\definecolor{darkgreen}{rgb}{0.1,0.7,0.1}
\definecolor{darkred}{rgb}{0.7,0.1,0.1}
\newtheorem{theorem}{Theorem}
\newtheorem{lemma}{Lemma}[section]
\newtheorem{proposition}[lemma]{Proposition}
\newtheorem{corollary}[lemma]{Corollary}
\newtheorem{remark}[lemma]{Remark}
\newcommand{\eps}{\varepsilon}
\newcommand\symb[2][\bf]{{\mathchoice{\hbox{#1#2}}{\hbox{#1#2}}%
        {\hbox{\scriptsize#1#2}}{\hbox{\tiny#1#2}}}}
\def\R{{\symb R}}
\def\N{{\symb N}}
\def\Z{{\symb Z}}
\def\Q{{\symb Q}}
\def\P{{\symb P}}
\def\un{\mathbf{1}}
\renewcommand{\P}{\mathbb{P}}
\newcommand{\E}{\mathbb{E}}
\newcommand{\bE}{\mathbf{E}}
\newcommand{\bP}{\mathbf{P}}
\newcommand{\bbN}{\mathbb{N}}
\newcommand{\bbP}{\mathbb{P}}
\newcommand{\bbR}{\mathbb{R}}
\newcommand{\bbZ}{\mathbb{Z}}
\newcommand{\cA}{\mathcal{A}}
\newcommand{\cB}{\mathcal{B}}
\newcommand{\cC}{\mathcal{C}}
\newcommand{\cD}{\mathcal{D}}
\newcommand{\cE}{\mathcal{E}}
\newcommand{\cF}{\mathcal{F}}
\newcommand{\cG}{\mathcal{G}}
\newcommand{\cH}{\mathcal{H}}
\newcommand{\cO}{\mathcal{O}}
\newcommand{\cP}{\mathcal{P}}
\newcommand{\cQ}{\mathcal{Q}}
\newcommand*{\Chi}{\mbox{\Large$\upchi$}}
\begin{document}

\title{ \textbf{\textsc{Localization of the continuous\\Anderson Hamiltonian in $1$-d}}}

\author{Laure Dumaz}
\address{
Universit\'e Paris-Dauphine, PSL Research University, UMR 7534, CNRS, CEREMADE, 75016 Paris, France}
\email{dumaz@ceremade.dauphine.fr}

\author{Cyril Labb\'e}
\address{
Universit\'e Paris-Dauphine, PSL Research University, UMR 7534, CNRS, CEREMADE, 75016 Paris, France}
\email{labbe@ceremade.dauphine.fr}

\vspace{2mm}

\date{\today}

\maketitle

\begin{abstract}

We study the bottom of the spectrum of the Anderson Hamiltonian $\cH_L := -\partial_x^2 + \xi$ on $[0,L]$ driven by a white noise $\xi$ and endowed with either Dirichlet or Neumann boundary conditions. We show that, as $L\rightarrow\infty$, the point process of the (appropriately shifted and rescaled) eigenvalues converges to a Poisson point process on $\R$ with intensity $e^x dx$, and that the (appropriately rescaled) eigenfunctions converge to Dirac masses located at independent and uniformly distributed points. Furthermore, we show that the shape of each eigenfunction, recentered around its maximum and properly rescaled, is given by the inverse of a hyperbolic cosine. We also show that the eigenfunctions decay exponentially from their localization centers at an explicit rate, and we obtain very precise information on the zeros and local maxima of these eigenfunctions. Finally, we show that the eigenvalues/eigenfunctions in the Dirichlet and Neumann cases are very close to each other and converge to the same limits.

\medskip

\noindent
{\bf AMS 2010 subject classifications}: Primary 60H25, 60J60; Secondary 35P20. \\
\noindent
{\bf Keywords}: {\it Anderson Hamiltonian; Hill's operator; localization; Riccati transform; Diffusion.}
\end{abstract}

\setcounter{tocdepth}{1}
\tableofcontents

\section{Introduction}

Consider the Anderson Hamiltonian
\begin{equation}\label{Eq:Hamiltonian}
\cH_L = -\partial^2_x + \xi\;,\quad x\in(0,L)\;,
\end{equation}
endowed with either Dirichlet or Neumann boundary conditions. Here, the potential $\xi$ is taken to be a real white noise on $(0,L)$, that is, a mean zero, delta-correlated Gaussian field on $(0,L)$. The operator $\cH_L$ is a random Schr\"odinger operator, sometimes called Hill's operator, that models disordered solids in physics.

\smallskip

There is a competition between the two terms appearing in the operator: while the eigenfunctions of the Laplacian are spread out over the whole box, the multiplication-by-$\xi$ operator tends to concentrate the mass of the eigenfunctions in very small regions. In his seminal article~\cite{Anderson58}, Anderson showed that for a discrete version of the present hamiltonian and in dimension $3$, the bottom of the spectrum consists of \emph{localized} eigenfunctions. This phenomenon, now referred to as \emph{Anderson localization}, has been the object of numerous studies, see for instance~\cite{Kirsch} for an extended survey.



In the present paper, we establish a localization phenomenon at the bottom of the spectrum of $\cH_L$ when $L\rightarrow\infty$.

\medskip

This operator was first studied by the physicist Halperin \cite{Halperin}, see also the work of Frisch and Lloyd \cite{FrischLloyd}.
The focus in these works was on the macroscopic picture of the eigenvalues in the large $L$ limit. If $N(\lambda)$ denotes the number of eigenvalues smaller than $\lambda$, the density of states is defined as the derivative of the large $L$ limit of $N(\lambda)$ divided by $L$. They found that the density of states of the operator $\cH_L$ admits an explicit integral formula, see Equation \eqref{eqDensityOfStates} or Figure \ref{fig:DensityOfStates}. 
\begin{figure}[!h]\label{fig:DensityOfStates}
\centering
\includegraphics[width = 6cm]{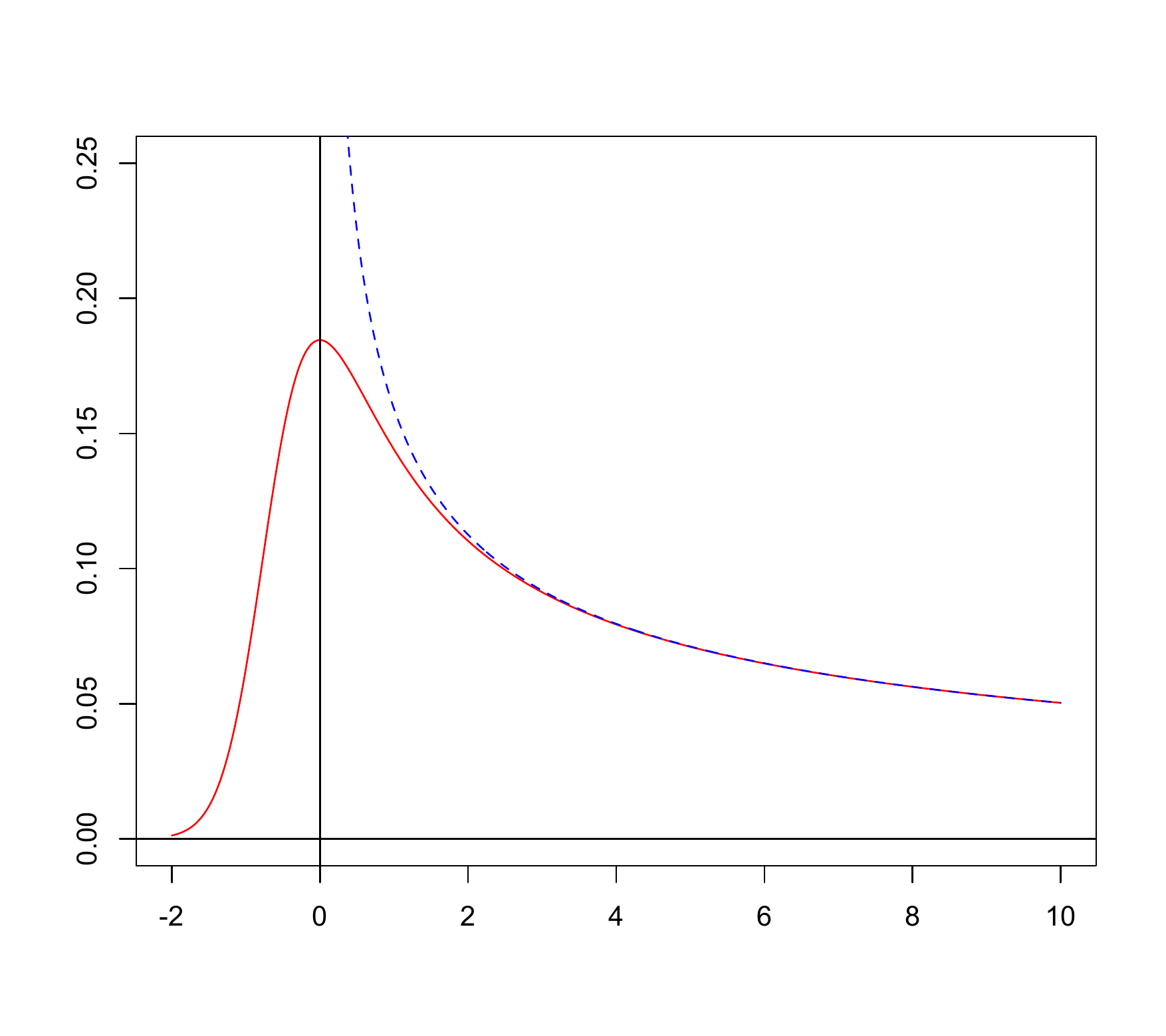}
\caption{Density of states of the operator $\mathcal{H}_L$ (plain line) and of $u \mapsto -\partial_x^2 u$ (dashed line).}
\end{figure}

Later on, Fukushima and Nakao~\cite{Fukushima} gave a precise formulation of the eigenvalue problem $\cH_L \varphi = \lambda \varphi$. They proved that for any fixed $L$, almost surely $\cH_L$ is a self-adjoint operator on $L^2(0,L)$, bounded below, that admits a pure point spectrum $(\lambda_{k})_{k\ge 1}$, with $\lambda_{1} < \lambda_{2} < \ldots$, and that the associated eigenfunctions $(\varphi_{k})_{k\ge 1}$ form an orthonormal basis of $L^2(0,L)$ and are H\"older $3/2^-$ (that is, H\"older $3/2 - \eps$ for all $\eps >0$). They also rigorously derived the density of states.

Subsequently, McKean~\cite{McKean} established that the first eigenvalue, appropriately shifted and rescaled:
\begin{align}
-4 \sqrt{a_L}(\lambda_{1} + a_L)\;, \label{CVfirsteigenvalue}
\end{align}
converges as $L\to\infty$ to a Gumbel distribution and therefore falls in one of the three famous extreme-value
distributions (see also \cite{Texier} for the aymptotics of the $k$-th eigenvalue). The precise definition of $a_L$ is given right above Equation \eqref{asymp_aL}, let us simply mention that as $L\rightarrow\infty$ we have
$$ a_L \sim \Big(\frac38 \ln L\Big)^{2/3}\;.$$

Let us also cite the works~\cite{CamMcK,CamRid} where the authors obtained an exact formula for the density distribution of the first eigenvalue in terms of an integral over the circular Brownian motion, and a precise asymptotic for its left tail. In these works, $L$ is fixed and the operator is endowed with periodic boundary conditions.

In all the aforementioned studies, the starting point is the Riccati transform which maps the second order linear differential equation $\cH_L u =  \lambda \,u$ into a first order non-linear one, see below. We will also use this tool in the present paper. 

\subsection*{Statement of our results:} We are interested in the large $L$ limit behavior of the smallest eigenvalues of $\cH_L$ and of their associated eigenfunctions. We will consider the operator $\cH_L$ endowed with either Dirichlet boundary conditions: $\varphi(0) = \varphi(L)=0$, or Neumann boundary conditions: $\varphi'(0)=\varphi'(L)=0$. In the sequel, if no mention is made of the boundary conditions, then they are taken to be Dirichlet.
\medskip

Let us introduce the rescaled eigenfunctions:
\begin{equation}\label{Eq:mL}
m_{k}(dt) = L \varphi_{k}^2(tL) dt\;,\quad t\in (0,1)\;,
\end{equation}
that belong to the space $\cP$ of probability measures on $[0,1]$ endowed with the topology of weak convergence. 
We also introduce $U_{k} \in [0,L]$ be the (first, if many) point at which the eigenfunction $|\varphi_k|$ achieves its maximum.

Our first result shows that asymptotically in $L$, the eigenvalues form a Poisson point process on $\R$ while the eigenfunctions converge to Dirac masses whose locations are uniformly distributed over $[0,L]$ and independent from the eigenvalues. 
\begin{theorem}\label{Th:Main}
The sequence of random variables $\big(4\sqrt{a_L}\,(\lambda_k + a_L), U_k/L, m_k\big)_{k \geq 1}$ converges in law to $\big(\lambda_k^{\infty}, U_k^\infty, \delta_{U_k^{\infty}}\big)_{k \geq 1}$ where $\big(\lambda_k^{\infty}, U_k^\infty\big)_{k \geq 1}$ is a Poisson point process on $\R \times [0,1]$ with intensity $e^x dx \otimes dt$ (where the convergence holds for the set of sequences of elements in $\R \times [0,1] \times \cP$ endowed with the product topology).
\end{theorem}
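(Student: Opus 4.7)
The strategy rests on the Riccati transform: for any non-vanishing solution $u$ of $\cH_L u = \lambda u$, the ratio $X = u'/u$ satisfies the SDE $dX_t = (-\lambda - X_t^2)\,dt + dB_t$, where $B$ is the Brownian motion obtained as a primitive of $\xi$. Under Dirichlet boundary conditions, $\lambda$ is the $k$-th eigenvalue iff the diffusion $X$ started from $+\infty$ at time $0$ explodes to $-\infty$ for the $k$-th time exactly at $L$, the explosion points coinciding with the zeros of $\varphi_k$. The proof then reduces to a joint asymptotic analysis of the explosion geometry of the one-parameter family $X^{(x)}$ of diffusions corresponding to the energies $\lambda = -a_L + x/(4\sqrt{a_L})$, as $x$ runs over a compact window of $\R$.

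Step 1: Poisson structure at a fixed energy. At $\lambda \approx -a_L$, the drift $-\lambda - y^2$ has a stable fixed point at $+\sqrt{a_L}$ and an unstable one at $-\sqrt{a_L}$, so $X^{(x)}$ spends long stretches of time near the attracting well, punctuated by rare Kramers-type transitions across the barrier that then rapidly carry it to $-\infty$. Solving in closed form the first passage problem (via the scale function and the speed measure of $X^{(x)}$) yields the asymptotics $\mu_L(x) \sim e^x/L$ for the mean explosion rate, the normalization in the definition of $a_L$ being tuned precisely to this end. Combined with the exponential forgetting of $X^{(x)}$ between two successive explosions, this shows that the point process of explosion times, rescaled by $L$, converges in law to a Poisson point process on $[0,1]$ of intensity $e^x$.

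Step 2: Joint convergence in the energy variable. The counting function $\lambda \mapsto \#\{\lambda_j \le \lambda\}$ coincides, up to boundary corrections, with the number of explosions of $X^{(x)}$ on $(0,L]$, and is monotone in $x$. Coupling the family $(X^{(x)})_x$ across a compact window via the common driving Brownian motion and upgrading Step~1 through this coupling produces the joint convergence of the marked point process $\{(4\sqrt{a_L}(\lambda_k + a_L),\,Z_k/L)\}_k$, with $Z_k$ the explosion time associated to $\lambda_k$, toward a Poisson point process on $\R \times [0,1]$ with intensity $e^x\,dx \otimes dt$. Uniformity in the spatial coordinate is inherited from the translation invariance of the white noise at scales much smaller than $L$.

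Step 3: Shape of the eigenfunctions and identification of $U_k$. Since $(\log|\varphi_k|)' = X$, the profile of $\varphi_k$ is obtained by integrating the Riccati trajectory. Near each maximum $U_k$ of $|\varphi_k|$, the process $X^{(\lambda_k)}$ sweeps through a tanh-like profile $X(t) \approx -\sqrt{a_L}\tanh(\sqrt{a_L}(t - U_k))$ from $+\sqrt{a_L}$ through $0$ down to $-\sqrt{a_L}$ in a narrow window of width $O(1/\sqrt{a_L})$, after which it crashes rapidly to $-\infty$ at $Z_k$; integrating this profile yields $|\varphi_k(t)| \propto 1/\cosh(\sqrt{a_L}(t - U_k))$ and in particular exponential decay at rate $\sqrt{a_L}$ on either side of $U_k$. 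Since $1/\sqrt{a_L} \ll L$, this forces $m_k \to \delta_{U_k^\infty}$ in the weak topology and $|U_k - Z_k|/L \to 0$, so that $U_k/L$ and $Z_k/L$ share the same limit. The Neumann case follows from the Dirichlet one by a short comparison argument. The main obstacle is to deliver all these estimates \emph{uniformly} in $k$ and jointly across finitely many eigenvalues, which requires quantitative metastability bounds on $X^{(x)}$ to rule out atypical scenarios such as two eigenfunctions localizing near the same site, or a single subinterval harbouring two comparable peaks of $|\varphi_k|$.
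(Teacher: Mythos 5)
Your proposal captures the Riccati-transform setup and the Kramers-law intuition, and the broad architecture (fixed-energy Poisson structure, then joint coupling across energies, then a pointwise $\cosh^{-1}$ profile) parallels the paper. But there are two genuine gaps.

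\emph{The Riccati transform of an eigenfunction is not a typical diffusion.} In Step 3 you assert that $X^{(\lambda_k)} = \Chi_k$ sweeps through the $\tanh$ profile and then ``crashes rapidly to $-\infty$ at $Z_k$.'' That is how a \emph{typical} diffusion $X_a$ behaves after an exceptional excursion, but $\Chi_k$ is a different object: it is conditioned by the event that the $k$-th explosion occurs exactly at $t=L$. For $k=1$, $\Chi_1$ explodes once and only at $t=L$; after the maximum $U_1$ it does not crash but rather hovers near the \emph{unstable} fixed point $-\sqrt{a_L}$ for a macroscopic time, and this is precisely what produces the exponential decay of $\varphi_1$ on $(U_1,L]$. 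Nothing in your proposal controls $\Chi_k$ on this macroscopic stretch. The paper handles it by squeezing $\Chi_k$ between the (monotone) diffusions $X_{a_k}\le \Chi_k \le X_{a_k'}$ only \emph{up to} the first extra explosion of $X_{a_k}$, and then using the time-reversed diffusions $\hat X_a(L-\cdot)$ to control $\Chi_k$ on the remaining interval; matching the two halves around the peak requires showing the forward and backward excursion times nearly coincide (Lemma 3.1, Proposition 3.8). Without the time-reversal step, the exponential decay — and hence the localization $m_k\to\delta_{U_k^\infty}$ — is not established on one side of the peak.

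\emph{Uniformity of the localization center.} In Step 2 you say the uniform law of $U_k/L$ ``is inherited from the translation invariance of the white noise at scales much smaller than $L$.'' This is not an argument: the operator $\cH_L$ is posed on $[0,L]$ with Dirichlet (or Neumann) boundary conditions, which destroy translation invariance, and even heuristically one needs to rule out drift of the localization center toward or away from the boundary. The paper proves uniformity via a coupling with the operator $\tilde{\cH}_L$ obtained by composing $\xi$ with a measure-preserving bijection of $[0,L]$ that permutes dyadic subintervals; on a high-probability event $\cE(n,\eps)$ the eigenvalues of $\cH_L$ and $\tilde{\cH}_L$ agree up to $\eps/\sqrt{a_L}$ while the localization centers are transported by the bijection, and an elementary lemma (Lemma 3.10) on dyadic-hypercube invariance then forces the limit law to be Lebesgue. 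This argument is the content of Proposition 3.9 and is not a consequence of anything you wrote. You should also note that the Poisson limit of the eigenvalues is obtained in the paper not by a direct coupling across a compact window but by a specific discretization: $[0,L]$ is cut into $2^n$ blocks, the auxiliary diffusions $X_a^j$ restarted from $+\infty$ at $t_j^n$ are shown to synchronize with $X_a$ (Proposition 2.6), and independence across blocks plus monotonicity across $a$ then produce the multinomial computation in Lemma 2.7; your Step 2 gestures at monotonicity but gives no mechanism for independence.
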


In the following theorem, we obtain a very precise description of the asymptotic behavior of the eigenfunctions. At a macroscopic level, we show that they decay exponentially fast from their localization centers at rate $\sqrt{a_L}$. At a microscopic level, we show that the asymptotic shape around the maximum is given by the inverse of a hyperbolic cosine at a space-scale of order $(\ln L)^{-1/3}$. 

Let us set for $i \ge 1$,
$$ h_{i}(t) := \frac{\sqrt 2}{a_L^{1/4}}\Big|\varphi_i\Big(U_{i} + \frac{t}{\sqrt{a_L}}\Big)\Big|\;,\quad t\in \R\;,$$
where we implicitly set the value of this function to $0$ whenever the argument of the function on the right does not belong to $[0,L]$. Similarly, we define
$$ b_i(t) := \frac1{\sqrt{a_L}} \Big(B\Big(U_{i} + \frac{t}{\sqrt{a_L}}\Big) - B(U_i)\Big)\;,\quad t\in \R\;,$$
where $B(t) := \langle \xi , \un_{[0,t]}\rangle$, $t\in [0,L]$. Note that $B$ is a Brownian motion. We also define
$$ h(t) := \frac1{\cosh(t)}\;,\quad b(t) := -2 \tanh(t)\;,\qquad t\in \R \;.$$
Finally, denote by $z_0:= 0 < z_1 <\cdots < z_{i-1} < z_i = L$ the zeros of the eigenfunction $\varphi_i$. 

\begin{theorem}[Exponential decay and shape of the eigenfunctions]\label{Th:Shape}
For every $i\ge 1$,
\begin{itemize}
\item \emph{Exponential decay:} 
There exist deterministic constants $C_2 > C_1 > 0$ such that with probability going to $1$, for all $t \in [0,L]$:
\begin{align}
C_1 \exp(- (\sqrt{a_L}+\kappa_L)|t - U_i|)1_{\{t \in D\}} \le \Big|\frac{\varphi_i(t)}{\varphi_i(U_i)}\Big| \le C_2 \exp(- (\sqrt{a_L}-\kappa_L)|t - U_i|), \label{ExponentialDecreaseEigenfunction}
\end{align}
where
\begin{align*}
\kappa_L := \frac{\ln^2 a_L}{a_L^{1/4}},\quad D = [0,L] \backslash \cup_{k=0}^{i} \big[(z_k - \frac38 \frac{\ln a_L}{\sqrt{a_L}})\vee 0, (z_k + \frac38 \frac{\ln a_L}{\sqrt{a_L}})\wedge L\big]\;.
\end{align*}
\smallskip
\item \emph{Shape around the maximum:} The processes $h_{i}$ and $b_i$ converge to $h$ and $b$ uniformly over compact subsets of $\R$ in probability as $L\rightarrow \infty$.
\end{itemize}
\end{theorem}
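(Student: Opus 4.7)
The plan is to use the Riccati transform $\rho_i := \varphi_i'/\varphi_i$, which (wherever finite) satisfies
\begin{equation*}
d\rho_i(t) = (-\lambda_i - \rho_i^2(t))\,dt + dB(t), \qquad t \in (0,L),
\end{equation*}
and blows from $+\infty$ to $-\infty$ across each zero $z_k$ of $\varphi_i$. By Theorem~\ref{Th:Main}, $\lambda_i + a_L \to 0$ in law, so the drift $-\lambda_i - \rho^2$ has its stable equilibrium close to $+\sqrt{a_L}$ and its unstable one close to $-\sqrt{a_L}$; both halves of the theorem reflect this phase portrait.

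For the shape, since $U_i$ is an interior maximum of $|\varphi_i|$ we have $\rho_i(U_i) = 0$. Setting $\tilde\rho_i(s) := \rho_i(U_i + s/\sqrt{a_L})/\sqrt{a_L}$ and $\alpha_L := -\lambda_i/a_L$ (so $\alpha_L \to 1$) yields the closed integral relation
\begin{equation*}
\tilde\rho_i(s) = \int_0^s (\alpha_L - \tilde\rho_i^2(r))\,dr + b_i(s), \qquad \tilde\rho_i(0) = 0.
\end{equation*}
I would first prove tightness of $(\tilde\rho_i, b_i)$ on compact sets, then identify each subsequential limit $(\tilde\rho_\infty, b_\infty)$ as a smooth pair satisfying the same relation with $\alpha = 1$. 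The pair $(-\tanh, -2\tanh)$ is the unique such solution matching the boundary behaviour $\tilde\rho_\infty(s) \to \mp 1$ as $s \to \pm\infty$, which expresses that just to the left of $U_i$ the Riccati hugs the stable value $+\sqrt{a_L}$, while just to the right it is forced through the unstable value $-\sqrt{a_L}$ before the next zero or boundary. The convergence $h_i \to 1/\cosh$ then follows from $h_i(s) = h_i(0)\exp(\int_0^s \tilde\rho_i(r)\,dr)$ together with $h_i(0) \to 1$ (the latter coming from the $L^2$-normalization $\int \varphi_i^2 = 1$ and the shape).

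For the exponential decay, the starting identity is
\begin{equation*}
\log\Big|\frac{\varphi_i(t)}{\varphi_i(U_i)}\Big| = \int_{U_i}^t \rho_i(u)\,du, \qquad t \in D.
\end{equation*}
On $D$, I would show $\rho_i(u) = -\mathrm{sgn}(u - U_i)\sqrt{a_L} + e(u)$ with $\int|e(u)|\,du \le \kappa_L|t - U_i|$. Concretely, an It\^o computation on an integrating factor that linearizes $-\lambda_i - \rho^2$ near $\pm\sqrt{a_L}$ converts Brownian fluctuations of $B$ on scale $(\ln a_L)/\sqrt{a_L}$ into deviations of $\rho_i$ of size $\kappa_L = \ln^2(a_L)/a_L^{1/4}$, the logarithms arising from optimizing Gaussian tail bounds on $\sup|B(t)-B(s)|$ for $|t-s| \le (\ln a_L)/\sqrt{a_L}$ against the restoring force of strength $2\sqrt{a_L}$. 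The excluded windows of width $\tfrac{3}{4}(\ln a_L)/\sqrt{a_L}$ around each $z_k$ absorb the blow-up of $\rho_i$, whose contribution to the integral is $O(\ln a_L)$ and can be hidden in the multiplicative constants $C_1, C_2$.

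The main obstacle is the deterministic convergence $b_i \to -2\tanh$. A priori $b_i$ is a rescaled Brownian motion with vanishing variance $s/a_L^{3/2}$, so in isolation $b_i \to 0$. The nonzero limit is produced by the implicit conditioning that ``$U_i$ is an interior maximum of an eigenfunction of the global spectral problem'': the eigenvalue relation forces $\xi(U_i) - \lambda_i = \varphi_i''(U_i)/\varphi_i(U_i)$ to be of order $-a_L$, and in fact forces $\xi$ on the window of width $1/\sqrt{a_L}$ around $U_i$ into the instanton profile $\xi \approx -2 a_L\,\mathrm{sech}^2(\sqrt{a_L}(t-U_i))$, whose primitive is $-2\tanh(\sqrt{a_L}(t-U_i))$. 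I expect this to be the technical heart of the proof: one must couple the Riccati SDE with an appropriate conditioned diffusion (via a Girsanov/$h$-transform or a direct excursion-theoretic analysis) and show that, after rescaling, the sample paths selected by the global spectral problem concentrate precisely on this instanton.
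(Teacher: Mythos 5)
Your outline captures the right phenomenology, and you correctly flag the instanton/conditioning issue as the technical heart, but as written the plan has two gaps that would prevent the key steps from going through. First, the relation $d\rho_i = (-\lambda_i-\rho_i^2)\,dt + dB$ is not an SDE in the usual sense: $\lambda_i$ is a functional of the whole path of $B$, so $\rho_i=\Chi_i$ is \emph{not} a diffusion and does not behave like a typical solution of \eqref{Eq:Diff} (indeed it spends a macroscopic time near the unstable point $-\sqrt{|\lambda_i|}$, which a typical solution never does). Your proposed ``tightness of $(\tilde\rho_i,b_i)$ plus identification of subsequential limits of the integral relation'' therefore cannot be run as a standard martingale-problem argument; the paper circumvents this by discretizing the parameter range into a grid $M_L$, squeezing $\Chi_i$ between two genuine (non-anticipating) diffusions $X_a\le\Chi_i\le X_{a'}$ via the monotonicity of the Riccati flow, and transferring the Girsanov-based instanton estimate (Proposition~\ref{Prop:BoundsCross1}) from $X_a$ to $\Chi_i$. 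You gesture at ``a Girsanov/$h$-transform or excursion-theoretic analysis'' but do not supply it, and this is precisely where the work lies.

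Second, your mechanism for the exponential decay --- an integrating factor that ``linearizes $-\lambda_i-\rho^2$ near $\pm\sqrt{a_L}$'' and trades Gaussian tails of $B$ against ``the restoring force of strength $2\sqrt{a_L}$'' --- only works on the side where the equilibrium is stable, i.e.\ near $+\sqrt{a_L}$ (to the left of $U_i$ on the last nodal interval). Near $-\sqrt{a_L}$ the drift is \emph{repelling} for the forward equation, so there is no restoring force and forward linearization cannot keep $\rho_i$ near $-\sqrt{a_L}$ over a macroscopic time; the fact that it stays there is an effect of the global spectral conditioning. The paper resolves this with the time-reversed diffusions $\hat X_a$, for which $-\sqrt a$ becomes the stable well, and with a separate overlap argument (Proposition~\ref{propo:upsilonhatupsilon}) showing that the forward and backward control intervals meet near $U_i$; for $i>1$ one also needs the coupling estimates \ref{(b)-(iv)} to show that $X_a$ and $X_{a'}$ explode almost simultaneously on the earlier nodal intervals. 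Without a substitute for the time-reversal step, the right half of \eqref{ExponentialDecreaseEigenfunction} (and the identification of the limit $\tilde\rho_\infty=-\tanh$ on $s>0$, hence $b_\infty=-2\tanh$) is not established. The remaining ingredients you list (e.g.\ $h_i(0)\to1$ from the $L^2$-normalization once shape and decay are known) are sound and match the paper's order of argument.
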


\begin{remark} The lower bound in \eqref{ExponentialDecreaseEigenfunction} does not hold near the zeros of the eigenfunctions and that is the reason for the restriction to $t\in D$. Actually our proof shows that near any zero $z_k$ we have
\begin{align*}
\forall t \in \big[(z_k - \frac38 \frac{\ln a_L}{\sqrt{a_L}})\vee 0, (z_k + \frac38 \frac{\ln a_L}{\sqrt{a_L}})\wedge L\big],\quad \varphi_i(t) = \varphi'_i(z_k) \frac{\sinh(\sqrt{a_L} |t-z_k|)}{\sqrt{a_L}}\big(1 + o(1)\big)\;,
\end{align*}
with a probability going to $1$.
\end{remark}

\begin{remark}
Note that $h$ is the main eigenfunction of the operator $-\Delta + b'$ with $b'$ the derivative of $b$.
\end{remark}

When $i > 1$, we obtain further information about the eigenfunctions (see Figure \ref{Fig:Eigenfunction}):
\begin{theorem}[Local maxima and zeros] \label{Th:LocalMaxZeros}
Fix $i >1$. Let us order the locations of the maxima of the $i$ first eigenfunctions $U_{\sigma(1)} < \cdots < U_{\sigma(i)}$.
\begin{itemize}
\item \emph{Position of the local maximum:}  There exists a constant $C>0$ such that for all $k\in \{1,\ldots,i\}$, all the points where the eigenfunction $|\varphi_i|$ reaches its maximum over the time interval $[z_{k-1}, z_{k}]$ lie at distance at most $C/(\sqrt{a_L}\ln a_L)$ from $U_{\sigma(k)}$ with probability going to $1$.
\smallskip
\item \emph{Deterministic shape around the local maxima:} For all $k \in \{1,\cdots,i\}$, $\varphi_i(U_{\sigma(k)} + \frac{t}{\sqrt{a_L}})/\varphi_i(U_{\sigma(k)})$ converges to $h$ uniformly over compact subsets of $\R$ in probability. 
\smallskip
\item \emph{Zeros of the eigenfunction:} For $k \in \{1,\cdots,i\}$,
\begin{align*}
&|z_k - U_{\sigma(k)} - \frac{3}{4} \frac{\ln a_L}{\sqrt{a_L}}| \leq \frac{(\ln\ln a_L)^2}{\sqrt{a_L}},\quad \mbox{if }U_{\sigma(k)} < U_i\;, \\
&|z_{k-1} - U_{\sigma(k)} + \frac34 \frac{\ln a_L}{\sqrt{a_L}}| \leq \frac{(\ln\ln a_L)^2}{\sqrt{a_L}},\quad \mbox{if }U_{\sigma(k)} > U_i\;,
\end{align*}
with probability going to $1$.
\end{itemize}
\end{theorem}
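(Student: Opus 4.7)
The strategy combines Theorem~\ref{Th:Shape} applied to each of $\varphi_1,\ldots,\varphi_i$ with a tight-binding description of $\varphi_i$ as a superposition of localised bumps at the wells $U_1,\ldots,U_i$, which by Theorem~\ref{Th:Main} sit at macroscopic distances (order $L$) with eigenvalue gaps $|\lambda_j-\lambda_k|\sim 1/\sqrt{a_L}$. For each $j$ I would construct $\psi_j$ as the principal eigenfunction of $\cH_L$ restricted with Dirichlet conditions to a neighbourhood of $U_j$ of size $o(L)$. Theorem~\ref{Th:Shape} applied to $\varphi_j$ identifies $\psi_j$ with $\varphi_j$ up to exponentially small errors, so $\psi_j$ has shape $h$, eigenvalue $\mu_j\approx\lambda_j$, and tails $\sim a_L^{1/4}\,e^{-\sqrt{a_L}|t-U_j|}$. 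In the basis $\{\psi_j\}$, $\cH_L$ is represented by an $i\times i$ matrix $M$ with diagonal entries $\mu_j$ and off-diagonal tunneling amplitudes $M_{jk}\sim a_L\,e^{-\sqrt{a_L}|U_j-U_k|}$, obtained from Wronskians using the explicit tail $h(t)\sim 2\,e^{-|t|}$. In 1D the non-adjacent entries are subdominant (by factors $e^{-\sqrt{a_L}d_{\mathrm{nn}}}$) relative to nearest-neighbour ones, so $M$ is effectively tridiagonal in position-ordered basis and its eigenvectors follow from a three-term recurrence.

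Solving this recurrence for the eigenvector $\varphi_i$ (whose main component is $\psi_i$ with $\lambda_i\approx\mu_i$) shows that the bump amplitudes $A_k:=|\varphi_i(U_{\sigma(k)})|$ satisfy
\[
\frac{A_k}{A_{k+1}}\sim a_L^{3/2}\,e^{-\sqrt{a_L}(U_{\sigma(k+1)}-U_{\sigma(k)})}
\]
along each side of the main well, anchored by $A_m\sim a_L^{1/4}/\sqrt{2}$ at $U_{\sigma(m)}=U_i$; the prefactor $a_L^{3/2}=\sqrt{a_L}\cdot a_L$ is the product of the inverse eigenvalue gap and the tunneling amplitude. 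Locally near each $U_{\sigma(k)}$ the dominant contribution to $\varphi_i$ is $c_{i,\sigma(k)}\psi_{\sigma(k)}$, which has its maximum at $U_{\sigma(k)}$ and, normalised at that point, shape $h$: this yields claims~(a) and~(b).

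For claim~(c), the zero $z_k$ lies in the exponential-tail region between consecutive bumps, and matching the two tails gives
\[
z_k=\frac{U_{\sigma(k)}+U_{\sigma(k+1)}}{2}+\frac{1}{2\sqrt{a_L}}\ln\frac{A_k}{A_{k+1}}=U_{\sigma(k)}+\frac{3}{4}\frac{\ln a_L}{\sqrt{a_L}},
\]
with the $3/4$ coming from the $(3/2)\ln a_L$ piece of $\ln(A_k/A_{k+1})$ divided by $2\sqrt{a_L}$; the symmetric case $U_{\sigma(k)}>U_i$ is analogous. The main technical hurdle is to make this reduction quantitative to precision $(\ln\ln a_L)^2/\sqrt{a_L}$ on the zero positions: one needs sharp sub-leading estimates on the single-well bound states $\psi_j$, on the Wronskian tunneling amplitudes, and on the eigenvalue gaps (refining Theorem~\ref{Th:Main}), together with careful control of the errors accumulated along a chain of secondary wells when iterating the three-term recurrence.
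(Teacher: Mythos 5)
Your tight-binding picture reproduces the correct constants (the $a_L^{3/2}$ prefactor, the $\tfrac34\ln a_L/\sqrt{a_L}$ offset of the zeros), and it is close in spirit to the heuristic the paper itself records in a remark (the restriction of $\varphi_i$ near a localization center resembles the principal eigenfunction of $\cH_L$ restricted to that window). But as a proof it has a gap that I do not think can be repaired along the route you propose: the zero positions are extracted from the identity $z_k=\tfrac12(U_{\sigma(k)}+U_{\sigma(k+1)})+\tfrac{1}{2\sqrt{a_L}}\ln(A_k/A_{k+1})$, i.e.\ as the difference of two macroscopically large quantities (each of order $L$) that must cancel to within $(\ln\ln a_L)^2/\sqrt{a_L}$. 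This requires knowing $\ln(A_k/A_{k+1})$, whose leading term is $\sqrt{a_L}\,|U_{\sigma(k+1)}-U_{\sigma(k)}|\sim\sqrt{a_L}\,L$, up to an \emph{additive} error $O((\ln\ln a_L)^2)$. No such control is available, nor is it true: the decay of $\varphi_i$ (equivalently of your single-well states $\psi_j$) is $\exp(\int\Chi_i)$ where the Riccati process oscillates around $\sqrt{a_L}$ with time-averaged fluctuations of order $h_{a_L}=\ln a_L/a_L^{1/4}$ (Proposition \ref{Prop:BoundsCross1}(i)), so the exponent of the amplitude ratio carries a random correction of order $h_{a_L}L$, vastly exceeding $(\ln\ln a_L)^2$. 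The same objection applies to the Wronskian tunneling amplitudes and to iterating the three-term recurrence along a chain of wells: the errors you flag as "the main technical hurdle" are not sub-leading refinements but are larger than the quantities you need to resolve. The exact pure-exponential tail-matching formula is also only valid for genuine exponentials, and correcting it forces you back to a local analysis near $z_k$.

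The paper's proof avoids global amplitude ratios entirely and is purely local. On the event $\cE(n,\eps)$ the Riccati transform $\Chi_i$ is squeezed on each sub-interval between two \emph{typical} diffusions $X_{a}$ and $X_{a'}$ with $a\le-\lambda_i<a'$, both of which explode in the windows corresponding to $k\ne i_*$; by Propositions \ref{Prop:BoundsCross1} and \ref{Prop:BoundsCross2} both follow the deterministic descent $t\mapsto\sqrt{a}\tanh(-\sqrt{a}(t-\upsilon))$ and explode within $O(\ln\ln a_L/\sqrt{a_L})$ of each other (property \ref{(b)-(iv)}). The local maximum $x_k$ is where $\Chi_i$ crosses $0$ on this descent (within $O(1/(\sqrt{a_L}\ln a_L))$ of $\upsilon$, giving the first and second bullets), and the zero $z_k$ is the subsequent explosion time of $\Chi_i$, which occurs a deterministic time $(3/8)t_L$ (from $0$ to $-\sqrt{a}$) plus $(3/8)t_L$ (from $-\sqrt{a}$ to $-\infty$, Lemmas \ref{Lemma:ExploandHit} and \ref{Lemma:Explo}) after $x_k$ — this is where $\tfrac34\ln a_L/\sqrt{a_L}$ comes from, with error $O((\ln\ln a_L)^2/\sqrt{a_L})$ coming only from the local exit-time estimate, not from any global decay. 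If you want to salvage your approach, you would have to replace the global tail-matching by exactly this kind of local ODE/diffusion analysis in a window of width $O(t_L)$ around each zero, at which point you have reproduced the paper's argument.
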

\begin{remark}
Thanks to \eqref{ExponentialDecreaseEigenfunction}, the amplitude of the local maxima on the time interval $[z_{k-1},z_{k}]$ is of order $|\varphi_i(U_{\sigma(k)})| = |\varphi_i(U_{i})|\exp\big((-\sqrt{a_L}+O(\kappa_L)) |U_{\sigma(k)}-U_i|\big)$ with probability going to $1$.
\end{remark}

\begin{figure}[!h]
\centering
\includegraphics[width = 13cm]{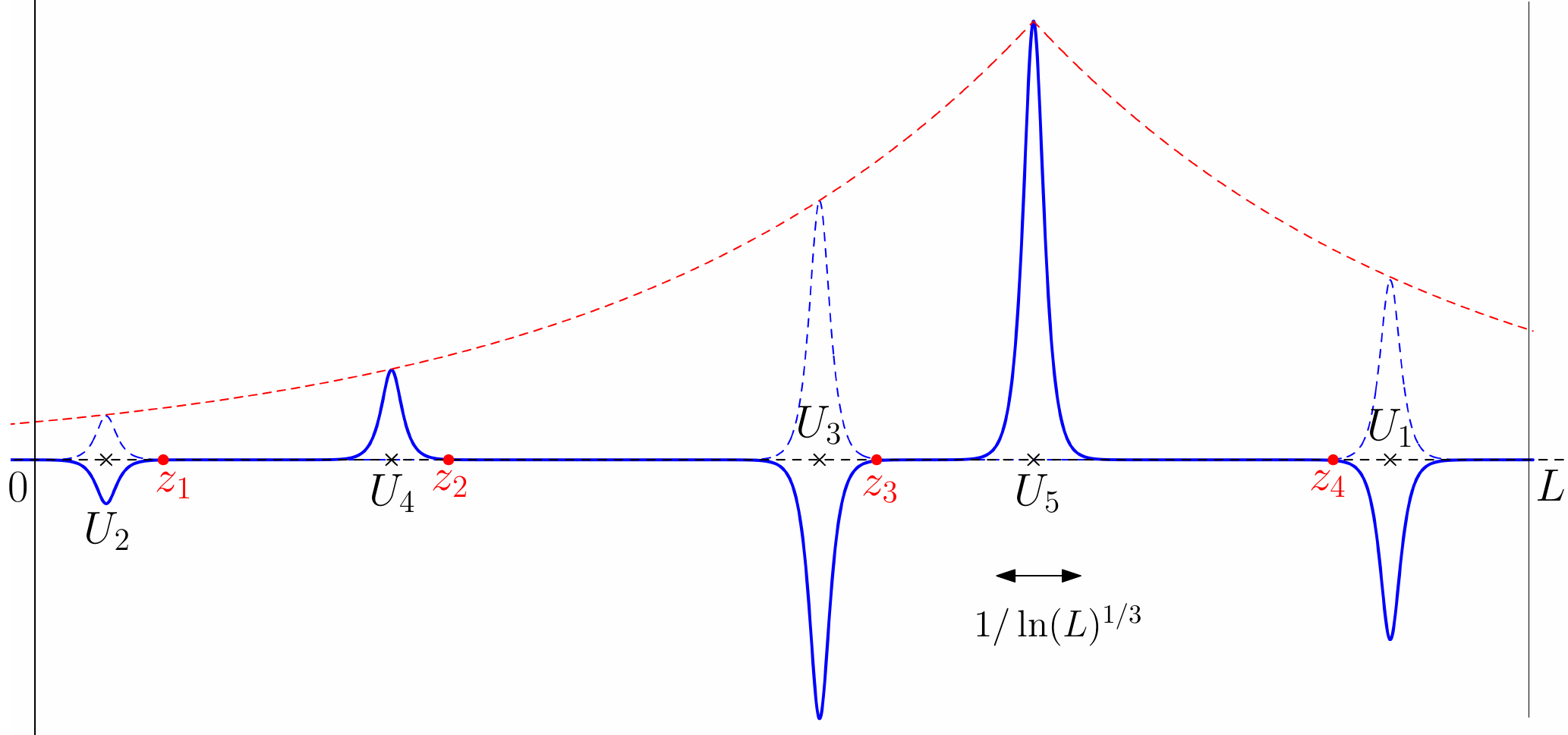}
\caption{A very schematic plot of the fifth eigenfunction $\varphi_5$ (not at scale). The main peak lies at $U_5$, while near $U_k$, for every $k<5$, the eigenfunction has peaks of smaller order. Observe that the height of the peak near $U_k$ decays exponentially with the distance $|U_k-U_5|$. Red dots correspond to the zeros of the eigenfunction.}\label{Fig:Eigenfunction}
\end{figure}

We now consider the case where $\cH_L$ is endowed with Neumann boundary conditions: we denote by $\varphi^{(N)}_{k}$ and $\lambda_{k}^{(N)}$ the corresponding eigenfunctions and eigenvalues. Notice that $\varphi^{(N)}_{k},\lambda_{k}^{(N)}$ and $\varphi_k,\lambda_{k}$ are all defined on a same probability space. Our next result shows that the limiting behavior of the eigenvalues/eigenfunctions in the Neumann case is the same as in the Dirichlet case. Furthermore, the convergence can be taken jointly for the two types of boundary conditions and the limits are the same, thus showing that the choice of Dirichlet/Neumann boundary conditions does not affect the asymptotic behavior of the eigenvalues/eigenfunctions.
\begin{theorem}\label{Th:Neumann}
For Neumann boundary conditions, the eigenvalues and eigenfunctions satisfy the same results as those stated in Theorems \ref{Th:Main}, \ref{Th:Shape} and \ref{Th:LocalMaxZeros}. Furthermore, for every $k\ge 1$,
$$(\lambda_{k}^{(N)} - \lambda_{k})\sqrt a_L\;,\quad\mbox{ and }\quad (U_{k}^{(N)}-U_{k}) \sqrt a_L\;,$$
 converge to $0$ in probability as $L\rightarrow\infty$ and consequently the limiting random variables satisfy a.s. 
 \begin{align*}
 \lambda_{k}^{(N), \infty}= \lambda_k^\infty, \quad \mbox{ and }\quad U^{(N),\infty}_{k} = U^{\infty}_k.
 \end{align*}
\end{theorem}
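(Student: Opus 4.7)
The strategy is to show that, for each fixed $k$, the Dirichlet and Neumann eigenvalues and eigenfunction localization centers differ by quantities that are $o(1/\sqrt{a_L})$ in probability; this makes the conclusion immediate, as all results of Theorems \ref{Th:Main}, \ref{Th:Shape}, \ref{Th:LocalMaxZeros} then transfer. The underlying reason is that the low-lying eigenfunctions are exponentially localized far from the boundary (by Theorem \ref{Th:Shape}), so the boundary condition is essentially invisible on the scale $1/\sqrt{a_L}$. The min--max principle together with $H^1_0 \subset H^1$ gives $\lambda_k^{(N)} \leq \lambda_k$ immediately. For the reverse inequality, I would introduce a smooth cutoff $\chi \in H^1_0$ that equals $1$ on $[2\delta, L-2\delta]$, vanishes on $[0,\delta]\cup[L-\delta,L]$, with $|\chi'| \leq 2/\delta$ for $\delta := (\ln a_L)/\sqrt{a_L}$. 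The functions $\chi \varphi_j^{(N)}$ for $j = 1, \ldots, k$ provide a $k$-dimensional test space for the Dirichlet variational problem; expanding the quadratic form $Q(\chi \varphi_j^{(N)})$ and handling the noise term via integration by parts against the Brownian motion $B$, the needed bound $\lambda_k \leq \lambda_k^{(N)} + o(1/\sqrt{a_L})$ reduces to showing that $\int_{[0,2\delta]\cup[L-2\delta,L]} (\varphi_k^{(N)})^2\, dx$ and $\int_{[0,2\delta]\cup[L-2\delta,L]} ((\varphi_k^{(N)})')^2\, dx$ are super-exponentially small.

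The heart of the proof is therefore to establish exponential decay of Neumann eigenfunctions at the boundary. I would do this by adapting the Riccati/diffusion analysis already developed for Dirichlet (underlying Theorems \ref{Th:Shape} and \ref{Th:LocalMaxZeros}) to the Neumann initial condition. The Riccati variable $X = \varphi'/\varphi$ satisfies the same SDE $dX = (-\lambda - X^2)\,dt + dB$ regardless of the boundary condition; only the starting value changes: $X(0^+) = +\infty$ for Dirichlet versus $X(0) = 0$ for Neumann. With $-\lambda \approx a_L$, the drift $-\lambda - X^2$ has a strongly attracting equilibrium at $+\sqrt{a_L}$ with linearization rate $2\sqrt{a_L}$, so both flows are driven to within exponentially small distance of $\sqrt{a_L}$ after a boundary layer of length $O(1/\sqrt{a_L})$. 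Past this boundary layer, the two trajectories agree up to super-exponentially small corrections, and the full description of localization centers, shape, zeros, and exponential decay carries over verbatim. This is the main obstacle: while all the quantitative information about the Dirichlet Riccati flow has already been established, one must carefully handle the new boundary layer where the initial condition differs and track the probabilistic estimates to ensure the exponential decay of $\varphi_k^{(N)}$ holds with probability tending to $1$.

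Finally, to upgrade $|\lambda_k^{(N)} - \lambda_k| = o(1/\sqrt{a_L})$ into $|U_k^{(N)} - U_k| = o(1/\sqrt{a_L})$, I combine the shape theorem --- both eigenfunctions converge, after rescaling by $\sqrt{a_L}$ around their maxima, to $h(t) = 1/\cosh(t)$ --- with an $L^2$-closeness estimate obtained by spectral perturbation: the limiting Poisson spacings $\lambda_{k+1}^\infty - \lambda_k^\infty$ are almost surely strictly positive, so the spectral gap of $\cH_L$ is of order $1/\sqrt{a_L}$ in the limit and strictly exceeds the eigenvalue perturbation, forcing $\chi \varphi_k^{(N)}$ and $\varphi_k$ to be $L^2$-close (after a choice of sign). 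A misalignment $|U_k^{(N)} - U_k| \geq C/\sqrt{a_L}$ would then contradict the $L^2$-closeness, since two bumps of the sharp profile $h$ on scale $1/\sqrt{a_L}$ would barely overlap. The joint convergence with identical limits follows by carrying out both the Dirichlet and Neumann convergences on the same probability space.
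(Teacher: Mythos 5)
Your proposal is correct in outline, and it correctly isolates the real work: adapting the Riccati analysis to the initial condition $X^{(N)}_a(0)=0$, showing that after a boundary layer of length $O(t_{a_L})$ the Neumann flow follows $\sqrt a\tanh(\sqrt a t)$ into the well and synchronizes with the Dirichlet flow. That adaptation is exactly what the paper does (its points (1) and (2) in Section \ref{Sec:Neumann}). Where you diverge is in how the comparison of eigenvalues and localization centers is then extracted. The paper stays entirely inside the Riccati framework: on the event $\cE(n,\eps)$ the number of explosions of $X^{(N)}_a$ equals that of $X_a$ for every $a$ in the grid $M_L$, so the \emph{same} pairs $a_i\le -\lambda_i,-\lambda_i^{(N)}<a_i'$ with $a_i'-a_i=\eps/\sqrt{a_L}$ squeeze both spectra, and the same diffusions $X^{j_*}_{a_i}$, $\hat X^{j_*+1}_{a_i}$ pin both maxima to the same zero $\upsilon_*$; the comparison is then free. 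You instead go through min--max with a cutoff (for the eigenvalues) and a spectral-gap perturbation argument combined with the $\cosh^{-1}$ profile overlap (for the centers). This is workable --- the residual $-\chi''\varphi^{(N)}-2\chi'(\varphi^{(N)})'$ is super-exponentially small while the relevant spectral gaps are of order $1/\sqrt{a_L}$ by the Poisson limit, so the Davis--Kahan-type step closes --- but it buys you nothing the squeezing does not already give, and it imports extra technicalities (the form-sense definition of $\int\xi u^2$, near-orthonormality of the test family, sign choices). One point you gloss over: for Neumann the correspondence between eigenvalues and explosions is not just a count --- one also needs the sign of $X^{(N)}_a(L)$ (the paper's point (3), proved via the coupling with the stationary diffusion). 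Without it you cannot locate $-\lambda_k^{(N)}$ in the grid $M_L$, and hence cannot run the squeezing that your own step (a) (exponential decay of $\varphi_k^{(N)}$) relies on; the soft inequality $\lambda_k^{(N)}\le\lambda_k$ alone does not suffice to place $-\lambda_k^{(N)}$ between two consecutive grid points. This is a fixable omission rather than a fatal gap, but it must be addressed for the argument to close.
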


\medskip

\subsection*{Discussion:} Before presenting the outline of the proof, let us give some motivations for studying this operator. 
The parabolic Anderson model (PAM) is the following Cauchy problem:
\begin{align}
\partial_t u(t,x) = \partial_x^2 u(t,x) - \xi(x) u(t,x)\;, \quad u(0,x) = \delta_0(x)\;,\qquad x\in\R\;. \label{PAM}
\end{align}
One expects its solution at time $t$ to be well approximated by the solution of the same equation restricted to a segment $[-L/2,L/2]$, with $L=L(t)$ properly chosen, and endowed with Dirichlet boundary conditions. Thus, the spectral decomposition of $\cH_L$ would yield
\begin{align}
u(t,x) \approx \sum_{k = 1}^{\infty} \exp(- \lambda_k t) \varphi_k(x) \varphi_k(0)\;. \label{eq:solPAM}
\end{align}
The bottom eigenvalues and eigenfunctions of $\cH_L$ in the large $L$ limit should therefore give the asymptotic behavior of the solution \eqref{eq:solPAM} when $t \to \infty$: in particular, the solution should concentrate on a few islands corresponding to the ``supports'' of the first eigenfunctions. We will provide rigorous arguments on this heuristic discussion in a future work.

If the Anderson operator is multiplied by the imaginary unit on the right hand side of \eqref{PAM}, the equation becomes the famous Schr\"odinger equation which is of fundamental importance in quantum mechanics. In this case, one needs to study the whole spectrum (not only its bottom part). This will be the object of a forthcoming work.

\medskip
The discretization of $\cH_L$ and \eqref{PAM} has been investigated in many papers for a general dimension $d$. In this case, the Laplacian is discrete on a grid, for instance $\Z^d$, and the white noise is replaced by i.i.d random variables. The discrete operator is first defined on a finite box $B$, say $B = [0,L]^d \cap \Z^d$, and taken with Dirichlet boundary conditions. We refer to the book of K\"onig \cite{Konig} for a state of the art on the subject. Analogous results to our Theorem \ref{Th:Main} are known for some distributions of potentials, see for instance Biskup and K\"onig~\cite{BisKon}.

When $d=1$, much more precise results are known. Let us introduce the discrete Anderson Hamiltonian 
\begin{align*}
u \mapsto - \Delta_x u + \xi\, u,
\end{align*}
acting on $u \;:\; [0,L] \cap \Z \to \R$ with Dirichlet boundary conditions, and where $\Delta_x$ denotes the discrete Laplacian and $(\xi(x), \;x \in \Z)$ are i.i.d random variables.

If the variance of $\xi(0)$ does not depend on $L$, the eigenfunctions are localized \cite{carmona1987, kunz1980, GMP1977}, and the local statistics of the eigenvalues are Poissonian \cite{minami1996, molcanov1980, Killip2007}. On the other hand, if $\xi\equiv 0$ then the eigenfunctions are spread out and the eigenvalues are deterministic with locally regular spacings (clock-points).

The critical regime appears when the variance of $\xi$ is of order $1/L$. It has been considered by Kritchevski, Valk\'o and Vir\'ag in \cite{KriValVir}. They proved that delocalization holds in this case and that the eigenvalues near a fixed bulk energy $E$ have a point process limit depending on only one parameter $\tau$ (which is a simple function of the variance and energy $E$) they called \emph{Schr$_{\tau}$}. Moreover, they showed that this point process exhibits strong eigenvalue repulsion. Rifkind and Vir\'ag \cite{RifkindVirag} also studied the associated eigenfunctions. They found that the shape of the eigenfunctions near their maxima is given by the exponential of a Brownian motion plus a linear drift, and is independent of the eigenvalue. Note that heuristically, this regime corresponds to the high eigenvalues and eigenfunctions of $\mathcal{H}_L$. 

\medskip

Another famous family of one-dimensional discrete random Schr\"odinger operators is given by the tridiagonal matrices called $\beta$-ensembles. These operators seen from the edge converge, in the large dimensional limit, to the stochastic Airy operator $\mathcal{A}_{\beta}$ formally defined as:
\begin{align*}
\mathcal{A}_{\beta} : u \mapsto -\partial_x^2 u + (x + \frac{2}{\sqrt{\beta}} \xi)\; u\;.
\end{align*}
In the paper \cite{AllezDumazTW}, the analogue of the result of McKean was proved, namely that the first eigenvalue of the Airy operator properly rescaled converges to a Gumbel distribution in the small $\beta$ limit. The density of states was also derived.
Using similar techniques to the present paper, one should be able to prove the convergence of the point process of the first eigenvalues (properly rescaled) to a Poisson point process of intensity $e^x dx$. In the bulk, the eigenvalues of the $\beta$-ensembles converge towards the Sine$_{\beta}$ process \cite{ValkoViragCarousel}. In the small $\beta$ limit, the Sine$_{\beta}$ process was also shown to converge towards a (homogeneous) Poisson point process \cite{AllezDumazSine} using its characterization via coupled diffusions. Closely related discrete models are Jacobi matrices with random decaying potential \cite{KriValVir} and CMV matrices \cite{KillipStoiciu}.

\medskip

The analogue of $\cH_L$ in dimension $d$ greater than $1$ can be considered. However, due to the irregularity of the white noise, the eigenvalue problem becomes singular already in dimension $2$ and one has to renormalize the operator by infinite constants. This has been carried out under periodic b.c.~by Allez and Chouk~\cite{AllezChouk} in dimension $2$, and by Gubinelli, Ugurcan and Zachhuber~\cite{Max2} in dimension $3$ by means of the recently introduced paracontrolled calculus~\cite{Max}; the construction of the operator under Dirichlet b.c.~in dimensions $2$ and $3$ has been performed in~\cite{LabbeAnd} using the theory of regularity structures~\cite{Hairer2014}.

\medskip

Another generalization would be to consider the \emph{multivariate} Anderson Hamiltonian of the form $-\partial_x^2 + W'$, operating on the vector-valued function space $L^2([0,L], \R^r)$ and where $W'$ is the derivative of a matrix valued Brownian motion. This study has been done in the case of the stochastic Airy operator by Bloemendal and Vir\'ag \cite{BloemendalVirag}. The eigenvalues of the multivariate Anderson Hamiltonian are characterized by a family of coupled SDEs studied by Allez and one of the authors in \cite{AllezDumazCubic}.

\subsection*{Main ideas of the proof:} We now present the main arguments of the proof. For the sake of clarity, we restrict ourselves to the case of Dirichlet boundary conditions.

\subsubsection*{The Riccati transform}

For any $a\in \R$, the differential equation $-f'' + f\xi = -a f$ on $[0,L]$ with initial condition $f(0) = 0, f'(0)=1$ admits a unique solution (by classical ODE arguments). The pair $(-a,f)$ is then an eigenvalue/eigenfunction of the operator $\cH_L$ if and only if $f(L) = 0$. A very convenient tool for studying the solutions of this differential equation is the so-called \emph{Riccati transform} that maps $f$ onto $X_a := f' / f$. One can check that $X_a$ starts from $X_a(0) = +\infty$ and solves
\begin{align}\label{Eq:Diff}
dX_a(t) &= (a - X_a(t)^2) dt + dB(t)\;,\qquad t\in [0,L]\;,
\end{align}
where $B(t):=\langle \xi , \un_{[0,t]}\rangle$ is a one-dimensional Brownian motion. Note that, whenever $X_a$ hits $-\infty$ (that is, whenever $f$ vanishes) it is restarted from $+\infty$. The set of eigenvalues is then in one-to-one correspondence with the set of points $-a$ such that $X_a(L) = -\infty$.

The key point of the Riccati transform is that the processes $X_a$'s satisfy the following \textit{monotonicity property}: if $a < a'$ then $X_a(\cdot) \le X_{a'}(\cdot)$ up to the first hitting time of $-\infty$ of $X_a$. As a consequence, for any $k\ge 1$, the r.v.~$-\lambda_k$ is the largest $a\in\R$ such that $X_a$ explodes exactly $k$ times on $[0,L]$: the Riccati transform $\Chi_k = \varphi'_k / \varphi_k$ of the $k$-th eigenfunction $\varphi_k$ is then given by $X_{-\lambda_k}$. (Note that the above discussion relies on deterministic arguments so that the characterization of the $\lambda_k,\varphi_k$'s in terms of the processes $X_a$'s holds true almost surely.)

The study of the eigenvalues/eigenfunctions thus boils down to the study of the collection of coupled processes $X_a$'s. For any given $a\in\R$, the process $X_a$ is an irreversible diffusion that evolves in the potential $V_a$:
\begin{equation}\label{Eq:V}
V_a(x) = \frac{x^3}{3} - ax\;,\quad \mbox{for all } x\in \R\;, \quad a \in \R\;,
\end{equation}
which admits a well whenever $a >0$ (see Figure \ref{PotentialVa}). For any $a\in\R$, this diffusion hits $-\infty$ in finite time almost surely. The aforementioned result of McKean shows that, as $L$ becomes large, the first eigenvalue goes to $-\infty$ at speed $a_L$ (and the same holds for the next $k$ eigenvalues). Therefore, to study the bottom of the spectrum of $\cH_L$ we can focus on diffusions with a large parameter $a >0$.

\smallskip

A typical realization of the diffusion $X_a$ for a fixed large $a>0$ does the following. It comes down from $+\infty$ very quickly and oscillates around $\sqrt a$ (bottom of the well of $V_a$) for a long time. It makes many attempts to get out of the well, and from time to time, it makes an exceptional excursion to $-\sqrt a$, spends a very short time near that point, and either goes back to $\sqrt a$ or explodes to $-\infty$ very quickly.

\begin{figure}[!h]
\centering
\includegraphics[width=7cm]{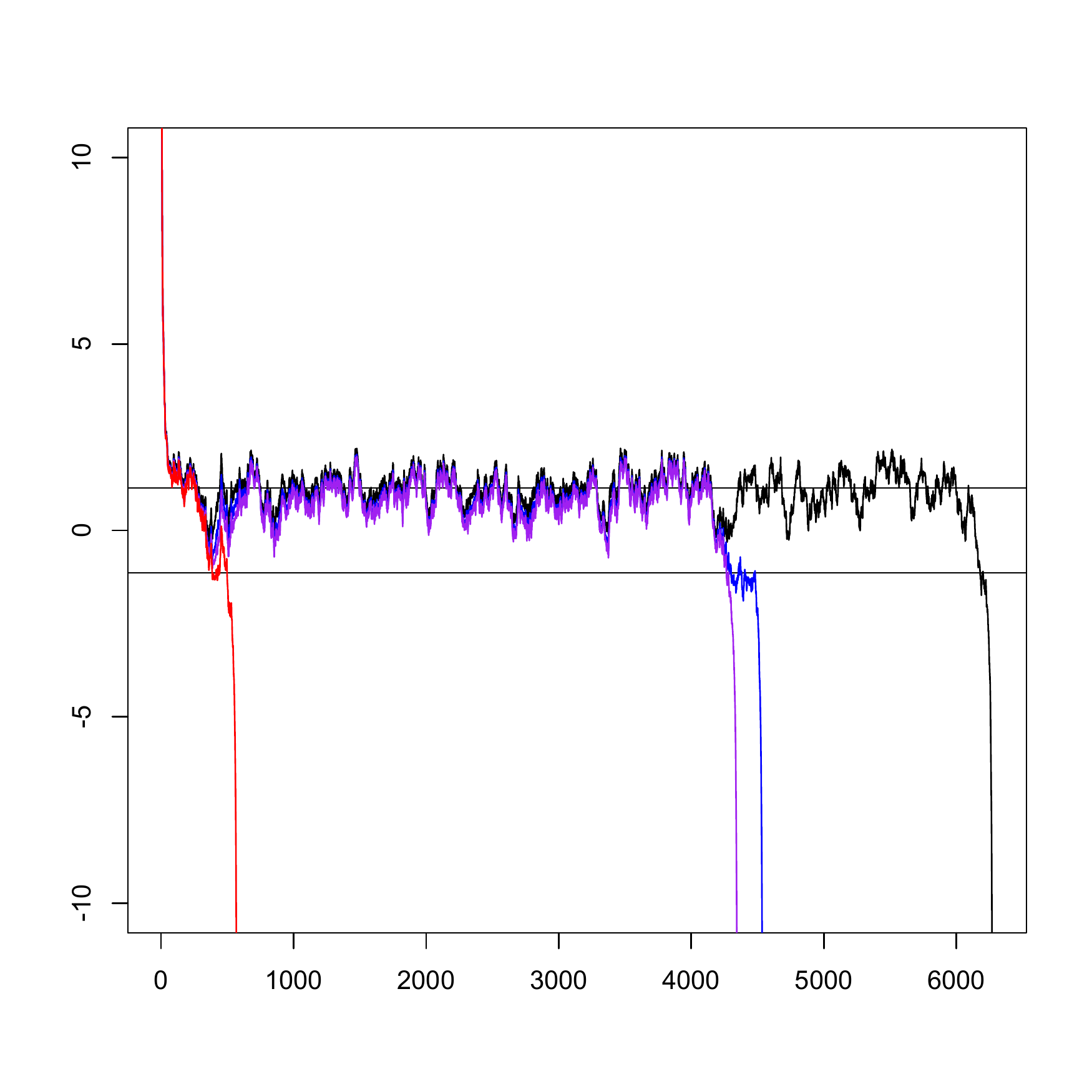}
\caption{(color online) Ordering of the diffusions $X_a$ until their first explosion time for $a=0,55$ (red), $a=0,7$ (purple), $a=0,845$ (blue), $a= 1,3$ (black).}\label{fig:firstexploXa}
\end{figure}

Since the eigenvalues $(\lambda_{k}, k \in \N)$ depend on the realization of the underlying noise $\xi$, the process $\Chi_{k}$ is \textit{not} a diffusion and actually does not look like a typical solution of \eqref{Eq:Diff}. For instance, we will see later that $\Chi_k$ spends a macroscopic time near the unstable point $-\sqrt{|\lambda_k|}$, see Figures \ref{figCompdiff} and \ref{timereversal}. Nevertheless, we will see that it is possible to extract information out of the realizations of \emph{typical} diffusions $X_a$'s.



\subsubsection*{Convergence of the eigenvalues} Let us present the main steps of the proof of our theorems. We start with the convergence of the point process of the rescaled and recentered eigenvalues 
\begin{align*}
\cQ_L := \sum_{k \geq 1} \delta_{4\sqrt{a_L}\,(\lambda_k + a_L)}
\end{align*}
of Theorem \ref{Th:Main}. While tightness is easy to get, we identify the limit by showing that for all fixed $r_1 < \ldots < r_k$ the random variable $\big(\cQ_L((r_1,r_2]),\ldots,\cQ_L((r_{k-1},r_k])\big)$ converges to a vector of independent Poisson r.v. with parameters $e^{r_i}-e^{r_{i-1}}$. 

Observe that $\cQ_L((r_{i-1},r_i]) = \#X_{a_i} - \#X_{a_{i-1}}$ where $a_i = \sqrt a_L - r_i/(4\sqrt a_L)$. We subdivide $(0,L]$ into the $2^n$ disjoint subintervals $(t_j^n,t_{j+1}^n]$ with $t_j^n=j2^{-n}L$, and we introduce the diffusion $X_{a_i}^{j}$ that starts from $+\infty$ at time $t_j^n$ and follows the SDE \eqref{Eq:Diff} with parameter $a_i$. Then, we show that with large probability for all $i$ and $j$:
\begin{itemize}
\item $X_{a_i}$ explodes at most one time on $(t_j^n,t_{j+1}^n]$,
\item $X_{a_i}$ explodes on $(t_j^n,t_{j+1}^n]$ if and only if $X_{a_i}^{j}$ explodes on $(t_j^n,t_{j+1}^n]$.
\end{itemize}
This holds because the potential $V_{a_i}$ possesses a large well and the diffusion goes down from $+\infty$ into the well in a very short time: its starting point quickly becomes irrelevant.

Therefore, it suffices to deal with the diffusions $X_{a_i}^{j}$ restricted to $(t_j^n,t_{j+1}^n]$: these diffusions are independent for different values of $j$, and are monotone in $i$, so that a very simple computation, see Lemma \ref{Lemma:TildePoisson}, allows to get the aforementioned convergence.

An important remark is that we only need the \emph{monotonicity} of the coupled diffusions for this part of the proof: we use no finer information about this coupling. The arguments in the proof could be applied to other situations where the number of explosions of coupled diffusions counts the eigenvalues (e.g. Airy$_{\beta}$ \cite{RamRidVir}, Sine$_{\beta}$ \cite{ValkoViragCarousel} or the Stochastic Bessel Operator \cite{RamirezRider}).

\subsubsection*{The first eigenfunction} Let us now concentrate on the asymptotic behavior of the first eigenfunction. According to the convergence \eqref{CVfirsteigenvalue}, we consider a discretization $M_L$ of a small neighborhood of $a_L$ of mesh $\eps/\sqrt{a_L}$ for a fixed small $\eps >0$ and we will use precise estimates on the typical behavior of $X_a$, simultaneously for all $a \in M_L$. With large probability, $-\lambda_1$ falls within this neighborhood of $a_L$ and there exist two points $a,a'$ in the discretization such that $-\lambda_2 < a \le -\lambda_1 < a'$. By definition, $X_{a}$ explodes one time and $X_{a'}$ does not explode on the time interval $[0,L]$.

\smallskip

From the monotonicity property, we have $X_a \le \Chi_1 < X_{a'}$ up to the first explosion time of $X_a$. With large probability, $X_a$ and $X_{a'}$ are ``typical'': in particular, they remain close to $\sqrt a$ and $\sqrt{a'}$ respectively most of the time. If the mesh of the discretization $M_L$ has been chosen small enough, we deduce that $\Chi_1$ is squeezed in between those two typical diffusions that are close to each other with large probability, up to the first explosion time of $X_a$. However, this does not provide any good control on $\Chi_1$ after this explosion time. In particular, it does not say whether, for instance, $\Chi_1$ remains around $-\sqrt{|\lambda_1|}$, or goes back to $\sqrt{|\lambda_1|}$. To push the analysis further, we rely on a symmetry argument.

\begin{figure}[!h]
\begin{minipage}{7cm}
\includegraphics[width=7cm]{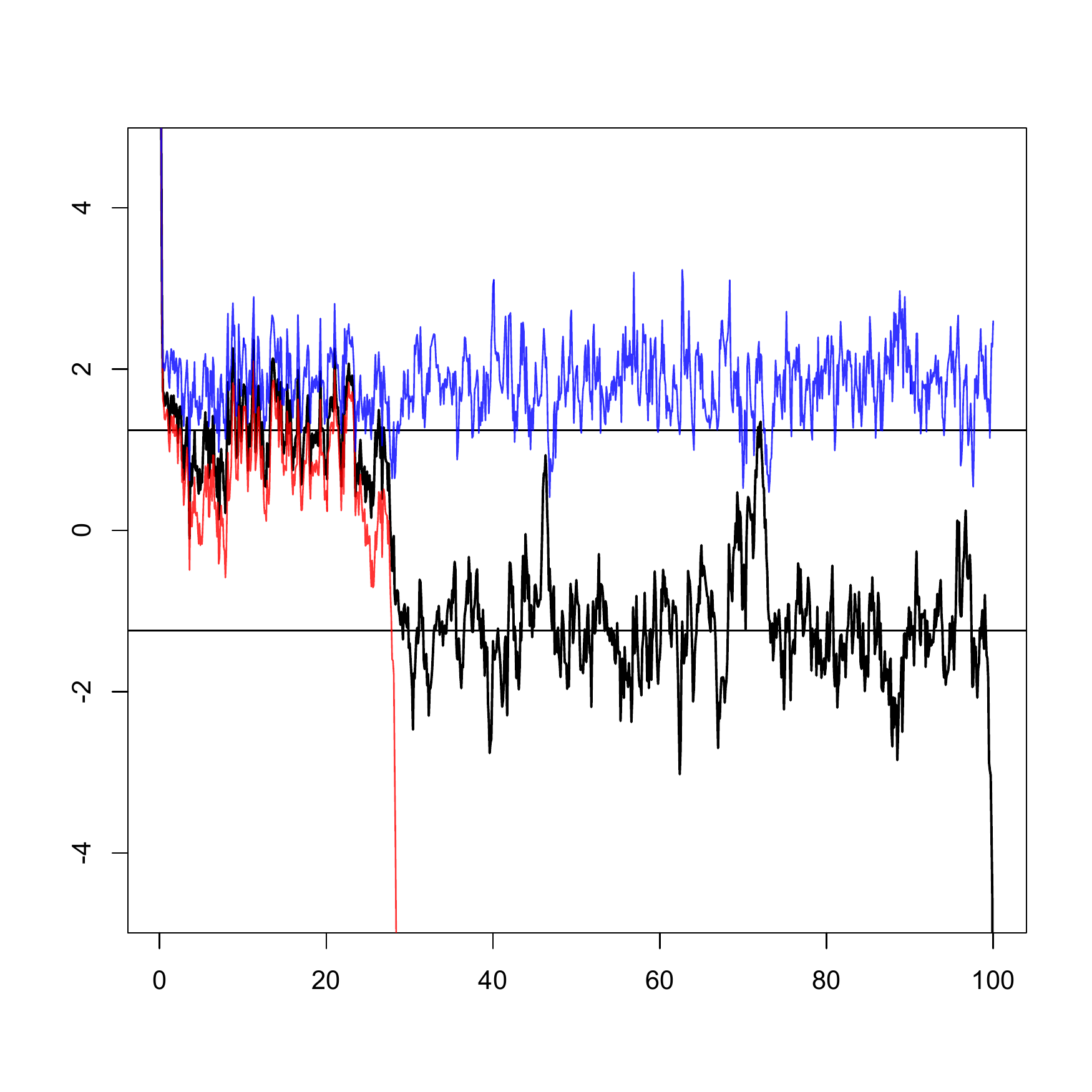}
\caption{(color online) Simulation of $\Chi_1$ (in black) and of two diffusions $X_a$ (in red) and $X_{a'}$ (in blue) with $a < -\lambda_1 < a'$.}\label{figCompdiff}
\end{minipage}\hfill
\begin{minipage}{7cm}
\includegraphics[width=7cm]{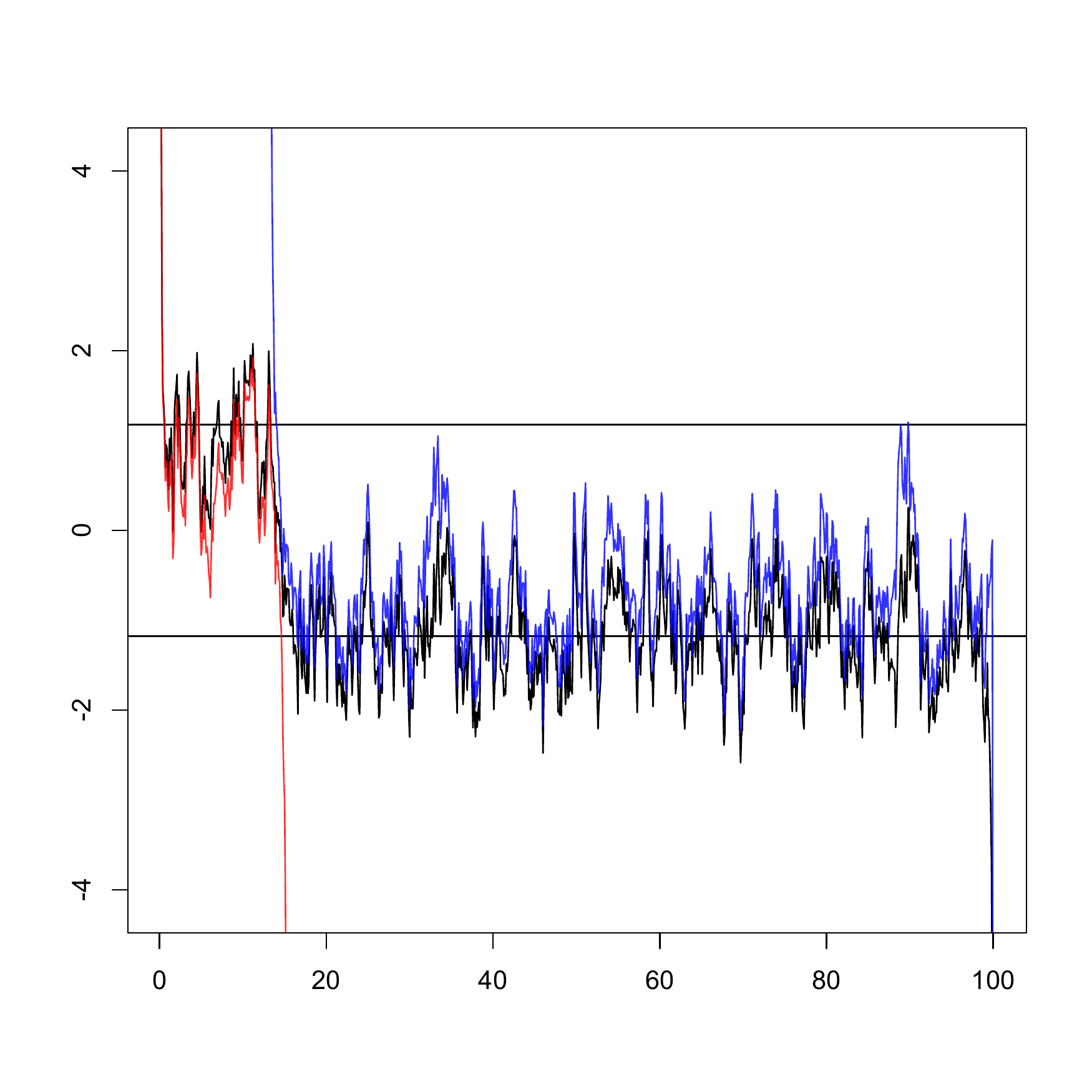}
\caption{(color online) Simulation of $\Chi_1$ (in black) with the two diffusions $X_a$ (in red) and $\hat X_a$ (in blue) until their first explosion, with $a < -\lambda_1$.
}\label{timereversal}
\end{minipage}
\end{figure}


The Riccati transform can be applied not only forward in time from time $0$ but also \emph{backward in time} from time $L$. This yields another set of diffusions $(\hat{X}_a, a\in \R)$, called the \emph{time-reversed diffusions}, which is equal in law to $(-X_a, a\in \R)$. Namely, we have:
$$d\hat{X}_a(t) = V'_a (\hat{X}_a(t)) dt + d\hat{B}(t) \;,$$
starting from $\hat{X}_a(0) = -\infty$ and where $\hat{B}(t) := B(L-t) - B(L)$. A coupling argument shows that the number of explosions of $X_a$ and $\hat{X}_a$ coincide. This allows to say that $\Chi_1$, run backward in time from time $L$, is squeezed in between $\hat{X}_a$ and $\hat{X}_{a'}$ up to the explosion time of $\hat{X}_a$. Then, it remains to show that the explosion times of $X_a$ and $\hat{X}_a$ are very close so that the intervals on which we bound the first eigenfunction overlap (see Figure \ref{timereversal}). 

\smallskip

Since $X_a$ and $X_{a'}$ spend most of their time near $\sqrt{a} \simeq \sqrt{a'} \simeq \sqrt{a_L}$, we deduce by inversing the Riccati transform, that the eigenfunction grows exponentially fast at this rate up to the explosion time of $X_a$. Then, it follows the time-reversed diffusions which spend most of their time near $-\sqrt{a_L}$ so that the first eigenfunction decays exponentially fast from the explosion time until time $L$. This proves the localization of the first eigenfunction near the explosion time of $X_a$.

\medskip

We push this analysis further and obtain a much more precise result on the localization, namely the \emph{shape of the first eigenfunction} of Theorem \ref{Th:Shape}. The explosion of $X_a$ occurs right after an exceptional descent from $+\sqrt{a}$ to $-\sqrt{a}$ whose trajectory is concentrated around the deterministic function $t\mapsto -\sqrt{a} \tanh(\sqrt{a}\,t)$ (appropriately shifted in time). This is a simple consequence of the large deviations principle satisfied by the diffusion, that ensures that the trajectory on this exceptional descent is roughly given by the solution of the ODE
\begin{align*}
dx(t) = V'_a(x(t)) dt\;.
\end{align*}
We will obtain a precise statement about this fact thanks to the Girsanov Theorem. We then show that $\Chi_1$ remains very close to $X_a$ upon this deterministic descent. A careful argument, involving the time-reversed diffusion, shows that the maximum of $\varphi_1$ is located at one of the zeros of $\Chi_1$ achieved during this deterministic descent: since all these zeros lie in a tiny region, we get a precise enough control on the location of the maximum. Applying the inverse of the Riccati transform to $-\sqrt{a} \tanh(\sqrt{a}\,t)$, one gets the deterministic shape $h(t)$ of the statement of the theorem.

\medskip

Up to this point, we have not proved yet that the localization center is asymptotically uniform on $[0,L]$. To that end, we present a coupling argument that relates the localization centers of the first eigenfunctions of $\cH_L$ and of the operator $\tilde{\cH}_L$, the latter being obtained upon replacing the white noise by its image through a Lebesgue preserving bijection. This coupling argument eventually shows that the law of these centers is invariant under (a large enough class of) Lebesgue preserving bijections so that it is necessarily uniform.

\subsubsection*{The next eigenfunctions} For the $k$-th eigenfunction with $k>1$, the situation is slightly more involved though the strategy is the same. We show that with large probability, $-\lambda_k$ falls in the aforementioned neighborhood of $a_L$ and that there exist $a,a'$ in the discretization of that neighborhood such that $-\lambda_{k+1} < a \le -\lambda_k < a' \le -\lambda_{k-1}$. The typical diffusions $X_a$ and $X_{a'}$ explode $k$ and $k-1$ times respectively. We show that they remain close to each other up to the \textit{additional} explosion time of $X_a$, thus providing a good control on $\Chi_k$ up to this time. Then, we rely on the time-reversed diffusions to complete the picture, as in the case of the first eigenfunction. We refer to Figure \ref{fig:1st4eigenfunctions} for an illustration.

\begin{figure}[!h]
\centering
\includegraphics[width = 7cm]{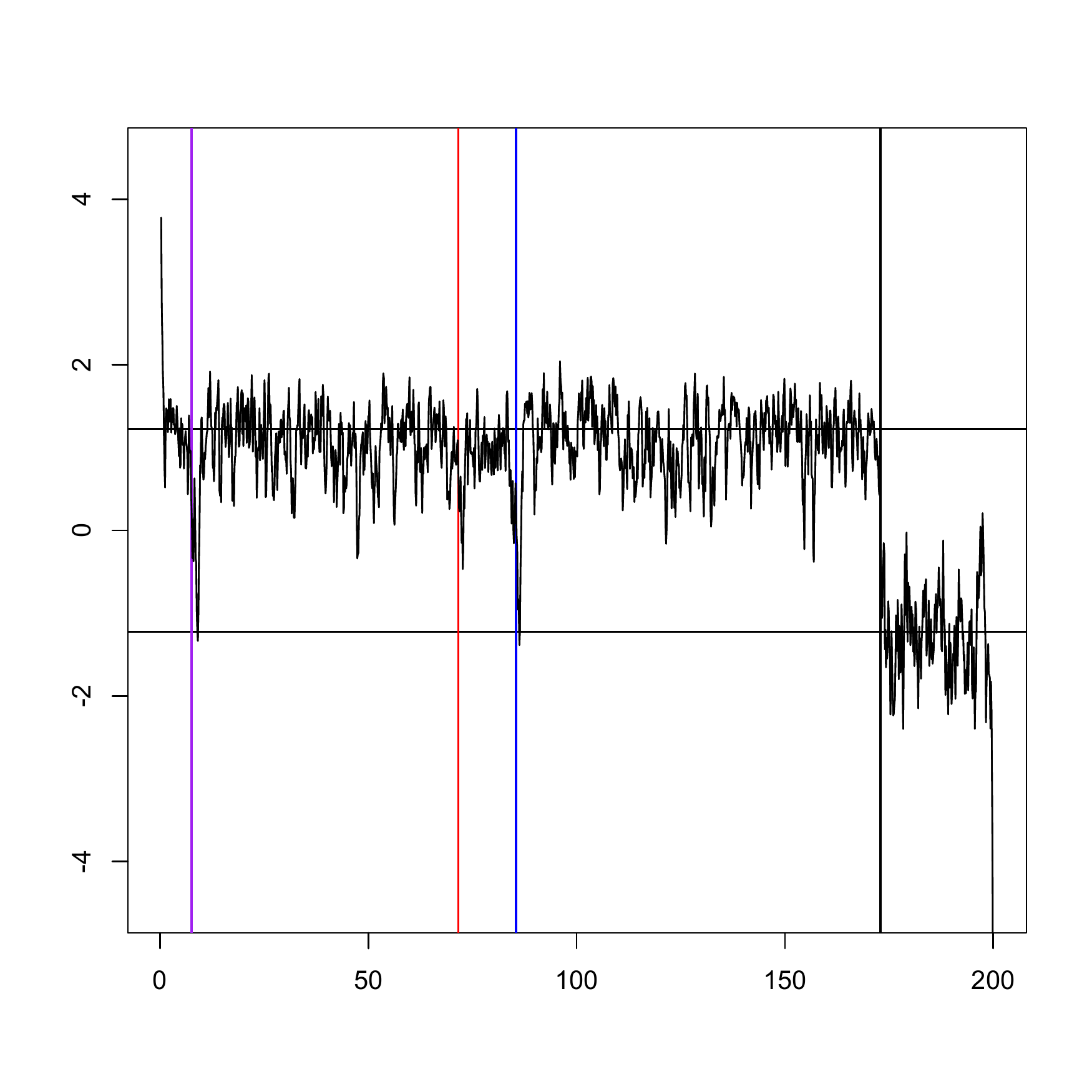} \\
\includegraphics[width = 5cm]{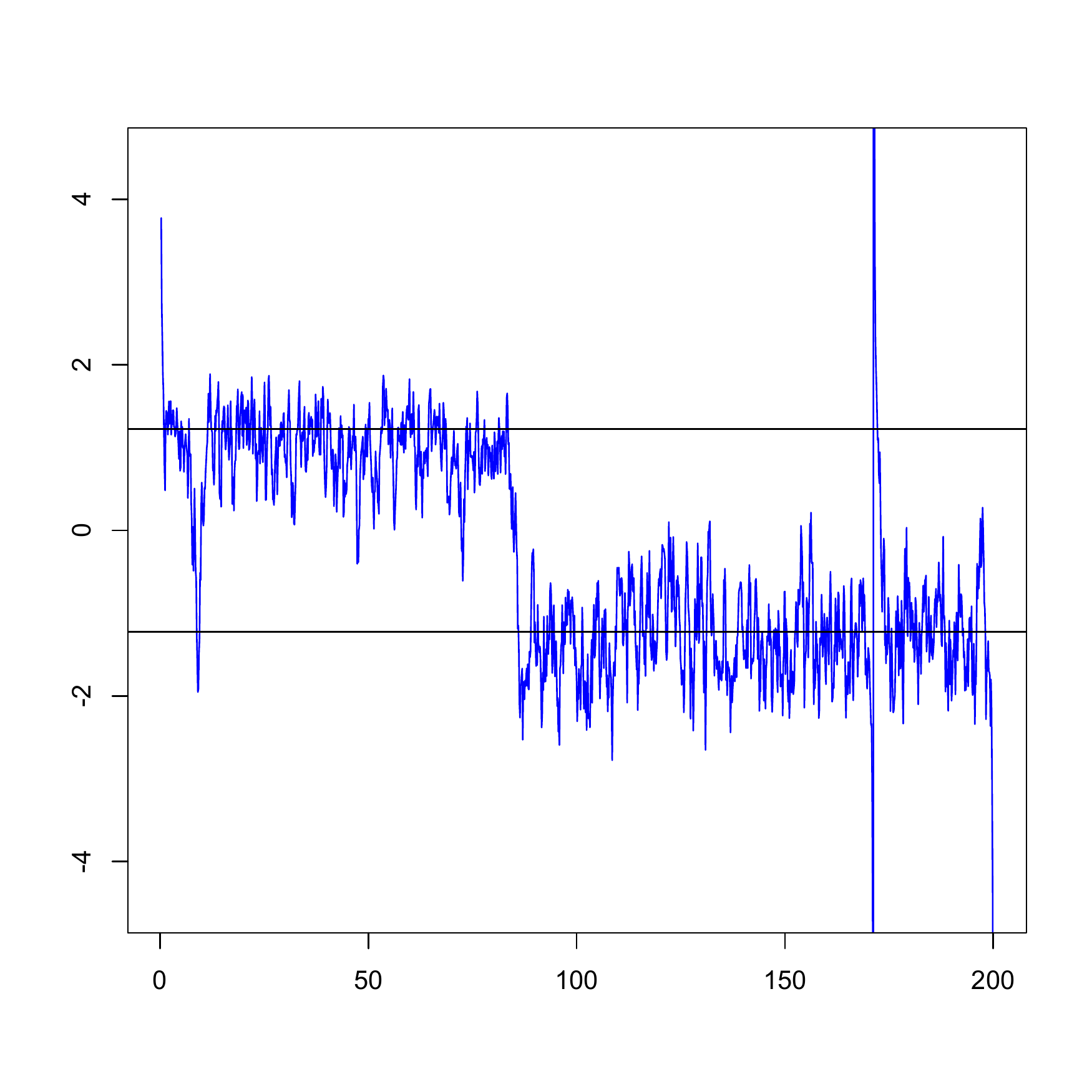}
\includegraphics[width = 5cm]{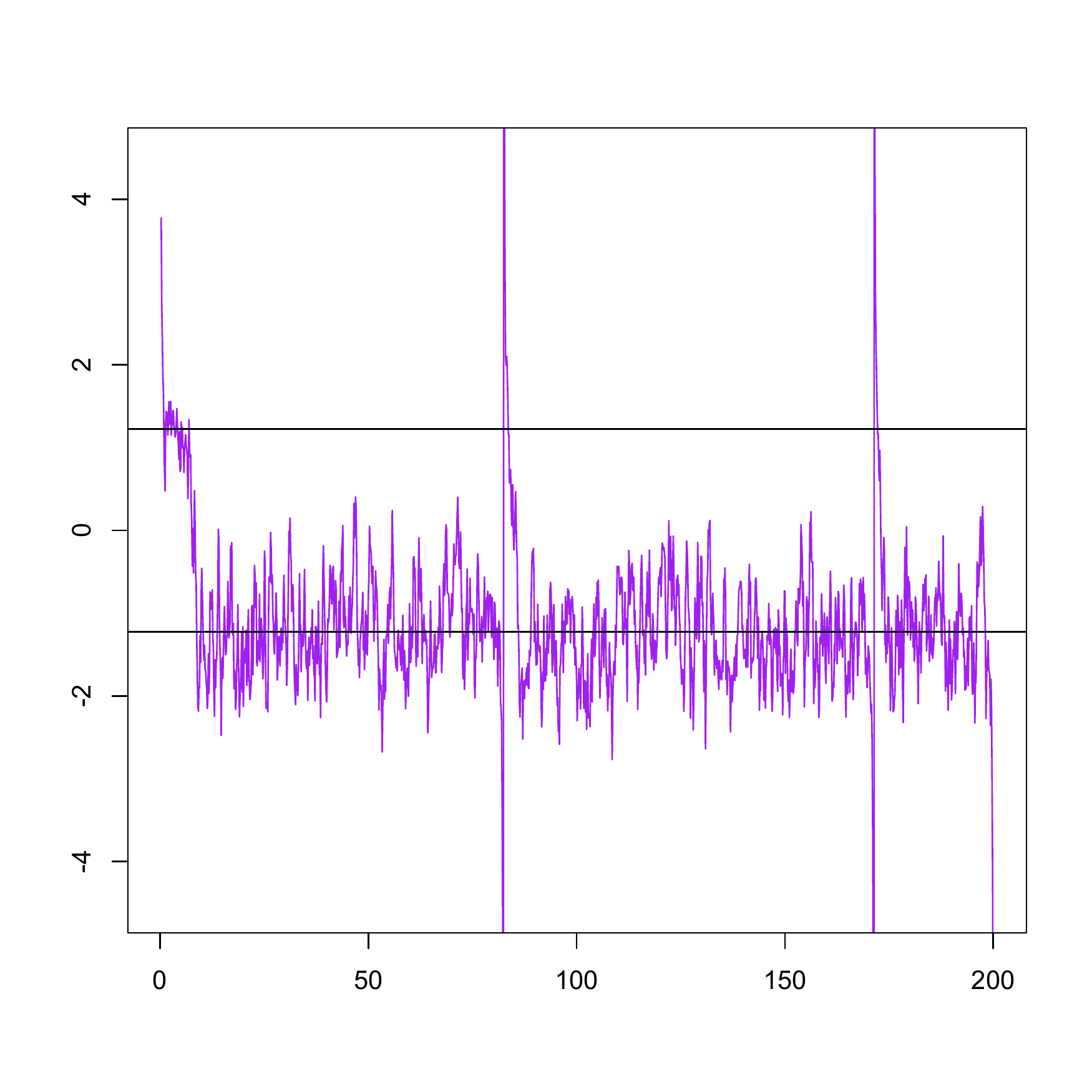}
\includegraphics[width = 5cm]{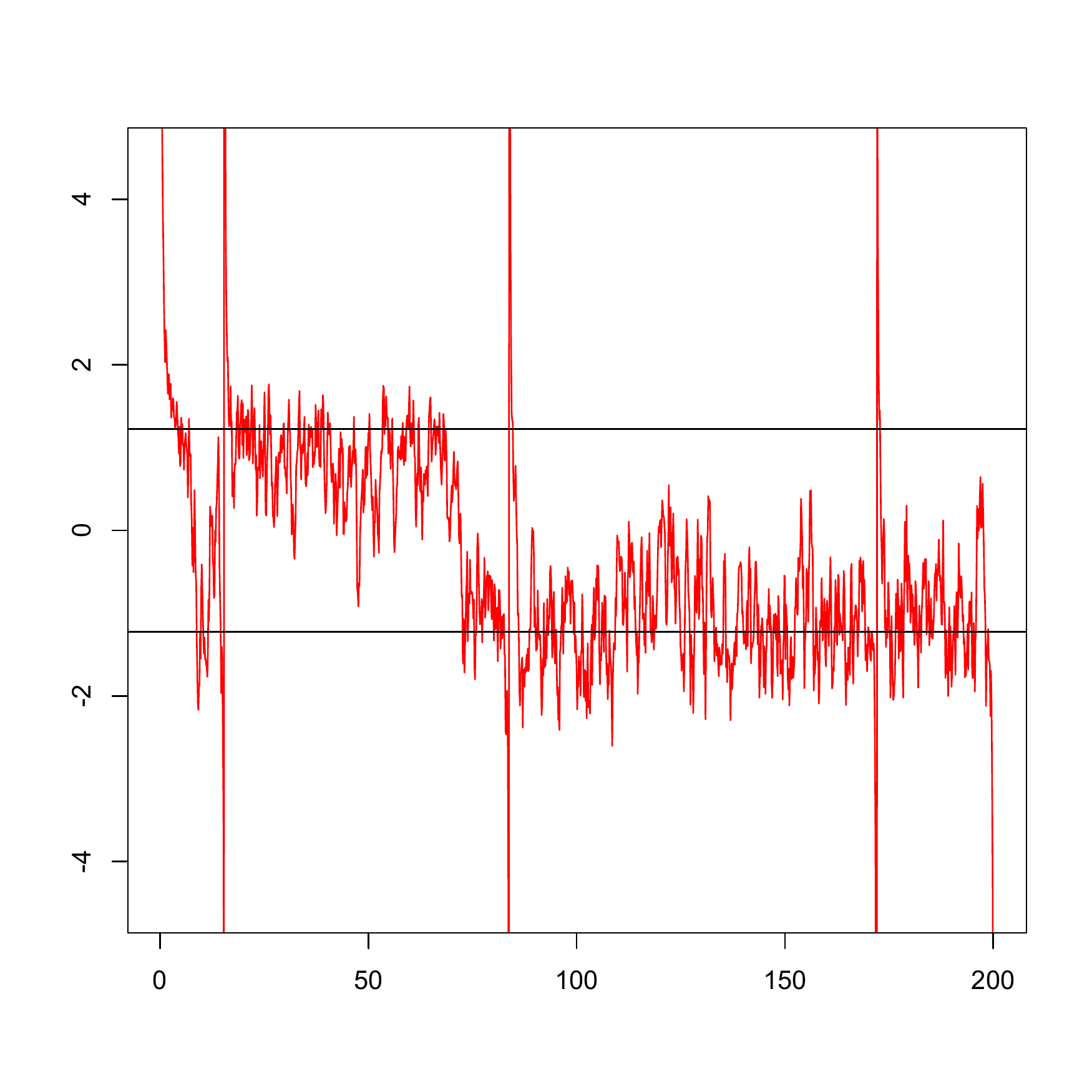}
\caption{Simulation of the Riccati transform of the first $4$ eigenfunctions (successively black, blue, purple, red). On the first picture, the position of the maxima of the first four eigenfunctions are represented with their color. Notice that the number of explosions equals the numbering of the eigenfunction. The explosions occur near the maxima of the previous eigenfunctions.}\label{fig:1st4eigenfunctions}
\end{figure}

\subsection*{Organization of the paper: } In Section \ref{Sec:Poisson}, we collect some first estimates on the diffusion $X_a$ and we prove the convergence of the point process of eigenvalues towards the Poisson point process with intensity $e^x dx$. The proofs of the first estimates are postponed to Section \ref{Sec:Diff}. In Section \ref{Sec:ProofsTh}, we prove Theorems \ref{Th:Main}, \ref{Th:Shape} and \ref{Th:LocalMaxZeros}. The proofs are based on more precise estimates on the diffusion $X_a$ on its exceptional excursions to $-\sqrt{a}$, which are obtained in Section \ref{Sec:Crossing}. Finally, in Section \ref{Sec:Neumann} we prove Theorem \ref{Th:Neumann} about Neumann boundary conditions.

\subsection*{Notations} The first hitting time by a continuous function $f$ of a point $\ell\in\R \cup \{- \infty\}$ is defined as follows
$$ \tau_\ell(f) := \inf\{t\geq 0: f(t) = \ell\}\;,$$
Moreover, for any integrable function $f$ we define its average on $[s,t]$ by setting
$$ \fint_s^t f(x) dx := \frac1{t-s} \int_s^t f(x) dx\;.$$
When $s=t$, we set this quantity to $f(s)$.

\subsection*{Acknowledgements} We thank the anonymous referees for their careful reading of the paper and their helpful remarks. LD thanks Romain Allez and Benedek Valk\'o for useful discussions. The work of LD is supported by the project MALIN  ANR-16-CE93-0003. CL is grateful to Julien Reygner for several useful comments on a preliminary version of this paper. The work of CL is supported by the project SINGULAR ANR-16-CE40-0020-01.

\section{Convergence of the eigenvalues towards the Poisson point process}\label{Sec:Poisson}

The goal of this section is to prove that the point process $\cQ_L := (4 \sqrt{a_L}(\lambda_k + a_L))$ of the rescaled eigenvalues converges in law to a Poisson point process of intensity $e^x dx$. This is the first step in the proof of Theorem \ref{Th:Main}. After some technical preliminaries on the well-definiteness of our diffusions, we collect some first estimates on the diffusion $X_a$, and then, we prove the convergence.

\subsection{Technical preliminaries}

For any given continuous function $b$ starting from $0$, we consider the ODE:
\begin{equation*}
\begin{cases}
x(t) =&- \int_0^t V_a'(x(s)) ds + b(t) \;,\quad t>0\;,\\
x(0) =& +\infty\;.
\end{cases}
\end{equation*}
This ODE admits a unique solution that leaves $+\infty$ at time $0+$, and restarts from $+\infty$ whenever it hits $-\infty$. Let $B$ be a standard Brownian motion on the probability space $(\Omega,\cF,\bP)$. Without loss of generality, we can assume that $B(\omega)$ is continuous for all $\omega\in\Omega$. We apply deterministically the solution map associated to the ODE above for every realization of the Brownian motion. This yields a well-defined process $(X_a(t),t\ge 0, a\in \R)$.
\begin{remark}
Actually, the process can be initialized not only at time $0$ but also at some arbitrary time $t_0\ge 0$. The above construction still applies, and yields a family $(X_a^{t_0}(t), t\geq t_0, a\in \R, t_0\ge 0)$ with $X_a^{t_0}(t_0) = +\infty$.
\end{remark}
The monotonicity of the solution map associated to the ODE yields the following property: if $a<a'$ and $X_a(t) \le X_{a'}(t)$ then $X_a(t+\cdot) \le X_{a'}(t+\cdot)$ up to the next explosion time of $X_a$.

\subsection{First estimates on the diffusion $X_a$}

We now study the process just defined above:
\begin{align*}
dX_a(t) = (a - X_a(t)^2) dt + dB(t), \quad X_a(0) = +\infty.
\end{align*}

This diffusion evolves in the potential $V_a(x) = - a x + x^3/3$. It blows-up to $-\infty$ in finite time a.s. and immediately restarts from $+\infty$. From now on, we let $0=\zeta_a(0) < \zeta_a(1) < \zeta_a(2) < \ldots$ be the successive explosion times of the diffusion $X_a$. When $a >0$, the potential $V_a$ has a well centered at $\sqrt{a}$. The diffusion has to cross the barrier from $\sqrt{a}$ to $-\sqrt{a}$, which is of size $\Delta V_a = (4/3) a^{3/2}$, in order to explode to $-\infty$  (see Figure \ref{PotentialVa}).

\begin{figure}[!h]
\includegraphics[width = 7cm]{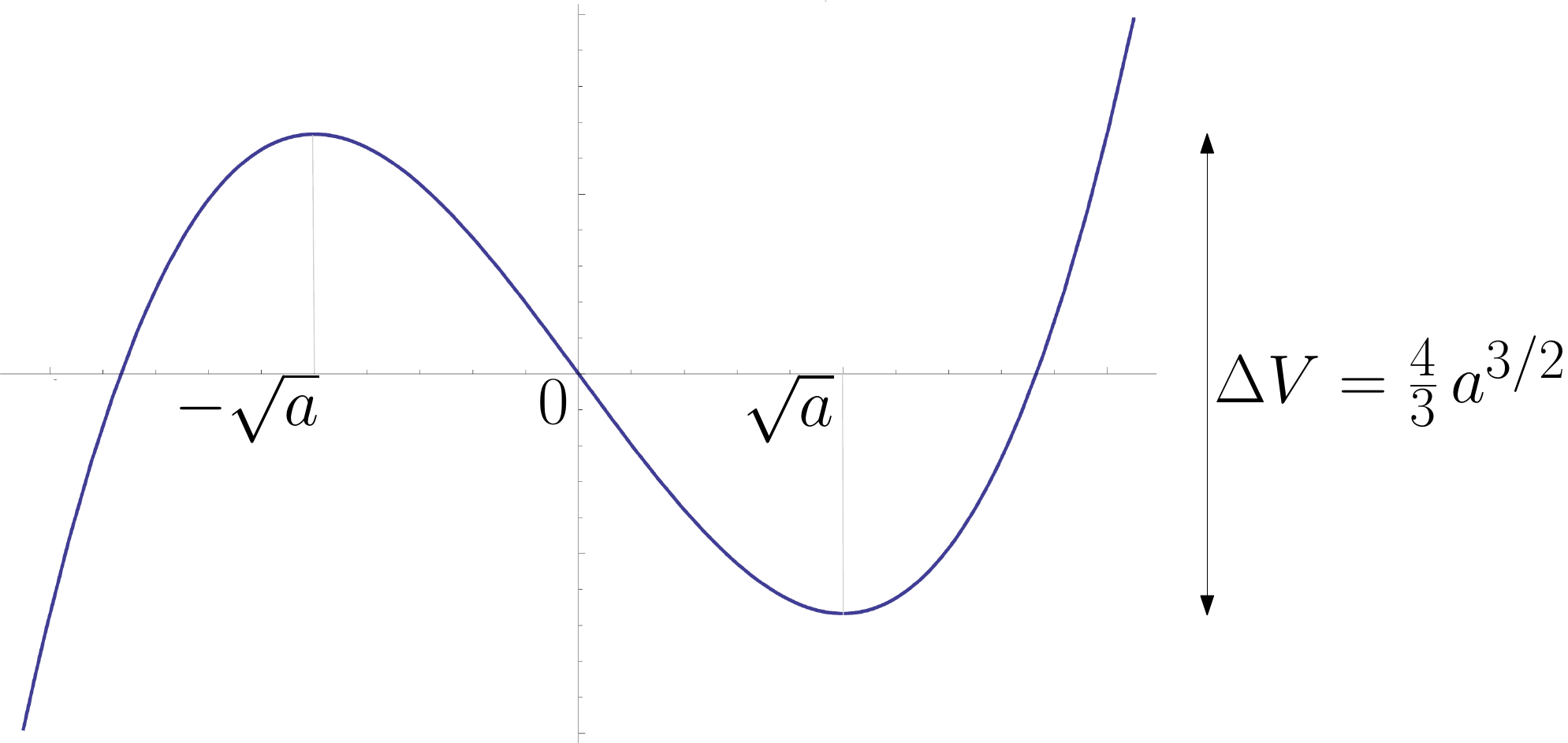}
\caption{The potential $V_a$ when $a >0$.}\label{PotentialVa}
\end{figure}

The expectation of the first explosion time $\zeta_a(1)$ admits the following expression, see~\cite{Fukushima}:
\begin{align}
m(a):= \E[\zeta_a(1)] &= \sqrt{2\pi} \int_{0}^{+\infty} \frac{dv}{\sqrt{v}} \, \exp \left( 2 a v - \frac{1}{6}\, v^3\right) \label{Eq:ma} \\
&= \frac{\pi}{\sqrt{a}}\exp(\frac{8}{3}a^{3/2})(1+o(1)), \quad \mbox{when }a \to +\infty\;. \label{asymp_ma}
\end{align} 
We see that the explosion time is of order $\exp(2\Delta V_a)$, which is in line with Kramers' law.
\begin{remark}
Recall that the number of explosions of $X_a$ in $[0,L]$ is equal to the number of eigenvalues below $-a$. By the law of large numbers, it readily implies that the integrated density of states $N(\lambda)$ mentioned in the introduction is precisely given by 
\begin{align}\label{eqDensityOfStates}
N(\lambda) = \frac{1}{m(-\lambda)} \quad \mbox{for all }\lambda \in \R \,.
\end{align}
\end{remark}
The following result is due to McKean, we refer to \cite{McKean, AllezDumazTW} for a proof.
\begin{proposition}\label{propo:blowupexpo}
As $a\to\infty$, the r.v.~$\zeta_a(1)/m(a)$ converges in distribution to an exponential law of parameter $1$.
\end{proposition}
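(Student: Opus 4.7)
The plan is to exploit the regenerative structure of $X_a$ inside the well of $V_a$ and show that $\zeta_a(1)$, on the scale $m(a)$, reduces to a geometric number of asymptotically short returning excursions, whose rescaled count converges to an exponential variable.

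First, I would establish that the descent from $+\infty$ to a reference level $r := \sqrt{a}$ at the bottom of the well takes a time that is negligible compared to $m(a)$. Above $\sqrt{a}$, the drift $a - X_a^2$ is strongly negative, and a comparison with the deterministic ODE $\dot{x} = a - x^2$ (the Brownian correction being $O(a^{-1/4})$ over the relevant time interval) gives $\tau_r(X_a) = O(1/\sqrt{a})$ in probability. Since $m(a) \sim (\pi/\sqrt{a})\exp((8/3) a^{3/2})$, we may thus replace $\zeta_a(1)$ by $\zeta_a(1) - \tau_r(X_a)$ without affecting the limit on the scale $m(a)$, and work with the diffusion started at $r$.

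Second, let $\sigma$ denote the first return time of $X_a$ to $r$ after leaving it, and let $p_a$ be the probability that such an excursion from $r$ explodes to $-\infty$ before returning. The strong Markov property applied at successive returns yields, starting from $r$,
\[
\zeta_a(1) \;\overset{d}{=}\; \sum_{i=1}^{N_a-1} \sigma_i^{\mathrm{ret}} + \sigma^{\mathrm{esc}},
\]
where $N_a$ is geometric of parameter $p_a$, the $\sigma_i^{\mathrm{ret}}$ are i.i.d.\ copies of $\sigma$ conditioned on returning to $r$, and $\sigma^{\mathrm{esc}}$ is the final explosive excursion. The scale function $s'(x) = e^{2V_a(x)}$ and speed measure $e^{-2V_a(x)}\,dx$ of the diffusion give $\E[\sigma_1^{\mathrm{ret}}] = O(1/\sqrt{a})$ and $\E_r[\zeta_a(1)] = m(a)(1+o(1))$, which forces $p_a \to 0$ with $p_a \sim \E[\sigma_1^{\mathrm{ret}}]/m(a)$. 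The classical convergence $p_a N_a \to \mathrm{Exp}(1)$ combined with a weak law of large numbers for $\sum_{i=1}^{N_a-1}\sigma_i^{\mathrm{ret}}$ (ensured by a bound $\E[(\sigma_1^{\mathrm{ret}})^2] = O(\E[\sigma_1^{\mathrm{ret}}]^2)$) then yields $\zeta_a(1)/m(a) \to \mathrm{Exp}(1)$ in distribution.

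The main technical obstacle is the sharp Laplace-type asymptotics of $p_a$, $\E[\sigma_1^{\mathrm{ret}}]$ and $\E[(\sigma_1^{\mathrm{ret}})^2]$. Each of these quantities admits an explicit double-integral expression involving $e^{\pm 2 V_a}$; their leading behaviors are governed by the dominant contribution coming from the well bottom $x = \sqrt{a}$ (for integrals of $e^{-2V_a}$) and from the saddle $x = -\sqrt{a}$ (for integrals of $e^{2V_a}$). The resulting exponent $(8/3)a^{3/2} = 2\bigl(V_a(-\sqrt{a}) - V_a(\sqrt{a})\bigr)$ in $m(a)$ is in precise agreement with Kramers' law, providing a consistency check.
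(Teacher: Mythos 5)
The paper does not actually prove this proposition: it is quoted from McKean and from \cite{AllezDumazTW}, where the argument runs through a fixed-point equation satisfied by the Laplace transform of $\zeta_a(1)$ (the same route the authors invoke again in Section \ref{Sec:Neumann}'s preparation, Subsection \ref{subsec:proofexcursions1}, when they adapt \cite[Thm 3.3]{AllezDumazTW} to the hitting time of $-\sqrt a$). Your renewal/geometric-sum argument is therefore a genuinely different, and perfectly classical, route to Kramers' law. It has the merit of exposing the metastable structure and of isolating the only two inputs really needed — the escape probability per attempt and the length of a typical attempt — and it is the same philosophy the paper itself uses later when it decomposes the path into ``bridges'' in the proof of Proposition \ref{Prop:BoundsCross1}.

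There is, however, one genuine gap as written: for a one-dimensional diffusion the point $r=\sqrt a$ is regular for itself, so ``the first return time of $X_a$ to $r$ after leaving it'' is almost surely $0$; the variables $N_a$ and $\sigma_i^{\mathrm{ret}}$ do not exist as you define them. You must either work with Itô excursion theory (excursions indexed by local time at $r$, explosion occurring at an exponential amount of local time) or, more elementarily, use a two-threshold cycle: start a cycle at $\sqrt a-\delta$ and end it when the path has either hit $-\sqrt a$ or has hit $\sqrt a$ and returned to $\sqrt a-\delta$. This is exactly the bridge decomposition of Subsection \ref{subsec:Boundsuptotheta}, and with it your scheme goes through. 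Three smaller points then need adjusting. First, $\E[\sigma^{\mathrm{ret}}]$ is no longer $O(1/\sqrt a)$ (its order depends on $\delta$), but this is harmless since only the ratio $p_a\sim\E[\sigma^{\mathrm{ret}}]/m(a)$ enters the limit. Second, the bound $\E[(\sigma^{\mathrm{ret}})^2]=O(\E[\sigma^{\mathrm{ret}}]^2)$ is asserted rather than proved; it does hold (iterate the Markov property to get a geometric tail for $\sigma^{\mathrm{ret}}$ on the scale of $\sup_x\E_x[\sigma^{\mathrm{ret}}]$), and in any case the weak law only needs the much weaker $\E[(\sigma^{\mathrm{ret}})^2]=o(\E[\sigma^{\mathrm{ret}}]\,m(a))$; one must also check that the final escaping excursion satisfies $\sigma^{\mathrm{esc}}=o(m(a))$ in probability. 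Third, the descent time to a neighbourhood of $\sqrt a$ is of order $\ln a/\sqrt a$, not $1/\sqrt a$ (cf.\ Lemma \ref{Lemma:CDI}) — still negligible against $m(a)$, so the conclusion is unaffected.
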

\noindent As a direct corollary, we get the following result.
\begin{proposition}\label{Prop:PoissonExplo}
The point process of the explosion times of $X_a$ rescaled by $m(a)$ converges in law, for the vague topology, to a Poisson point process on $\R_+$ with intensity $1$.
\end{proposition}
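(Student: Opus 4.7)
The plan is to combine the single-explosion-time convergence of Proposition \ref{propo:blowupexpo} with the \emph{renewal structure} of the explosion process. The key observation is that after every blow-up, the diffusion $X_a$ is restarted from the \emph{deterministic} state $+\infty$, so by the strong Markov property (applied at the stopping times $\zeta_a(k)$, using the regular entrance character of $+\infty$ for the SDE \eqref{Eq:Diff}) the inter-explosion times
\[
\tau_a^{(k)} := \zeta_a(k) - \zeta_a(k-1), \qquad k\ge 1,\quad \zeta_a(0):=0,
\]
form an i.i.d.\ sequence, each distributed as $\zeta_a(1)$.

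Next, by Proposition \ref{propo:blowupexpo} each $\tau_a^{(k)}/m(a)$ converges in law to $\mathrm{Exp}(1)$ as $a\to\infty$. Independence of the $\tau_a^{(k)}$'s then gives, for every fixed $K\ge 1$, the joint convergence
\[
\bigl(\tau_a^{(1)}/m(a),\dots,\tau_a^{(K)}/m(a)\bigr) \;\xrightarrow[a\to\infty]{\mathrm{law}}\; (E_1,\dots,E_K),
\]
with $E_j$ i.i.d.\ $\mathrm{Exp}(1)$ (for instance via characteristic functions, using a product argument). Taking partial sums and applying the continuous mapping theorem yields
\[
\bigl(\zeta_a(1)/m(a),\dots,\zeta_a(K)/m(a)\bigr)\;\xrightarrow[a\to\infty]{\mathrm{law}}\;(T_1,\dots,T_K),
\]
where $T_k:=E_1+\cdots+E_k$ are precisely the atoms of a unit-intensity Poisson point process on $\R_+$.

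Finally, it remains to upgrade this finite-dimensional convergence to vague convergence of the associated random measures. Fix $T>0$. Because the $T_k$ grow linearly ($T_k/k\to 1$ a.s.) and $\tau_a^{(k)}/m(a)$ converges to $\mathrm{Exp}(1)$, the number of rescaled explosion points that fall in $[0,T]$ is uniformly tight in $a$; more precisely, for every $\eta>0$ one can pick $K$ large enough so that $\P(\zeta_a(K)/m(a) \le T) < \eta$ uniformly in $a$. Thus, for any continuous test function $f\ge 0$ with support in $[0,T]$, the functional $\sum_k f(\zeta_a(k)/m(a))$ agrees with $\sum_{k\le K} f(\zeta_a(k)/m(a))$ with probability $1-\eta$, and the latter is a continuous function of the already-convergent finite vector $(\zeta_a(1)/m(a),\dots,\zeta_a(K)/m(a))$. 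Passing to the limit and then letting $K\to\infty$ gives convergence of Laplace functionals towards those of the unit-intensity Poisson process, which is equivalent to vague convergence on $\R_+$.

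The only genuinely non-routine step is the passage from finite-dimensional convergence of the points to vague convergence of the point process; I expect this to be the main (though still mild) obstacle, and it is dealt with by the uniform tightness argument just sketched, which relies only on the explicit exponential limit and the i.i.d.\ renewal structure. Note in particular that no finer information on the trajectories of $X_a$ is needed beyond Proposition \ref{propo:blowupexpo} and the strong Markov property.
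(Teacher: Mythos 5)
Your proof is correct and follows exactly the route the paper intends: the paper states this result without proof as ``a direct corollary'' of Proposition \ref{propo:blowupexpo}, the implicit argument being precisely the i.i.d.\ renewal structure of the inter-explosion times (restart from $+\infty$ plus the strong Markov property) that you spell out. Your tightness step for upgrading finite-dimensional convergence of the atoms to vague convergence is also sound, since $\P(\zeta_a(K)\le Tm(a))\le \P(\zeta_a(1)\le Tm(a))^K$ is eventually bounded by $q^K$ with $q<1$.
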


It is then natural to introduce $L\mapsto a_L$ as the reciprocal of $a\mapsto m(a)$. A simple computation yields the following asymptotics
\begin{align}
a_L =  \Big(\frac{3}{8} \ln L\Big)^{2/3} + 3^{-1/3} \cdot 2^{-1} (\ln L)^{-1/3} \Big(\ln(\frac{3^{1/3}}{2 \pi} \ln^{1/3} L) + o(1)\Big)\;.\label{asymp_aL}
\end{align}
The diffusion $X_a$, with $a$ close to $a_L$, typically explodes a finite number of times on the time interval $[0,L]$. More precisely, for all $r \in \R$, using \eqref{asymp_ma}, we have as $L \to \infty$:
\begin{align*}
m\big(a_L + \frac{r}{\sqrt{a_L}}\big) = \frac{\pi}{\sqrt{a_L}}\exp(\frac{8}{3}a_L^{3/2}+4r)(1+o(1))\;,
\end{align*}
so that the order of magnitude of the number of explosions of $X_a$ on $[0,L]$ for $a = a_L + r/(4\sqrt{a_L})$ is given by $L/m(a_L + r/(4\sqrt{a_L})) = \exp(-r) (1+ o(1))$.

\bigskip

Until the end of this subsection, we drop the subscript $a$ from $X_a$ and from the explosion times $\zeta_a(k)$, $k\ge 0$ to alleviate the notations. Let us analyse the diffusion $X$ until its first explosion time $\zeta := \zeta(1)$. We set
\begin{equation}\label{Eq:ta}
t_a = \frac{\ln a}{\sqrt{a}}\;, \quad \mbox{and}\quad h_a = \frac{\ln a}{a^{1/4}}\;.
\end{equation}

A typical realization of the process $X$ is close to a deterministic path when it comes down from $+\infty$ (entrance) and when it explodes to $-\infty$ (explosion):
\begin{description}
\item[1- Entrance] Let $\ell := \sqrt{a} + h_a$. We have
\begin{align*}
\tau_{\ell}(X) = \frac38 t_a - \frac{\ln\ln a}{2\sqrt a} + O(\frac1{\sqrt a}),
\end{align*}
together with the bound
$$ \sup_{t\in (0,\frac38 t_a]} \big|X(t) - \sqrt{a} \coth (\sqrt a t)\big| \le 1\;.$$
\item[2- Explosion] Let $\ell := -\sqrt{a} - h_a$. We have
\begin{align*}
\tau_{-\infty}(X) - \tau_{\ell}(X) = \frac38 t_a - \frac{\ln\ln a}{2\sqrt a} + O(\frac1{\sqrt a}).
\end{align*}
together with the bound
$$ \sup_{t\in [\tau_{\ell}(X),\zeta)} \big|X(t)+\sqrt{a} \coth (\sqrt a (\zeta-t))\big| \le 1\;.$$
\end{description}

We denote by $\mathcal D_a^N$ the event on which the $N$ first trajectories $(X(t),\;t \in [\zeta(k),\zeta(k+1)))$ for $k \in \{0,\cdots,N-1\}$ satisfy the estimates above.

\begin{proposition}[Entrance and explosion]\label{Prop:Da} 
Fix $N\in \bbN$. For all the parameters $a$ large enough, we have 
\begin{align*}
\bbP[\mathcal{D}_a^N] \ge 1-\frac{1}{\ln a}\;.
\end{align*}
\end{proposition}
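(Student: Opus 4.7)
The strategy is to compare the diffusion $X = X_a$ to the deterministic ODE $\dot{x} = a - x^2$, whose solutions from $+\infty$ and toward $-\infty$ are explicitly $\sqrt{a}\coth(\sqrt{a}t)$ and $-\sqrt{a}\coth(\sqrt{a}(\zeta^\ast - t))$ respectively. By the strong Markov property applied at the successive explosion times, the $N$ cycles are i.i.d.\ with the law of a single cycle started from $+\infty$, so a union bound reduces matters to showing that one cycle satisfies both the entrance and explosion estimates with probability at least $1 - \frac{1}{2N \ln a}$ for $a$ large.

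\textbf{Entrance phase.} The identity $\coth^{-1}(1+\epsilon) = \tfrac{1}{2}\ln(2/\epsilon) + O(\epsilon)$ applied at $\epsilon = h_a/\sqrt{a}$ yields the claimed expansion for $\tau_\ell(x)$. The error $Y := X - x$ satisfies
\begin{equation*}
dY = -(X+x)\,Y\,dt + dB,
\end{equation*}
and on the entrance phase $x \ge \sqrt{a}$, so a bootstrap and stochastic Gronwall argument (equivalently, comparison with an Ornstein--Uhlenbeck process of relaxation rate $\asymp 2\sqrt{a}$) combined with Gaussian tail estimates for this linear SDE yield $\sup_{t \in (0, 3t_a/8]}|Y(t)| \lesssim h_a$ with failure probability $O(e^{-c\sqrt{\ln a}})$, much stronger than the stated $|Y|\le 1$. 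The initial singularity at $t \to 0^+$ is handled by noting that both $X$ and $x$ enter like $1/t$ and the quadratic restoring drift $-X^2$ dominates any Brownian fluctuation. Since $|Y(\tau_\ell(x))| \lesssim h_a$ and $|\dot{x}(\tau_\ell(x))| = \ell^2 - a \asymp 2\sqrt{a}\, h_a$, an inversion gives $\tau_\ell(X) = \tau_\ell(x) + O(1/\sqrt{a})$.

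\textbf{Explosion phase.} The deterministic ODE started at $-\ell$ explodes in time matching the claimed expansion, by the symmetric computation. For the SDE, however, the difference $Y = X - y$ with $y(t) := -\sqrt{a}\coth(\sqrt{a}(\zeta - t))$ has \emph{destabilizing} drift in forward time, so direct Gronwall fails. The remedy is to exploit the symmetry of the SDE under $(X, t, B) \mapsto (-X, -t, -B)$: the explosion phase of $X$ is equidistributed, conditionally on the a.s.\ finite explosion time $\zeta$, with the entrance phase of a copy of the diffusion run in reverse time from $\zeta$, to which the analysis above applies verbatim. An equivalent device is to pass to the Pr\"ufer-type coordinate $\theta = \cot^{-1}(X/\sqrt{a}) \in (0,\pi)$, in which the transformed SDE has bounded drift and noise of size $O(1/\sqrt{a})$, the singularities at $\pm\infty$ are removed, and a unified comparison with the deterministic phase portrait handles both entrance ($\theta \to 0^+$) and explosion ($\theta \to \pi^-$).

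Concatenating the two single-cycle estimates and union-bounding over the $N$ cycles then gives $\P[\mathcal{D}_a^N] \ge 1 - 1/\ln a$. The main technical obstacle is the explosion phase: the forward-time drift is unstable and the terminal time $\zeta$ is random, so the naive linearized analysis blows up. Time reversal (or, equivalently, the Pr\"ufer change of variable) is the key device that recasts the explosion as a tractable entrance-type problem.
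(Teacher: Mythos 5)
Your skeleton is right: reduce to a single cycle by the strong Markov property (the excursions between successive explosion times are i.i.d.), prove the entrance and explosion estimates for one cycle with probability $1-o(1/\ln a)$, and union-bound. This matches the paper, whose proof simply invokes Lemmas \ref{Lemma:CDI} and \ref{Lemma:Explo}. The difference — and the gap — lies in how you prove the per-cycle estimates. The paper does \emph{not} linearize around the deterministic profile. It sets $Z = X - B$, observes that $Z$ solves the random ODE $\dot Z = a - Z^2(1+B/Z)^2$, and on the event $\sup_{[0,t_a]}|B|\le M$ with $M = 2\sqrt{t_a}\ln\ln a \ll h_a$ sandwiches $Z$ between the explicit solutions of $\dot F = a - C_i F^2$ with $C_i = (1\pm M/(\ell(a)-M))^2$, using only monotonicity of the ODE flow. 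Crucially, the \emph{same} sandwich works for the explosion phase: one applies the strong Markov property at the stopping time $\tau_{-\sqrt a - h_a/4}(X)$ to get a fresh Brownian motion, and below that level the factor $(1+B/Z)^2$ is again uniformly close to $1$, so $Z$ is squeezed between two explicit $-\sqrt{a/C}\coth(\sqrt{aC}(t^*-t))$ profiles going forward in time. No instability arises because nothing is linearized.

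Your proposed remedy for the explosion phase is where the argument breaks. You claim that, conditionally on $\zeta$, the path $(X(\zeta-s))_s$ is distributed as an entrance phase run in reverse, "to which the analysis applies verbatim." This is not justified: $\zeta$ is a terminal (cofinal) random time for the path, and the increments $B(\zeta)-B(\zeta-s)$ read backwards from $\zeta$ do \emph{not} form a Brownian motion — the reversed process is an $h$-transform (Williams-type reversal), i.e.\ a \emph{different} diffusion, not a sign-flipped copy of $X$. (The time reversal the paper does use, in Section \ref{Sec:Reversal}, reverses the whole interval $[0,L]$ at the deterministic time $L$, which is why $\hat X_a \stackrel{d}{=} -X_a$ there; that identity does not transfer to reversal at $\zeta$.) Your Pr\"ufer alternative is more promising but is only gestured at, and the claim that the angular SDE has "bounded drift" is wrong — the drift is $\sqrt a\cos(2\theta)+O(1)$, of order $\sqrt a$. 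A secondary soft spot is the entrance at $t\to 0^+$: your linearization $Y = X - x$ is $\infty-\infty$ at $t=0$, and "both enter like $1/t$" is not an argument; the paper's $Z=X-B$ comparison avoids this because $Z$ and the comparison ODEs all start from $+\infty$ and the monotone flow handles the singularity directly.
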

\noindent The proof of Proposition \ref{Prop:Da} relies on a comparison with an ordinary differential equation (in the same way as in \cite{DumazVirag}): it is postponed to Section \ref{Sec:Diff}.

\medskip

For $t_0 > 0$, let $X^{t_0}$ be the diffusion following the same SDE as $X$ but starting from $+\infty$ at time $t_0$. The following \textit{synchronization} estimate ensures that the diffusion $X^{t_0}$ gets very close to the original diffusion $X$ after a short time of order $O(t_a)$, and that they stay together until their first explosion. 

We set $\tau^k_{-2\sqrt{a}}(X):=\inf\{t\ge \zeta_a(k-1): X(t) = - 2\sqrt a\}$.

\begin{proposition}[Synchronization]\label{Propo:DiffClose}
Fix $N\ge 1$ and $(t_0(a))_{a >0}$ a family of deterministic starting times which may depend on $a$. There exists $C>0$ such that the following holds true with a probability at least $1-O(1/\ln a)$ as $a \to \infty$. If there exists $k\in \{1,\ldots,N\}$ such that $t_0(a)\in [\zeta_a(k-1), \zeta_a(k))$ then we have the bounds
\begin{align*}
&\big| X(t) - X^{t_0(a)}(t)\big| \le h_a\;,\quad \forall t \in [t_0(a)+\frac38 t_a,\tau^k_{-2\sqrt{a}}(X)]\;,\\
&\big|\tau_{-\infty}(X^{t_0(a)}) - \zeta_a(k)\big| < \frac{C}{\sqrt{a}}\;.
\end{align*}
\end{proposition}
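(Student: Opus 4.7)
I would write $Y:=X$ and $Z:=X^{t_0(a)}$. Both are driven by the same Brownian motion, and since $Y(t_0(a))<+\infty=Z(t_0(a))$, the monotonicity of the Riccati flow gives $Y(t)\le Z(t)$ on $[t_0(a),\zeta_a(k))$. The non-negative difference $D:=Z-Y$ then solves the deterministic ODE $D'(t)=-(Y(t)+Z(t))\,D(t)$, so
\begin{align*}
D(t)=D(s)\exp\!\Big(-\int_{s}^{t}(Y(u)+Z(u))\,du\Big),\qquad t_0(a)<s\le t<\zeta_a(k).
\end{align*}
I would work on an event $E$ of probability $\ge 1-O(1/\ln a)$ built as the intersection of $\mathcal D_a^N$ applied to $Y$, its analogue for $Z$ (a diffusion starting from $+\infty$ at time $t_0(a)$), and an additional well-concentration estimate to be proved in Section~\ref{Sec:Diff} which keeps $Y$ within $o(h_a)$ of $\sqrt a$ after its entrance phase and up to the start of its terminal excursion.

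The core step is the bound $D(t^*)\le h_a$ at $t^*:=t_0(a)+\tfrac{3}{8}t_a$. Set $s_0:=t_0(a)-\zeta_a(k-1)\ge 0$. The entrance estimate of Proposition~\ref{Prop:Da} applied to $Z$ gives $Z(u)\ge \sqrt a\coth(\sqrt a(u-t_0(a)))-1$ on $(t_0(a),t^*]$, together with an analogous lower bound for $Y$: the entrance estimate for $Y$ when $u-\zeta_a(k-1)\le \tfrac{3}{8}t_a$, or the well-concentration bound $Y\ge \sqrt a - o(1)$ once $Y$ has entered. Plugging these into the exponential identity and letting $s\downarrow t_0(a)$, the divergence $D(s)\sim 1/(s-t_0(a))$ is absorbed by the divergence of $\int_s^{t^*}Z\,du\sim -\ln(\sqrt a(s-t_0(a)))$; explicit integration of the resulting $\coth$ expressions yields a bound of the form $D(t^*)\lesssim a^{1/4}(s_0\wedge t_a)\vee a^{-1/4}$, which is $\le h_a$ in both regimes $s_0\lesssim t_a$ and $s_0\gg t_a$ for $a$ large enough.

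For the propagation to $[t^*,\tau^k_{-2\sqrt a}(Y)]$ I use the same ODE representation. On $E$, both $Y$ and $Z$ sit near $\sqrt a$ throughout this interval except on the terminal descent of $Y$ from $\sqrt a$ to $-2\sqrt a$, of duration $O(t_a)$ and on which $Y+Z\ge -4\sqrt a$ (from the hypothesis $Y\ge -2\sqrt a$ and monotonicity). Hence $\int_{t^*}^t(Y+Z)\,du\ge -O(\ln a)$ uniformly on the interval, while the strong in-well contraction $Y+Z\approx 2\sqrt a$ elsewhere actually pulls $D$ further below $h_a$, so the bound $D(t)\le h_a$ is preserved throughout.

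The proximity of explosion times is then immediate: at $\tau^k_{-2\sqrt a}(Y)$ we have $Z\in[-2\sqrt a,-2\sqrt a+h_a]$, well inside the explosion regime of Proposition~\ref{Prop:Da}. Applying the explosion estimate to $Z$ and tracking $Y$ from $-2\sqrt a$ to $-\infty$ via the explicit $-\sqrt a\coth$ approximation (an additional time of $O(1/\sqrt a)$) gives $|\tau_{-\infty}(Z)-\zeta_a(k)|\le C/\sqrt a$. The hardest step will be the sharp bound $D(t^*)\le h_a$: monotonicity combined with the $\pm 1$ accuracy of the entrance estimate alone gives only $D(t^*)=O(\sqrt a)$, so closing the gap requires the delicate cancellation between diverging quantities described above and the refined in-well concentration established in Section~\ref{Sec:Diff}.
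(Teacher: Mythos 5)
Your proposal departs structurally from the paper's proof: you bound the difference $D = X^{t_0(a)} - X$ directly via the ODE $D' = -(X+X^{t_0(a)})\,D$, whereas the paper introduces the \emph{stationary} diffusion $Y$ (started from the invariant measure $\mu_a$) as an intermediary, proves $|X - Y|\le h_a/2$ and $|X^{t_0(a)} - Y|\le h_a/2$ separately, and concludes (Lemmas~\ref{Lemma:ControlY}, \ref{Lemma:DiffXY}, Corollary~\ref{Cor:XY}). The ODE identity you write is correct and is indeed the engine of the paper's Lemma~\ref{Lemma:DiffXY}. The problem is the concentration input you feed into it.

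The ``additional well-concentration estimate\ldots which keeps $Y$ within $o(h_a)$ of $\sqrt a$ after its entrance phase and up to the start of its terminal excursion'' is false as a pointwise statement: over the time span of order $m(a)$ before the final escape, the diffusion reaches $0$ (and even $-\sqrt a$) many times, so $\sup_t |X(t)-\sqrt a|$ greatly exceeds $h_a$ with probability tending to $1$; the event you define therefore does not have probability $1-O(1/\ln a)$. What is true, and what the paper actually proves, is only the \emph{time-averaged} bound $\fint Y \in I_a(1/2)$ (Lemma~\ref{Lemma:ControlY}), and it is proved for the \emph{stationary} $Y$, where Markov's inequality against $\mu_a$ does the work. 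Going through the stationary $Y$ buys two things your direct approach lacks: (i) by stationarity, the averaged oscillation bound can be re-anchored at \emph{any} deterministic time $t_1$, which is precisely what lets one write $\int_{t^*}^{t} Y \ge (\sqrt a - h_a/2)(t-t^*)\ge 0$ and hence propagate $D(t)\le D(t^*)$; an averaged bound for $X$ anchored at $\zeta_a(k-1)+\tfrac38 t_a$ does not by itself control $\int_{t^*}^t X$, since the base-point mismatch leaves a possibly negative $2h_a s_0$ term; and (ii) the law of $Y$ at any fixed time is exactly $\mu_a$, hence concentrated near $\sqrt a$, whereas placing $X(t_0(a))$ near $\sqrt a$ with probability $1-O(1/\ln a)$ is itself a claim requiring proof (the paper gets it for free from $|X-Y|\le h_a/2$). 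Finally, the estimate $\int_{t^*}^t (Y+Z)\,du \ge -O(\ln a)$ on its own gives only $D(t)\le D(t^*)\,a^{O(1)}$, which does not imply $D(t)\le h_a$; the missing step is again the averaged positivity of $\int Y$. In short, the ODE idea is right, but the pointwise concentration you posit is not available, and once replaced by the correct averaged statement your argument is forced back into the paper's stationary-coupling route.
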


The proof of Proposition \ref{Propo:DiffClose} is based on a coupling with a stationary diffusion. Since the arguments are rather standard, we postpone the proof to Section \ref{Sec:Diff}.

\subsection{Proof of the convergence of the point process of the eigenvalues}\label{subsec:proofeigenvalues}
We consider
\begin{align*}
\cQ_L := \sum_{k \geq 1} \delta_{4\sqrt{a_L}\,(\lambda_k + a_L)}\;,
\end{align*}
which is a random element of the set of measures on $\R$ with finite mass on $(-\infty, r)$ for any $r\in\R$, and endowed with the topology that makes continuous the maps $\mu\mapsto \int f \mu$ for all bounded and continuous functions $f$ with support bounded on the right. By~\cite[Lemma 14.15]{Kallenberg}, the family $(\cQ_L)_{L >1}$ is tight if for every $r > 0$ and every $\eps > 0$, there exists $c>0$ such that
\begin{equation}\label{Eq:Tightness}
\sup_{L>1} \bbP\big[\cQ_L((-\infty,r]) > c \big] < \eps\;.
\end{equation}
Since $\cQ_L((-\infty,r])$ is equal to the number of explosions of $X_{a_L - r/(4\sqrt{a_L})}$ on the time-interval $[0,L]$, we deduce from Proposition \ref{Prop:PoissonExplo} that there exists $c'>0$ such that
$$\varlimsup_{L\in \bbN, L\rightarrow\infty} \bbP\big[\cQ_L((-\infty,r]) > c' \big] < \eps\;,$$
and therefore, that there exist $c>0$ such that
$$\sup_{L\in \bbN} \bbP\big[\cQ_L((-\infty,r]) > c \big] < \eps\;.$$
Since $a_L = a_{\lfloor L\rfloor} + o(1/\sqrt{a_{\lfloor L\rfloor}})$, we can bound the number of explosions of $X_{a_L - r/(4\sqrt{a_L})}$ by the number of explosions of $X_{a_{\lfloor L\rfloor} - r'/(4\sqrt{a_{\lfloor L\rfloor}})}$ for some appropriately chosen $r'$, thus ensuring that the latter bound extends to all $L\in (1,\infty)$. We deduce that \eqref{Eq:Tightness} holds true, thus ensuring the tightness of $(\cQ_L)_{L>1}$.

It remains to prove that any limit $\cQ_\infty$ is distributed as a Poisson point process with intensity $e^x dx$. By classical arguments, it suffices to show that for every $k\geq 1$, and every $-\infty = r_0 < r_1 < r_2 < \ldots < r_k$, if we set
$$ Q_L(i) := \cQ_L((r_{i-1},r_i]) \;,\quad i=1,\ldots,k\;,$$
then $(Q_L(1),\ldots,Q_L(k))$ converges in law to a vector of independent Poisson r.v.~with parameters $p_i$ where
$$ p_i:=e^{r_{i}}-e^{r_{i-1}}\;,\quad i=1,\ldots,k\;.$$ 
Let us fix from now on $r_1 < r_2 < \ldots < r_k$, and notice that if we denote by $\# X_{a_i}$ the number of explosions of $X_{a_i}$ on $[0,L]$, where $a_i = a_L - r_i/(4\sqrt{a_L})$, then we have almost surely 
$$ Q_L(i) = \#X_{a_i} - \# X_{a_{i-1}}\;,\quad i=1,\ldots,k\;.$$

To identify the limiting law of $(Q_L(1),\ldots,Q_L(k))$, we follow an indirect approach. First, we introduce some simpler r.v.~$\tilde{Q}_L(i)$ which are shown to converge to the right limits, and then we prove that they are actually close in probability to the original ones.

We discretize the time-interval $[0,L]$ in such a way that the studied diffusions will typically explode at most once on each sub-interval with large probability. Let $n\geq 1$ be given. We set $t_j^n := j 2^{-n}L$ for all $j\in \{0,\ldots,2^n\}$ and we let $X^j_a:= X^{t_j^n}_a$ be the diffusion starting from $+\infty$ at time $t_j^n$. The main idea of the proof is to approximate the number of explosions of $X_{a_i}$ on $(t_j^n,t_{j+1}^n]$ by the number of explosions of $X_{a_i}^{j}$ on the same interval. Such an approximation is justified by Proposition \ref{Propo:DiffClose}. The advantage of considering the diffusions $X_{a_i}^{j}$ restricted to $(t_j^n,t_{j+1}^n]$ is twofold: first, for every $j$, they are ordered in $i$ by the monotonicity of the diffusions; second, for different values $j$ they are independent.\\

For every $j\in\{0,\ldots,2^n-1\}$, and for every $i\in\{1,\ldots,k\}$, we set $U_j(i) = 1$ if $X_{a_i}^{j}$ explodes on $(t_j^n,t_{j+1}^n]$, and $U_j(i) = 0$ otherwise. Recall that $a_1 > a_2 > \ldots > a_k$. Note that on each time-interval $(t_j^n,t_{j+1}^n]$, we have the ordering $X_{a_k}^{j} \leq \cdots \leq X_{a_1}^{j}$ until the first explosion of $X_{a_k}^{j}$. 

Then, we set
$$\tilde{Q}^{(n)}_L(i) := \sum_{j=0}^{2^n-1} \Big(U_j(i)-U_j(i-1)\Big)\;,\quad i=1,\ldots,k\;.$$
\begin{lemma}\label{Lemma:TildePoisson}
The vector $(\tilde{Q}^{(n)}_L(i))_{i=1,\ldots,k}$ converges in distribution, as $L\rightarrow\infty$ and then $n\rightarrow\infty$, to a vector of independent Poisson r.v.~with parameters $p_i$.
\end{lemma}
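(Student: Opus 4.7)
The plan is to use independence of the diffusions $X_{a_i}^{j}$ across $j$ together with monotonicity in $i$ to recast $(\tilde Q_L^{(n)}(i))_{1\le i\le k}$ as the cell counts of a multinomial law, and then to identify its Poisson limit. For each $j$, the vector $(U_j(1),\ldots,U_j(k))$ is measurable with respect to the increments of $B$ on $(t_j^{n}, t_{j+1}^{n}]$, since $X_{a_i}^{j}$ starts from $+\infty$ at time $t_j^{n}$; the subintervals being disjoint, these vectors are i.i.d.\ as $j$ varies. Moreover, since $a_1>\cdots>a_k$ and all $X_{a_i}^{j}$ start from $+\infty$ at the same time, the monotonicity property of the Riccati coupling yields $X_{a_k}^{j}\le\cdots\le X_{a_1}^{j}$ until the first explosion of $X_{a_k}^{j}$, whence $U_j(1)\le\cdots\le U_j(k)$ almost surely. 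Setting $I_j:=\min\{i\in\{1,\ldots,k\}: U_j(i)=1\}$, with $I_j:=k+1$ if no such $i$ exists, one then has $U_j(i)-U_j(i-1)=\mathbf{1}_{\{I_j=i\}}$ a.s., with the convention $U_j(0):=0$, so
$$\tilde Q_L^{(n)}(i) \;=\; \#\{0\le j<2^n : I_j=i\}.$$
In particular $(\tilde Q_L^{(n)}(1),\ldots,\tilde Q_L^{(n)}(k))$ has a multinomial law with $2^n$ trials and cell probabilities $q_i^{(n,L)}:=\bbP[U_0(i)=1]-\bbP[U_0(i-1)=1]$.

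Next I compute the limit of these cell probabilities. Proposition~\ref{propo:blowupexpo} combined with the asymptotics \eqref{asymp_ma}, the definition $a_i=a_L-r_i/(4\sqrt{a_L})$, and the choice of $a_L$, shows that $L\,2^{-n}/m(a_i)\to 2^{-n}e^{r_i}$ as $L\to\infty$, so
$$\bbP[U_0(i)=1] \;=\; \bbP\bigl[\zeta_{a_i}(1)\le L\,2^{-n}\bigr] \;\xrightarrow[L\to\infty]{}\; 1-\exp(-2^{-n}e^{r_i}).$$
Hence $q_i^{(n,L)}\to q_i^{(n)} := \exp(-2^{-n}e^{r_{i-1}})-\exp(-2^{-n}e^{r_i})$, with the convention $e^{r_0}:=0$, and a Taylor expansion yields $2^n q_i^{(n)}\to p_i = e^{r_i}-e^{r_{i-1}}$ as $n\to\infty$.

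To conclude, I invoke the classical sparse-multinomial Poisson approximation: if $(Y_1^{(N)},\ldots,Y_k^{(N)})$ is multinomial with $N$ trials and probabilities $(q_1,\ldots,q_k,1-\sum_i q_i)$ satisfying $Nq_i\to p_i$, then the $Y_i^{(N)}$ converge jointly in law to independent $\mathrm{Poisson}(p_i)$. Applied with $N=2^n$ after sending $L\to\infty$ at fixed $n$, this yields the lemma. The delicate point is precisely this last step: each marginal is trivially asymptotically Poisson, but joint asymptotic \emph{independence} of the coordinates relies on the sparsity $q_i^{(n)}=O(2^{-n})$. I would verify it by a direct computation of the probability generating function, $\bbE\bigl[\prod_i z_i^{Y_i^{(N)}}\bigr] = \bigl(1+\sum_i q_i(z_i-1)\bigr)^N$, which factorizes in the limit as $\prod_i \exp(p_i(z_i-1))$.
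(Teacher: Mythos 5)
Your proof is correct and follows essentially the same route as the paper: both arguments rest on the i.i.d.\ structure across $j$, the a.s.\ ordering $U_j(1)\le\cdots\le U_j(k)$ coming from monotonicity in $a$, and the marginal convergence $\bbP[U_j(i)=1]\to 1-\exp(-2^{-n}e^{r_i})$ supplied by Proposition \ref{Prop:PoissonExplo}. The only cosmetic difference is that you pass to the Poisson limit via the probability generating function of the resulting multinomial, whereas the paper computes the joint probability mass function directly with the multinomial coefficient and Taylor-expands; these are equivalent.
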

\begin{proof}
First of all, the collection (indexed by $j=0,\ldots,2^n-1$) of the $k$-dimensional vectors $(U_j(i),i=1,\ldots,k)$ is i.i.d., and for every $j$ we have almost surely $U_j(1)\le \ldots \le U_j(k)$ since $a_1 > \ldots > a_k$. Henceforth we have for $i \in \{1,\cdots,k-1\}$,
\begin{align*}
\bbP[U_j(1) = \ldots = U_j(i) = 0, U_j(i+1) = \ldots = U_j(k) = 1] &= \bbP[U_j(i) = 0,\,U_j(i+1) = 1]\\
&= \bbP[U_j(i+1)=1]-\bbP[U_j(i)=1] \;,
\end{align*}
as well as
$$ \bbP[U_j(1)= \ldots = U_j(k) = 0] = \bbP[U_j(k)=0]\;,$$
and
$$ \bbP[U_j(1)= \ldots = U_j(k) = 1] = \bbP[U_j(1)=1]\;.$$
Consequently, the law of the vector $U_j$ is completely characterized by its one-dimensional marginals. Since $m(a_L)/m(a_i) \rightarrow e^{r_i}$ as $L\rightarrow\infty$, Proposition \ref{Prop:PoissonExplo} gives as $L\rightarrow \infty$
$$ \bbP[U_j(i) = 1] \rightarrow 1-\exp(- 2^{-n} e^{r_i})\;,$$
for any $j$. Moreover, the random variables $U_j$, $j\in\{0,\ldots,2^n-1\}$ are independent. We have all the elements at hand to compute the limiting distributions of the $\tilde{Q}^{(n)}_L(i)$'s. Fix $\ell_1,\ldots,\ell_k$ such that $\ell=\sum_1^k \ell_i \le 2^n$. Then, we find:
\begin{align*}
&\lim_{L\rightarrow \infty} \bbP\Big[\bigcap_{i=1}^k \{\tilde{Q}^{(n)}_L(i) = \ell_i\} \Big]\\
&=\lim_{L\rightarrow \infty} {2^n \choose \ell_1, \ldots, \ell_k, 2^n-\ell} \Big(\prod_{i=1}^k \bbP[U_1(1)=\ldots=U_1(i-1)=0,U_1(i)=\ldots=U_1(k)=1]^{\ell_i}\Big)\\
&\qquad\qquad\qquad\qquad \times\bbP[U_1(1)=\ldots=U_1(k)=0]^{2^n-\ell}\\
&= {2^n \choose \ell_1, \ldots, \ell_k, 2^n-\ell} \Big(\prod_{i=1}^k \big(1-\exp(-2^{-n} e^{r_i})-1+\exp(- 2^{-n} e^{r_{i-1}})\big)^{\ell_i}\Big) \exp(- 2^{-n} e^{r_k} (2^n - \ell))\\
&= \prod_{i=1}^k\frac{\big(e^{r_i}-e^{r_{i-1}}\big)^{\ell_i}}{\ell_i!} e^{-(e^{r_i}-e^{r_{i-1}})} (1+O(2^{-n}))\;,
\end{align*}
so that the limiting distribution as $n\rightarrow\infty$ is the product of Poisson distributions with parameter $p_i$.
\end{proof}

We now introduce an event on which $\tilde{Q}^{(n)}_L$ and $Q_L$ coincide. Let $\mathcal{F}_{L,n}$ be the event on which for every $i\in\{1,\ldots,k\}$ and $j \in \{0,\cdots,2^n-1\}$:
\begin{enumerate}
\item[(i)] $X_{a_i}$ never explodes twice in any interval $(t_j^n,t_{j+1}^n]$,
\item[(ii)] $X_{a_i}$ explodes on $(t_j^n,t_{j+1}^n]$ if and only if $X_{a_i}^{j}$ explodes on $(t_j^n,t_{j+1}^n]$.
\end{enumerate}
On the event $\mathcal{F}_{L,n}$, the auxiliary random variables $\tilde Q_L^{(n)}(i)$ indeed coincide with $Q_L(i)$.\\

If we prove that the probability of $\mathcal{F}_{L,n}$ tends to $1$ when $L\rightarrow\infty$ and then $n\rightarrow\infty$, then by Lemma \ref{Lemma:TildePoisson} we get:
\begin{align*}
\bbP[Q_L(1) = \ell_1,\ldots,Q_L(k) = \ell_k] = \prod_{i=1}^k \frac{p_i^{\ell_i}}{\ell_i !} e^{-p_i} + \eps_{L,n}\;,
\end{align*}
where $\eps_{L,n}$ goes to $0$ as $L$ and then $n$ go to infinity. This would ensure that the vector $Q_L$ converges in distribution to a vector of independent Poisson random variables with parameters $p_i$, and would conclude the proof of this section.

Therefore, it remains to show that $\bbP[\mathcal{F}_{L,n}]$ goes to $1$ as $L\rightarrow\infty$ and then $n\rightarrow\infty$. By Proposition \ref{Prop:PoissonExplo} and Proposition \ref{Propo:DiffClose}, the probability that for all $i\in \{1,\ldots,k\}$:\begin{enumerate}
\item[(a)] $X_{a_i}$ never explodes twice in any interval $(t_j^n,t_{j+1}^n]$,
\item[(b)] $X_{a_i}$ never explodes in any interval $(t_{j+1}^n-t_{a_L},t_{j+1}^n]$,
\item[(c)] the next explosion times of $X_{a_i}$ and $X_{a_i}^{j}$ after time $t_j^n$ lie within a distance of order $1/\sqrt{a_L}$ from each other,
\end{enumerate}
goes to $1$ as $L\rightarrow\infty$ and then $n\rightarrow\infty$. Property (a) ensures (i). The monotonicity of the diffusions ensures the following assertion: if $X_{a_i}^{j}$ explodes on $(t_j^n,t_{j+1}^n]$, then $X_{a_i}$ explodes on $(t_j^n,t_{j+1}^n]$. The converse assertion is implied by (b) and (c), so that (ii) follows.

\section{Localization of the eigenfunctions}\label{Sec:ProofsTh}

In this section, we first introduce the time-reversed diffusions associated to the eigenvalue problem. Then, we collect some fine estimates on the diffusion during its exceptional excursion that leads it from $\sqrt a$ to $-\sqrt a$. Finally, we provide the proof of Theorems \ref{Th:Main}, \ref{Th:Shape} and \ref{Th:LocalMaxZeros}.

\subsection{Time reversal}\label{Sec:Reversal}

We define the time-reversed operator
$$ \hat{\cH}_L = - \partial^2_x + \hat{\xi}\;,\quad x\in (0,L)\;,$$
with homogeneous Dirichlet boundary conditions, where $\hat{\xi}(\cdot):= \xi(L-\cdot)$ in the distributional sense. Let $(\hat{\varphi}_{k},\hat{\lambda}_{k})_{k\geq 1}$ be the corresponding eigenfunctions/eigenvalues and observe that almost surely for all $k\ge 1$
$$ \hat{\varphi}_{k}(\cdot) = \varphi_{k}(L-\cdot)\;,\quad \hat{\lambda}_{k} = \lambda_{k}\;.$$
We can therefore study the Riccati transforms of the reversed operator and introduce the associated family of diffusions. For convenience, we take the opposite of the Riccati transform:
$$ \hat{\Chi}_{k}(t) := - \frac{\hat{\varphi}_{k}'(t)}{\hat{\varphi}_{k}(t)}\;,\quad t\in (0,L)\;,$$
with boundary conditions $\hat{\Chi}_{k}(0)=-\infty$ and $\hat{\Chi}_{k}(L)=+\infty$. We then have the identity:
$$ \hat{\Chi}_{k}(L-t) = \Chi_{k}(t)\;.$$
It is natural to introduce the collection of diffusions $(\hat{X}_a(t),t\ge 0)$, $a\in\R$, as solutions of:
\begin{equation}\label{Eq:DiffReversed}
d\hat{X}_a(t) = V'_a (\hat{X}_a(t)) dt + d\hat{B}(t) \;,
\end{equation}
starting from $\hat{X}_a(0) = -\infty$ and where $\hat{B}(t) := B(L-t) - B(L)$. We then let $\hat{\zeta}_a(k)$ be the $k$-th explosion time of $\hat{X}_a$. We also set $\hat{\zeta}_a(0) = \zeta_a(0) = 0$.

\begin{remark}
Time $0$ for the diffusion $\hat{X}_a$ corresponds to time $L$ for the diffusion $X_a$.
\end{remark}

\begin{lemma}\label{Prop:Reversal}
Almost surely, for every $k\ge 1$ if $X_a$ explodes to $-\infty$ exactly $k$ times on $[0,L]$ then
\begin{enumerate}
\item $\hat{X}_a$ explodes exactly $k$ times to $+\infty$ on $[0,L]$,
\item for every $i\in \{1,\ldots,k\}$, we have $\zeta_a(i-1) \le L-\hat{\zeta}_a(k-i+1) \le \zeta_a(i)$.
\end{enumerate}
\end{lemma}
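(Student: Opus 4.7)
The plan is to reduce the statement to the classical Sturm separation theorem applied to two linearly independent solutions of the linear ODE $-u'' + \xi u = -au$ on $[0,L]$. Let $f_a$ denote the (distributional) solution with $f_a(0)=0$, $f_a'(0)=1$, and let $g_a$ denote the analogous solution with boundary data at the right endpoint, $g_a(L)=0$, $g_a'(L)=-1$. By the construction already exploited in the paper, $X_a = f_a'/f_a$, so the explosion times $\zeta_a(1) < \cdots < \zeta_a(k)$ of $X_a$ on $(0,L]$ coincide exactly with the zeros of $f_a$ in $(0,L]$.

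The first step would be to identify $\hat X_a$ with the time-reversed Riccati of $g_a$. A short computation using $\hat{\xi}(t) = \xi(L-t)$ and $\hat B(t) = B(L-t)-B(L)$ shows that the process $t \mapsto g_a'(L-t)/g_a(L-t)$ satisfies the SDE \eqref{Eq:DiffReversed}, and a Taylor expansion of $g_a$ near $L$ using $g_a(L)=0$, $g_a'(L)=-1$ shows it starts at $-\infty$ as $t \to 0^+$. Pathwise uniqueness of this SDE (with the usual restart from $-\infty$ after each explosion to $+\infty$) then forces it to coincide with $\hat X_a$ on $[0,L]$. Consequently, the explosion times of $\hat X_a$ in $(0,L]$ are precisely the points $L - s$, for $s$ ranging over the zeros of $g_a$ in $[0,L)$.

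Next one applies Sturm separation. The Wronskian $W := f_a g_a' - f_a' g_a$ is constant on $[0,L]$---its distributional derivative vanishes since $f_a$ and $g_a$ solve the same linear equation---and equals $-g_a(0)$ at $t=0$. For a deterministic $a$ one has $g_a(0) \neq 0$ almost surely, since otherwise $g_a$ would be a nontrivial eigenfunction of $\cH_L$ with eigenvalue $-a$, and the event $\{-a \in \mathrm{sp}(\cH_L)\}$ has zero probability. Hence $f_a$ and $g_a$ are a.s.~linearly independent, and Sturm's separation theorem yields strict interlacing of their zeros on $[0,L]$.

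On this full-probability event, suppose $X_a$ explodes exactly $k$ times on $[0,L]$, so the zeros of $f_a$ in $[0,L]$ are $0 = s_0 < s_1 < \cdots < s_k < L$ with $s_i = \zeta_a(i)$. Sturm interlacing places exactly one zero $\hat s_i$ of $g_a$ in each $(s_{i-1}, s_i)$; conversely, any extra zero of $g_a$ in $(s_k, L)$ would, together with the zero of $g_a$ at $L$, force (by a second Sturm application) an extra zero of $f_a$ in $(s_k, L)$, a contradiction. Thus $g_a$ has exactly the $k$ zeros $\hat s_1 < \cdots < \hat s_k$ in $[0, L)$, satisfying $s_{i-1} < \hat s_i < s_i$. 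Setting $\hat{\zeta}_a(k - i + 1) := L - \hat s_i$ then yields both items of the lemma. The main technical subtlety is the distributional nature of $\xi$, but the requisite ODE theory---existence and uniqueness, continuity of $f_a$ and its derivative, constancy of the Wronskian, and the Sturm separation property---is classical in this setting and was already used in~\cite{Fukushima}.
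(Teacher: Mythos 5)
Your argument is correct for each fixed $a$, and it takes a genuinely different route from the paper's. The paper never manipulates the solutions $f_a,g_a$ directly: it proves item (1) by observing that the number of explosions of $X_a$ (resp.\ $\hat X_a$) counts the eigenvalues of $\cH_L$ (resp.\ $\hat{\cH}_L$) below $-a$ and that these operators share the same spectrum, and it proves item (2) by running the same eigenvalue-counting identity on the truncated operators $\cH_{0,t_0}$ and $\hat{\cH}_{L-t_0,L}$ for all \emph{rational} $t_0$, combined with the monotonicity of the explosion count in the starting point of the reversed diffusion. Your route instead works at the level of the ODE, identifying $\hat X_a$ with the backward Riccati transform of $g_a$ and invoking constancy of the Wronskian plus Sturm separation. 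What this buys is strict interlacing $\zeta_a(i-1)<L-\hat\zeta_a(k-i+1)<\zeta_a(i)$ and an explicit description of the explosion times of $\hat X_a$ as the reflected zeros of $g_a$; what it costs is the non-degeneracy hypothesis $g_a(0)\neq 0$, whose exceptional null set depends on $a$. The lemma is stated as ``almost surely, for every $k$'' with no restriction on $a$, and the paper's proof does deliver a single null set valid for all $a$ simultaneously, whereas yours gives the conclusion only for $a$ ranging over a deterministic countable family. This happens to be all the paper ever uses (the lemma is applied with $a$ in the deterministic grid $M_L$ and on deterministic dyadic subintervals), and in the degenerate case $-a\in\mathrm{sp}(\cH_L)$ one has $g_a\propto f_a$, the zeros coincide, and the non-strict inequalities of item (2) still hold by inspection; so the discrepancy is cosmetic rather than a substantive gap, but you should state explicitly which quantification over $a$ you are proving.
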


\begin{proof}
(1) is immediate as the numbers of explosions of $X_a$ and $\hat X_a$ are related to the number of eigenvalues below $-a$ of $\cH_L$ and $\hat{\cH}_L$, and these two operators share the same eigenvalues.\\
To prove the second property, note that almost surely, for all $t_0 \in \Q\cap (0,L)$ the operators
$$ \cH_{0,t_0} := -\partial_x^2 + \xi\;,\quad x\in (0,t_0)\;,$$
and
$$ \hat{\cH}_{L-t_0,L} := -\partial_x^2 + \hat{\xi}\;,\quad x\in (L-t_0,L)\;,$$
share the same eigenvalues. Assume that for some $i$,  $L-\hat{\zeta}_a(k-i+1) > \zeta_a(i)$ and pick a rational number $t_0$ in between these two values. Since the diffusion $X_a$ explodes at least $i$ times on the interval $[0,t_0]$, the operator $\cH_{0,t_0}$ has at least $i$ eigenvalues below $-a$. On the other hand, the diffusion $\hat{X}_a$ explodes at most $i-1$ times on $[L-t_0,L]$ so that the diffusion $\hat{X}_a^{L-t_0}$ that starts at $-\infty$ at time $L-t_0$ cannot explode more than $i-1$ times either. Consequently, $\hat{\cH}_{L-t_0,L}$ has at most $i-1$ eigenvalues below $-a$. This raises a contradiction. The inequality $\zeta_a(i-1) \le L-\hat{\zeta}_a(k-i+1)$ follows by symmetry, thus concluding the proof.
\end{proof}

\subsection{Fine estimates on the exceptional excursions}\label{subsec:fineestimates}

In this subsection, we collect some precise estimates on the behavior of the diffusion $X_a$ during its first exceptional excursion to $-\sqrt{a}$.  

\smallskip

We let $\theta_a := \tau_{-\sqrt a}(X_a)$ be the first hitting time of $-\sqrt a$, $\upsilon_a$ be the last hitting time of $0$ before $\theta_a$ and $\iota_a$ the last hitting time of $\sqrt{a}$ before $\theta_a$. We take similar definitions for the time-reversed diffusion $\hat X_a$: $\hat{\theta}_a$ is the first hitting time of $+\sqrt a$, $\hat\upsilon_a$ is the last hitting time of $0$ before $\hat\theta_a$ and $\hat\iota_a$ is the last hitting time of $-\sqrt a$ before $\hat\theta_a$. Note that $\theta_a < \infty$ a.s.~since $X_a$ explodes in finite time a.s.

\smallskip

We call \emph{excursion} of $X_a$ a portion of the trajectory that starts at $\sqrt a$, reaches $-\sqrt a$ while staying (strictly) below $\sqrt{a}$ and then comes back to $\sqrt a$ (either after an explosion and a restart from $+\infty$ or without explosion). Note that the diffusion $X_a$ starts its first excursion at time $\iota_a$.

\medskip

In the next proposition, we control the behavior of $X_a$ during its first excursion. We refer to Figure \ref{fig:TypicalXa} for an illustration of the path $X_a$. Recall that $t_a= \ln a / \sqrt a$ and $h_a = \ln a / a^{1/4}$.
\begin{figure}[!h]
\centering
\includegraphics[width=10cm]{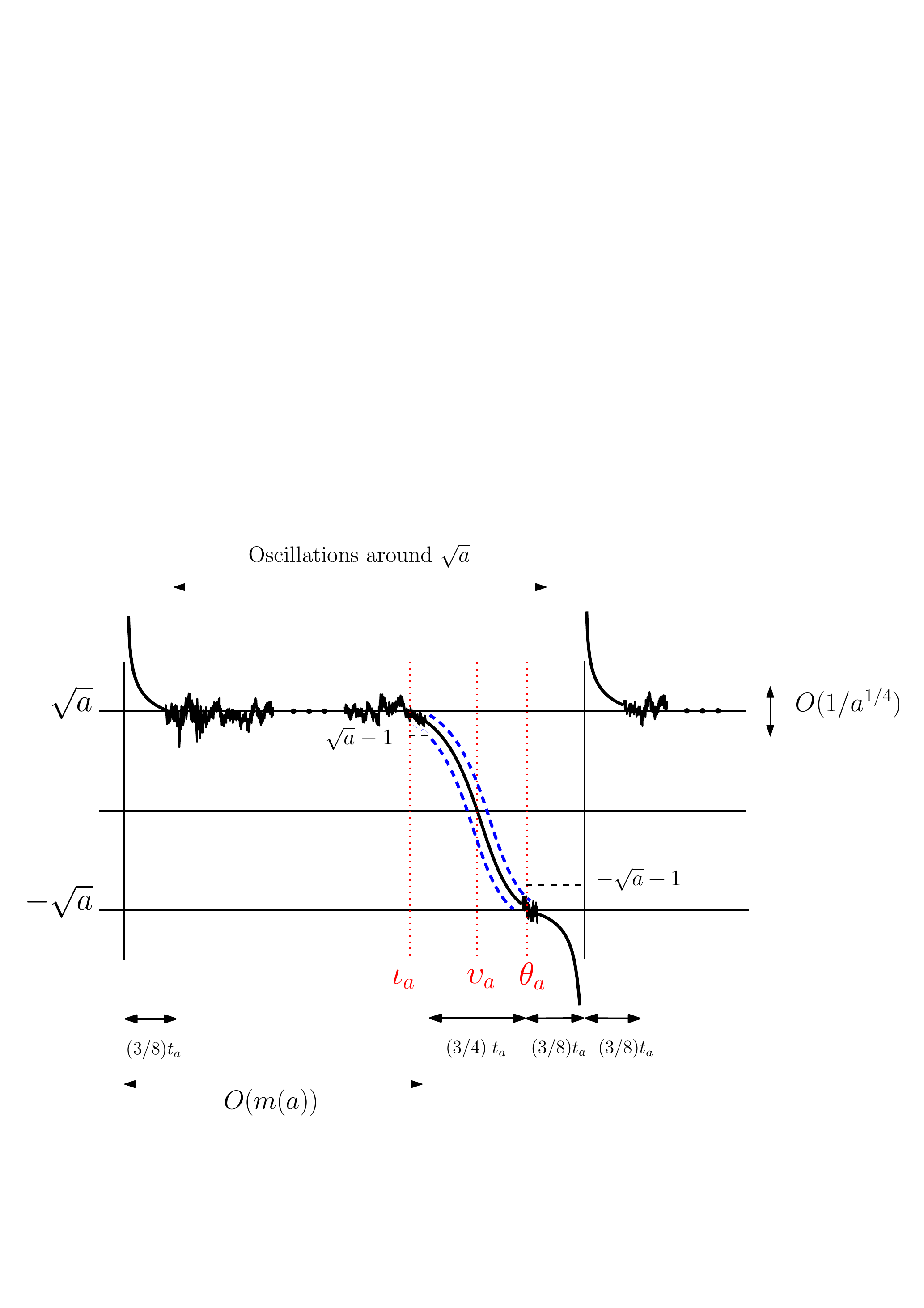}
\caption{Schematic path of $X_a$ and its first excursion to $-\sqrt{a}$ in the case of an explosion}\label{fig:TypicalXa}
\end{figure}

\begin{proposition}[Typical diffusion on its first excursion] \label{Prop:BoundsCross1}
There exists some constant $C > 0$ such that for all $a$ large enough, with a probability at least $1-O(1/\ln\ln a)$, the following holds:
\begin{itemize}
\item[(i)] \emph{Oscillations around $\sqrt{a}$}: 
\begin{align*}
\mbox{For all }t \in [\frac{3}{8} t_a, \tau_{-2\sqrt{a}}(X_a)), \quad \fint_{\frac{3}{8} t_a}^t X_a(s) ds \in \big[\sqrt{a} - h_a, \sqrt{a} + h_a\big].
\end{align*}
\item[(ii)] \emph{Crossing}: 
For all $t \in [\iota_a,\theta_a]$,
$$\big|X_a(t) - \sqrt a \tanh(-\sqrt{a}(t-\upsilon_a))\big| \le C \frac{\sqrt{a}}{\ln a}\;,$$
and
\begin{align*}
&\upsilon_a - \iota_a \ge (3/8) t_a - C \frac{\ln \ln a}{\sqrt a}\;, \\
 &\big|\theta_a-\upsilon_a - (3/8) t_a\big| \le C \frac{\ln \ln a}{\sqrt a}\;.
\end{align*}
\item[(iii)] \emph{Explosion and/or return to $\sqrt{a}$}: 
If after time $\theta_a$ the diffusion $X_a$ explodes before coming back to $+\sqrt a$, then $X_a$ stays below $-\sqrt a + 1$ in $[\theta_a, \zeta_a(1))$ and explodes within a time $(3/8)\, t_a + (\ln\ln a)^2 / (4\sqrt a)$.

If it does not explode, $X_a$ returns to $\sqrt{a}$ within a time $(3/4)\, t_a + C (\ln\ln a)^2 / \sqrt a$.
\end{itemize}
\end{proposition}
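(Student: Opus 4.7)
The plan is to handle the three items in turn, using the SDE structure and the explicit solution of the reversed-drift ODE $\dot\phi = \phi^2 - a$ as the deterministic skeleton for the exceptional excursions.

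For (i), I would rewrite the SDE for $Y := X_a - \sqrt a$, namely $dY = -(2\sqrt a\,Y + Y^2)\,dt + dB$, integrate on $[s,t]$ and solve for the average:
\begin{equation*}
\fint_s^t Y_u\,du = \frac{1}{2\sqrt a\,(t-s)}\Bigl(Y_s - Y_t - \int_s^t Y_u^2\,du + B(t)-B(s)\Bigr).
\end{equation*}
Proposition~\ref{Prop:Da} controls $|Y_s|$ at $s = 3t_a/8$, the stopping condition $t \le \tau_{-2\sqrt a}(X_a)$ yields $Y \ge -3\sqrt a$, and upward excursions $\{Y > h_a\}$ are short-lived by comparison with an Ornstein--Uhlenbeck process of relaxation rate $2\sqrt a$ and stationary variance $1/(4\sqrt a)$. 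The Brownian increment $|B(t)-B(s)|$ is controlled uniformly over the horizon $O(m(a))$ by a dyadic maximal inequality. Combining these gives $|\fint_s^t Y_u\,du| \le h_a$ with the required probability.

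For (ii), the key observation is that $\phi(t) := \sqrt a\,\tanh\bigl(-\sqrt a(t-\upsilon_a)\bigr)$ solves $\dot\phi = \phi^2 - a$ and passes through $0$ at $t = \upsilon_a$. Since the crossing from $+\sqrt a$ to $-\sqrt a$ is exponentially rare under $\bbP$, I would perform a Girsanov change of measure
\begin{equation*}
\frac{d\tilde{\bbP}}{d\bbP}\Big|_{\mathcal{F}_t} = \exp\Bigl(-2\int_0^t (a-X_a^2)\,dB - 2\int_0^t (a-X_a^2)^2\,du\Bigr),
\end{equation*}
so that under $\tilde{\bbP}$ the process $X_a$ satisfies $dX_a = (X_a^2-a)\,dt + d\tilde{B}$ and the crossing becomes typical. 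Concentration of $X_a$ around $\phi$ on $[\iota_a,\theta_a]$ then follows by a Grönwall argument on the linearization $d\tilde{Y} = 2\phi(t)\tilde{Y}\,dt + d\tilde{B}$, using the crucial fact that $\int_{\upsilon_a}^t 2\phi(s)\,ds = -2\ln\cosh(\sqrt a(t-\upsilon_a))$ is bounded \emph{above}, so fluctuations are damped by a factor $\cosh^{-2}$ rather than amplified. Converting back to $\bbP$ via a bounded Radon--Nikodym derivative on the event of crossing yields the pathwise bound $|X_a(t)-\phi(t)| \le C\sqrt a/\ln a$. The time estimates for $\upsilon_a - \iota_a$ and $\theta_a - \upsilon_a$ then follow by inverting $\tanh$: the levels $\sqrt a - h_a$ and $-\sqrt a + h_a$ are reached at ODE-time $\pm(3/8)t_a + O(\ln\ln a/\sqrt a)$ from $\upsilon_a$.

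For (iii), the post-$\theta_a$ behavior is decided by the sign of the local Brownian fluctuation at $-\sqrt a$, where the drift vanishes. If $X_a$ makes a definite excursion below $-\sqrt a$, the strongly negative drift $a - X_a^2$ pushes the diffusion to $-\infty$ along a trajectory close to the deterministic profile $-\sqrt a\coth\bigl(\sqrt a(\zeta_a(1)-t)\bigr)$, in analogy with the entrance phase of Proposition~\ref{Prop:Da}; this gives both the $L^\infty$ bound $X_a \le -\sqrt a + 1$ and the explosion time estimate. In the alternative scenario, $X_a$ follows $\sqrt a\tanh\bigl(\sqrt a(t-\theta_a^\ast)\bigr)$ back to $\sqrt a$ in ODE-time $(3/4)t_a + O((\ln\ln a)^2/\sqrt a)$. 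Both are typical paths that can be handled by standard concentration. The main obstacle of the whole statement is part (ii): obtaining the precision $C\sqrt a/\ln a$ uniformly on the crossing interval requires the fine interplay between the instability of the reverse ODE and the damping provided by the integrated linearization, which is what makes the Girsanov argument quantitatively tight.
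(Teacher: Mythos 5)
Your part (ii) is essentially the paper's own route: a Girsanov change of measure onto the reversed-drift diffusion $dH=(H^2-a)\,dt+dB$, with the density costing only a polynomial factor $e^{4\sqrt a T}=a^{O(1)}$ on the crossing event, followed by a deterministic sandwich around the $\tanh$ profile (the paper works with $Z=H-B$ squeezed between explicit solutions $Z_\pm$ rather than a Gr\"onwall linearization, but this is a cosmetic difference). The genuine problems are in (i) and (iii).

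For (i), the identity $2\sqrt a\,\fint_s^t Y_u\,du=\frac{1}{t-s}\bigl(Y_s-Y_t-\int_s^tY_u^2\,du+B(t)-B(s)\bigr)$ is correct, but the plan of bounding the Brownian increment and the quadratic term \emph{separately} cannot work, because $t$ ranges up to $\tau_{-2\sqrt a}(X_a)$, which is typically of order $m(a)=e^{\frac83 a^{3/2}(1+o(1))}$. To conclude you would need $|B(t)-B(s)|\le 2\sqrt a\,h_a\,(t-s)=2a^{1/4}\ln a\,(t-s)$ simultaneously for all $t$ in this exponentially long window; at scale $t-s=\Delta$ a single increment violates this with probability $e^{-c\sqrt a\,\ln^2 a\,\Delta}$, while the number of increments of that scale in $[0,m(a)]$ has logarithm of order $a^{3/2}$, so the dyadic union bound fails for every $\Delta\lesssim a/\ln^2a$. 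The two terms in fact cancel each other: a Brownian push of size $+x$ over a short interval lifts $Y$ to $\approx x$, and the subsequent relaxation (drift $\approx -Y^2$) returns $\int Y^2\approx x$ with the opposite sign. The paper sidesteps this entirely: it couples $X_a$ after time $\frac38 t_a$ with the stationary diffusion (Lemma \ref{Lemma:DiffXY}) and controls $\fint_0^tY$ by an occupation-time argument — Markov's inequality applied to $\fint\un_{\{Y\notin I_a(1/4)\}}$ at dyadic times, using the invariant measure of Lemma \ref{Lemma:InvMeas} (Lemma \ref{Lemma:ControlY}) — in which the Brownian increment never appears. Your OU heuristic for short upward excursions points in that direction, but it is not the argument you wrote down, and it says nothing about the contribution of deep downward excursions to $\int Y^2$.

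For (iii), ``standard concentration around a deterministic profile'' does not apply at $-\sqrt a$, which is an \emph{unstable equilibrium} of the drift: the relevant deterministic path there is the constant $-\sqrt a$, and the time the diffusion lingers near it is genuinely random. All the quantitative content of (iii) — the term $(\ln\ln a)^2/(4\sqrt a)$ and the probability $1-O(1/\ln\ln a)$, which is the bottleneck of the whole proposition (not part (ii), which holds with the better probability $1-O(1/\ln a)$) — comes from the exit problem from $[-\sqrt a-\delta,-\sqrt a+\delta]$: a scale-function/Green's-function computation (Lemma \ref{Lemma:ExploandHit}) gives an expected exit time $O(\ln\ln a/\sqrt a)$, and Markov's inequality then yields exit within $(\ln\ln a)^2/(4\sqrt a)$ with conditional probability $1-O(1/\ln\ln a)$. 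Only after leaving this window do the $\coth$/$\tanh$ profiles of Lemmas \ref{Lemma:Explo} and \ref{lem:return_sqrta} take over. Without that exit-time estimate your sketch produces neither the stated time bounds nor the stated probability.
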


\begin{remark}~ 
\begin{itemize}
\item In this proposition we do not bound from above $\upsilon_a - \iota_a$, as we do not need to control this time for our purposes.
\item In (iii), the time necessary to explode (resp. return to $\sqrt{a}$) is in fact $(3/8) t_a + O(\ln \ln a/\sqrt{a})$ (resp. $(3/4) t_a + O(\ln \ln a/\sqrt{a})$). The choice of $(\ln\ln a)^2/\sqrt{a}$ is arbitrary and any time of the form $C_a \ln\ln a/\sqrt{a}$ with $C_a \to \infty$ would give a probability going to $1$.
\end{itemize}
\end{remark}

The next proposition controls the difference between two diffusions $X_a$ and $X_{a + \eps}$, for $\eps$ not too large, during the first excursion of $X_a$. Again, the bounds are not optimal but they suffice for our purposes.
\begin{proposition}[Coupling during the excursion]\label{Prop:BoundsCross2}
Let $\eps = \eps_a \in (a^{-2/3},1)$. There exists some constant $C > 0$ such that with a probability at least $1-O(1/\ln a)$, the following holds for all $a$ large enough:

\smallskip

For all $t \in [\upsilon_a-\frac1{16} t_a,\upsilon_a + \frac1{16} t_a]$, $\big|X_{a+\eps}(t) - X_{a}(t)\big| \le 1\;,$

\medskip

For all $t \in [\upsilon_a + \frac1{16} t_a, \theta_a- \frac{1}{16} t_a]$, $X_{a+\eps}(t) \leq -\sqrt{a} +C \,a^{3/7}$,
\medskip

For all $t \in [\upsilon_a + \frac1{16} t_a, \theta_a]$, $X_{a+\eps}(t) \leq \sqrt{a} -1$.
%
\end{proposition}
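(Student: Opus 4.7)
The plan is to exploit the fact that since $X_a$ and $X_{a+\eps}$ share the same driving Brownian motion, the difference $D(t) := X_{a+\eps}(t) - X_a(t)$, which is nonnegative by the monotonicity of the Riccati flow, satisfies the \emph{deterministic} ODE
\[
\dot D = \eps - D(2X_a + D)\;.
\]
Duhamel's formula gives $D(t) = D(s)\,e^{-I(s,t)} + \eps\int_s^t e^{-I(u,t)}\,du$ with $I(u,t) = \int_u^t (2X_a + D)\,dv$, reducing the problem to estimating integrals of $X_a$. I will work on the event of Proposition \ref{Prop:BoundsCross1}, which furnishes the tanh approximation $X_a(u)\approx \sqrt{a}\tanh(-\sqrt{a}(u-\upsilon_a))$ on $[\iota_a,\theta_a]$ with error $O(\sqrt{a}/\ln a)$. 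The crucial identity $\int \sqrt{a}\tanh(-\sqrt{a}(\cdot-\upsilon_a))\,du = -(1/\sqrt{a})\ln\cosh(\sqrt{a}(\cdot-\upsilon_a))$ collapses the exponentials in the Duhamel formula to explicit ratios of $\cosh^2$.

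For bound (i), I would first show that $D(\iota_a) = O(1/\sqrt{a})$: on the long interval $[3t_a/8,\iota_a]$, part (i) of Proposition \ref{Prop:BoundsCross1} forces the time average of $2X_a$ to be at least $2(\sqrt{a}-h_a)$, so the Duhamel exponential damps $D$ to its attracting fixed point $\eps/(2\sqrt{a})$. Then on $[\iota_a,\upsilon_a + t_a/16]$ the $\ln\cosh$ primitive gives
\[
D(t) \le D(\iota_a)\,\frac{\cosh^2(\sqrt{a}(t-\upsilon_a))}{\cosh^2(\sqrt{a}(\iota_a-\upsilon_a))} + \eps\int_{\iota_a}^t \frac{\cosh^2(\sqrt{a}(t-\upsilon_a))}{\cosh^2(\sqrt{a}(u-\upsilon_a))}\,du\;,
\]
up to a multiplicative factor $\exp(O(1))$ absorbing the tanh error $O(\sqrt{a}/\ln a)$ integrated over a time $O(t_a)$. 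On the window $|\sqrt{a}(t-\upsilon_a)|\le \ln a/16$, both numerators are $\le a^{1/16}$, while $\cosh(\sqrt{a}(\iota_a-\upsilon_a))$ is much larger (since $\upsilon_a - \iota_a \approx 3t_a/8$); the integral term contributes $O(\eps\, t_a\, a^{1/8}) = o(1)$. Both terms are $o(1)$, yielding $D \le 1$.

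For bounds (ii) and (iii), I would use the same Duhamel representation starting at $s = \upsilon_a + t_a/16$ with $D(s) \le 1$ from step (i). Since $X_a < 0$ on $[\upsilon_a + t_a/16, \theta_a]$, the exponential amplifies $D$ at rate $\approx 2\sqrt{a}$, and for $t \le \theta_a - t_a/16$ the cosh ratio satisfies $\cosh^2(\sqrt{a}(t-\upsilon_a))/\cosh^2(\sqrt{a}(s-\upsilon_a)) \lesssim a^{3/8}$. Combining this with the tanh bound $X_a(t) \le -\sqrt{a} + O(a^{3/8})$ gives bound (ii), where the further loss from $3/8$ to $3/7$ accommodates the tanh errors propagated through the exponential. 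For bound (iii) on $[\theta_a - t_a/16, \theta_a]$ the tanh control degenerates, but since $X_{a+\eps}$ starts this subinterval near $-\sqrt{a} + O(a^{3/7})$, the deterministic ODE associated with $V_{a+\eps}$ (whose explicit tanh solution is available) takes time $\gtrsim t_a/4 \gg t_a/16$ to cross from there to $\sqrt{a}-1$; a Girsanov tail estimate on the Brownian fluctuation, analogous to part (iii) of Proposition \ref{Prop:BoundsCross1}, then ensures that $X_{a+\eps}$ stays below $\sqrt{a}-1$ with probability $1 - O(1/\ln a)$. The hard part will be tracking the tanh errors precisely enough through the exponential amplification in step (ii) to reach the stated exponent $3/7$.
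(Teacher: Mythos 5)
Your proposal follows essentially the same route as the paper's proof (Lemma 5.6 there): a Duhamel/Gronwall bound on the nonnegative difference $D=X_{a+\eps}-X_a$, whose drift is $\eps - D(X_a+X_{a+\eps})\le \eps-2X_aD$, combined with the explicit $\tanh$ envelopes for $X_a$ on the crossing and the identity $\int \sqrt a\tanh(\sqrt a\,\cdot)=\ln\cosh(\sqrt a\,\cdot)$. Two of your steps, however, would not survive as written.

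First, the claim $D(\iota_a)=O(1/\sqrt a)$ via ``damping to the fixed point $\eps/(2\sqrt a)$'' over $[\tfrac38 t_a,\iota_a]$ does not follow from the time-averaged control of Proposition 3.4(i): that control bounds $\int_{3t_a/8}^{v}X_a$ for every $v$, but the forcing term in Duhamel requires a lower bound on $\int_u^t X_a$ for every \emph{intermediate} $u$, and writing $\int_u^t=\int_s^t-\int_s^u$ produces an error $2h_a(u-s)$ which is uncontrollable since $\iota_a-\tfrac38 t_a$ is typically of order $m(a)$. Fortunately the sharp initial estimate is unnecessary: a crude bound $D\lesssim\sqrt a$ at the start of the final descent suffices, because the contraction accumulated between the last departure from $\sqrt a-\delta$ and $\upsilon_a\pm\tfrac1{16}t_a$ is already a factor $a^{-5/8+o(1)}$; this is exactly what the paper does (it starts the estimate at the beginning of the crossing bridge and takes $Z(0)\le 10\sqrt a$).

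Second — the point you yourself flag as ``the hard part'' — the uniform error $O(\sqrt a/\ln a)$ in the statement of Proposition 3.4(ii) is too coarse to reach bound (ii): since $\sqrt a/\ln a\gg a^{3/7}$, writing $X_{a+\eps}=X_a+D\le \sqrt a\tanh(-\sqrt a(t-\upsilon_a))+C\sqrt a/\ln a+O(a^{3/8+o(1)})$ only yields $-\sqrt a+O(\sqrt a/\ln a)$. One must instead return to the Girsanov comparison underlying that proposition and use the explicit envelopes $Z_\pm\pm M$ with \emph{additive} error $M=O(\ln a/a^{1/4})$, which give $X_a(t)\le-\sqrt a+O(a^{3/8+o(1)})$ pointwise on $[\upsilon_a+\tfrac1{16}t_a,\theta_a-\tfrac1{16}t_a]$; adding $D=O(a^{3/8+o(1)})$ then yields $-\sqrt a+Ca^{3/7}$. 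Finally, for bound (iii) no Girsanov or Brownian-fluctuation estimate is needed: the noise cancels in $D$, so $\dot D\le a+\eps-(D-\sqrt a)^2$ holds deterministically up to $\theta_a$ (where $X_a\ge-\sqrt a$), and comparison with the explicit $\tanh$ solution started from $D(\theta_a-\tfrac1{16}t_a)=O(a^{3/7})$ shows the crossing to $\sqrt a-1$ takes time $\tfrac14 t_a(1+o(1))>\tfrac1{16}t_a$. A last minor point: conditioning on the full event of Proposition 3.4 only gives probability $1-O(1/\ln\ln a)$ because of its part (iii); to reach the stated $1-O(1/\ln a)$ one should condition only on the crossing estimates (parts (i)--(ii)), which is all you actually use.
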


As we will see later on, the above estimates provide a good control on the first eigenfunction over the time-interval $[\iota_a, \theta_a]$ (for some well chosen $a$). We will control the first eigenfunction after time $\theta_a$ by using the time-reversed diffusions. The idea is that after the stopping time $\theta_a$, the Brownian motion makes no exceptional event with large probability so that the time reversal $\hat X_a$ should oscillate around $-\sqrt{a}$.

\begin{proposition}[Control after time $\theta_a$]\label{Prop:BoundsCross3}
For any $c \ge 1$ and any $L=L_a  \leq c\; m(a)$ the following holds true with probability greater than $1-(\ln a)^{-1/2}$ for all $a$ large enough. If $L \ge \theta_a+ 10 \,t_a$ and $\hat X_a$ does not explode before time $L- \theta_a - 10 \,t_a$, then it does not explode in the time-interval $[0,L - \theta_a]$ and we have the bounds:
\begin{align*}
 \hat{X}_a(L-\theta_a) \le -\sqrt{a} + h_a\;, \qquad \sup_{t \in [\theta_a,L]}\fint_{\theta_a}^{t} \hat{X}_a(L-s) ds \le -\sqrt{a} + h_a\;.
\end{align*}
\end{proposition}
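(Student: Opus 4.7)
The plan is to reduce the statement to one about the forward diffusion on a deterministic time interval, by combining the strong Markov property of $B$ at $\theta_a$ with the distributional identity $(\hat X_a(t))_{t\ge 0}\stackrel{d}{=}(-X_a(t))_{t\ge 0}$ (easily verified from the SDEs). Since $\theta_a$ is a stopping time for the Brownian filtration of $B$, the process $(B(\theta_a+s)-B(\theta_a))_{s\ge 0}$ is a Brownian motion independent of $\cF_{\theta_a}$. Hence, conditionally on $\cF_{\theta_a}$, the process $\hat X_a$ on $[0,\tilde L]$ with $\tilde L:=L-\theta_a\in[10\,t_a,c\,m(a)]$ has the same law as $-Y$, where $Y$ is an independent copy of $X_a$. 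After the change of variables $u=L-s$ in the integrals appearing in the statement, the conclusions of the proposition reduce to the following claim: on the event that $Y$ does not explode on $[0,\tilde L-10\,t_a]$, with probability at least $1-(\ln a)^{-1/2}$, $Y$ does not explode on $[0,\tilde L]$, $Y(\tilde L)\ge \sqrt{a}-h_a$, and $\inf_{u_0\in[0,\tilde L]}\fint_{u_0}^{\tilde L} Y(u)\,du\ge \sqrt{a}-h_a$.

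On the non-explosion event, Propositions~\ref{Prop:Da} and~\ref{Prop:BoundsCross1}(i) together yield, with probability at least $1-O(1/\ln a)$: the entrance bound $Y(t)\ge\sqrt{a}$ on $(0,3 t_a/8]$ and the oscillation bound $\fint_{3 t_a/8}^{u} Y(s)\,ds \in[\sqrt{a}-h_a,\sqrt{a}+h_a]$ for every $u\in[3 t_a/8,\tau]$, where $\tau:=\tau_{-2\sqrt{a}}(Y)\wedge(\tilde L-10\,t_a)$. By part (iii) of Proposition~\ref{Prop:BoundsCross1}, on the non-explosion event one has $\tau=\tilde L-10\,t_a$ up to an $O(t_a)$ error (any descent below $-\sqrt a+1$ triggers an explosion within a time of order $t_a$). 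Linearizing the drift $a-Y^2$ around $\sqrt{a}$ yields an effective Ornstein--Uhlenbeck equation with restoring rate $2\sqrt{a}$ and stationary standard deviation $(4\sqrt{a})^{-1/2}\ll h_a$; single-time Gaussian concentration at $\tau$ upgrades the averaged bound to $Y(\tau)\in[\sqrt{a}-h_a/2,\sqrt{a}+h_a/2]$ with overwhelming probability. Applying the strong Markov property of $Y$ at $\tau$, Propositions~\ref{Propo:DiffClose} and~\ref{propo:blowupexpo} show that the probability of an explosion in the next $10\,t_a$ is at most $O(t_a/m(a))$ (super-exponentially small in $a^{3/2}$), and the oscillation bound extends to $[3 t_a/8,\tilde L]$.

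The pointwise bound $Y(\tilde L)\ge \sqrt{a}-h_a$ now follows from single-time Gaussian concentration at $\tilde L$. For the averaged bound, I split the window $[u_0,\tilde L]$ at $\tau$: when $u_0\ge\tau$ the window is contained in the final segment, where the Markov--Gaussian argument yields pointwise control $Y\ge \sqrt{a}-h_a$; when $u_0\le 3 t_a/8$, the entrance bound $Y\ge\sqrt{a}$ on the initial segment combined with the extended oscillation bound on $[3 t_a/8,\tilde L]$ gives the inequality directly; and in the intermediate case $u_0\in(3 t_a/8,\tau)$, one writes $\int_{u_0}^{\tilde L} Y=\int_{3 t_a/8}^{\tilde L} Y-\int_{3 t_a/8}^{u_0} Y$ and combines the oscillation bound with the pointwise control on the final window. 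The main obstacle is this last step, since a naive application of the oscillation bound produces a multiplicative error that degenerates as $u_0\to\tau$; carefully matching the time-averaged estimate on the bulk with the pointwise estimate near the right endpoint is the most delicate part of the argument. The slack between the probability $(\ln a)^{-1/2}$ in the conclusion and the $O(1/\ln a)$ errors of the various input estimates is precisely what leaves room for the required union bound.
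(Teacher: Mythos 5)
Your opening reduction (strong Markov property at the stopping time $\theta_a$, so that conditionally on $\cF_{\theta_a}$ the process $\hat X_a$ on $[0,L-\theta_a]$ is a time-reversed diffusion run for a deterministic time) is exactly the paper's first step. But the core of the proposition is the uniform-in-window averaged bound $\sup_{t}\fint_{\theta_a}^{t}\hat X_a(L-s)\,ds\le -\sqrt a+h_a$, and here your argument has a genuine gap that you yourself flag but do not close. After the change of variables, you need $\fint_{u_0}^{\tilde L}Y\ge\sqrt a-h_a$ for \emph{every} left endpoint $u_0$, with the right endpoint $\tilde L$ fixed and $\tilde L$ possibly as large as $c\,m(a)=e^{(8/3)a^{3/2}(1+o(1))}$. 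The only averaged estimate you invoke, Proposition~\ref{Prop:BoundsCross1}(i), is anchored at the \emph{left} endpoint $\frac38 t_a$. Your proposed subtraction $\int_{u_0}^{\tilde L}Y=\int_{3t_a/8}^{\tilde L}Y-\int_{3t_a/8}^{u_0}Y$ yields, after dividing by $\tilde L-u_0$, an error of order $h_a\,(\tilde L+u_0)/(\tilde L-u_0)$, which blows up by an exponentially large factor for $u_0$ in the bulk; this is not a delicate matching issue, it is a failure of the decomposition. The paper's resolution is precisely the ingredient your plan omits: couple $\hat X_a$ with a \emph{stationary} diffusion $\hat Y$ (ordered, $\hat X_a\le\hat Y$ up to the explosion of $\hat Y$). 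Stationarity makes the expected occupation time of $I_a(1/4)^c$ over \emph{any} window equal to $\mu_a(I_a(1/4)^c)$, so the dyadic windows in the Markov-inequality argument of Lemma~\ref{Lemma:ControlY} can be anchored at the right endpoint, and a union bound over the $O(a^{3/2})$ scales gives the uniform averaged bound with probability $1-e^{-c(\ln a)^2}$. The comparison and synchronization (Lemma~\ref{Lemma:DiffXY}) then transfer the bound to $\hat X_a$, and a separate contradiction argument (events $\cA$, $\cB$, $\cD$ in the paper) shows that under the non-explosion hypothesis the comparison is valid on all of $[0,L-\theta_a]$.

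Two further problems. First, the probability accounting: Proposition~\ref{Prop:BoundsCross1} holds only with probability $1-O(1/\ln\ln a)$, which is \emph{weaker} than the required $1-(\ln a)^{-1/2}$ (since $(\ln a)^{-1/2}\ll 1/\ln\ln a$); any proof of Proposition~\ref{Prop:BoundsCross3} must avoid routing through it and instead use the sharper inputs (Lemmas~\ref{Lemma:InvMeas}, \ref{Lemma:RBM}, \ref{Lemma:ControlY}, \ref{Lemma:DiffXY}), as the paper does. Second, you phrase several estimates as holding ``on the non-explosion event,'' effectively conditioning the law of $Y$ on non-explosion; the cited unconditional estimates do not automatically survive this conditioning. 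The statement to prove is an implication valid on an unconditional high-probability event, and the argument should be organized that way.
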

\begin{remark}\label{rem:afterthetaforXa+eps}
For any $\eps = \eps_a  \in (a^{-2/3},1)$, a straightforward adaptation of the proof of Proposition \ref{Prop:BoundsCross3} shows that under the same conditions we have the further bounds:
\begin{align*}
 -\sqrt{a} - h_a \le \hat{X}_{a+\eps}(L-\theta_a)\;,\qquad -\sqrt{a} - h_a \le \inf_{t \in [\theta_a,L-(3/8)t_a]}\fint_{\theta_a}^{t} \hat{X}_{a+\eps}(L-s) ds\;.
\end{align*}
with probability greater than $1-(\ln a)^{-1/2}$.
\end{remark}

For the sake of readability, we postpone the proofs of those technical propositions to Section \ref{Sec:Crossing} and proceed to the proof of the main results of this paper.

\subsection{Preliminaries for the proof of Theorems \ref{Th:Main}, \ref{Th:Shape} and \ref{Th:LocalMaxZeros}}

The collection (indexed by $L$) of random variables
$$\big(4\sqrt{a_L}(\lambda_k + a_L),U_{k}/L, m_{k}\big)_{k \geq 1}\;,$$
is tight since $\mathcal{Q}_L$ is tight, see Section \ref{Sec:Poisson}, and since the remaining coordinates belong to the product space $[0,1]^\N\times \cP^\N$ which is compact. Let $\big(\lambda_k^{\infty},U^{\infty}_{k}, m^{\infty}_k\big)_{k \geq 1}$ be the limit of a converging subsequence. We already know that $(\lambda_k^{\infty})_{k \geq 1}$ is a Poisson point process of intensity $e^x dx$. Our goal is to show that for any $k\ge 1$, $(U^{\infty}_{1},\ldots,U^{\infty}_k)$ is i.i.d.~uniform over $[0,1]$ and independent of $\cQ_\infty$, that $m^\infty_i = \delta_{U^\infty_i}$ for every $i\in \{1,\cdots, k\}$ and to obtain the precise description of the eigenfunctions of Theorems \ref{Th:Shape} and \ref{Th:LocalMaxZeros}.

From now on, $k$ is fixed. As in Subsection \ref{subsec:proofeigenvalues}, we subdivide $[0,L]$ into $2^n$ macroscopic sub-intervals $[t^n_j, t^n_{j+1}]$ where $t_j^n := j 2^{-n} L$ and denote by $X_a^j := X_a^{t_j^n}$ the diffusion starting from $+\infty$ at time $t_j^n$ and by $\hat X_a^{j+1} := \hat X_a^{L - t_{j+1}^n}$ the time-reversed diffusion starting from $-\infty$ at time $L-t_{j+1}^n$. The underlying idea of the proof is that, on every time-interval $[t^n_j, t^n_{j+1}]$, the diffusion $X_a^j$ is a faithful approximation of $X_a$ until their first explosions and the content of Propositions \ref{Prop:BoundsCross1}, \ref{Prop:BoundsCross2} and \ref{Prop:BoundsCross3} can be applied to $X_a^j$ and  $\hat X_a^{j+1}$ (on an interval of length $2^{-n} L$ instead of $L$).
\begin{remark}
The aforementioned approximation is supported by the following heuristics: the restriction of every eigenfunction of $\cH_L$ to a small interval surrounding its maximum coincides, up to small errors, to the principal eigenfunction of the operator $\cH_L$ restricted to that interval.
\end{remark}

\medskip

We gather in the following event the ingredients we need to approximate accurately the Riccati transforms of the eigenfunctions by ``typical'' diffusions. Since we want very precise control simultaneously on several diffusions and since one has to rule out many undesired boundary effects, its definition is quite long. At first reading, one may skip the details and proceed to the next subsection.
\medskip

For any $\eps >0$, we consider the following discretization of mesh $\eps / \sqrt a_L$ of the interval $[a_L - 1/(\eps \sqrt a_L), a_L + 1/(\eps \sqrt a_L)]$:
$$ M_L := \left\{ a\in \Big[a_L - \frac1{\eps \sqrt a_L}, a_L + \frac1{\eps \sqrt a_L}\Big]:\; \exists n\in \bbZ,\; a=a_L + \frac{n \eps}{\sqrt{a_L}}\right\}\;.$$

From now on, we will use the notation $t_L := t_{a_L} = \ln(a_L)/\sqrt{a_L}$. Notice that, for all $a \in M_L$, the difference $t_L - t_a = O\big(\ln(a_L) / (\eps a_L^2)\big)$ is negligible compared to $1/\sqrt{a_L}$ so that the estimates already obtained carry through without modifications upon replacing $t_a$ by $t_{L}$. 

\medskip

Let $\cE(n,\eps)$ be the event on which there exists a random subset
$$\cA := \{a_k < a'_k \le a_{k-1} < a'_{k-1} \le \ldots \le a_1 < a'_1\}$$
of the grid $M_L$ such that the following conditions (a), (b) and (c) are fulfilled.

\medskip

(a) \textbf{Squeezing of the k first eigenvalues:} We have (see Figure \ref{Fig:Squeezing})
$$ a_k \le -\lambda_k < a'_k \le a_{k-1} \le \ldots < a'_2 \le a_1 \le -\lambda_1 < a'_1\;.$$

\smallskip

(b) \textbf{Control of the excursions and explosions:} For all $a\in \cA$ and all $j\in \{0,\ldots,2^{n}-1\}$ we have:\begin{enumerate}[label=(b)-(\roman*)]
\smallskip
\item None of the explosion times of $X_a$ fall at distance less than $2^{-2n} L$ from any $t_j^n$. \label{(b)-(i)}
\item Over the time-interval $[t_j^n,t_{j+1}^n]$, the diffusions $X_a$ and $X^j_a$ start at most one excursion to $-\sqrt{a}$ that hits $-\sqrt{a}$ before time $t_{j+1}^n$. Over the first and last time-intervals (i.e. $j=0$ and $j=2^n-1$), the diffusion $X^j_a$ does not hit $-\sqrt a$. \label{(b)-(ii)}
\item If $X_a^j$ explodes on $[t_j^n,t_{j+1}^n]$, then neither $X_a^{j-1}$ nor $X_a^{j+1}$ explodes on $[t_{j-1}^n,t_{j}^n]$ and $[t_{j+1}^n,t_{j+2}^n]$.\label{(b)-(iii)}
\item If $X_a^j$ explodes on $[t_j^n,t_{j+1}^n]$, then so does $X_{a_k}^j$ and their explosion times lie at a distance at most $(\ln \ln a_L)^2/(3\sqrt{a_L})$ from each other. \label{(b)-(iv)}
\end{enumerate}
and similarly for the time reversed diffusions (up to obvious modifications in the statement due to the identity in law between $X_a$ and $-\hat{X}_a$).

\smallskip

(c) \textbf{Synchronization and typical diffusions:} For all $a\in \cA$ and all $j \in \{0,\ldots,2^{n}-1\}$ we have:\begin{enumerate}[label=(c)-(\roman*)]
\smallskip
\item \emph{Synchronization of $X_a$ and  $X^j_a$:} if $t_j^n \in [\zeta_a(l-1),\zeta_a(l))$ for some $l \in \{1,\cdots,k\}$ then
\begin{align*}
&\big| X_a(t) - X_a^{j}(t)\big| \le h_a\;,\quad \forall t \in [t_j^n+\frac38 t_L,\tau^l_{-2\sqrt{a}}(X_a)]\;,\\
&\big|\tau_{-\infty}(X_a^{j}) - \zeta_a(l)\big| < \frac{C}{\sqrt{a}}\;,
\end{align*}
where $\tau^l_{-2 \sqrt{a}}(X_a) := \inf \{t \geq \zeta_a(l-1)\::\: X_a = -2 \sqrt{a}\}$.\label{(c)-(i)}
\item \emph{Typical behavior of $X^j_a$ and $X_a$:} The diffusions $X^j_a$ and $X_a$ follow the behavior described in Proposition \ref{Prop:BoundsCross1} and satisfy the conditions of the event $\cD_a^k$.\label{(c)-(ii)}
\item \emph{Coupling:} for all $a' \in \cA$, $X^j_a$ and $X^j_{a'}$ follow the behavior described in Proposition \ref{Prop:BoundsCross2}.\label{(c)-(iii)}
\item \emph{Time-reversal:} the diffusion $\hat X^{j+1}_a$ follows the behavior described in Proposition \ref{Prop:BoundsCross3}.\label{(c)-(iv)}
\end{enumerate}
and similarly for the time-reversed diffusions.

 \begin{figure}[!h]
\includegraphics[width=11cm]{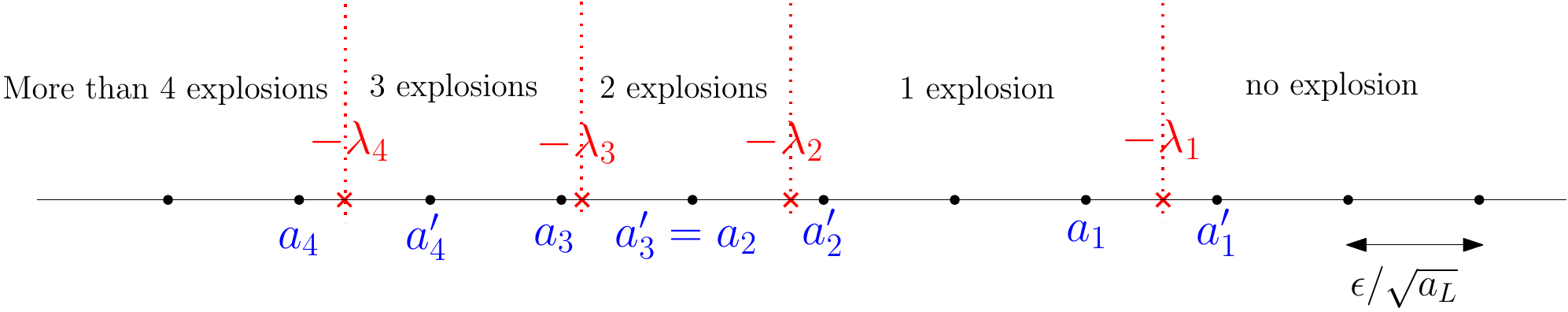}
\caption{Locations of the (opposite of the) eigenvalues together with their approximations $a_k$ and $a'_k$. Black dots correspond to points of $M_L$.}\label{Fig:Squeezing}
\end{figure}

\begin{proposition}\label{Propo:cE}
There exists a constant $C>0$ such that for all $\eps$, we have
$$ \varliminf_{n\to\infty}\varliminf_{L\to\infty} \P(\cE(n,\eps)) \ge 1-C\eps\;.$$
\end{proposition}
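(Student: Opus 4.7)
My plan is to treat the three conditions (a), (b), (c) in the definition of $\cE(n,\eps)$ separately: only (a) will contribute the slack $C\eps$, while conditions (b) and (c) will hold with probability tending to $1$ as first $L\to\infty$ and then $n\to\infty$, so they disappear when the two iterated $\varliminf$ are taken. For (a) I will invoke the Poisson convergence already proved in Section~\ref{Sec:Poisson}: the rescaled $k$-tuple $(4\sqrt{a_L}(\lambda_i+a_L))_{1\le i\le k}$ converges in law to the $k$ smallest points of a PPP of intensity $e^x dx$. Because this intensity is integrable near $-\infty$ and bounded on compacts, the probability that $\lambda_1^\infty<-4/\eps$, that $\lambda_k^\infty>4/\eps$, or that two of the $\lambda_i^\infty$ lie within $4\eps$ of each other, is bounded by $C_k\eps$ for $\eps$ small (the gap estimate uses that a PPP on a bounded window with bounded intensity has probability $O(\delta)$ of containing two points at distance $\le\delta$). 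On the complementary event, each $-\lambda_i$ sits in $[a_L-1/(\eps\sqrt{a_L}),a_L+1/(\eps\sqrt{a_L})]$ and consecutive pairs $-\lambda_{i+1}<-\lambda_i$ are separated by strictly more than the mesh $\eps/\sqrt{a_L}$; taking $a_i\in M_L$ to be the largest grid point $\le-\lambda_i$ and $a'_i\in M_L$ the next one (with possibly $a_i=a'_{i+1}$) immediately yields the chain $a_k<a'_k\le a_{k-1}<\cdots<a'_1$ and the squeezing $a_i\le-\lambda_i<a'_i$ required by~(a).

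For (b) and (c), the crucial observation is that $\cA\subset M_L$ with $|M_L|=O(1/\eps^2)$, so I can union-bound simultaneously over every $a\in M_L$ and every $j\in\{0,\ldots,2^n-1\}$, thereby bypassing the fact that $\cA$ itself depends on the coupled diffusions. For each fixed pair $(a,j)$, conditions \ref{(b)-(i)}--\ref{(b)-(iii)} are straightforward consequences of Proposition~\ref{Prop:PoissonExplo}: the explosion times of $X_a$ behave as a PPP of intensity $1/m(a)\asymp_\eps 1/L$, so the probability that an explosion falls within $2^{-2n}L$ of a dyadic time, or that two excursions reaching $-\sqrt a$ occur in one (resp.\ in two adjacent) sub-intervals of length $2^{-n}L$, is $O_\eps(2^{-n})$. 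Condition \ref{(b)-(iv)} follows from Proposition~\ref{Prop:BoundsCross2} applied with $\eps_a=a-a_k\le 2k\eps/\sqrt{a_L}\in(a_L^{-2/3},1)$: $X_a^j$ and $X_{a_k}^j$ are forced into the same exceptional descent and then both into the explosion regime, so combining with the explosion-time estimate of Proposition~\ref{Prop:BoundsCross1}(iii) their explosion times differ by $O((\ln\ln a_L)^2/\sqrt{a_L})$. Conditions \ref{(c)-(i)}--\ref{(c)-(iv)} are precisely the statements of Propositions~\ref{Propo:DiffClose}, \ref{Prop:Da}, \ref{Prop:BoundsCross1}, \ref{Prop:BoundsCross2} and~\ref{Prop:BoundsCross3} applied to $X_a^j$, each with failure probability $O(1/\ln a_L)$. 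The time-reversed analogues follow from the distributional identity $(\hat X_a)_{a\in\R}\stackrel{d}{=}(-X_a)_{a\in\R}$ recalled in Section~\ref{Sec:Reversal}.

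Summing these errors over $a\in M_L$ and $j\in\{0,\ldots,2^n-1\}$ produces a total failure probability for (b) and (c) of order $O_\eps(2^n/\ln\ln L)+O_\eps(2^{-n})$, which goes to $0$ in the order $L\to\infty$, then $n\to\infty$. Combining with the bound $C_k\eps$ coming from (a) yields $\varliminf_n\varliminf_L\bbP(\cE(n,\eps))\ge 1-C_k\eps$, as desired. The main technical difficulty, in my view, is organizational rather than genuinely probabilistic: the definition of $\cE(n,\eps)$ bundles a large number of conditions, and the real work lies in checking that the union bound over the deterministic grid $M_L$ (of polynomial size in $1/\eps$) remains affordable against the $O(1/\ln\ln L)$ per-pair error rates from Section~\ref{subsec:fineestimates}, and in verifying that the specific numerical constants in (b)-(iv) are actually delivered by the quoted propositions.
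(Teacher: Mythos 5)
Your overall architecture matches the paper's: part (a) from the Poisson convergence of the rescaled eigenvalues, parts (b)--(c) by a union bound over the deterministic grid $M_L$ (of size $O(1/\eps^2)$) and over $j$, using the per-pair estimates of Sections \ref{Sec:Poisson}--\ref{subsec:fineestimates}, with only (a) contributing the $O(\eps)$ slack. This is indeed how the paper proceeds (it only writes out part (b) explicitly, in Subsections \ref{subsec:proofexcursions1}--\ref{subsec:proofexcursions2}). However, there is a genuine gap in your treatment of \ref{(b)-(iv)}, which is precisely the item the paper's remark singles out as ``more involved''. Proposition \ref{Prop:BoundsCross2} does \emph{not} force the upper diffusion $X_a^j$ ``into the explosion regime'': it only bounds $X_{a}^j$ from \emph{above} during the excursion of the lower diffusion $X_{a_k}^j$ (by $-\sqrt{a_k}+Ca^{3/7}$ in the middle and by $\sqrt{a_k}-1$ up to $\theta_{a_k}$), and says nothing after $\theta_{a_k}$. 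The whole difficulty is that at time $\theta_{a_k}$ the upper diffusion may sit anywhere in $[-\sqrt a,\sqrt{a_k}-1]$ and may then climb back to $+\sqrt a$ without exploding — this scenario must occur for some pairs, since the numbers of explosions differ across $\cA$. The paper's Lemma \ref{Lemma:Xaa0} handles this by a case analysis: if $X_a^j(\theta_{a_k})$ is within $\delta$ of $-\sqrt a$, Lemma \ref{Lemma:ExploandHit} controls the exit of that neighborhood and Lemma \ref{Lemma:Explo} the subsequent explosion, which is what delivers the $(\ln\ln a_L)^2/(3\sqrt{a_L})$ bound; otherwise Lemma \ref{lem:return_sqrta} shows $X_a^j$ returns to $\sqrt a$ without hitting $-\sqrt a$, and Lemma \ref{Lemma:Ordering} shows it re-passes above the restarted $X_{a_k}^j$, so that (combined with \ref{(b)-(ii)}) it cannot explode later in the sub-interval. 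None of this is recoverable from Propositions \ref{Prop:BoundsCross1}(iii) and \ref{Prop:BoundsCross2} alone, so your one-line justification does not close this step.

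Two smaller points. First, for \ref{(b)-(ii)} you invoke Proposition \ref{Prop:PoissonExplo}, but that proposition governs \emph{explosion} times only, whereas \ref{(b)-(ii)} counts \emph{excursions reaching $-\sqrt a$}, most of which do not explode; the paper needs a separate (easy) Poisson limit for the hitting times of $-\sqrt a$, obtained by adapting the fixed-point argument of \cite{AllezDumazTW}, giving intensity $2/m(a)$ rather than $1/m(a)$. Second, your gap estimate for (a) as stated uses ``bounded intensity on a bounded window'', but the intensity $e^x$ is of order $e^{4/\eps}$ on $[-4/\eps,4/\eps]$; the correct route is either to restrict to a window depending only on $k$, or to compute directly that $\P(\lambda^\infty_{i+1}-\lambda^\infty_i\le\delta)\le 2\delta\,\E[e^{\lambda_i^\infty}]=2\delta i$, which does give the required $C_k\eps$ bound. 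These two are repairable with modest effort; the \ref{(b)-(iv)} issue is the one that requires a genuinely new argument beyond the propositions you cite.
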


\noindent The proof of this proposition is postponed to Section \ref{Sec:Crossing}.

\begin{remark}
It is not difficult to prove that properties (a), \ref{(b)-(i)}, \ref{(b)-(ii)} and \ref{(b)-(iii)} hold with large probability thanks to the convergence of the associated point processes. However, the proof of \ref{(b)-(iv)} is more involved and rely on coupling arguments for different parameters $a$'s. Note that \ref{(b)-(iii)} and \ref{(b)-(iv)} are used only for the localization of the eigenfunctions $\varphi_i$ for $i \geq 2$ proved in paragraph \ref{paragraph:nexteigen}.
\end{remark}

\begin{remark}
Note that \ref{(b)-(ii)} implies that $X_a$ explodes at most once in each interval $[t_j^n,t_{j+1}^n]$. Therefore, \ref{(b)-(i)} together with the synchronization \ref{(c)-(i)} imply that $X_a$ explodes in $[t_j^n,t_{j+1}^n]$ iff $X^j_a$ explodes in $[t_j^n,t_{j+1}^n]$. We deduce that the total number of explosions of $X_a$ on $[0,L]$ equals the sum over $j$ of the number of explosions of $X_a^j$ on $[t_j^n,t_{j+1}^n]$ on the event $\cE(n,\eps)$.
\end{remark}

\begin{remark}
At a technical level, the symmetries that we rely on are similar in spirit to those appearing in~\cite{Hendrik} where the invariant measure of the stochastic Allen-Cahn equation is studied.
\end{remark}

In the next subsection, we focus on the first eigenfunction: we show that $m_1^\infty = \delta_{U_1^\infty}$, that the shape of $\varphi_1$ around $U_1$ is asymptotically given by the inverse of a hyperbolic cosine and that $\varphi_1$ decays at the exponential rate $\sqrt{a_L}$ from its localization center. In the subsequent subsection, we prove the same results for the $i$-th eigenfunction with $i\in \{2,\ldots,k\}$, together with the behavior of its local maxima and zeros. In the last subsection, we prove that the r.v. $(U_1^\infty,\ldots,U_k^\infty)$ are i.i.d.~uniform over $[0,1]$ and independent of $\cQ_\infty$. These three subsections therefore conclude the proof of Theorems \ref{Th:Main}, \ref{Th:Shape} and \ref{Th:LocalMaxZeros}.

\subsection{The first eigenfunction $\varphi_1$.}\label{subsec:1steigen}

Recall that we denote the Riccati transform of $\varphi_1$ by $\Chi_1= X_{-\lambda_1}$. Let $a := a_{1}$ and $a' := a'_{1}$ be the approximations of $-\lambda_1$. We work deterministically on the event $\cE(n,\eps)$ for some arbitrary $\eps >0$. Recall that $X_{a}$ and $\hat X_{a}$ explode exactly one time on $[0,L]$ while $X_{a'}$ and $\hat X_{a'}$ do not explode. Moreover, these diffusions are ``typical'' in the sense of (b) and (c) of $\cE(n,\eps)$.

\medskip

There is an interval $[t_j^n,t_{j+1}^n)$ that contains the (unique) explosion time of $X_a$ which occurs in $[t_j^n + 2^{-2n} L,t_{j+1}^n  - 2^{-2n} L]$ by \ref{(b)-(i)}. Using \ref{(c)-(i)}, we know that $X_a^j$ and $X_a$ synchronize after time $t_j^n + (3/8) t_L$ and that $X^j_{a}$ explodes as well (only one time, by \ref{(b)-(ii)}). Moreover, using Lemma \ref{Prop:Reversal} on the interval $[t_j^n,t_{j+1}^n]$, we deduce that $\hat X^{j+1}_{a}(L-\cdot)$ explodes one time in $[t_j^n, t_{j+1}^n]$ and by (b) and (c) again, that the time-interval $[t_j^n + 2^{-2n} L,t_{j+1}^n - 2^{-2n} L]$ contains the explosion time of $\hat X_{a}(L-\cdot)$ as well.

The diffusion $X^{j}_a$ makes only one excursion to $-\sqrt{a}$ on the time interval $[t^n_j,t^n_{j+1})$ (\ref{(b)-(ii)}) which therefore explodes to $-\infty$ before coming back to $\sqrt{a}$: let us denote by $\theta$ its first hitting time of $-\sqrt a$ after time $t_j^n$, by $\upsilon$ its last hitting time of $0$ before time $\theta$ and by $\iota$ its last hitting time of $\sqrt{a}$. The same holds for $\hat X^{j+1}_{a}$ and we use the notations $\hat \theta$, $\hat \upsilon$ and $\hat \iota$. See Figure \ref{fig:TypicalXahatXa} for an illustration of the above diffusions.

\begin{figure}[!h]
\centering
\includegraphics[width=11cm]{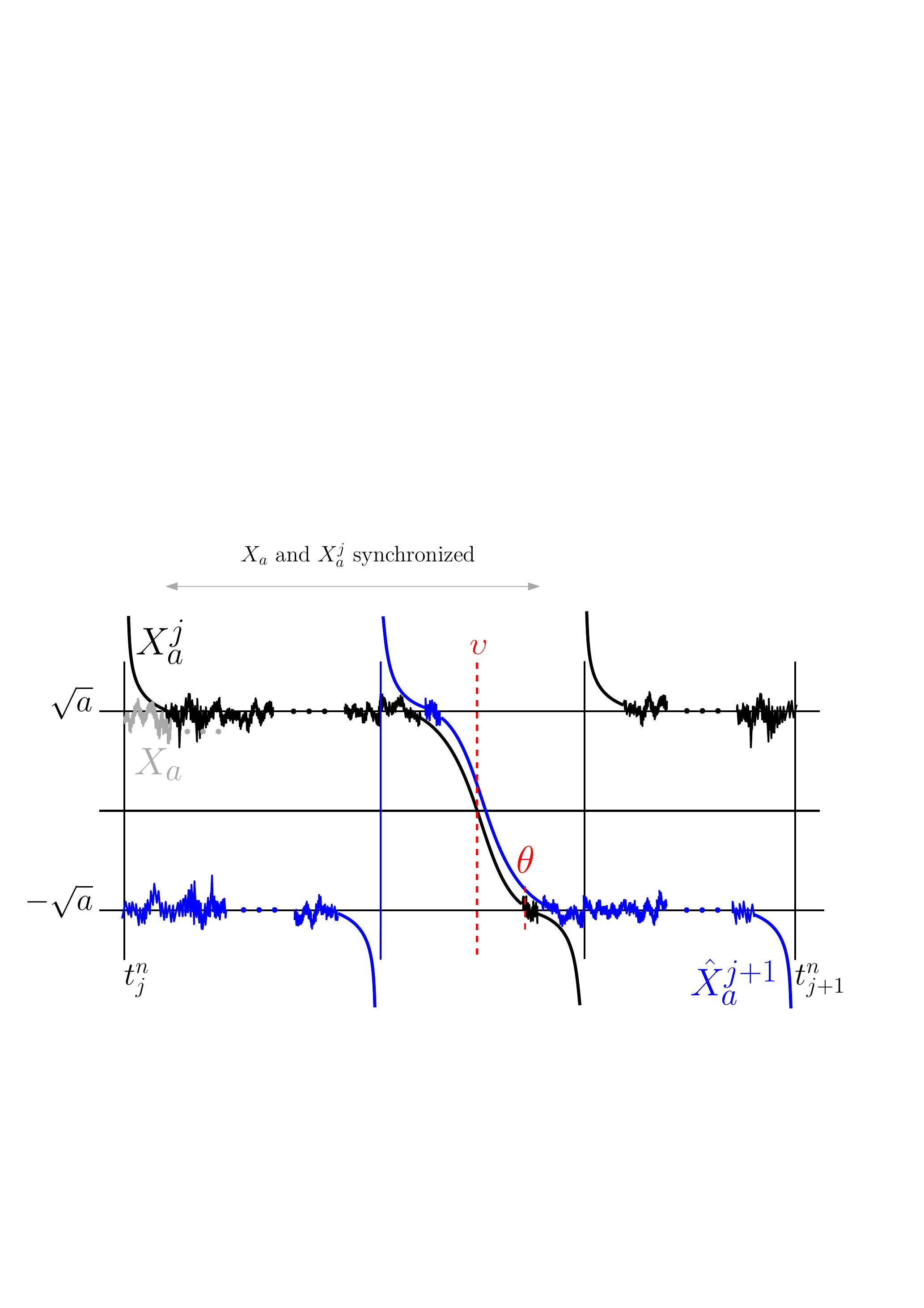}
\caption{Typical behavior of $X_{a}$, $X^j_{a}$ and $\hat X^{j+1}_{a}$ on the time-interval $[t^n_j,t^n_{j+1}]$}\label{fig:TypicalXahatXa}
\end{figure}

\medskip

For technical reasons, we are also led to set
$$ {\iota_+} := \max(\iota, t_j^n + (3/8)t_L)\;,\quad \hat{\iota}_+ := \max(\hat{\iota}, L-t_{j+1}^n + (3/8)t_L)\;.$$
Indeed, we will apply the synchronization estimates \ref{(c)-(i)} which hold true only after time $t_j^n + (3/8)t_L$.\\

The next proposition gives a lower bound on the first eigenfunction $\varphi_1$ around $\upsilon$ and the exponential decay after time $\theta$.

\begin{proposition}\label{propo:boundfirsteigenfunction} On the event $\cE(n,\eps)$, the first eigenfunction satisfies the following:
\begin{itemize}
\item[(1)] \emph{Lower bound around $\upsilon$:} 
\begin{align*}
\forall t \in [{\iota_+}, \theta],\quad &\frac{\varphi_1(t)}{\varphi_1(\upsilon)} \geq \frac1{\cosh(\sqrt{a_L}(t-\upsilon))}\Big(1- 2 C\, |t-\upsilon| \frac{\sqrt a_L}{\ln a_L}\Big)\; .
\end{align*}
\item[(2)] \emph{Exponential decay after time $\theta$:}
\begin{align*}
\forall t\in [\theta,L],\quad &\frac{\varphi_{1}(t)}{\varphi_{1}(\theta)} \le\exp\big(-(\sqrt{a_L} -\kappa_L) (t-\theta)\big) ,\\
\forall t\in [\theta,L - \frac38 t_L], \quad &\frac{\varphi_{1}(t)}{\varphi_{1}(\theta)} 
 \ge \exp\big(-(\sqrt{a_L} + \kappa_L) (t-\theta)\big)\;,
 \end{align*}
where $\kappa_L := \ln^2 a_L/a_L^{1/4}$.
\end{itemize}
\end{proposition}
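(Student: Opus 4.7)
Both inequalities will be derived from the Riccati identity
\[
\frac{\varphi_1(t)}{\varphi_1(s)} = \exp\Bigl(\int_s^t \Chi_1(u)\,du\Bigr),
\]
combined with two complementary squeezings of $\Chi_1$. Forward monotonicity together with $a_1\le -\lambda_1<a_1'$ gives $X_{a_1}(u)\le \Chi_1(u)\le X_{a_1'}(u)$ for every $u$ before the unique explosion of $X_{a_1}$, which by \ref{(b)-(i)}--\ref{(b)-(ii)} occurs just after $\theta$; this squeezing covers part~(1). After $\theta$, I switch to the time-reversed picture: since $\hat\Chi_1$ and $\hat X_{-\lambda_1}$ solve the same SDE driven by $\hat B$ from the initial datum $-\infty$, pathwise uniqueness yields the pathwise identification $\hat\Chi_1=\hat X_{-\lambda_1}$, and the backward monotonicity (decreasing in $a$) then gives $\hat X_{a_1'}(u)\le \hat\Chi_1(u)\le \hat X_{a_1}(u)$ up to the explosion of $\hat X_{a_1}$ located near $L-\theta$.

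\textbf{Part (1).} Since $\cosh\ge 1$ and $\varphi_1>0$ on $(0,L)$, the claimed bound is vacuous once $2C|t-\upsilon|\sqrt{a_L}/\ln a_L\ge 1$; upon enlarging $C$ if necessary (for instance $C\ge 8$), it therefore suffices to prove it for $|t-\upsilon|\le t_L/16$, which is precisely the coupling window of Proposition~\ref{Prop:BoundsCross2}. On that window I combine three estimates: Proposition~\ref{Prop:BoundsCross1}(ii) pins $X_{a_1}^j$ to the deterministic profile $u\mapsto\sqrt{a_1}\tanh(-\sqrt{a_1}(u-\upsilon))$ within $C\sqrt{a_1}/\ln a_1$; the synchronization \ref{(c)-(i)} transfers this bound from $X_{a_1}^j$ to $X_{a_1}$ (and similarly for $X_{a_1'}$) within $h_{a_1}$; and Proposition~\ref{Prop:BoundsCross2} gives $|X_{a_1'}^j-X_{a_1}^j|\le 1$. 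After absorbing the harmless perturbation $|a_1-a_L|=O(1/\sqrt{a_L})$ into the error, this yields
\[
\Chi_1(u) = \sqrt{a_L}\tanh\bigl(-\sqrt{a_L}(u-\upsilon)\bigr) + O\bigl(\sqrt{a_L}/\ln a_L\bigr)\qquad\text{for}\quad u\in[\upsilon-t_L/16,\upsilon+t_L/16].
\]
Using the primitive $\int\sqrt{a_L}\tanh(-\sqrt{a_L}(u-\upsilon))\,du = -\ln\cosh(\sqrt{a_L}(u-\upsilon))$ and integrating from $\upsilon$ to $t$ gives $\int_\upsilon^t\Chi_1(u)\,du = -\ln\cosh(\sqrt{a_L}(t-\upsilon)) + R(t)$ with $|R(t)|\le C\sqrt{a_L}\,|t-\upsilon|/\ln a_L$, and the elementary inequality $e^R\ge 1-|R|$ completes the proof, with the stated $2C$ absorbing all constants.

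\textbf{Part (2).} The change of variable $u=L-s$ turns $\int_\theta^t\Chi_1(s)\,ds$ into $\int_{L-t}^{L-\theta}\hat\Chi_1(u)\,du$. For the upper bound I use $\hat\Chi_1\le\hat X_{a_1}$ together with the synchronization \ref{(c)-(iv)} to transfer estimates from $\hat X_{a_1}^{j+1}$ to $\hat X_{a_1}$; Proposition~\ref{Prop:BoundsCross3} then gives $\fint_\theta^t\hat X_{a_1}(L-s)\,ds\le-\sqrt{a_1}+h_{a_1}$ for every $t\in[\theta,L]$, so that
\[
\int_\theta^t\Chi_1(s)\,ds \le (-\sqrt{a_1}+h_{a_1})(t-\theta) \le (-\sqrt{a_L}+\kappa_L)(t-\theta),
\]
after using $|\sqrt{a_L}-\sqrt{a_1}|=O(1/\sqrt{a_L})$ and $h_{a_1}\le\kappa_L$; exponentiating yields the upper bound. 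For the lower bound I replace $\hat X_{a_1}$ by $\hat X_{a_1'}$ and invoke the Remark following Proposition~\ref{Prop:BoundsCross3}, whose hypothesis $a_1'-a_1=\eps/\sqrt{a_L}\in(a_L^{-2/3},1)$ is verified for large $L$; the resulting estimate is available only on $t\in[\theta,L-(3/8)t_L]$, which matches the reduced range in the statement.

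The only genuine technical issue is the calibration of $C$ in part~(1): enlarging it ensures that the range of $t$ on which the claimed bound is non-trivial lies within the coupling window, where Proposition~\ref{Prop:BoundsCross2} supplies pointwise control of $X_{a_1'}$. Once this is done both parts reduce to direct integrations against known primitives (of $\tanh$ for part~(1), of a constant for part~(2)). Part~(2) is conceptually cleaner, its only subtlety being the pathwise identification $\hat\Chi_1=\hat X_{-\lambda_1}$ that imports monotonicity in $a$ directly into the time-reversed setting.
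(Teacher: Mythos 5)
Your proposal follows the same route as the paper: squeeze $\Chi_1$ between $X_{a_1}$ and $X_{a_1'}$ (through their localized versions $X_{a_1}^j$, the synchronization \ref{(c)-(i)} and the $\tanh$ profile of Proposition \ref{Prop:BoundsCross1}(ii)) up to the explosion, and between the time-reversed diffusions afterwards. For part (1) your reduction to the window $|t-\upsilon|\le t_L/16$, where the claimed bound is non-vacuous, is a correct (if optional) variation: since only a lower bound on $\Chi_1$ is needed, the paper simply uses $\Chi_1\ge X_{a_1}\ge X_{a_1}^j-h_{a_1}$ together with the $\tanh$ profile, which is available on all of $[\iota_+,\theta]$, so no restriction to the coupling window of Proposition \ref{Prop:BoundsCross2} is required.

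The one step you elide is in part (2), namely the passage from the localized estimate to the global one. The event $\cE(n,\eps)$ guarantees the conclusion of Proposition \ref{Prop:BoundsCross3} only for $\hat X_{a_1}^{j+1}$, i.e.\ the averaged bound $\fint_\theta^t\hat X_{a_1}^{j+1}(L-s)\,ds\le-\sqrt{a_1}+h_{a_1}$ for $t\le t_{j+1}^n-(3/8)t_L$; one cannot apply the proposition directly to the global $\hat X_{a_1}$ centered at $\theta$, because the global first hitting time of $-\sqrt{a_1}$ by $X_{a_1}$ need not be $\theta$ ($X_{a_1}$ may make non-exploding excursions in other subintervals, which \ref{(b)-(ii)} does not forbid), and with the wrong centering the hypothesis of Proposition \ref{Prop:BoundsCross3} on the absence of explosions fails. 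The paper therefore patches: it combines the localized bound with the synchronization of $\hat X_{a_1}$ and $\hat X_{a_1}^{j+1}$ and the typical oscillation of $\hat X_{a_1}$ on $[t_{j+1}^n,L]$ to reach $t=L-(3/8)t_L$ with the improved rate $\sqrt{a_L}-\kappa_L/2$ (computation \eqref{ineqpfUB}), and then treats $[L-(3/8)t_L,L]$ separately via the $\coth$ entrance behavior of $\Chi_1$ near $L$, the slack in the rate absorbing the weaker control there. These additions are routine, and with them your argument is complete; the rest of part (2), including the use of Remark \ref{rem:afterthetaforXa+eps} for the lower bound on the reduced range, matches the paper.
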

Note that thanks to the time reversal, we get the upper bound:
\begin{align}\label{Eq:TimereversedBoundeigen}
\forall t \in [L-\hat \theta, L-\hat{\iota}_+],\quad &\frac{\varphi_1(t)}{\varphi_1(L-\hat \upsilon)} \leq \frac1{\cosh(\sqrt{a_L}(t-(L- \hat \upsilon)))}\Big(1+ 2 C |t-(L-\hat \upsilon)| \frac{\sqrt a_L}{\ln a_L}\Big)\;,
\end{align}
as well as the exponential growth before time  $L-\hat \theta$.

\begin{proof}
For (1), thanks to the synchronization, the Riccati transform $\Chi_1$ is bounded from below by $X^j_a - h_a$ on the time interval $[{\iota_+},\theta]$. The behavior of the first excursion of $X_a^j$ described in \ref{(c)-(ii)} yields:
\begin{align*}
\frac{\varphi_{1}(t)}{\varphi_{1}(\upsilon)} &\geq \exp(\int_{\upsilon}^t X^j_{a}(s) ds - h_a |t-\upsilon|) \\
&\geq \exp(\int_{\upsilon}^t \sqrt a \tanh(-\sqrt a(s-\upsilon)) ds - (C \frac{\sqrt a_L}{\ln a_L} +h_a) |t-\upsilon|))\\
&\geq \frac1{\cosh(\sqrt{a_L}(t-\upsilon))}\Big(1- 2 C \,|t-\upsilon| \frac{\sqrt a_L}{\ln a_L}\Big)\;,
\end{align*}
which gives the inequality (1). 
\smallskip

For (2): Let us first prove the exponential decay until $L - (3/8) t_L$.
By \ref{(c)-(ii)}, the only explosion of $X_a^j$ on $[t_{j}^n,t_{j+1}^n]$ occurs at time $\theta + (3/8) t_L(1 +o(1))$.  Applying Lemma \ref{Prop:Reversal}, we deduce that $\hat{X}_a^{j+1}(L-\cdot)$ does not explode on $[\theta+10 t_L,t_{j+1}^n]$ so that \ref{(c)-(iv)} yields the bound
\begin{equation}\label{Eq:BdMeanXj+1}
\fint_{\theta}^{t} \hat{X}_a^{j+1}(L-s) ds \le -\sqrt{a} + h_a\;,\quad t\in[\theta,t_{j+1}^n - (3/8) t_L]\;.
\end{equation}
We thus get the following bound for all $t\in [\theta,L- (3/8) t_L]$ and all $L$ large enough:
$$ \int_\theta^t \hat{X}_a(L-s) ds \le -(\sqrt a_L - \kappa_L/2)(t-\theta)\;.$$
Indeed, for $t\le t_{j+1}^n - \frac38 t_L$ this is a direct consequence of \eqref{Eq:BdMeanXj+1} and of the synchronization estimate. For $t>t_{j+1}^n- \frac38 t_L$, it comes from \eqref{Eq:BdMeanXj+1}, the synchronization estimate, the typical behavior of $\hat{X}_a$, the fact that $t_{j+1}^n - \theta > 2^{-2n} L$ and the simple calculation:
\begin{align}
\int_\theta^t \hat{X}_a (L-s) ds &= \int_\theta^{t_{j+1}^n- \frac38 t_L}\hat{X}_a(L-s) ds + \int_{t_{j+1}^n- \frac38 t_L}^{L-\frac38 t_{L}}\hat{X}_a(L-s) ds - \int_t^{L-\frac38 t_{L}}\hat{X}_a(L-s) ds \notag \\
&\le -(\sqrt{a_L}-2h_{a_L})(t_{j+1}^n- \frac38 t_L-\theta) - (\sqrt{a_L}-h_{a_L})(L-t_{j+1}^n)\notag\\
&\;\;\;\;+ (\sqrt{a_L}+h_{a_L})(L-\frac38 t_L-t)\notag\\
&\le -(\sqrt a_L-\kappa_L/2)(t-\theta)\;,\label{ineqpfUB}
\end{align}
for all $L$ large enough. Since $\Chi_1(t) \le \hat{X}_a(L-t)$ for all $t\in[\theta,L)$, we deduce the upper bound of the first inequality until time $L - (3/8) t_L$ but with the improved rate $\sqrt{a_L} -\kappa_L/2$. 

For the lower bound, the proof is similar except that one has to replace $\hat X_a$ and $\hat X_a^j$ by $\hat X_{a'}$ and $\hat X_{a'}^j$ and use Remark \ref{rem:afterthetaforXa+eps}.

For the interval $[L - (3/8) t_L, L]$, thanks to \ref{(c)-(ii)} and using the fact that $\hat X_{a'}(t) \le \Chi_{1}(L-t) \le \hat X_{a}(t)$ for all $t\in(0,\tau_{+\infty}(\hat X_a))$ we get
\begin{align*}
\sup_{t\in (0,(3/8) t_L]} \Big|\Chi_{1}(L-t)+\sqrt{a_L} \coth (\sqrt a_L (L-t))\Big| \le 1\;.
\end{align*}
We deduce that
\begin{align*}
\forall t \in [L-(3/8)t_L,L],\quad \varphi_1(t) \leq \varphi_1(L- \frac38 t_L) \exp(- (\sqrt{a_L}-1) (t - L + \frac38 t_L))\;,
\end{align*}
and the desired upper bound follows, using the exponential decay until $L-(3/8) t_L$ already established but with an improved rate, and $L-\theta \geq 2^{-2n}L- O(t_L)$ (thanks to \ref{(b)-(i)} and \ref{(c)-(ii)}).

\end{proof}

We will now show that the times $\upsilon$, $L- \hat \upsilon$ and $U_1$ are very close to each other so that the time intervals $[\iota_+, \theta]$ and $[L-\hat \theta, L- \hat{\iota}_+]$ overlap and the previous proposition thus describes the behavior of the first eigenfunction around its maximum. This is the key point and most difficult part of the proof. It relies on the ordering of the forward/backward diffusions, the coupling of $X_a$ and $X_{a'}$ and the exponential decays of Proposition \ref{propo:boundfirsteigenfunction}.
\begin{proposition}\label{propo:upsilonhatupsilon}
On the event $\cE(n,\eps)$, the eigenfunction $|\varphi_1|$ reaches its maximum at distance at most $2 C/(\sqrt a_L \, \ln a_L)$ from $\upsilon$ and $L-\hat \upsilon$. Moreover, for all $t \in  [L- \hat \theta, \theta]$, we have
\begin{align}\label{equivaroundU1}
\frac{\varphi_1(t)}{\varphi_1(U_1)} = \frac{1}{\cosh(\sqrt{a_L}(t-U_1))}\Big( 1+ O\Big(|t-U_1| \frac{\sqrt a_L}{\ln a_L}\Big) + O\Big(\frac{1}{\ln a_L}\Big)\Big)\; 
\end{align}
\end{proposition}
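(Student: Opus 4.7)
The plan is to show that $U_1$ lies within $2C/(\sqrt{a_L}\ln a_L)$ of both $\upsilon$ and $L-\hat\upsilon$ by combining Proposition \ref{propo:boundfirsteigenfunction}(1) and \eqref{Eq:TimereversedBoundeigen} with their analogs obtained by applying the same analysis to $\hat\varphi_1(t)=\varphi_1(L-t)$, which is the first eigenfunction of $\hat\cH_L$. The key new inequality this yields is an upper bound
\[
\frac{\varphi_1(t)}{\varphi_1(\upsilon)} \le \frac{1}{\cosh(\sqrt{a_L}(t-\upsilon))}\Big(1+2C|t-\upsilon|\frac{\sqrt{a_L}}{\ln a_L}\Big)
\]
valid on a neighborhood of $\upsilon$; this is legitimate because $\cE(n,\eps)$ is constructed symmetrically in forward and backward diffusions. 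Proposition \ref{propo:boundfirsteigenfunction}(2) together with its time-reversed version force $U_1\in[L-\hat\theta,\theta]$, which in particular implies $L-\hat\theta\le\theta$ and can be checked to sit inside each of the four relevant validity domains.

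To bound $|U_1-\upsilon|$, I evaluate the above upper bound at $t=U_1$ and use $\varphi_1(U_1)\ge\varphi_1(\upsilon)$ to obtain
\[
\cosh\bigl(\sqrt{a_L}|U_1-\upsilon|\bigr)\le 1+2C|U_1-\upsilon|\sqrt{a_L}/\ln a_L.
\]
Since $\cosh$ grows exponentially while the right-hand side is linear, $x:=\sqrt{a_L}|U_1-\upsilon|$ is first seen to be bounded by an absolute constant, after which the estimate $\cosh(x)\ge 1+x^2/2$ forces $x\le 4C/\ln a_L$, hence $|U_1-\upsilon|\le 2C/(\sqrt{a_L}\ln a_L)$ (absorbing factors into $C$). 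The identical argument applied to \eqref{Eq:TimereversedBoundeigen} at $t=U_1$ yields the analogous bound for $|U_1-(L-\hat\upsilon)|$.

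For the asymptotic formula \eqref{equivaroundU1}, the two distance bounds reinserted into the same inequalities give $\varphi_1(\upsilon)=\varphi_1(U_1)(1+O(1/\ln^2 a_L))$, and similarly at $L-\hat\upsilon$. For $t\in[L-\hat\theta,\theta]$, combining the upper bound \eqref{Eq:TimereversedBoundeigen} with the lower bound of Proposition \ref{propo:boundfirsteigenfunction}(1) sandwiches $\varphi_1(t)/\varphi_1(U_1)$ between two expressions of the form
\[
\frac{1+O(|t-c|\sqrt{a_L}/\ln a_L)}{\cosh(\sqrt{a_L}(t-c))}\;, \qquad c\in\{\upsilon,\,L-\hat\upsilon\}.
\]
Since $|c-U_1|=O(1/(\sqrt{a_L}\ln a_L))$, the elementary identity $\cosh(x+\delta)/\cosh(x)=1+O(\delta)$ (valid uniformly in $x$ for bounded $\delta$) replaces $\cosh(\sqrt{a_L}(t-c))$ by $\cosh(\sqrt{a_L}(t-U_1))$ at the cost of a multiplicative factor $1+O(1/\ln a_L)$, producing \eqref{equivaroundU1}.

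The main obstacle is verifying that the symmetric upper bound around $\upsilon$ actually holds on $\cE(n,\eps)$: Proposition \ref{propo:boundfirsteigenfunction} was stated only in the forward direction, but the conditions \ref{(c)-(iii)} and \ref{(c)-(iv)} of $\cE(n,\eps)$ impose the time-reversed analogs of Propositions \ref{Prop:BoundsCross1}--\ref{Prop:BoundsCross3} on $\hat X_a^{j+1}$, so the same proof goes through verbatim with $X_a^j$ replaced by $\hat X_a^{j+1}$ and $\upsilon$ by $\hat\upsilon$, and then translates back to $\varphi_1$ via $\hat\varphi_1(t)=\varphi_1(L-t)$.
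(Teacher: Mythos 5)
Your endgame computations (the $\cosh(x)\ge 1+x^2/2$ trick to extract $|U_1-\upsilon|\lesssim 1/(\sqrt{a_L}\ln a_L)$, and the recentering $\cosh(\sqrt{a_L}(t-c))\to\cosh(\sqrt{a_L}(t-U_1))$ at cost $1+O(1/\ln a_L)$) are fine, and your derivation of $L-\hat\theta\le\theta$ by multiplying the forward and backward exponential upper bounds on the (putative) interval $[\theta,L-\hat\theta]$ is a legitimate shortcut. But there is a genuine gap where you write that $U_1$ ``can be checked to sit inside each of the four relevant validity domains.'' The only upper cosh bound centered at $\upsilon$ available on $\cE(n,\eps)$ comes from the squeeze $X^j_a-1\le\Chi_1\le X^j_{a'}\le X^j_a+1$, which holds only on $[\upsilon-\tfrac1{16}t_L,\upsilon+\tfrac1{16}t_L]$; \eqref{Eq:TimereversedBoundeigen} is centered at $L-\hat\upsilon$, not at $\upsilon$, and is valid only on $[L-\hat\theta,L-\hat\iota_+]$. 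Your argument places $U_1$ in $[L-\hat\theta,\theta]$, but nothing you have said excludes $U_1\in[\upsilon+\tfrac1{16}t_L,\theta]$ (where no upper bound centered at $\upsilon$ applies) or shows $\theta\le L-\hat\iota_+$ and $\iota_+\le L-\hat\theta$ (needed so that the lower bound of Proposition \ref{propo:boundfirsteigenfunction}(1) and the upper bound \eqref{Eq:TimereversedBoundeigen} are \emph{both} valid on all of $[L-\hat\theta,\theta]$, which is what \eqref{equivaroundU1} requires). Knowing merely $L-\hat\theta\le\theta$ does not control $\theta-(L-\hat\theta)$, nor $|\upsilon-(L-\hat\upsilon)|$, so evaluating the bound centered at $L-\hat\upsilon$ at $t=U_1$ cannot by itself yield closeness of $U_1$ to $\upsilon$.

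The missing ingredients are exactly the two steps the paper supplies. First, on $[\upsilon+\tfrac1{16}t_L,\theta-\tfrac1{16}t_L]$ one uses \ref{(c)-(iii)} ($X^j_{a'}\le-\sqrt a+Ca^{3/7}$, hence $\Chi_1$ very negative) to show $|\varphi_1|$ strictly decreases there and does not recover on $[\theta-\tfrac1{16}t_L,\theta]$; this confines the maximum of $|\varphi_1|$ over $[\upsilon-\tfrac1{16}t_L,L]$ to the small window where the two-sided tanh squeeze holds. Second, and crucially, the overlap of the forward and backward windows is obtained by comparing $\hat X^{j+1}_a(L-\cdot)$ with $X^j_a(\cdot)$ \emph{to each other} — they sandwich the Riccati transform of the first eigenfunction of $-\partial_x^2+\xi$ restricted to $(t_j^n,t_{j+1}^n)$ — combined with the deterministic tanh shape of $X^j_a$ on $[\iota,\upsilon]$, which forbids $\hat X^{j+1}_a(L-\cdot)$ from vanishing on $[\iota,\upsilon-2C/(\sqrt a\ln a)]$ and yields $L-\hat\theta>\iota_+$ and $L-\hat\upsilon>\upsilon-2C/(\sqrt{a_L}\ln a_L)$ (and their mirror images). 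This comparison between the two families of diffusions is not a consequence of the symmetry of $\cE(n,\eps)$ alone, which is the only justification your proposal offers.
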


\begin{proof} 

Let us first prove that all the points where the eigenfunction $|\varphi_1|$ reaches its maximum over the time-interval $[\upsilon - (1/16)t_L,L]$ lie at distance at most $2 C/(\sqrt a_L \ln a_L)$ from $\upsilon$. Indeed:
\begin{itemize}
\item Using (c), for all $t \in [\upsilon - (1/16)t_L, \upsilon + (1/16) t_L]$, we have 
$X^j_a -1 \le \Chi_1(t) \leq X^j_{a'}(t) \leq X^j_a +1$ and:
\begin{align}\label{ineqChi1}
 - C \frac{\sqrt a_L}{\ln a_L} - 1 \le \Chi_1(t)-\sqrt a_L \tanh(-\sqrt a_L(t-\upsilon)) \leq C \frac{\sqrt a_L}{\ln a_L} + 1.
\end{align}
This implies that all the points where $|\varphi_1|$ reaches its maximum over $[\upsilon - (1/16)t_L, \upsilon + (1/16) t_L]$ lie at distance at most $2 C/(\sqrt a_L \ln a_L)$ from $\upsilon$.
\item By \ref{(c)-(iii)}, the diffusion $X_{a'}^j$ is below $-\sqrt{a}(1 + o(1))$ on the interval $[\upsilon + (1/16) t_L, \theta - (1/16) t_L]$. Therefore, $|\varphi_1|$ decreases on this time interval and we have the bound
$$ |\varphi_1(\theta - \frac{1}{16} t_L)| \le |\varphi_1(\upsilon + \frac{1}{16} t_L)| \exp\Big( - \sqrt{a}\,(1 + o(1)) t_L \big(\frac{1}{4} +o(1)\big)\Big)\;,$$
where we used $\theta - \upsilon = (3/8)t_L(1+o(1))$. Since $X_{a'}^j$ stays below $\sqrt{a}$ on $[\theta - (1/16) t_L, \theta]$, we deduce that $|\varphi_1|$ remains below $|\varphi_1(\upsilon + \frac{1}{16} t_L)|$ on the time interval $[\upsilon + (1/16) t_L, \theta]$.
\item From Proposition \ref{propo:boundfirsteigenfunction}, the eigenfunction $|\varphi_1|$ decays exponentially on the time-interval $[\theta,L]$.
\end{itemize}

By time-reversal, the eigenfunction $|\varphi_1|$ reaches its maximum over the time-interval $[0, L- \hat \upsilon + (1/16)t_L]$ at distance at most $2 C/(\sqrt a_L \ln a_L)$ from $L- \hat \upsilon$.

Note that in the time-interval $[L- \hat \tau_{+\infty}(\hat X^{j+1}_a), \tau_{-\infty}(X^j_{a})]$, the diffusion $\hat X^{j+1}_{a}(L- \cdot)$ is bounded from below by $X^j_{a}(\cdot)$ as the Riccati transform of the first eigenfunction of the operator
$$ -\partial^2_x + \xi\;,\quad x\in(t_j^n,t_{j+1}^n)\;,$$
stays in between those two diffusions (see Figure \ref{fig:TypicalXahatXa}). Using that the diffusion $X_a^j$ is bounded from below by $t \mapsto \sqrt{a} \tanh(-\sqrt{a} (t - \upsilon)) - C \sqrt{a}/\ln a$ on the time interval $[\iota, \upsilon]$, we deduce that $\hat{X}_a^{j+1}(L-\cdot)$ cannot vanish on the time-interval $[\iota, \upsilon - 2 C/(\sqrt{a} \ln a)]$. As it reaches $\sqrt{a}$ in $[\iota,t_{j+1}^{n}]$ and makes at most one excursion to $\sqrt{a}$ on $[t_j^n, t_{j+1}^n]$, we deduce that $L - \hat \theta$ is larger than $\iota$ and $L - \hat \upsilon$ is larger than $\upsilon - 2C/(\sqrt{a_L} \ln a_L)$. This implies that the maximum over the time-interval $[\upsilon - (1/16)t_L,L]$ is a maximum over the whole interval $[0,L]$. Therefore $U_1$ and $\upsilon$ are at distance smaller than $2C/(\sqrt{a_L} \ln a_L)$ from each other and the same holds for $L - \hat \upsilon$.\\
Using \eqref{ineqChi1}, we obtain for all $t \in [\upsilon - (1/16)t_L, \upsilon + (1/16) t_L]$
\begin{align*}
\frac1{\cosh(\sqrt{a_L}(t-\upsilon))}\Big(1- 2 C\, |t-\upsilon| \frac{\sqrt a_L}{\ln a_L}\Big) \le \frac{\varphi_1(t)}{\varphi_1(\upsilon)} \leq \frac1{\cosh(\sqrt{a_L}(t-\upsilon))}\Big(1+ 2 C\, |t-\upsilon| \frac{\sqrt a_L}{\ln a_L}\Big)\;,
\end{align*}
and therefore $\varphi_1(U_1) / \varphi_1(\upsilon)$ and $\varphi_1(U_1) / \varphi_1(L - \hat\upsilon)$ are both of order $1+ O(1/\ln^2 a_L)$.\\
Note that $t_j^n + (3/8)t_L < L- \hat \theta $ since otherwise there would be an explosion of $\hat{X}_a(L-\cdot)$ close to $t_j^n$ thus violating \ref{(b)-(i)}; similarly we have $\theta < t^n_{j+1} - (3/8)t_L$. Then, combining the lower bound of Proposition \ref{propo:boundfirsteigenfunction}-(1) and its time-reversed version \eqref{Eq:TimereversedBoundeigen}, together with the inequalities $\iota_+ < L- \hat \theta < \theta < L- \hat{\iota}_+$ we obtain \eqref{equivaroundU1}.
\end{proof}

%

We now conclude the proof of the statements of Theorems \ref{Th:Main} and \ref{Th:Shape} regarding the first eigenfunction (except the statement on the law of $U_1^\infty$, which is presented in Subsection \ref{Subsec:Unif}). We work on the event $\cE(n,\eps)$:
\begin{itemize}
\item \textbf{Exponential decay.}

The simple inequalities $e^{-|x|} \leq 1/\cosh(x) \leq 2 \,e^{-|x|}$ applied to \eqref{equivaroundU1}, together with Proposition \ref{propo:boundfirsteigenfunction}-(2) and its time-reversed version give the exponential decay on $[0,L]$. 

%

\smallskip
\item \textbf{Localization around $U_1$ and shape of the eigenfunction.}

Using the exponential decay at rate $\sqrt{a_L}$, we easily get
\begin{align*}
m_1([\theta/L,1]) \le \varphi_1(\theta)^2 \,O\Big(\frac{1}{\sqrt{a_L}}\Big)\;.
\end{align*}
Moreover, the time reversal gives:
\begin{align*}
m_1([0, (L- \hat \theta)/L]) \le \varphi_1(L- \hat \theta)^2 \,O\Big(\frac{1}{\sqrt{a_L}}\Big) \;.
\end{align*}
Integrating  \eqref{equivaroundU1} of Proposition \ref{propo:upsilonhatupsilon}, we obtain
\begin{align*}
m_1([L- \hat \theta/L, \theta/L]) = \varphi_1(U_1)^2 \frac{2}{\sqrt a_L} \Big(1 +O\Big(\frac{1}{ \ln a_L}\Big)\Big)\;.
\end{align*}
Moreover, using that $\theta - U_1 = (3/8) t_L (1 + o(1))$, we get
\begin{align*}
\frac{\varphi_1(\theta)}{\varphi_1(U_1)} \leq \exp(-(3/8) \ln a_L( 1+ o(1)))\;,
\end{align*}
and similarly $|\varphi_1(L- \hat \theta)| \leq |\varphi_1(U_1)|\,\exp(-(3/8) \ln a_L( 1+ o(1)))$.

As $m_1$ is a probability measure, we deduce that $\varphi_{1}^2(U_1) = (\sqrt a_L/2) (1+o(1))$ and the statement about the shape of the first eigenfunction follows. To obtain the shape of the Brownian motion, it suffices to combine the identity
$$ \Chi_1(t) = \Chi_1(U_1) + \int_{U_1}^t (-\lambda_1 - \Chi_1(s)^2) ds + B(t)-B(U_1)\;,$$
with the estimate \eqref{ineqChi1}, noticing that $|a_L+\lambda_1| = O(1/\sqrt a_L)$.\\
It is then easy to see using again \eqref{equivaroundU1} that any interval centered around $U_1$ and of length much greater than $1/\sqrt{a_L}$ has a mass going to $1$ as $L \to \infty$ so that $m_1$ is asymptotically as close as desired to a Dirac mass at $U_1/L$.

\end{itemize}

\subsection{The next eigenfunctions.}\label{paragraph:nexteigen}

We now turn to the $i$-th eigenfunction, for some $i\in \{2,\ldots,k\}$. Let $a = a_{i}$ and $a' = a_{i}'$. Again, we work deterministically on the event $\cE(n,\eps)$. 

The main idea is that the trajectory $\Chi_i$ follows the forward diffusions $X_{a_i}$ and $X_{a'_{i}}$ until the additional explosion of $X_{a_i}$. It then follows the time-reversed diffusions $\hat X_{a_i}$ and $\hat X_{a_i'}$ up to time $L$ (see Figure \ref{RiccatiT4emeVP}).

\begin{figure}[!h]
\centering
\includegraphics[width = 10cm]{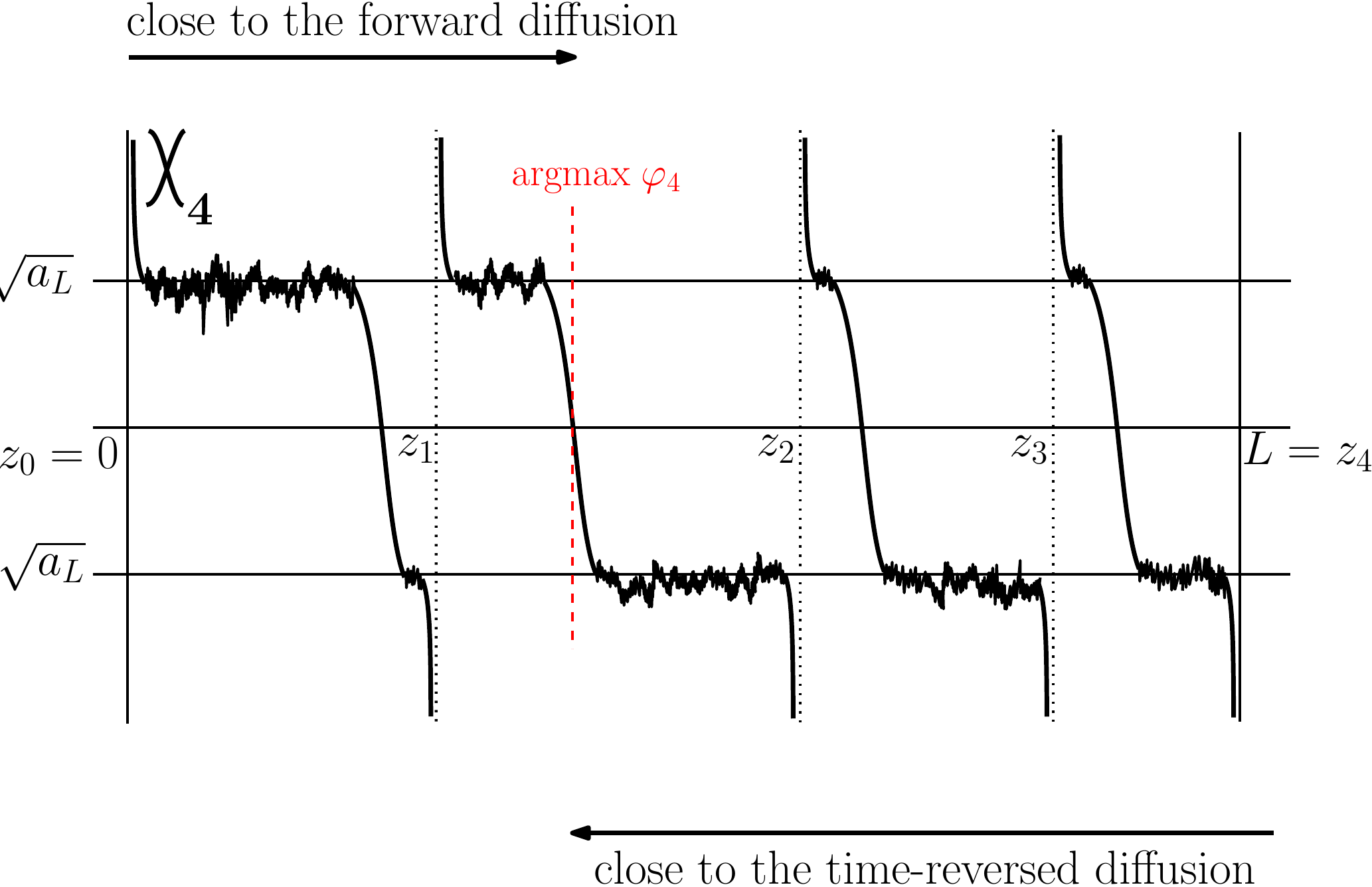}
\caption{Schematic path of $\Chi_4$.}\label{RiccatiT4emeVP}
\end{figure}

There exist $j_1 < j_2 < \ldots < j_i$ such that the $i$ explosion times of $X_a$ lie in the intervals $[t_{j_\ell}^n,t_{j_\ell +1}^n)$, $\ell=1,\ldots, i$. Each $X^{j_\ell}_a$ explodes once, by \ref{(b)-(i)} and \ref{(c)-(i)}, and makes only one excursion to $-\sqrt{a}$, by \ref{(b)-(ii)}. We then call 
$\theta_{\ell}$ the location of its first hitting time of $-\sqrt a$ (resp. 
$\hat \theta_{\ell}$ for the time-reversed diffusions). There exists a unique $i_* \in \{1,\ldots,i\}$ such that, for $j_*=j_{i_*}$, $X^{j_*}_a$ explodes but $X^{j_*}_{a'}$ does not. Without loss of generality, we can suppose that $i_* \ge 2$: indeed, the case $i_*=1$ corresponds to the case $i_*=i$ upon reversing time.

Let us now study $\varphi_{i}$. We denote by $0=z_0 < z_1 < \ldots < z_i =L$ the zeros of $\varphi_{i}$, or equivalently the explosion times of $\Chi_{i}$. We also let $x_\ell$ be the (first if many) point where $|\varphi_i|$ reaches its maximum on the interval $[z_{\ell-1},z_\ell]$.

We first describe the behavior of $\varphi_i$ on  $[z_{\ell-1},z_\ell]$ with $\ell < i_*$. 
\begin{lemma}\label{Lemma:SuccessiveMax}
Let $\ell < i_*$. On the event $\cE(n,\eps)$, we have:
\begin{itemize}
\item[(I)] Deterministic entrance and explosion: 
\begin{align*}
\forall t \in [z_{\ell-1}, z_{\ell-1} + \frac38 t_L],\quad \varphi_i(t)&=\varphi_i'(z_{\ell-1}) \frac{\sinh(\sqrt{a_L} (t-z_{\ell-1}))}{\sqrt a_L} (1+o(1))\;,\\
\forall t \in [z_{\ell}-\frac38 t_L, z_{\ell}],\quad \varphi_i(t)&=\varphi_i'(z_{\ell}) \frac{\sinh(\sqrt{a_L} (t-z_{\ell}))}{\sqrt a_L} (1+o(1))\;.
\end{align*}
\item[(II)] Exponential decay: For all $t\in [z_{\ell -1}+\frac38 t_{L}, z_{\ell}-\frac38 t_{L}]$
\begin{align*}
\frac1{100}\exp\big(-(\sqrt{a_L} + \frac1{2}\kappa_L) |t-x_\ell|\big) \le \frac{\varphi_{i}(t)}{\varphi_{i}(x_\ell)} \le 100 \exp\big(-(\sqrt{a_L} - \frac1{2}\kappa_L) |t-x_\ell|\big) ,
\end{align*}
where $\kappa_L := \ln^2 a_L/a_L^{1/4}$.
\smallskip
\item[(III)] Inverse of hyperbolic cosine around $x_\ell$:
\begin{align*}
\forall t\in [L - \hat \theta_{\ell}, \theta_\ell], \quad \frac{\varphi_i(t)}{\varphi_i(x_\ell)} = \frac{1}{\cosh(\sqrt{a_L}(t-x_\ell))}\Big( 1+ O\Big(|t-x_\ell| \frac{\sqrt a_L}{\ln a_L}\Big) + O\Big(\frac{1}{\ln a_L}\Big)\Big)\;.
\end{align*}
\item[(IV)] We have the bounds
$$ |x_\ell + 3/8 t_L - \theta_\ell| \le \frac{5C \ln\ln a_L}{\sqrt a_L}\;,\quad |z_\ell - \theta_\ell - 3/8 t_L | \le \frac23 \frac{(\ln\ln a_L)^2}{\sqrt a_L}\;.$$
\end{itemize}
\end{lemma}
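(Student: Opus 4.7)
The strategy is to mimic the analysis of the first eigenfunction carried out in Subsection \ref{subsec:1steigen}, but applied to each sub-interval $[z_{\ell-1},z_\ell]$. The key observation enabling this is that for $\ell < i_*$ (and in fact for every $\ell \ne i_*$), both diffusions $X_{a}^{j_\ell}$ and $X_{a'}^{j_\ell}$ start from $+\infty$ at $t^n_{j_\ell}$ and explode exactly once on $[t^n_{j_\ell},t^n_{j_\ell+1})$, so the picture inside this sub-interval is essentially the picture of the first eigenfunction scenario inside an interval $[t^n_{j_\ell},t^n_{j_\ell+1})$ of length $2^{-n}L$. By monotonicity of the Riccati family, $X_{a}^{j_\ell}\le \Chi_i \le X_{a'}^{j_\ell}$ until the first explosion of $X_{a}^{j_\ell}$; using the time-reversed diffusions $\hat{X}_{a}^{j_\ell+1}$ and $\hat{X}_{a'}^{j_\ell+1}$ and Lemma \ref{Prop:Reversal}, we obtain a matching squeezing after the explosion.

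The first step would be to locate $z_{\ell-1}$ and $z_\ell$: using the squeezing of $\Chi_i$ between $X_{a}^{j_\ell}$ and $X_{a'}^{j_\ell}$ together with the synchronization \ref{(c)-(i)} and the coupling estimate \ref{(c)-(iii)}, one shows that $z_{\ell-1}$ lies within $O(1/\sqrt{a_L})$ of $t^n_{j_\ell}$ (up to the small transient $(3/8)t_L$), and that $z_\ell$ lies within $O((\ln\ln a_L)^2/\sqrt{a_L})$ of $\tau_{-\infty}(X_{a}^{j_\ell}) = \theta_\ell+(3/8)t_L(1+o(1))$, which yields the second bound of (IV). Item (I) then follows from plugging $\Chi_i\approx X^{j_\ell}_a\approx \sqrt{a_L}\coth(\sqrt{a_L}(t-z_{\ell-1}))$ near $z_{\ell-1}$ (and the analogous backward statement near $z_\ell$) into the identity $\varphi_i(t)=\varphi_i'(z_{\ell-1})\int_{z_{\ell-1}}^{t}\exp\big(\int_{z_{\ell-1}}^{s}\Chi_i\big)ds$, and noting that $\int_{z_{\ell-1}}^{t}\sqrt{a_L}\coth(\sqrt{a_L}(s-z_{\ell-1}))ds=\ln\sinh(\sqrt{a_L}(t-z_{\ell-1}))-\ln(\sqrt{a_L}(t-z_{\ell-1}))+o(1)$.

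For (III), I would reproduce verbatim the argument of Proposition \ref{propo:upsilonhatupsilon} on the sub-interval $[t^n_{j_\ell},t^n_{j_\ell+1})$: the forward diffusion $X_{a}^{j_\ell}$ follows $\sqrt{a_L}\tanh(-\sqrt{a_L}(\cdot-\upsilon_\ell))$ during its exceptional excursion by Proposition \ref{Prop:BoundsCross1}(ii), its coupling with $X_{a'}^{j_\ell}$ is controlled by Proposition \ref{Prop:BoundsCross2}, and the time-reversed pair satisfies the symmetric estimates. Comparing the forward and backward squeezings shows that the maximum $x_\ell$ of $|\varphi_i|$ on $[z_{\ell-1},z_\ell]$ is located within $O(1/(\sqrt{a_L}\ln a_L))$ of $\upsilon_\ell$ and of $L-\hat\upsilon_{\ell}$, and that the two windows $[\iota_\ell,\theta_\ell]$ and $[L-\hat\theta_\ell,L-\hat\iota_\ell]$ overlap. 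Inverting the Riccati transform with $\int(-\sqrt{a_L}\tanh)$ gives $\varphi_i\propto 1/\cosh$ on their union, establishing (III); the first bound of (IV) follows immediately from $|x_\ell-\upsilon_\ell|=O(1/(\sqrt{a_L}\ln a_L))$ and Proposition \ref{Prop:BoundsCross1}(ii).

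Finally, for the exponential decay (II), I would split the interval $[z_{\ell-1}+\frac38 t_L,z_\ell-\frac38 t_L]$ into three pieces: before $L-\hat\theta_\ell$, after $\theta_\ell$, and the middle window $[L-\hat\theta_\ell,\theta_\ell]$ where (III) already provides exact asymptotics. On the outer pieces, the typical behavior \ref{(c)-(ii)} gives $\fint X_a \simeq \sqrt{a_L}$ (resp. $\fint \hat{X}_a\simeq -\sqrt{a_L}$) with fluctuations of order $h_{a_L}$, yielding the $\exp(\mp(\sqrt{a_L}\pm\frac12\kappa_L)|t-x_\ell|)$ bounds just as in Proposition \ref{propo:boundfirsteigenfunction}(2), up to accounting for the boundary transients at $z_{\ell-1}+\frac38t_L$ and $z_\ell-\frac38 t_L$ via (I). The main obstacle here is the careful book-keeping at the boundaries of each sub-regime: the transitions at $\iota_\ell,\theta_\ell,L-\hat\theta_\ell,L-\hat\iota_\ell$ must produce only $O(1)$ multiplicative errors so that the rates $\sqrt{a_L}\pm\frac12\kappa_L$ can absorb them uniformly over the interval.
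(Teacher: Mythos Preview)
Your overall strategy is right, but the claim that you can ``reproduce verbatim the argument of Proposition \ref{propo:upsilonhatupsilon}'' overlooks the main new difficulty. In the first-eigenfunction analysis, $X_{a'}^{j}$ did \emph{not} explode on $[t_j^n,t_{j+1}^n]$, and the upper squeezing $\Chi_1\le X_{a'}^{j}$ was valid all the way to $\tau_{-\infty}(X_a^{j})$. Here, for $\ell<i_*$, \emph{both} $X_{a}^{j_\ell}$ and $X_{a'}^{j_\ell}$ explode, and $z_\ell$ (the explosion of $\Chi_i$) lies between their two explosion times. Nothing in \ref{(c)-(i)} or \ref{(c)-(iii)} tells you these explosion times are close: Proposition \ref{Prop:BoundsCross2} only controls $X_{a'}^{j_\ell}$ up to $\theta_a$ and yields merely $X_{a'}^{j_\ell}(\theta_a)\le\sqrt{a}-1$, which does not prevent $X_{a'}^{j_\ell}$ from climbing back to $\sqrt{a'}$ and exploding much later. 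The missing ingredient is \ref{(b)-(iv)}, which guarantees $|\tau_{-\infty}(X_a^{j_\ell})-\tau_{-\infty}(X_{a'}^{j_\ell})|\le(\ln\ln a_L)^2/(3\sqrt{a_L})$. This is what pins down $z_\ell$ (second bound of (IV)), and what then allows you, combined with the deterministic pre-explosion profile from \ref{(c)-(ii)}, to get $\Chi_i\le X_{a'}^{j_\ell}\le -\sqrt{a_L}+2C\sqrt{a_L}/\ln a_L$ on $[\theta_a,z_\ell]$, completing the upper control needed for (II) and (III). The paper flags this explicitly: the proof ``deviates from that presented in the previous subsection''.

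A smaller but genuine confusion: your claim that $z_{\ell-1}$ lies within $O(1/\sqrt{a_L})$ of $t^n_{j_\ell}$ is incorrect. The interval $[z_{\ell-1},z_\ell]$ is macroscopic and spans several sub-intervals $[t^n_j,t^n_{j+1}]$ (indeed $j_\ell\ge j_{\ell-1}+2$ by \ref{(b)-(iii)}); for $\ell=1$ one simply has $z_0=0$. The growth piece of (II) on $[z_{\ell-1}+\frac38 t_L,\,L-\hat\theta_\ell]$ therefore requires the time-reversed analogue of Proposition \ref{propo:boundfirsteigenfunction}(2) over a long stretch, not just a transient near $t^n_{j_\ell}$. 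Finally, your integral formula for $\varphi_i$ near $z_{\ell-1}$ is not correct as written (the inner integral $\int_{z_{\ell-1}}^{s}\Chi_i$ diverges); the paper instead writes $\varphi_i(t)=\frac{\varphi_i'(t_0)}{\Chi_i(t_0)}\exp\!\int_{t_0}^t\Chi_i$ and passes $t_0\downarrow z_{\ell-1}$ using $\Chi_i(t)\approx\sqrt{a_L}\coth(\sqrt{a_L}(t-z_{\ell-1}))$.
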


\begin{proof}
We treat in detail the case $\ell = 1$. The other cases follow from the same arguments, one simply has to notice that the diffusions $X_a$ and $X_{a'}$ have a delay at the starting time of the interval, but since this delay is negligible compared to $t_{L}$ the proof carries through (indeed, the only probabilistic estimate in the proof of Lemma \ref{Lemma:CDI} is a control of the Brownian motion on $[0,t_L]$).

Set $j=j_1$. Let $\theta_a$, $\theta_{a'}$ be the first hitting times of $-\sqrt a$, $-\sqrt{a'}$ by $X_a^{j}$ and $X_{a'}^{j}$, and similarly for $\upsilon_a, (\iota_a)_+, \upsilon_{a'},(\iota_{a'})_+$. Moreover, let $\hat\theta_a$, $\hat\theta_{a'}$ be the first hitting times of $\sqrt a$, $\sqrt{a'}$ by $\hat{X}_a^{j+1}$ and $\hat{X}_{a'}^{j+1}$. Applying the arguments of the proof of Proposition \ref{propo:boundfirsteigenfunction}-(1) we obtain
\begin{align*}
\forall t \in [(\iota_a)_+, \theta_a],\quad &\frac{\varphi_i(t)}{\varphi_i(\upsilon_a)} \geq \frac1{\cosh(\sqrt{a_L}(t-\upsilon_a))}\Big(1- 2 C\, |t-\upsilon_a| \frac{\sqrt a_L}{\ln a_L}\Big)\;.
\end{align*}
Applying the arguments of the proof of the \emph{time-reversed} version of Proposition \ref{propo:boundfirsteigenfunction}-(2) we obtain
\begin{equation}\label{Eq:Bnd0Lhat}\begin{split}
\forall t\in [0,L-\hat\theta_a],\quad &\frac{\varphi_{i}(t)}{\varphi_{i}(L-\hat\theta_a)} \le\exp\big(-(\sqrt{a_L} -\frac1{2}\kappa_L) (L-\hat\theta_a-t)\big) \;,\\
\forall t\in [(3/8) t_L,L-\hat\theta_a], \quad &\frac{\varphi_{i}(t)}{\varphi_{i}(L-\hat\theta_a)} \ge \exp\big(-(\sqrt{a_L} +\frac1{2}\kappa_L) (L-\hat\theta_a-t)\big)\;.\end{split}
 \end{equation}
Note that, compared to the original version of the bound, here we have an additional factor $1/2$ in front of $\kappa_L$ but in the proof, this prefactor can actually be chosen arbitrarily. This is enough to deduce that the maximum of $|\varphi_i|$ over $[0,L-\hat\theta_a]$ is achieved at $L-\hat\theta_a$.\\

The proof now deviates from that presented in the previous subsection for the first eigenfunction. Indeed, we are in a situation where $X_a$ and $X_{a'}$ remain close to each other and explode roughly at the same time (while the latter did not explode in the previous case). More precisely, by \ref{(b)-(iv)} the explosions times of $X_a^j$ and $X_{a'}^j$ lie at distance at most $(\ln \ln a_L)^2/(3\sqrt a_L)$ from each other. Since with \ref{(c)-(ii)}, their behaviors are almost deterministic on a time-interval of length $(3/8 + 3/4) t_L$ before their explosion times and since they are very close to each other on $[\upsilon_a-(1/16)t_L,\upsilon_a+(1/16)t_L]$ we deduce that
\begin{equation}\label{Eq:upsthet}
0 \le \upsilon_{a'} - \upsilon_a \le 2C /(\ln a_L \sqrt{a_L})\;,\quad\mbox{which implies }\quad  0 \le \theta_{a'} - \theta_a \le 3 C \ln\ln a_L / \sqrt{a_L}\;.
\end{equation}

By monotonicity $X_a \le \Chi_i \le X_{a'}$ until the first explosion time of $X_a$. Using the synchronization estimates \ref{(c)-(i)}, together with \ref{(c)-(ii)} on $X_a^j$ and $X_{a'}^j$ and \eqref{Eq:upsthet}, we deduce that
\begin{align}
&\forall t \in [(\iota_a)_+, \theta_a], \quad - C \frac{\sqrt a_L}{\ln a_L} - 1 \le \Chi_i(t)-\sqrt a_L \tanh(-\sqrt a_L(t-\upsilon_a)) \label{Eq:ChiTanh1}\\
&\forall t \in [(\iota_{a'})_+, \theta_{a'}], \quad  \Chi_i(t)-\sqrt a_L \tanh(-\sqrt a_L(t-\upsilon_a)) \leq 3C \frac{\sqrt a_L}{\ln a_L} + 1\;.  \label{Eq:ChiTanh2}
\end{align}
In addition, $X_{a'}^j$ remains below $-\sqrt a_L + 1$ on $[\theta_{a'},\zeta_{a'}]$, and it cannot be higher than $-\sqrt a_L + 2C \sqrt a_L/\ln a_L$ on the interval of time $[\theta_a,\theta_{a'}]$ by \ref{(c)-(ii)} and \eqref{Eq:upsthet}. We thus deduce that $\Chi_i(t) \le -\sqrt{a_L} + 2C \sqrt a_L/\ln a_L$ for all $t\in [\theta_a,z_1]$. Therefore all the points where the maximum of $|\varphi_i|$ over $[(\iota_a)_+ \vee (\iota_{a'})_+,z_1]$ is achieved lie at distance at most $4C/(\ln a_L \sqrt{a_L})$ from $\upsilon_a$.\\

Let us now prove that the intervals $[0,L-\hat{\theta}_a]$ and $[(\iota_a)_+ \vee (\iota_{a'})_+,z_1]$ overlap (here the argument is the same as in the previous subsection). To that end, we observe that $X^j_{a'}(t) \le \hat{X}^{j+1}_{a}(L-t)$ for all $t\in [L-\hat{\tau}_{+\infty}(\hat{X}^{j+1}_{a}), \tau_{-\infty}(X^j_{a'})]$ and that the latter interval is not empty (these two assertions follow from Lemma \ref{Prop:Reversal} and from the fact that the Riccati transform of the first eigenfunction of $-\partial^2_x + \xi$ restricted to $(t_j^n,t_{j+1}^n)$ lies in between these two diffusions). Henceforth $L-\hat\theta_a > \iota_{a'}$ and $L-\hat{\upsilon}_a > \upsilon_{a'} - C /(\ln a_L \sqrt{a_L}) >  \upsilon_{a} - C /(\ln a_L \sqrt{a_L})$. Moreover, $L-\hat\theta_a > t_j^n +(3/8)t_L$ since otherwise $\hat{X}_a(L-\cdot)$ would explode close to $t^n_j$. Therefore, $L-\hat\theta_a > (\iota_{a'})_+$. The same argument applies upon replacing $X^j_{a'}$ by $X^j_{a}$ and yields the inequality $L-\hat\theta_a > (\iota_{a})_+$ which concludes the proof of the overlapping.\\

Consequently, all the points where the maximum of $|\varphi_i|$ over $[0,z_1]$ is achieved lie at distance at most $4C/(\ln a_L \sqrt{a_L})$ from $\upsilon_a$. This proves the first bound of (IV). The second bound of (IV) is a consequence of the squeezing $X_a \le \Chi_i \le X_{a'}$, of \ref{(c)-(i)} and \ref{(c)-(ii)} and of \eqref{Eq:upsthet}.\\
Assertion (III) is deduced from $|x_1-\upsilon_a| \le 4C/(\ln a_L \sqrt{a_L})$, combined with \eqref{Eq:ChiTanh1}, \eqref{Eq:ChiTanh2} and the inequality $(\iota_a)_+ \vee (\iota_{a'})_+ \le L-\hat\theta_a$. It also ensures that for all $t\in [L-\hat\theta_a,\theta_a]$
$$ \frac1{10} \exp(-(\sqrt a_L+\frac12 \kappa_L) |t-x_1|) \le \frac{\varphi_i(t)}{\varphi_i(x_1)} \le 10 \exp(-(\sqrt a_L-\frac12 \kappa_L) |t-x_1|)\;.$$
Using \eqref{Eq:Bnd0Lhat}, these inequalities still hold on $[(3/8)t_L, \theta_a]$. It remains to control the decay on $[\theta_a,z_1-(3/8)t_L]$: by the second bound of (IV) this interval has length at most $(\ln\ln a_L)^2/\sqrt a_L$. Furthermore on this interval of time, we have already seen that $\Chi_i$ remains below $-\sqrt a_L + 2C \sqrt a_L/\ln a_L$ and by (c)-(ii) it remains above $-\sqrt a_L -2$. Henceforth, for all $t\in [\theta_a,z_1-(3/8)t_L]$
$$  \frac12 \exp\big(-\sqrt a_L (t-\theta_a)\big) \le \frac{\varphi_i(t)}{\varphi_i(\theta_a)} \le 2\exp\big(-\sqrt a_L (t-\theta_a)\big)\;.$$
This is enough to get (II).

Recall that $X_a(t) \le \Chi_{i}(t) $ for all $t\in (0,\zeta_{a}(1))$ and  $\Chi_{i}(t) \le X_{a'}(t)$ for all $t\in (0,z_1)$. The deterministic behavior of $X_a$ and $X_{a'}$ near their starting point gives:
\begin{align}\label{Eq:ControlChii}
\sup_{t\in (0,(3/8) t_L]} \Big|\Chi_{i}(t)-\sqrt{a_L} \coth (\sqrt a_L t)\Big| \le 1\;.
\end{align}
Since $\varphi_i'(t) = \Chi_i(t) \varphi_i(t)$, we deduce that for all $0 < t_0 <t < \tau_{-\infty}(\Chi_i)$ we have
$$ \varphi_i(t) = \frac{\varphi_i'(t_0)}{\Chi_i(t_0)} e^{\int_{t_0}^t \Chi_i(s) ds}\;.$$
Using \eqref{Eq:ControlChii} and passing to the limit as $t_0 \downarrow 0$, we deduce that for all $t\in (0,\frac38 t_L]$, we obtain
\begin{equation}\label{Eq:ExactEigen0}
\varphi_i(t) = \varphi_i'(0) \frac{\sinh(\sqrt{a_L} t)}{\sqrt a_L} (1+O(t_L))\;.
\end{equation}

Regarding the behavior near $z_1$: both $X_a$ and $X_{a'}$ explode at a distance negligible compared to $t_L$ from each other thanks to \ref{(b)-(iv)}. The first explosion time of $\Chi_{i}$ necessarily lies in between those two explosion times. The proof of Lemma \ref{Lemma:Explo} then ensures that if we let $y=-\sqrt{a_L} - h_L$ then
\begin{align*}
 \forall t\in [\tau_{y}(\Chi_{i}),z_1),\quad  \Big|\Chi_{i}(t)+\sqrt{a_L} \coth (\sqrt a_L (z_1-t))\Big| \le 1\;.
\end{align*}
Indeed, the only probabilistic ingredient in that proof is a control on the Brownian motion on an interval of size $t_L$ before the explosion time: on the event $\cE(n,\eps)$, we have this control before the explosion times of $X_{a'}$ and $X_a$ so that the control holds true before the explosion time of $\Chi_{i}$. A similar calculation as above then shows that for all $t\in [\tau_{y}(\Chi_{i}),z_1]$, we have
$$ \varphi_i(t) = \varphi_i'(z_1) \frac{\sinh(\sqrt{a_L}(t-z_1))}{\sqrt a_L} (1+O(t_L))\;.$$
Since $z_1 - \tau_{y}(\Chi_{i}) \ge (3/8)t_L - \ln \ln a_L / \sqrt a_L$, and since $\Chi_i(t) \le -\sqrt a_L + 2C \sqrt a_L/\ln a_L$ for all $t\in [\theta_a - \ln \ln a_L / \sqrt a_L,  \tau_{y}(\Chi_{i})]$ we deduce that for all $t\in [z_1-(3/8)t_L,z_1]$
$$ \varphi_i(t) = \varphi_i'(z_1) \frac{\sinh(\sqrt{a_L}(t-z_1))}{\sqrt a_L} (1+o(1))\;.$$
\end{proof}

Let us now describe what happens on the interval $[z_{i_*-1},z_{i_*}]$.
\begin{lemma}\label{Lemma:SuccessiveMax2}
On the event $\cE(n,\eps)$, the eigenfunction $|\varphi_i|$ satisfies (I)-(III) of Lemma \ref{Lemma:SuccessiveMax} on the time interval $[z_{i_*-1},z_{i_*}]$.
Moreover all the points where the maximum over $[z_{i_*-1},z_{i_*}]$ is reached lie at distance at most $4C / (\ln a_L \sqrt{a_L})$ from each other, and we have $|x_{i_*} + 3/8 t_L - \theta_{i_*}| \le 5C \ln\ln a_L /\sqrt a_L$.
\end{lemma}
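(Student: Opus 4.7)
The plan is to port the argument of Subsection \ref{subsec:1steigen} (on the first eigenfunction) to the interval $[z_{i_*-1}, z_{i_*}]$, observing that the setup is an exact local analog. Indeed, on the sub-interval $[t_{j_*}^n, t_{j_*+1}^n] \subset [z_{i_*-1}, z_{i_*}]$ where the decisive action takes place, $X_a^{j_*}$ explodes exactly once while $X_{a'}^{j_*}$ does not explode, mirroring the setup for $\varphi_1$ on $[0,L]$. Using the notations $\theta = \theta_{i_*}$, $\upsilon$, $\iota_+$ for $X_a^{j_*}$ and $\hat\theta$, $\hat\upsilon$, $\hat\iota_+$ for $\hat X_a^{j_*+1}$, the monotonicity gives $X_a(t) \le \Chi_i(t) \le X_{a'}(t)$ up to the $i_*$-th explosion of $X_a$, and the time-reversal Lemma \ref{Prop:Reversal} yields the analogous bracketing from $L - \hat\zeta_a(i - i_* + 1)$ backwards. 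The synchronization \ref{(c)-(i)} (with $l = i_*$) lets us substitute $X_a^{j_*}$ for $X_a$ and $\hat X_a^{j_*+1}$ for $\hat X_a$ at the cost of errors of order $h_a$.

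Reproducing the proof of Proposition \ref{propo:boundfirsteigenfunction} gives the lower bound $\varphi_i(t)/\varphi_i(\upsilon) \ge (\cosh(\sqrt{a_L}(t-\upsilon)))^{-1}(1 - 2C|t-\upsilon|\sqrt{a_L}/\ln a_L)$ on $[\iota_+, \theta]$, together with the exponential decay $\varphi_i(t)/\varphi_i(\theta) \le \exp(-(\sqrt{a_L} - \kappa_L)(t - \theta))$ on $[\theta, z_{i_*} - (3/8)t_L]$ and the time-reversed analogs (using $\hat X_{a'}^{j_*+1}$ via Remark~\ref{rem:afterthetaforXa+eps}). Then reproducing the proof of Proposition \ref{propo:upsilonhatupsilon} — using \ref{(c)-(iii)} to guarantee that $X_{a'}^{j_*}$ remains below $\sqrt{a}-1$ on $[\upsilon + (1/16)t_L, \theta]$, so that $|\varphi_i|$ has already decayed substantially there — shows that every point where the maximum on $[z_{i_*-1}, z_{i_*}]$ is achieved lies within $2C/(\sqrt{a_L}\ln a_L)$ of both $\upsilon$ and $L - \hat\upsilon$, and that the intervals $[\iota_+, \theta]$ and $[L - \hat\theta, L - \hat\iota_+]$ overlap. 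Combining the lower bound with its time-reversed upper bound on the overlap yields (III); splicing (III) with the exponential decays just obtained produces (II). The estimate $|x_{i_*} + (3/8)t_L - \theta_{i_*}| \le 5C \ln\ln a_L/\sqrt{a_L}$ is then immediate from $|x_{i_*} - \upsilon| \le 2C/(\sqrt{a_L}\ln a_L)$ and the bound $|\theta - \upsilon - (3/8)t_L| \le C \ln\ln a_L/\sqrt{a_L}$ of Proposition \ref{Prop:BoundsCross1}(ii).

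Statement (I) is obtained exactly as at the end of the proof of Lemma \ref{Lemma:SuccessiveMax}. Just after $z_{i_*-1}$, both $X_a$ and $X_{a'}$ have restarted from $+\infty$ at times within $(\ln\ln a_L)^2/(3\sqrt{a_L})$ of $z_{i_*-1}$ (by \ref{(b)-(iv)}, since $\ell = i_* - 1 < i_*$ so both explode on the previous active interval $[t^n_{j_{i_*-1}}, t^n_{j_{i_*-1}+1}]$), hence the squeezing together with the deterministic entrance of \ref{(c)-(ii)} yields $|\Chi_i(t) - \sqrt{a_L}\coth(\sqrt{a_L}(t - z_{i_*-1}))| \le 1$ for $t - z_{i_*-1} \in (0, (3/8) t_L]$, and the $\sinh$ formula follows by integration. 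Near $z_{i_*}$, the deterministic explosion behavior of $\Chi_i$ is supplied by Lemma \ref{Lemma:Explo}, whose only probabilistic input — control of the driving Brownian motion on a window of length $t_L$ preceding the explosion — is already present on $\cE(n,\eps)$ through the typical behavior of $X_a^{j_*}$. The principal subtlety, distinguishing this case from Lemma \ref{Lemma:SuccessiveMax}, is the absence of a comparison explosion of $X_{a'}^{j_*}$ near $z_{i_*}$, so that \ref{(b)-(iv)} is not directly available at the right endpoint; this is precisely what the time-reversal machinery via $\hat X_a^{j_*+1}$ is designed to circumvent, in direct parallel with how the right endpoint $L$ is handled in the first-eigenfunction proof.
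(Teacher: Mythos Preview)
Your overall strategy --- transplant the first-eigenfunction argument of Subsection~\ref{subsec:1steigen} to $[z_{i_*-1},z_{i_*}]$, accounting for the delay at $z_{i_*-1}$ where $X_a$ and $X_{a'}$ have both just restarted --- matches the paper's one-line proof, and your handling of (II), (III) and the location of the maximum is essentially correct.

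The gap is in your treatment of (I) at the right endpoint $z_{i_*}$. You claim the deterministic explosion profile of $\Chi_i$ there comes from Lemma~\ref{Lemma:Explo} with the Brownian control inherited from $X_a^{j_*}$, implicitly placing $z_{i_*}$ near $\zeta_a(i_*)$ in the interval $[t_{j_*}^n,t_{j_*+1}^n]$. But that is precisely what does \emph{not} happen: the whole point of the analogy with $\Chi_1$ is that $\Chi_i$ remains near $-\sqrt{a_L}$ for a macroscopic time after $X_a^{j_*}$ explodes, so $z_{i_*}$ lies far from interval $j_*$. Concretely, the reverse-time squeezing $\hat\zeta_a(i-i_*)\le L-z_{i_*}\le \hat\zeta_{a'}(i-i_*)$ together with the reversed \ref{(b)-(iv)} localizes $z_{i_*}$ in interval $j_{i_*+1}$ when $i_*<i$ (and $z_{i_*}=L$ when $i_*=i$). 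The correct route to (I) at $z_{i_*}$ is the mirror image of your argument at $z_{i_*-1}$: both $\hat X_a$ and $\hat X_{a'}$ have just restarted from $-\infty$ at reverse-times within $(\ln\ln a_L)^2/(3\sqrt{a_L})$ of $L-z_{i_*}$, and their deterministic entrance behavior (the reversed Proposition~\ref{Prop:Da}) squeezes $\Chi_i(L-\cdot)$, yielding the $\coth$ profile on $[z_{i_*}-(3/8)t_L,z_{i_*})$. This is the ``delay at the end'' symmetric to the paper's stated delay at the beginning; the diffusion $\hat X_a^{j_*+1}$ you invoke controls the decay on $[\theta,t_{j_*+1}^n]$ but plays no role near $z_{i_*}$.
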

\begin{proof}
The proof follows from the same arguments as in the case of the first eigenfunction, one simply has to take into account the delay between $X_{a}$ and $X_{a'}$ at the beginning of the interval but this delay is negligible compared to $t_L$.
\end{proof}

Applying successively Lemma \ref{Lemma:SuccessiveMax} and its time-reversed version, together with Lemma \ref{Lemma:SuccessiveMax2}, a straightforward calculation then shows that for all $t\in [0,L]$
\begin{align*}
\frac1{100}\exp\big(-(\sqrt{a_L} + \kappa_L) |t-x_{i_*}|\big)1_{\{t \in D\}} \le \frac{\varphi_{i}(t)}{\varphi_{i}(x_{i_*})} \le 100 \exp\big(-(\sqrt{a_L} - \kappa_L) |t-x_{i_*}|\big) \;,
\end{align*}
where the set $D$ is defined in the statement of Theorem \ref{Th:Shape}. (Note that the improved exponential rate $\sqrt a_L - (1/2) \kappa_L$ allows to compensate the ``absence'' of decay near the zeros).\\

Recall that by \ref{(b)-(iii)}, $|z_\ell-x_{i_*}| > 2^{-n}L$ for all $\ell \ne i_*$. Hence the maximum of $|\varphi_i|$ on $[0,L]$ is $|\varphi_i(x_{i_*})|$ and necessarily $U_i = x_{i_*}$. It is therefore easy to see that
$$ m_i\Big([0,1] \backslash\Big[\frac{z_{i_*-1}}{L},\frac{z_{i_*}}{L}\Big]\Big) \lesssim \varphi_i(x_{i_*})^2 L e^{-\frac12 \sqrt a_L 2^{-n} L}\;.$$
By Lemma \ref{Lemma:SuccessiveMax2}, we also get
$$ m_i\Big(\Big[\frac{z_{i_*-1}}{L},\frac{z_{i_*}}{L}\Big]\Big)= \varphi_i(x_{i_*})^2 \frac{2}{\sqrt a_L} (1+o(1))\;.$$
Therefore,
$$ m_{i}([0,1]) = \varphi_i(x_{i_*})^2 \frac{2}{\sqrt a_L} (1+o(1))\;.$$
Since $m_i$ is a probability measure, we deduce that
$$ |\varphi_i(x_{i_*})| \sim a_L^{1/4} / \sqrt 2\;.$$
Finally, we claim that for all $\ell \in \{1,\ldots,i\}$ we have the bound
$$|x_\ell - U_{\sigma(\ell)}| \le 8C / (\ln a_L \sqrt{a_L})\;.$$
The cases $\ell < i_*$ and $\ell > i_*$ are symmetric, so we only consider the former. Let us provide the arguments in the case $\ell = 1$, using the notations of the proof of Lemma \ref{Lemma:SuccessiveMax} (in particular $j=j_1$). If we let $p$ be the smallest integer such that $X_{a_p}^j$ explodes on $[t_j^n,t_{j+1}^n]$ but $X_{a_p'}^j$ does not, then $\upsilon_{a_p}$ lies at a distance at most $2C / (\sqrt{a_L} \ln a_L)$ from $U_{\sigma(1)}$ and from $\upsilon_a$. Since $x_1$ lies at a distance at most $4C / (\sqrt{a_L} \ln a_L)$ from $\upsilon_a$, the claim follows. Note that $U_{\sigma(i_*)} = U_i$.\\
The rest of the statements of Theorems \ref{Th:Main}, \ref{Th:Shape} and \ref{Th:LocalMaxZeros} regarding the $i$-th eigenfunction directly follows from Lemma \ref{Lemma:SuccessiveMax} and Lemma \ref{Lemma:SuccessiveMax2}, except the statement on the law of $U_i^\infty$ which will be proven in the next subsection.

\subsection{Convergence towards uniform r.v.}\label{Subsec:Unif}

\begin{proposition}\label{Prop:uniform}
The r.v.~$(U^{\infty}_1,\ldots,U^{\infty}_k)$ are i.i.d.~uniform over $[0,1]$ and independent of $\cQ_\infty$.
\end{proposition}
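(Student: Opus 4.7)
The strategy is to refine the convergence of $\cQ_L$ established in Subsection 2.3 by additionally tracking, for each eigenvalue, the block index in which the corresponding localization center lies. Fix $n \ge 1$, $\eps > 0$ and work on the event $\cE(n,\eps)$. By property (b)-(iii) of $\cE(n,\eps)$, for each $i \in \{1, \ldots, k\}$ the center $U_i$ lies in a single block $(t_{j_i}^n, t_{j_i+1}^n]$ and the block indices $j_1, \ldots, j_k$ are pairwise distinct. Since $|U_i/L - j_i/2^n| \le 2^{-n}$, it suffices to determine the joint limit of $\cQ_L$ together with the discretized positions $(j_i/2^n)_{i=1}^k$, and then to send $n\to \infty$.

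The core observation, already used in the proof of Lemma \ref{Lemma:TildePoisson}, is that the indicator vectors $(U_j(1), \ldots, U_j(k))$ for $j \in \{0, \ldots, 2^n - 1\}$ are i.i.d., since each is a measurable function of the Brownian increments on the disjoint sub-interval $(t_j^n, t_{j+1}^n]$. By the exchangeability of these blocks, conditional on the count vector $(\tilde Q^{(n)}_L(i))_{i=1}^k$, the unordered $k$-tuple $\{j_1, \ldots, j_k\}$ is uniformly distributed among the subsets of $\{0, \ldots, 2^n -1\}$ compatible with the profile; within each ``eigenvalue slice'' $(r_{i-1}, r_i]$ the ordering of blocks according to eigenvalue rank is again uniform, up to the vanishing-probability event of ties within a slice. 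Combining this with Lemma \ref{Lemma:TildePoisson}, which identifies the limiting joint law of the count vector as independent Poisson random variables, we deduce that, conditional on $\cQ_\infty$, the limiting positions $(U_1^\infty, \ldots, U_k^\infty)$ are i.i.d. uniform on $[0,1]$; hence unconditionally they are i.i.d. uniform and independent of $\cQ_\infty$.

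The key obstacles are twofold. First, one must rigorously identify each $U_i$ with a unique block on $\cE(n, \eps)$: this is guaranteed by properties (b)-(iii) and (b)-(iv), together with the exponential decay of Theorem \ref{Th:Shape} which prevents the mass of $\varphi_i$ from leaking into neighboring blocks. Second, one must exchange the order of limits: Proposition \ref{Propo:cE} yields $\liminf_{L\to\infty} \bbP[\cE(n,\eps)] \ge 1 - C\eps$ uniformly in $n$, so the conditional uniformity obtained on $\cE(n, \eps)$ passes to the limit as $L\to\infty$, and sending $n \to \infty$ converts the uniform distribution on $\{0, \ldots, 2^n -1\}$ into the Lebesgue measure on $[0,1]$. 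Since $\eps$ is arbitrary and the error $2^{-n}$ in the identification $U_i/L \approx j_i/2^n$ vanishes, this yields the announced joint convergence and hence the statement of the proposition.
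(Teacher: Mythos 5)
Your proposal takes a genuinely different route from the paper. The paper proves this proposition via a \emph{symmetry/coupling} argument: it first establishes Lemma \ref{Lemma:IID}, which reduces the i.i.d.~uniformity to an invariance property under piecewise-affine dyadic block permutations $\sigma$, and then realizes that invariance by coupling the operator $\cH_L$ with $\tilde{\cH}_L := -\partial_x^2 + \xi\circ\sigma_L^{-1}$. Since $\sigma$ preserves Lebesgue measure, $\tilde{\xi}$ is equal in law to $\xi$; on the event $\cE(n,\eps)$ the block-decomposed explosion structure transforms covariantly, so $\tilde U_i/L$ lands in the same dyadic cell as $\sigma(U_i/L)$ while $\tilde\lambda_i \approx \lambda_i$, and the identity \eqref{Eq:IdUnif} with the test function $f(\lambda_1,\ldots,\lambda_k)$ in the integrand encodes simultaneously the uniformity and the independence from $\cQ_\infty$. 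You instead run a \emph{conditional exchangeability} argument directly on the i.i.d.~block data $(U_j(1),\ldots,U_j(k))_j$, arguing that conditional on the count vector the set of explosion blocks is exchangeable, and that ties in eigenvalue rank within a slice are broken uniformly.

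The key gap in your sketch is the step ``within each eigenvalue slice the ordering of blocks according to eigenvalue rank is again uniform''. The eigenvalue $\lambda_i$ and hence the rank function are properties of the \emph{global} operator and are not measurable functions of a single block; you need to argue, using property (a) of $\cE(n,\eps)$, that the rank among the first $k$ eigenvalues is recoverable from the block-level data via the grid $M_L$ (i.e., from which $a$'s in $M_L$ produce an explosion in which block), and that this block-level data is i.i.d.~in $j$. This is true but is precisely the content that needs to be spelled out, and it is what the paper's coupling with $\tilde{\xi}$ circumvents automatically. A second, smaller gap: your final sentence jumps from ``uniform conditional on the count vector'' to ``uniform conditional on $\cQ_\infty$''; this requires letting the slice boundaries $r_1<\cdots<r_k$ vary and a standard monotone-class/approximation argument that you do not mention. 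Both gaps are fillable, but they represent a nontrivial fraction of the work. The paper's choice to phrase the statement through Lemma \ref{Lemma:IID} and a distributional coupling is precisely designed to avoid doing this conditional bookkeeping by hand; your route would be a valid alternative if those two steps were made precise.
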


This proposition, combined with the results of Section \ref{subsec:proofeigenvalues}, show that $(\lambda_i^\infty,U_i^\infty)_{i\ge 1}$ is distributed as a Poisson point process on $\R\times[0,1]$ with intensity $e^x dx \otimes dt$. Indeed, since the intensity is a product measure, the mere fact that $(\lambda_i^\infty)_{i\ge 1}$ and $(U_i^\infty)_{i\ge 1}$ are independent and have the right distributions yields the desired property.\\

Before we proceed to the proof, we prove a simple characterization of i.i.d.~uniform r.v. Let $\sigma$ be a bijection from $[0,1]$ into itself and assume that there exists $n_0\geq 1$ such that the restriction of $\sigma$ to every interval of the form $[s_i^{n_0}, s_{i+1}^{n_0})$ is affine with slope $1$, where $s_i^{n_0}=i 2^{-n_0}$ for $i\in\{0,\ldots,2^{n_0}-1\}$. In other terms, $\sigma$ is completely characterized by its values at the points $s_i^{n_0}$ and in between we have $\sigma(x) = \sigma(s_i^{n_0}) + (x- s_i^{n_0})$, $x\in[s_i^{n_0},s_{i+1}^{n_0})$.
\begin{lemma}\label{Lemma:IID}
Let $V_1,\ldots,V_k$ be $[0,1]$-valued r.v., assumed to be almost surely distinct. Suppose that for all bijections $\sigma$ as above and for all $j_1,\ldots,j_k \in \{0,\ldots,2^{n_0}-1\}$ we have
$$ \P\big[V_1 \in [s_{j_1}^{n_0},s_{j_{1}+1}^{n_0}),\ldots, V_k \in [s_{j_k}^{n_0},s_{j_{k}+1}^{n_0})\big] = \P\big[\sigma(V_1) \in [s_{j_1}^{n_0},s_{j_{1}+1}^{n_0}),\ldots, \sigma(V_k) \in [s_{j_k}^{n_0},s_{j_{k}+1}^{n_0})\big]\;.$$
Then $(V_1,\ldots,V_k)$ is i.i.d. uniform over $[0,1]$.
\end{lemma}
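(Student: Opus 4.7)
The key observation is that a bijection $\sigma$ of the type described in the statement is nothing but a measure-preserving permutation of the dyadic cells of order $n_0$: writing $I_i := [s_i^{n_0}, s_{i+1}^{n_0})$, such a $\sigma$ must send every $I_i$ onto some $I_{\pi(i)}$ for a permutation $\pi$ of $\{0,\ldots,2^{n_0}-1\}$, and conversely every such $\pi$ gives rise to a unique $\sigma$. The hypothesis therefore reads: for every $n_0 \ge 1$, every permutation $\pi$ and all $(j_1,\ldots,j_k)$,
\begin{equation*}
p^{(n_0)}_{j_1,\ldots,j_k} := \P[V_1 \in I_{j_1},\ldots,V_k \in I_{j_k}] = p^{(n_0)}_{\pi^{-1}(j_1),\ldots,\pi^{-1}(j_k)}.
\end{equation*}
In other words, $p^{(n_0)}$ is constant on each orbit of the diagonal $S_{2^{n_0}}$-action on $\{0,\ldots,2^{n_0}-1\}^k$; equivalently it depends only on the partition structure of $(j_1,\ldots,j_k)$.

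Next I exploit the almost sure distinctness of the $V_i$. Let $A_{n_0}$ be the event that $V_1,\ldots,V_k$ fall into $k$ pairwise distinct cells of order $n_0$. Since $V_i \ne V_j$ a.s. for $i \ne j$, one has $\P[A_{n_0}] \to 1$ as $n_0 \to \infty$. Denoting by $q_{n_0}$ the common value of $p^{(n_0)}_{j_1,\ldots,j_k}$ on tuples with pairwise distinct coordinates, and summing over such tuples,
\begin{equation*}
2^{n_0}(2^{n_0}-1)\cdots(2^{n_0}-k+1)\,q_{n_0} \;=\; \P[A_{n_0}] \;\xrightarrow[n_0\to\infty]{} 1,
\end{equation*}
so that $q_{n_0} = 2^{-n_0 k}(1+o(1))$. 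The remaining mass $1-\P[A_{n_0}]$ is carried by the degenerate orbits and tends to $0$.

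Finally I test against continuous functions. Given $f \in C([0,1]^k)$, let $g_n$ be the step function that equals $f(s_{j_1}^{n},\ldots,s_{j_k}^{n})$ on the cell $I_{j_1}\times\cdots\times I_{j_k}$; then $g_n \to f$ uniformly. Writing
\begin{equation*}
\E[g_n(V_1,\ldots,V_k)] = q_n \sum_{\mathbf{j}\text{ distinct}} g_n(\mathbf{j}) \;+\; R_n,
\end{equation*}
the remainder $R_n$ is bounded by $\|f\|_\infty\,(1-\P[A_n]) = o(1)$, while the sum of $g_n$ over degenerate tuples involves $O(2^{n(k-1)})$ terms each of size $O(2^{-nk}\|f\|_\infty)$, hence
\begin{equation*}
2^{-nk}\sum_{\mathbf{j}\text{ distinct}} g_n(\mathbf{j}) = \int_{[0,1]^k} g_n \,d\mathbf{x} + O(2^{-n}\|f\|_\infty).
\end{equation*}
Combining the two displays and using $q_n = 2^{-nk}(1+o(1))$ gives $\E[g_n(V_1,\ldots,V_k)] = \int g_n + o(1)$, and letting $n\to\infty$ yields $\E[f(V_1,\ldots,V_k)] = \int_{[0,1]^k} f\,d\mathbf{x}$. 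Since this holds for every continuous $f$, the law of $(V_1,\ldots,V_k)$ is Lebesgue measure on $[0,1]^k$.

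There is no genuine obstacle here: the whole point is to translate the invariance hypothesis into the combinatorial statement ``$p^{(n_0)}$ is symmetric under the diagonal $S_{2^{n_0}}$-action on cells'' and to notice that almost sure distinctness concentrates the mass on the unique symmetric orbit. The only mildly delicate bookkeeping is to track the two small errors (mass on degenerate orbits, and the difference between summing over distinct tuples and over all tuples) simultaneously with the refinement $n\to\infty$.
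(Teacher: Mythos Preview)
Your proof is correct and follows essentially the same approach as the paper: both arguments translate the invariance hypothesis into the statement that all dyadic cells with pairwise distinct coordinates carry the same mass, use almost sure distinctness to show this mass is asymptotically $2^{-kn_0}$, and conclude by an approximation argument. The only cosmetic difference is that the paper passes through product-of-intervals sets while you test against continuous functions via step approximations; the substance is identical.
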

\begin{proof}
Let $\pi$ be the law of $(V_1,\ldots,V_k)$. Let $D$ be the set of all points $v=(v_1,\ldots,v_k) \in [0,1]^k$ that have at least two coordinates equal. By assumption, $\pi(D) = 0$. Consider the $2^{k n_0}$ dyadic hypercubes of the type $[s_{j_1}^{n_0},s_{j_{1}+1}^{n_0})\times \ldots\times[s_{j_k}^{n_0},s_{j_{k}+1}^{n_0})$. Call $D_{n_0}$ the union of all such hypercubes that intersect $D$. We have $D_{n_0} \downarrow D$ as $n_0\rightarrow\infty$ so that $\pi(D_{n_0}) \rightarrow 0$. On the other hand, the assumption of the statement ensures that $\pi$ gives the same measure to every hypercube that does not intersect $D$. The number of all such hypercubes is $2^{n_0} (2^{n_0} -1)\ldots (2^{n_0}-k+1)$; notice that this quantity is equivalent to $2^{kn_0}$ as $n_0\to\infty$. Since $\pi([0,1]^k\backslash D_{n_0}) \to 1$, we deduce that the measure of every hypercube that does not intersect $D$ is equivalent to $2^{-kn_0}$ as $n_0\rightarrow \infty$. By a simple approximation argument, it is then easy to deduce that $\pi(A)$ equals the Lebesgue measure of $A$ for all set $A$ which is a product of intervals. As a consequence, $\pi$ is the Lebesgue measure on $[0,1]^k$ and the statement of the lemma follows.
\end{proof}

\begin{proof}[Proof of Proposition \ref{Prop:uniform}]
Observe that $\cQ_\infty$ is a Poisson point process with intensity $e^x dx$ that can be written
$$ \cQ_\infty = \sum_{i\ge 1} \delta_{\lambda_i^{\infty}}\;,$$
where $(\lambda_i^{\infty})_{i\ge 1}$ is the limit in distribution (for the product topology) of $\big((\lambda_{i}+a_L)/4\sqrt{a_L}\big)_{i\ge 1}$. Moreover, the r.v. $U_{1}^\infty,\ldots,U_k^\infty$ are all distinct a.s. Indeed, on the event $\cE(n,\eps)$ the r.v. $U_1,\ldots,U_k$ all lie at a distance at least $2^{-n}$ from one another so that the r.v. $U_{1}^\infty,\ldots,U_k^\infty$ are all distinct with probability at least $1-O(\eps)$; but since $\eps$ can be taken as small as desired, the latter property holds almost surely.\\
It suffices to show that $(U^{\infty}_1,\ldots,U^{\infty}_k)$ is i.i.d. uniform over $[0,1]$ and independent of $(\lambda^{\infty}_1,\ldots,\lambda^{\infty}_k)$. Indeed, since $k$ is arbitrary, such a result would ensure that $(U^{\infty}_1,\ldots,U^{\infty}_K)$ is independent of $(\lambda^{\infty}_1,\ldots,\lambda^{\infty}_K)$ for any $K\ge k$, so that $(U^{\infty}_1,\ldots,U^{\infty}_k)$ is independent from $(\lambda^{\infty}_i)_{i\ge 1}$ as required.\\
If we show that
\begin{equation}\label{Eq:IdUnif}
\E\Big[\prod_{i=1}^k \un_{U_{i}^\infty \in [s_{j_i}^{n_0},s_{j_{i}+1}^{n_0})} f(\lambda_1^\infty,\ldots,\lambda_k^\infty)\Big] = \E\Big[\prod_{i=1}^k \un_{\sigma(U_{i}^\infty) \in [s_{j_i}^{n_0},s_{j_{i}+1}^{n_0})}  f(\lambda_1^\infty,\ldots,\lambda_k^\infty)\Big]\;,
\end{equation}
holds for all continuous functions $f$ on $\R^k$ with compact support, all integers $j_i\in \{0,\ldots,2^{n_0}-1\}$ and all bijections $\sigma$ as above, then Lemma \ref{Lemma:IID} ensures that $(U_1^\infty,\ldots,U_k^\infty)$ is i.i.d. uniform on $[0,1]$, and classical arguments yield the independence from $(\lambda_1^\infty,\ldots,\lambda_k^\infty)$.\\

To prove \eqref{Eq:IdUnif}, we proceed as follows. Fix a bijection $\sigma$ and an integer $n_0\ge 1$ as above. Let $\tilde{\xi} := \xi \circ \sigma_L^{-1}$ where $\sigma_L: [0,L]\ni x\mapsto L\sigma(x/L)$. In other words, we let $\tilde{\xi}$ be the unique distribution on $[0,L]$ such that
$$ \langle \tilde{\xi} , f \rangle = \langle \xi, f\circ\sigma_L\rangle\;,\quad \forall f\in \cC^\infty([0,L])\;.$$
Since $\sigma$ preserves the Lebesgue measure on $[0,1]$, we deduce that $\tilde{\xi}$ has the same law as $\xi$. Therefore, we can define the corresponding operator $\tilde{\cH} = -\partial^2_x + \tilde{\xi}$ as well as the diffusions $\tilde{X}_a$ subject to the noise $\tilde{B}$. Similarly, we can introduce the r.v.~$\tilde{a}_i < \tilde{a}'_i$ which approximate the $k$ first eigenvalues of $\tilde{\cH}_L$.

\medskip

Take $n\ge n_0$. On the event $\cE(n,\eps)$, the following holds. For all $a \geq a_0$ such that $a \in M_L$, the number of explosions of $X_a$ (resp.~$\tilde{X}_a$) coincides with the sum of the number of explosions of $X_a^{j}$ (resp.~$\tilde{X}_a^{j}$), $j=0,\ldots,2^{n}-1$. Furthermore, we have the identity
$$ \tilde{X}_{a}^{j}(s_j^n L+t) = X_{a}^{2^n\sigma^{-1}(s_j^n)}\big(\sigma_L^{-1}(s_j^n L)+t\big)  \;,\quad t\in [0,2^{-n} L]\;.$$
As a consequence, the number of explosions of $X_a$ and $\tilde{X}_a$ coincide and we deduce that $a_i=\tilde{a}_i$ and $a_i'=\tilde{a}'_i$. This already ensures that the $k$ first eigenvalues of $\cH_L$ and $\tilde{\cH}_L$ lie at a distance at most $\eps/\sqrt{a_L}$ from each other on the event $\cE(n,\eps)$. Additionally, the r.v.~$\tilde{U}_{i}/L$ falls into the same subinterval of length $2^{-n}$ as $\sigma(U_{i}/L)$. Since $\cE(n,\eps)$ has a probability of order $1-O(\eps)$ for $L$ and $n$ large enough, we find:
\begin{align*}
\E\Big[\prod_{i=1}^k \un_{\tilde{U}_{i}/L \in [s^{n_0}_{j_i},s^{n_0}_{j_i+1}]}\, f(\tilde{\lambda}_{1},\ldots,\tilde{\lambda}_{k})\Big]
= \E\Big[\prod_{i=1}^k \un_{\sigma(U_{i}/L) \in [s^{n_0}_{j_i},s^{n_0}_{j_i+1}]}\, f(\lambda_{1},\ldots,\lambda_{k})\Big] + O(\eps) + \cO\Big(\frac{\eps}{\sqrt{a_L}}\Big)\;.
\end{align*}
On the other hand, since $\cH_L$ and $\tilde{\cH}_L$ have the same statistics, we immediately have
$$ \E\Big[\prod_{i=1}^k \un_{\tilde{U}_{i}/L \in [s^{n_0}_{j_i},s^{n_0}_{j_i+1}]}\, f(\tilde{\lambda}_{1},\ldots,\tilde{\lambda}_{k})\Big]  = \E\Big[\prod_{i=1}^k \un_{U_{i}/L \in [s^{n_0}_{j_i},s^{n_0}_{j_i+1}]}\, f(\lambda_{1},\ldots,\lambda_{k})\Big] \;.$$
By passing to the limit on a converging subsequence, we deduce that the identity \eqref{Eq:IdUnif} holds up to an error of order $\eps$, which can be taken as small as desired, thus concluding the proof.
\end{proof}

\section{Proofs of some first estimates on the diffusion $X_a$}\label{Sec:Diff}

\subsection{Invariant measure}

For every given $a>0$, the process $(X_a(t),t\ge 0)$ admits a unique invariant (but not reversible) probability measure $\mu_a(dx)=f_a(x)dx$ with
\begin{align}\label{density:Invmeasure}
f_a(x) = \frac2{m(a)} \exp(-2V_a(x)) \int_{-\infty}^x \exp(2V_a(y)) dy\;.
\end{align}
To check this fact, one simply has to show that $\cG^* f_a = 0$, where $\cG^*$ is the forward generator of the diffusion $X_a$
$$ \cG^* f(x) = \frac12 f''(x) + \big(V_a'(x) f(x)\big)'\;.$$
We also introduce the scale function associated with our diffusion:
\begin{align*}
S(x) = \int_{-\infty}^{x} \exp(2 V_a(y)) dy = \int_{-\infty}^{x} \exp(-2 a y + \frac{2}{3} y^3) dy \;.
\end{align*}
If we let $\P_x$ be the law of our diffusion starting from $x\in (y,z)$, then we have:
\begin{align}\label{Eq:ScaleHit}
\P_{x}[\tau_{y} < \tau_{z} \big] &= \frac{S(z) - S(x)}{S(z) - S(y)}\;.
\end{align}
In the next lemma, we establish some useful estimates on the invariant measure
\begin{lemma}\label{Lemma:InvMeas}
For all $y(a)$ that goes to $\infty$ as $a\rightarrow\infty$ but is negligible compared to $a^{3/4}$, we have the bound
$$\mu_a([\sqrt{a} - \frac{y(a)}{a^{1/4}},\sqrt{a} +\frac{y(a)}{a^{1/4}}]) \ge 1-e^{-y(a)^2}\;,$$
uniformly over all $a$ large enough. Furthermore, for all $c\in (0,1)$, there exists $\rho(c) >0$ such that
\begin{align*}
\mu_a([(1-c) \sqrt{a},(1+c) \sqrt{a}]) &\geq 1 - e^{-\rho(c) a^{3/2}}\;,
\end{align*}
uniformly over all $a$ large enough.
\end{lemma}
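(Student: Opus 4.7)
The plan is to start from the explicit formula \eqref{density:Invmeasure} for $f_a$ and combine it with the asymptotics \eqref{asymp_ma} of $m(a)$. The key analytical input is a Laplace expansion of the inner integral $I(x) := \int_{-\infty}^x \exp(2 V_a(y)) dy$. Since $V_a$ has a local maximum at $-\sqrt{a}$ with $V_a(-\sqrt a) = \frac{2}{3}a^{3/2}$ and curvature $V_a''(-\sqrt a) = -2\sqrt a$, Laplace's method gives, uniformly for $x \in [-\sqrt{a}+1, 2\sqrt{a}]$,
$$I(x) = (1 + o(1))\sqrt{\pi/(2\sqrt a)}\, \exp(\tfrac{4}{3} a^{3/2})\;,$$
and plugging this into \eqref{density:Invmeasure} together with \eqref{asymp_ma} yields
$$f_a(x) = (1 + o(1))\sqrt{2\sqrt a / \pi}\, \exp(-2(V_a(x) - V_a(\sqrt a)))$$
uniformly on the same range, with the Taylor expansion $V_a(x) - V_a(\sqrt a) = \sqrt a (x - \sqrt a)^2 + (x-\sqrt a)^3/3$ providing the Gaussian-like structure at the bottom of the well.

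For the first bound, I would rescale $x = \sqrt a + s/a^{1/4}$ so that $f_a(x)\,dx$ becomes $(1 + o(1))\sqrt{2/\pi}\, \exp(-2 s^2 - 2 s^3/(3 a^{3/4}))\,ds$. On the range $y(a) < |s| \leq a^{3/4}/2$ the cubic correction is at most $|s|^2/3$ in absolute value, so the exponent is bounded above by $-\tfrac{5}{3}s^2 \leq -\tfrac{5}{3}\, y(a)^2$, and the tail integral is of order $e^{-(5/3) y(a)^2}/y(a)$, comfortably smaller than $e^{-y(a)^2}$. The extreme tails $|s| > a^{3/4}/2$ correspond to $x$ outside $[\sqrt a/2, 3\sqrt a/2]$, which the second bound (applied with $c = 1/2$) controls by $e^{-\rho(1/2) a^{3/2}}$, a quantity negligible compared to $e^{-y(a)^2}$ since $y(a)^2 = o(a^{3/2})$.

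For the second bound, I rely on the explicit computation $V_a(\sqrt a (1 \pm c)) - V_a(\sqrt a) = a^{3/2} g_\pm(c)$ with $g_\pm(c) := ((1\pm c)^3 - 3(1 \pm c) + 2)/3 > 0$. Using the uniform expansion of $f_a$ on $[-\sqrt a + 1, 2\sqrt a]$, the contribution to $\mu_a$ of the set within this range but outside $[(1-c)\sqrt a, (1+c)\sqrt a]$ is dominated by integrating the Gaussian-like density, whose maximal value there is at most $\exp(-2 \min(g_+(c), g_-(c)) a^{3/2})$, yielding a bound of order $\mathrm{poly}(a)\,\exp(-2 \rho_0(c) a^{3/2})$. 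The remaining extreme tails are treated directly from \eqref{density:Invmeasure}: for $x \leq -\sqrt a$, the integrand $\exp(2 V_a(y))$ in $I(x)$ does not access the peak at $y = -\sqrt a$, so $I(x)$ is exponentially suppressed, giving $f_a(x) \lesssim 1/(m(a)(x^2-a))$; for $x \geq 2\sqrt a$, the super-exponential decay of $\exp(-2 V_a(x))$ overwhelms the growth of $I(x)$ via an integration-by-parts argument. The main obstacle is maintaining the uniformity of the Laplace expansion of $I(x)$ and $f_a$ throughout the full range $[-\sqrt a + 1, 2\sqrt a]$, which grows with $a$, rather than merely on compact subsets, while simultaneously controlling the extreme tails through these separate coarse estimates.
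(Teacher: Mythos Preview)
Your approach is essentially the same as the paper's: both establish uniform Laplace asymptotics for $S(x)=I(x)$ on a wide interval (the paper uses $[-\sqrt{a}/2,\,3\sqrt{a}/2]$ rather than your $[-\sqrt{a}+1,\,2\sqrt{a}]$), deduce the Gaussian-like formula $f_a(x)\sim \sqrt{2/\pi}\,a^{1/4}\exp\!\big(-2(V_a(x)-V_a(\sqrt a))\big)$, and then control the complement by rough tail estimates. One small technical point to fix: your pointwise bound $f_a(x)\lesssim 1/(m(a)(x^2-a))$ for $x\le -\sqrt a$ is not integrable up to $-\sqrt a$, and the strip $(-\sqrt a,\,-\sqrt a+1)$ falls between your two regimes; both gaps are easily closed by a crude bound on $f_a$ near $-\sqrt a$ (where $e^{-2V_a}$ is near its local minimum, so $f_a$ is of order $a^{1/4}e^{2\sqrt a}/m(a)$ there), but you should patch this explicitly.
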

\begin{proof}
Note that the density of the invariant measure writes: 
\begin{align}\label{Eq:InvMeas}
\forall x \in \R, \quad f_a(x) = \frac2{m(a)} \exp(-2V(x)) S(x).
\end{align}
Some rough estimates ensure that there exists $\rho'>0$ such that for all $a$ large enough
$$ \mu_a\big(\R \backslash [-\sqrt{a}/2, 3\sqrt{a}/2]\big) \le e^{-\rho' a^{3/2}}\;.$$
Consequently, we can focus on the interval $[-\sqrt{a}/2, 3\sqrt{a}/2]$. Therein, the scale function is almost constant. Indeed, there exists $\rho'' > 0$ such that for all $x\in [-\sqrt{a}/2, 3\sqrt{a}/2]$, we have $S(x) = S(\sqrt a) \big(1+O(e^{-\rho'' a^{3/2}})\big)$. Moreover, $S(\sqrt{a}) = \sqrt{\pi/2} \;a^{-1/4} \exp(4a^{3/2}/3)(1+o(1))$. Thus, using \eqref{Eq:InvMeas} we find for all $x \in [-\sqrt{a}/2, 3\sqrt{a}/2]$,
\begin{align}\label{Eq:faExplicit}
f_a(x) = \sqrt{\frac{2}{\pi}} a^{1/4} \exp\big(-2 \sqrt{a}(x- \sqrt{a})^2 (1+ \frac{(x- \sqrt{a})}{3\, \sqrt a})\big)(1+ o(1))\;,
\end{align}
Recall that $f_a$ integrates to $1$. Therefore, to get the first bound of the statement it suffices to control the integral of $f_a$ over $[-\sqrt{a}/2, 3\sqrt{a}/2] \backslash [\sqrt{a} - \frac{y(a)}{a^{1/4}},\sqrt{a} +\frac{y(a)}{a^{1/4}}]$: this follows from a simple computation based on \eqref{Eq:faExplicit}. The second bound is obtained similarly.
\end{proof}

\subsection{Entrance and exit}
In this paragraph, we will evaluate how much time the diffusion takes to go from $+\infty$ down to various levels around $\sqrt{a}$ (resp. the time it takes to explode from various levels near $-\sqrt{a}$). Recall the definition of $t_a$ and $h_a$ given in \eqref{Eq:ta}. 

\begin{lemma}[Entrance]\label{Lemma:CDI}
Take $x(a) = \sqrt a + h_a/4$ and $M=2\sqrt t_a \ln\ln a$. On the event $\{\sup_{t\in[0,t_a]} |B(t)| < M \}$, whose probability is at least $1-\exp(-(\ln\ln a)^2)$, we have
\begin{align}
\sup_{t\in (0,\tau_{x(a)}(X)]} \big|X(t)-\sqrt{a} \coth (\sqrt a t)\big| \lesssim M \;. \label{Eq:Entrance}
\end{align}
As a consequence, we get the following asymptotics for all $u>1$ and all $c\ge 1/4$:
\begin{align}
\tau_{\sqrt a + c\,h_a}(X) &= \frac{3}{8}\, \frac{\ln a}{\sqrt{a}} + \frac1{2\sqrt{a}}\ln\frac{2}{c\ln a} + o\Big(\frac{1}{\sqrt{a}}\Big)\;, \label{Eq:tCDI}\\
\tau_{u \sqrt a}(X) &= \frac{C(u)}{\sqrt{a}} + o\Big(\frac{1}{\sqrt{a}}\Big)\;, \label{Eq:tCDI2}
\end{align}
where $u\mapsto C(u)$ is the reciprocal of $x \mapsto \coth x$.\\
Furthermore, we have the additional bound for all $a$ large enough
\begin{equation}\label{Eq:Entrance2}
\sup_{t\in (\tau_{x(a)}(X),\frac38 t_a]} \big|X(t)-\sqrt{a} \coth (\sqrt a t)\big| \le 1 \;.
\end{equation}
\end{lemma}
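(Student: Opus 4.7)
The plan is to compare the diffusion $X := X_a$ with the deterministic descent $\phi(t) := \sqrt{a}\coth(\sqrt{a}\,t)$, the unique solution on $(0,\infty)$ of $\dot\phi = a - \phi^2$ entering from $+\infty$ at time $0^+$. Setting $Y := X - \phi$, one obtains the linear (but random-coefficient) SDE $dY = -(X+\phi)\,Y\,dt + dB$. Using the integrating factor $\mu(s) := \exp(\int_0^s (X+\phi)(u)\,du)$, which is $\mathcal{F}_s$-adapted and of finite variation, one finds $d(\mu Y) = \mu\,dB$, and Stieltjes integration by parts applied to $\mu B$ gives
$$Y(t) = B(t) - \int_0^t B(s)\,(X+\phi)(s)\,\exp\Big(-\int_s^t (X+\phi)(u)\,du\Big)\,ds.$$
The contribution of the initial condition drops out since $\int_0^s (X+\phi)\,du = +\infty$.

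Whenever $(X+\phi)\ge 0$ on $[0,t]$, the measure $(X+\phi)(s)\exp(-\int_s^t(X+\phi))\,ds$ is sub-probability, giving $|Y(t)| \le 2\sup_{[0,t]}|B|$. On the event $\{\sup_{[0,t_a]}|B| < M\}$---whose complement has probability at most $4\exp(-M^2/(2t_a)) = 4\exp(-2(\ln\ln a)^2) \le \exp(-(\ln\ln a)^2)$ for $a$ large---I would close the argument by a bootstrap: since $\phi \ge \sqrt{a}$ on $(0,\infty)$, the bound $|Y| \le \sqrt{a}/2$ already guarantees $X+\phi > 0$. Defining $\tau := \inf\{t \in (0,\tfrac{3}{8}t_a] : |Y(t)| \ge \sqrt{a}/2\}$, the integration-by-parts bound yields $|Y| \le 2M$ on $[0,\tau)$; since $2M < \sqrt{a}/2$ for $a$ large, continuity forces $\tau > \tfrac{3}{8}t_a$, so $|Y| \le 2M$ throughout $(0,\tfrac{3}{8}t_a]$. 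This proves \eqref{Eq:Entrance}, and, since $M = 2(\ln a)^{1/2}(\ln\ln a)/a^{1/4} \to 0$, also \eqref{Eq:Entrance2} for $a$ sufficiently large.

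The hitting-time asymptotics \eqref{Eq:tCDI} and \eqref{Eq:tCDI2} follow by inverting $\phi$. Solving $\phi(t) = u\sqrt{a}$ gives $t = C(u)/\sqrt{a}$, matching \eqref{Eq:tCDI2}, while solving $\phi(t) = \sqrt{a} + c h_a$ via the expansion $\coth(x) - 1 \sim 2e^{-2x}$ gives $t = \tfrac{3}{8}(\ln a)/\sqrt{a} + (2\sqrt{a})^{-1}\ln(2/(c\ln a)) + o(1/\sqrt{a})$, matching \eqref{Eq:tCDI}. In each case, the $O(M)$ spatial error between $X$ and $\phi$ translates into a time error of order $O(M/|\phi'|)$ at the target level: near $\sqrt{a} + c h_a$ one has $|\phi'| = |a - \phi^2| \asymp \sqrt{a}\,h_a = (\ln a)\,a^{1/4}$, yielding a time correction $O((\ln\ln a)/\sqrt{a\ln a}) = o(1/\sqrt{a})$; near $u\sqrt{a}$ for fixed $u > 1$, $|\phi'| \asymp \sqrt{a}$, yielding $O(M/\sqrt{a}) = o(1/\sqrt{a})$.

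The main technical point is the bootstrap controlling the sign of $X+\phi$; this is essentially free here because $\phi$ is bounded below by $\sqrt{a}$ while the Brownian oscillation $M$ tends to $0$. The only other subtlety is matching the $O(M)$ spatial error against the local slope of $\phi$ near the target levels to conclude $o(1/\sqrt{a})$ temporal corrections---a direct but careful computation once the asymptotic form of $\phi$ is known.
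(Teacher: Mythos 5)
Your argument is correct, and it takes a genuinely different route from the paper's. The paper removes the noise by setting $Z=X-B$, so that $Z$ solves $\dot Z = a - Z^2(1+B/Z)^2$, and then sandwiches $Z$ between the explicit solutions $F_{1,2}(t)=\sqrt{a/C_{1,2}}\coth(\sqrt{aC_{1,2}}\,t)$ of ODEs whose quadratic coefficient is multiplicatively perturbed by $(1\pm M/(\ell(a)-M))^2$; since the relative perturbation $M/\ell(a)$ depends on the current level of the diffusion, the uniform $O(M)$ bound down to $t=0^+$ is obtained by patching these sandwiches over a sequence of levels $\ell(a), 2\ell(a),\dots$ You instead linearize around the exact deterministic entrance $\phi(t)=\sqrt a\coth(\sqrt a\,t)$, represent $Y=X-\phi$ by Duhamel's formula, observe that the kernel $(X+\phi)(s)e^{-\int_s^t(X+\phi)}ds$ is a probability measure on $[0,t]$ as long as $X+\phi\ge 0$, and close the loop with a continuity bootstrap using $\phi\ge\sqrt a\gg M$. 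This avoids the level-by-level patching entirely: the singularity at $t=0^+$ is absorbed automatically because the exponential weight vanishes there, and the hitting-time asymptotics then follow by inverting $\phi$ exactly as in the paper (where the same comparison of the $O(M)$ spatial error with the local slope of $\coth$ is carried out on $F_{1,2}$). Two small points to tidy up: the integrating factor $\mu(s)=\exp(\int_0^s(X+\phi))$ is literally $+\infty$ for every $s>0$ since both $X$ and $\phi$ behave like $1/u$ near the origin, so the computation should be set up on $[t_0,t]$ and the limit $t_0\downarrow 0$ taken at the end (your final formula and the vanishing of the initial-condition term are then correct, since the weight decays like $(t_0/t)^2$); and at the level $u\sqrt a$ the slope is $|a-u^2a|\asymp a$ rather than $\asymp\sqrt a$, which only makes the temporal error smaller than you claim, so \eqref{Eq:tCDI2} is unaffected.
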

\begin{proof}
We adapt the arguments of the proof of~\cite[Proposition 2]{DumazVirag}. Let $Z(t) = X(t) - B(t)$. Necessarily, $Z$ solves the ODE
\begin{equation*}
dZ(t) = \Big(a-Z(t)^2 \Big(1+\frac{B(t)}{Z(t)}\Big)^2\Big)dt\;,\qquad Z(0)=+\infty\;.
\end{equation*}
We are therefore led to considering the ODE
\begin{equation*}
dF(t) = \Big(a-C F(t)^2\Big)dt\;,\qquad F(0)=+\infty\;,
\end{equation*}
with $C$ being either $C_1$ or $C_2$ where
$$ C_1 = \Big(1 - \frac{M}{\ell(a) - M}\Big)^2\;,\qquad C_2 = \Big(1 + \frac{M}{\ell(a) - M}\Big)^2 \;,$$
and $\ell(a)$ is a value in $[x(a),\infty)$ that will be chosen later on. The generic solution is given by $F(t) = \sqrt{a/C} \coth(\sqrt{a \,C} \,t)$ for all $t>0$. On the event $\{\sup_{t\in[0,t_a]} |B(t)| < M\}$ and for $a$ large enough, we have the following bound
\begin{equation*}
F_2(t) \leq Z(t) \leq F_1(t)\;,\qquad t\in[0,\tau_{\ell(a)}(X)\wedge t_a]\;. 
\end{equation*}
Consequently the hitting time of $\ell(a)$ by the diffusion $X$ satisfies
$$ \tau_{\ell(a) + M}(F_2)\wedge t_a \leq \tau_{\ell(a)}(X)\wedge t_a \leq \tau_{\ell(a) - M}(F_1)\wedge t_a \;.$$
We now derive the asymptotics of these upper and lower bounds. Let $\ell_\pm(a)$ be $\ell(a) \pm M$.

When $\ell(a)=u \sqrt a$, we have:
$$ \tau_{\ell_{\pm}(a)}(F) = \frac{C(u)}{\sqrt{a}}\Big(1 + \cO\Big(\frac{M}{\ell_{\pm}(a)}\Big)\Big)\;.$$
On the other hand, when $\ell(a) = \sqrt a + c\frac{\ln a}{a^{1/4}}$, the asymptotic expansion of $\coth$ at infinity readily yields
$$ \tau_{\ell_{\pm}(a)}(F) = -\frac{1}{2\sqrt{aC}} \ln\frac12\Big(\frac{\ell_{\pm}(a)}{\sqrt{a}}\sqrt{C}-1+o(e^{-2\sqrt{aC}\tau_{\ell_{\pm}(a)}(F)})\Big)\;,$$
uniformly over all $C$ in a neighborhood of $1$. Since $M \ll (\ell(a) - \sqrt a)$,
a simple calculation shows that both $\tau_{\ell(a) + M}(F_2)$ and $ \tau_{\ell(a) - M}(F_1)$ admit the following expansion
$$\frac{3}{8}\, \frac{\ln a}{\sqrt{a}} + \frac1{2\sqrt{a}}\ln\frac{2}{c\ln a} + o\Big(\frac1{\sqrt{a}}\Big)\;,$$
as $a\rightarrow\infty$.\\
We have proven the asserted asymptotics on the hitting times: they ensure that $\tau_{\ell(a)}(X) < t_a$ for $a$ large enough, so that
\begin{equation*}
F_2(t)-M \leq X(t) \leq F_1(t)+M\;,\qquad t\in[0,\tau_{\ell(a)}(X)]\;. 
\end{equation*}
for all $\ell(a) \in [x(a),\infty)$. It is simple to check that $F_2(t)-M \le \sqrt{a} \coth(\sqrt{a} t) \le F_1(t)+M$ for all $t\ge 0$. Hence, for all $\ell(a) \in [x(a),\infty)$
\begin{align*}
\sup_{t\in [\tau_{2\ell(a)},\tau_{\ell(a)}]}\big|X(t) - \sqrt{a} \coth(\sqrt{a} t)\big| &\le \sup_{t\in[\tau_{2\ell(a)+M}(F_2),\tau_{\ell(a)-M}(F_1)]} \big|(F_1(t)+M) - (F_2(t) - M)\big|\\
&\le \sup_{t\in[\tau_{2\ell(a)+M}(F_2),\tau_{\ell(a)-M}(F_1)]} \Big|\frac{F_1(t)}{F_2(t)}-1\Big| F_2(t) + 2M\;.
\end{align*}
From the explicit expressions of $F_1$ and $F_2$, we obtain:
$$ \sup_{t>0}\Big|\frac{F_1(t)}{F_2(t)} - 1\Big| \lesssim \frac{M}{\ell(a)}\;,$$
uniformly over all choices of $\ell(a) \in [x(a),\infty)$. Therefore,
$$ \sup_{t\in [\tau_{2\ell(a)+M}(F_2),\tau_{\ell(a)-M}(F_1)]}\Big|\frac{F_1(t)}{F_2(t)} - 1\Big|F_2(t) \lesssim M\;,$$
so that
$$ \sup_{t\in [\tau_{2\ell(a)},\tau_{\ell(a)}]}\big|X(t) - \sqrt{a} \coth(\sqrt{a} t)\big| \lesssim M\;,$$
uniformly over all choices of $\ell(a) \in [x(a),\infty)$. Patching together these estimates, we get \eqref{Eq:Entrance}.\\
To complete the proof of the lemma, it remains to control the diffusion on the interval $[\tau_{x(a)}(X),(3/8)t_a]$ and on the event $\{\sup_{t\in[0,t_a]} |B(t)| < M\}$. To that end, we take $\ell(a) = \sqrt{a}/2$ and, by the arguments at the beginning of the proof, we find
$$ F_2(t)-M \le X(t) \le F_1(t)+M\;,\qquad t\in[0,\tau_{\ell(a)}(X)\wedge t_a]\;.$$
Since $F_2((3/8)t_a)-M = \sqrt a - O(M)$ and since $F_2$ is decreasing, we easily deduce that $\tau_{\ell(a)+M}(F_2) > (3/8)t_a$ and therefore $\tau_{\ell(a)}(X)> t_a$. This yields \eqref{Eq:Entrance2}.
\end{proof}

We have an analogous result right before the explosion time, we keep the notations from the previous lemma.
\begin{lemma}[Explosion]\label{Lemma:Explo}
On the event $\{\sup_{t\in[0,t_a]} \big|B(\tau_{-x(a)}(X)+t)-B(\tau_{-x(a)}(X))\big| < M\}$, whose probability is at least $1-\exp(-(\ln\ln a)^2)$, we have:
$$ \sup_{t\in [\tau_{-x(a)}(X),\tau_{-\infty}(X))} \Big|X(t)+\sqrt{a} \coth (\sqrt a (\tau_{-\infty}(X)-t))\Big| \lesssim M \;.$$
Therefore, for $u> 1$ and $c\ge 1/4$, as $a\to\infty$
\begin{align*}
\tau_{-\infty}(X) - \tau_{-\sqrt a - c\, h_a}(X) &= \frac{3}{8}\, \frac{\ln a}{\sqrt{a}} + \frac1{2\sqrt{a}}\ln\frac{2}{c\ln a} + o\Big(\frac{1}{\sqrt{a}}\Big)\;, \\
\tau_{-\infty}(X) - \tau_{-u\sqrt a}(X) &= \frac{C(u)}{\sqrt{a}} + o\Big(\frac{1}{\sqrt{a}}\Big)\;.
\end{align*}
\end{lemma}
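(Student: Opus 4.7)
The plan is to mirror the proof of Lemma \ref{Lemma:CDI} on the post-$\sigma$ portion of the trajectory, where $\sigma := \tau_{-x(a)}(X)$. By the strong Markov property, $\tilde X(t) := X(\sigma + t)$ solves the same SDE with initial condition $-x(a)$ and driving Brownian motion $\tilde B(t) := B(\sigma+t) - B(\sigma)$, which is independent of $\cF_\sigma$. On the good event $A := \{\sup_{t\in[0,t_a]}|\tilde B(t)| \leq M\}$, whose probability is at least $1 - \exp(-(\ln\ln a)^2)$ by the same Gaussian tail used in Lemma \ref{Lemma:CDI}, I would set $\tilde Z := \tilde X - \tilde B$ and observe that $d\tilde Z/dt = a - (\tilde Z + \tilde B)^2$.

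Since $\tilde X$ is (on $A$) decreasing monotonically to $-\infty$, for any target level $\ell(a) \in [x(a), \infty)$ the bound $|\tilde Z| \geq \ell(a) - M$ holds as long as $|\tilde X| \geq \ell(a)$. This allows the same pinching $(\tilde Z + \tilde B)^2 \in [C_1 \tilde Z^2, C_2 \tilde Z^2]$ with $C_i = (1 \mp M/(\ell(a)-M))^2$ used in Lemma \ref{Lemma:CDI}, and yields $F_2(t) \leq \tilde Z(t) \leq F_1(t)$ where $F_i$ solves $dF_i/dt = a - C_i F_i^2$ with $F_i(0) = -x(a)$. The explicit solution is $F_i(t) = -\sqrt{a/C_i}\coth(\sqrt{aC_i}(T^*_{F_i}-t))$, with $T^*_{F_i}$ fixed by matching the initial condition through $\coth$. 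The hitting-time asymptotics then follow by the same computation as in Lemma \ref{Lemma:CDI}: using $\coth^{-1}(1+\varepsilon) = \tfrac12\ln(2/\varepsilon) + O(\varepsilon)$ with $\varepsilon$ of order $h_a/\sqrt{a}$ (respectively $\coth^{-1}(u) = C(u)$ for fixed $u>1$), one recovers the stated expressions for $\tau_{-\infty}(X) - \tau_{-\sqrt{a}-c h_a}(X)$ and $\tau_{-\infty}(X) - \tau_{-u\sqrt{a}}(X)$; since $T^*_{F_1} - T^*_{F_2} = o(1/\sqrt{a})$ (the corrections vanish faster than $1/\sqrt{a}$ because $M/h_a \to 0$), the sandwich pins down the true explosion time to the stated precision.

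The main obstacle is the supremum estimate $|X(t) + \sqrt{a}\coth(\sqrt{a}(\tau_{-\infty}(X)-t))| \lesssim M$, since the reference profile involves the \emph{random} explosion time $T^* := \tau_{-\infty}(X)$ rather than any deterministic $T^*_{F_i}$. My strategy would be to combine the sandwich $F_2(t-\sigma) - M \leq X(t) \leq F_1(t-\sigma) + M$ with the observation that near $t \to T^{*-}$ both $X(t)$ and $-\sqrt{a}\coth(\sqrt{a}(T^*-t))$ behave like $-1/(T^*-t) - a(T^*-t)/3 + o(1)$, so their difference vanishes at the explosion. To extend this control uniformly to the entire interval $[\sigma, T^*)$, I would patch over successive intervals $[\tau_{-2\ell(a)}(X), \tau_{-\ell(a)}(X)]$ with varying $\ell(a)$, exactly as in Lemma \ref{Lemma:CDI}, exploiting the multiplicative estimate $\sup_t |F_1(t)/F_2(t) - 1| \lesssim M/\ell(a)$ (from the $\coth$ scaling combined with $|T^*_{F_1}-T^*_{F_2}| \lesssim M/(\sqrt{a}\,h_a)$). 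On each piece this gives $|F_i(t-\sigma) - (-\sqrt{a}\coth(\sqrt{a}(T^*-t)))| \lesssim M$, and the sup bound follows.
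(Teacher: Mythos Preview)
Your proposal is correct and follows exactly the approach the paper intends: the paper's own proof simply states that ``the proof is essentially the same as that of Lemma \ref{Lemma:CDI} so we do not provide the details,'' and you have faithfully carried out that mirroring. Two small cosmetic points: it is $\tilde Z = \tilde X - \tilde B$ that is monotone (not $\tilde X$ itself, which carries Brownian fluctuations of size $M$), and the patching intervals should read $[\tau_{-\ell(a)}(X),\tau_{-2\ell(a)}(X)]$ since the trajectory is descending; neither affects the argument.
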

\begin{proof}
The proof is essentially the same as that of Lemma \ref{Lemma:CDI} so we do not provide the details.
\end{proof}


\begin{proof}[Proof of Proposition \ref{Prop:Da}]
By Lemmas \ref{Lemma:CDI} and \ref{Lemma:Explo}, the trajectory $(X(t), t\in [0,\zeta(1)))$ satisfies the two requirements of the event $\cD^N_a$ with a probability at least $1-\exp(-(\ln\ln a)^2)$ uniformly over all $a$ large enough. Since the random paths $(X(t+\zeta(k)),\;t \in [0,\zeta(k+1)-\zeta(k)))$, $k\in \{0,\cdots,N\}$ are i.i.d, we easily deduce the statement of Proposition \ref{Prop:Da}.
\end{proof}

\subsection{Oscillations}

The goal of this subsection is to prove Proposition \ref{Propo:DiffClose}. From now on, $Y$ is taken to be a solution of \eqref{Eq:Diff} starting from the stationary measure $\mu_a$. The key step consists in showing that $Y$ spends most of its time near $\sqrt{a}$. 
We introduce the notations:
\begin{align*}
&\tau_{\le\ell}(f) := \inf\{t\geq 0: f(t) \le \ell\}\;,\quad \tau_{\ge\ell}(f) := \inf\{t\geq 0: f(t) \ge \ell\}\;.\\
&\tau^k_{-\infty}(f) := \inf\{ t \geq \tau^{k-1}_{-\infty}(f) : f(t) = -\infty\}\;, \quad \tau_{-\infty}^0(f) := 0.
\end{align*}
For $r >0$, we define the following interval around the bottom of the well $\sqrt{a}$:
\begin{align*}
I_a(r) := \big[\sqrt{a} - r h_a, \sqrt{a} + r h_a\big].
\end{align*}
We start with a simple estimate on the exit times of $X$.
\begin{lemma}\label{Lemma:RBM}
For any $T>0$ and any $0 < d < D < 2\sqrt a$ we have
$$ \P(\exists t\in [0,T], X(t) \notin [\sqrt a - D,\sqrt a + D] \,|\, X(0) \in [\sqrt a - d,\sqrt a + d]) \le \frac{8\sqrt T}{D-d} e^{-\frac{(D-d)^2}{2T}}\;.$$
\end{lemma}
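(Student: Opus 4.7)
The approach is pathwise comparison with the driving Brownian motion $B$, exploiting that the hypothesis $D < 2\sqrt a$ makes the drift $a-x^2$ of the SDE restoring at both endpoints of $[\sqrt a - D, \sqrt a + D]$. By conditioning I fix a deterministic initial point $x_0 \in [\sqrt a - d, \sqrt a + d]$ for $X$ and split the exit event into an upper exit (first hit of $\sqrt a + D$) and a lower exit (first hit of $\sqrt a - D$); the conditional probability is then obtained by integrating the resulting uniform bound against the law of $X(0)$.

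For the upper exit, denote the first hitting time by $\tau$ and set $\sigma := \sup\{t \in [0,\tau] : X(t) \le \sqrt a + d\}$, which is well-defined since $X(0) \le \sqrt a + d$. By continuity $X(\sigma) = \sqrt a + d$, and on $[\sigma, \tau]$ one has $X(s) \ge \sqrt a + d > \sqrt a$, so the drift $a-X(s)^2$ is nonpositive there. Integrating the SDE between $\sigma$ and $\tau$ yields
\[
D - d \;=\; X(\tau) - X(\sigma) \;=\; \int_\sigma^\tau \bigl(a - X(s)^2\bigr)\,ds \,+\, B(\tau) - B(\sigma) \;\le\; B(\tau) - B(\sigma).
\]
A symmetric argument with $\tau' := \inf\{t : X(t) = \sqrt a - D\}$ and $\sigma' := \sup\{t \in [0,\tau'] : X(t) \ge \sqrt a - d\}$ shows that on the lower exit event $B(\sigma') - B(\tau') \ge D - d$; here the drift is nonnegative on $[\sigma', \tau']$ because $X(s) \in [\sqrt a - D, \sqrt a - d] \subset (-\sqrt a, \sqrt a)$, which is precisely where the hypothesis $D < 2\sqrt a$ enters.

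In either case we find $0 \le s \le t \le T$ with $|B(t) - B(s)| \ge D - d$, so the range of $B$ on $[0,T]$ exceeds $D - d$. Writing $M_T := \max_{[0,T]} B$ and $m_T := \min_{[0,T]} B$, the inequality $M_T - m_T \ge D-d$ forces $\max(M_T, -m_T) \ge (D-d)/2$, so by a union bound it suffices to estimate $\P(M_T \ge (D-d)/2)$ and $\P(-m_T \ge (D-d)/2)$. The reflection principle gives $\P(M_T \ge x) = 2\P(B_T \ge x)$, and the standard Gaussian tail estimate $\P(B_T \ge x) \le \sqrt{T/(2\pi)}\, x^{-1} e^{-x^2/(2T)}$ then yields a bound of the advertised form $\frac{C\sqrt T}{D-d}\, e^{-c(D-d)^2/T}$; tuning constants gives the statement.

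The argument is essentially routine; the only point that requires care is the sign of the drift in each of the two halves of the interval, and both sign computations rely crucially on the hypothesis $D < 2\sqrt a$.
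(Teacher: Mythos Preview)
Your approach is genuinely different from the paper's and more elementary. The paper compares $X-\sqrt a$ pathwise with a reflected Brownian motion $A$ started from $d$ (via the Skorokhod problem), shows $|X-\sqrt a|\le A$, and then uses that $A$ has the law of $|B+d|$ together with the reflection principle. You bypass the reflected process entirely and extract a Brownian increment directly from the SDE on the last upcrossing/downcrossing before exit; the drift-sign analysis is correct and the hypothesis $D<2\sqrt a$ is used exactly where you say.

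However, the last step does \emph{not} deliver the stated constants. From ``the range of $B$ on $[0,T]$ is at least $D-d$'' you pass to $\max(M_T,-m_T)\ge (D-d)/2$, and the Gaussian tail at level $(D-d)/2$ then yields the exponent $-(D-d)^2/(8T)$, not $-(D-d)^2/(2T)$. This factor $4$ in the exponent cannot be absorbed by ``tuning constants''; what you have actually proved is the weaker bound
\[
\frac{8}{\sqrt{2\pi}}\,\frac{\sqrt T}{D-d}\,\exp\!\Big(-\frac{(D-d)^2}{8T}\Big)\,.
\]
(This would in fact suffice for every use of the lemma in the paper, where only some exponential decay in $(D-d)^2/T$ is needed, but it is not the lemma as stated.)

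The fix stays entirely within your framework: estimate the two exit events \emph{separately} before taking a union bound. On the upper-exit event you have $B(\tau)-B(\sigma)\ge D-d$ with $\sigma\le\tau\le T$, i.e.\ $\sup_{0\le s\le t\le T}(B_t-B_s)\ge D-d$. Since $\sup_{s\le t}(B_t-B_s)=B_t-\min_{u\le t}B_u$ has, by L\'evy's identity, the law of $|B_t|$, this event has probability at most $\P\big(\sup_{t\le T}|B_t|\ge D-d\big)\le 4\,\P(B_T\ge D-d)$. The lower-exit event is symmetric, and the union bound gives
\[
8\,\P(B_T\ge D-d)\;\le\;\frac{8}{\sqrt{2\pi}}\,\frac{\sqrt T}{D-d}\,e^{-(D-d)^2/(2T)}\;\le\;\frac{8\sqrt T}{D-d}\,e^{-(D-d)^2/(2T)}\,,
\]
which is the claim.
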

\begin{proof}
We let $A$ be the reflected Brownian motion starting from $d$, obtained as the solution of the following Skorohod problem
$$ dA(t) = dB(t) + d\ell(t)\;, \quad \int_{t\geq 0} A(t) d\ell(t) =0\;,\quad A(0)=d\;.$$
For $x\in [\sqrt a-d,\sqrt a+d]$, consider the diffusion $X$ starting from $x$. We claim that $A(t) - (X(t) - \sqrt a) \ge 0$ for all $t\ge 0$. Indeed, for any time $t\ge 0$ at which $A(t)=X(t) - \sqrt a$, we have $d(A(t)-(X(t)-\sqrt{a})) > 0$ since either $A(t)=X(t)-\sqrt{a}=0$ and then $d\ell_t >0$ and $-V'(X(t))=0$, or $A(t)=X(t) - \sqrt{a} > 0$ and then $d\ell(t) = 0$ and $-V'(X(t)) < 0$. Since $A(t)$ has the same law as $|B(t)+d|$ and since the supremum (resp. the infimum) of $B$ on $[0,T]$ has the same law as $|B(T)|$ (resp. $-|B(T)|$) we obtain
$$ \P(\sup_{t\in [0,T]} X(t) > \sqrt a + D | X(0) =x ) \le \P( \sup_{t\in [0,T]} A(t) > D) \le 2 \P(|B(T)| > D-d) \le \frac{4\sqrt T}{D-d} e^{-\frac{(D-d)^2}{2T}} \;.$$
The same argument applies to bound $X$ from below, thus concluding the proof.
\end{proof}

\begin{lemma}[Oscillations of $Y$]\label{Lemma:ControlY}
There exists $c>0$ such that the probability that 
\begin{align*}
\fint_{0}^t Y(s) ds \in I_a(1/2)\;,\quad \forall t\in [0,\tau_{\le -3\sqrt{a}}]\;,
\end{align*}
is larger than $1-\exp(-c (\ln a)^2)$ uniformly over all $a$ large enough.
\end{lemma}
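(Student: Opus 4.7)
The strategy is to exploit the It\^o identity $\int_0^t Y(s)^2\,ds = at + Y(0) - Y(t) + B(t)$ obtained by integrating the SDE, together with the algebraic decomposition $Y^2 = (Y - \sqrt{a})^2 + 2\sqrt{a}\,Y - a$, which yields for every $t > 0$:
\begin{equation*}
\fint_0^t Y(s)\,ds - \sqrt{a} \;=\; \frac{1}{2\sqrt{a}\,t}\Big( Y(0) - Y(t) + B(t) - \int_0^t (Y(s) - \sqrt{a})^2\,ds \Big)\;.
\end{equation*}
I would then bound each of the four terms on the right uniformly on $[t^\star, \tau_{\le -3\sqrt{a}}]$ for some threshold $t^\star$ of order $a^{-1/4}$. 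Separately, on the short-time interval $[0, t^\star \wedge \tau_{\le -3\sqrt{a}}]$ I would apply Lemma~\ref{Lemma:RBM} (with $D - d$ of order $h_a$ and $T = t^\star$) to show that $Y$ never leaves $I_a(1/2)$, so the average trivially lies in the convex set $I_a(1/2)$.

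For the boundary terms: Lemma~\ref{Lemma:InvMeas} applied with $y(a) \asymp \ln a$ provides $|Y(0) - \sqrt{a}| = O(h_a)$ with probability $\ge 1 - e^{-c(\ln a)^2}$. The scale function controls the upper extreme of $Y$: by \eqref{Eq:ScaleHit}, $\P_{\sqrt{a}}[\tau_{3\sqrt{a}} < \tau_{-3\sqrt{a}}] \approx S(\sqrt{a})/S(3\sqrt{a}) = e^{-\Theta(a^{3/2})}$, whence $\sup_{[0, \tau_{\le -3\sqrt{a}}]} Y \le 3\sqrt{a}$ with overwhelming probability. The Brownian term is handled by a Gaussian tail estimate on each dyadic block of $[0, T]$ where $T := e^{4 a^{3/2}}$ is a deterministic high-probability upper bound on $\tau_{\le -3\sqrt{a}}$ (from Markov's inequality applied to $\E[\tau_{\le -3\sqrt{a}}] \lesssim m(a)$).

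The principal obstacle is the uniform control of $\int_0^t (Y(s) - \sqrt{a})^2\,ds$ on an interval of possibly exponential length, since the naive pointwise bound $(Y-\sqrt{a})^2 \le 16 a$ is off by a factor $\sqrt{a}$. To bypass this, I would solve the Poisson equation $\mathcal{L}\psi = (y - \sqrt{a})^2 - \sigma_a^2$ for the generator $\mathcal{L} = \frac12\partial^2 - V_a'\partial$, where $\sigma_a^2 := \int (y-\sqrt{a})^2\,\mu_a(dy) \asymp a^{-1/2}$, via an explicit first-order ODE whose solution is expressed in terms of the scale function. A careful Laplace-type analysis should show $\|\psi\|_\infty + \|\psi'\|_\infty = O(a^{-1/2})$ on $[-3\sqrt{a}, 3\sqrt{a}]$. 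It\^o's formula then gives
\begin{equation*}
\int_0^t (Y(s) - \sqrt{a})^2\,ds = \sigma_a^2\, t + \psi(Y_t) - \psi(Y_0) + \int_0^t \psi'(Y_s)\, dB(s)\;,
\end{equation*}
whose boundary terms are $O(a^{-1/2})$, whose main term contributes only $O(a^{-3/2})$ to $\fint_0^t Y - \sqrt{a}$, and whose martingale part has quadratic variation $\lesssim t/a$, so that an exponential martingale inequality combined with a dyadic union bound over $[0, T]$ yields the target rate $e^{-c(\ln a)^2}$. The technically delicate step is the derivation of the uniform bounds on $\psi$ and $\psi'$, which must handle the divergence of $e^{-2V_a}$ at $-\infty$ via the precise structure of the stationary density $f_a$.
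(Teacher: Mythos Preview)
Your route is genuinely different from the paper's, and the step you flag as delicate is in fact where it breaks. The claimed bound $\|\psi\|_\infty+\|\psi'\|_\infty=O(a^{-1/2})$ on $[-3\sqrt a,3\sqrt a]$ does not hold. Where the drift dominates, $\tfrac12\psi''+(a-y^2)\psi'=g$ forces $\psi'(y)\approx g(y)/(a-y^2)=-(y-\sqrt a)/(y+\sqrt a)$, which is already $O(1)$ across most of $[-3\sqrt a,0]$. Near the unstable point $y=-\sqrt a$ the drift vanishes and a local analysis (writing $v=y+\sqrt a$ and using $V_a(z)\approx \tfrac{2}{3}a^{3/2}-\sqrt a\,v^2$) gives $\psi'(y)\approx -8a\,e^{-2\sqrt a v^2}\int_v^0 e^{2\sqrt a u^2}du$, whose maximum over $|v|\sim a^{-1/4}$ is of order $a^{3/4}$; integrating, $|\psi|$ reaches order $\sqrt a\ln a$. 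With boundary terms of this size, $|\psi(Y_t)-\psi(Y_0)|/(2\sqrt a\,t)$ is only below $h_a$ once $t\gtrsim a^{1/4}$, and the martingale quadratic variation picks up $\|\psi'\|_\infty^2$ when $Y$ transits the region near $-\sqrt a$, so the dyadic exponential-martingale bound does not deliver $e^{-c(\ln a)^2}$ uniformly on $[t^\star,\tau_{\le -3\sqrt a}]$. A secondary issue: with $D-d\asymp h_a$, Lemma~\ref{Lemma:RBM} gives exponent $(D-d)^2/(2t^\star)=(\ln a)^2/(2\sqrt a\,t^\star)$, so you need $t^\star\asymp a^{-1/2}$, not $a^{-1/4}$; with $t^\star=a^{-1/4}$ the short-time bound is vacuous.

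The paper avoids all of this by a much more elementary device: it never integrates the SDE at all. Writing $\int_0^t Y=\int_0^t Y\,\un_{\{Y\in I_a(1/4)\}}+\int_0^t Y\,\un_{\{Y\notin I_a(1/4)\}}$, the first part is automatically in $[(\sqrt a-h_a/4)t,(\sqrt a+h_a/4)t]$, and the second is controlled by the occupation time of $I_a(1/4)^c$. Since $Y$ is stationary, Markov's inequality gives $\P\big(\fint_0^{2^{-k}L_a}\un_{\{Y\notin I_a(1/4)\}}\ge 1/a\big)\le a\,\mu_a(I_a(1/4)^c)\lesssim a\,e^{-(\ln a)^2/16}$ by Lemma~\ref{Lemma:InvMeas}; a union bound over the $O(a^{3/2})$ dyadic scales up to $L_a=e^{ba^{3/2}}$ gives the target probability, and on the good event the crude bound $|Y|\le 4\sqrt a$ on the rare set keeps the average in $I_a(1/2)$. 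If you want to retain your It\^o decomposition, this same occupation-time argument controls $\int_0^t(Y-\sqrt a)^2\,ds\le (h_a/4)^2t+16a\cdot t/a$ directly on the good event, which is both simpler and correct where the Poisson-equation step is not.
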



\begin{remark}
We could improve the bounds by choosing an interval of size $C\sqrt{\ln a}/a^{1/4}$ around $\sqrt{a}$ at the cost of decreasing the lower bound on the probability.
\end{remark}

\begin{proof}
Let $L_a = \exp(b\, a^{3/2})$ for some large enough $b>0$. We first control the integral of the statement for all $t\in [1/m(a),\tau_{\le -3 \sqrt{a}}(Y) \wedge \tau_{\ge 4\sqrt{a}}(Y) \wedge L_a]$. For any such $t$, we have:
\begin{align*}
\int_0^t Y(s) ds = \int_0^t Y(s) \un_{\{Y(s) \in I_a(1/4)\}}ds + \int_0^t Y(s) \un_{\{Y(s) \notin I_a(1/4)\}} ds\;.
\end{align*}
For $k\in \bbN$, we set
$$E_k := \Big\{\fint_0^{2^{-k}\, L_a} \un_{\{Y(s) \notin I_a(1/4)\}} ds \geq \frac1{a}\Big\}\;.$$
Using the Markov inequality, we find $\P(E_k) \le a\, \mu_a(I_a(1/4)^c)$. Let $k^*$ be the smallest integer such that $2^{-k^*}L_a <1/m(a)$. Observe that $k^*$ is of order $a^{3/2}$ thanks to our hypothesis on the growth of $L_a$ so that thanks to Lemma \ref{Lemma:InvMeas},
$$ \P\big[\cup_{k=0}^{k^*} E_k\big] \le \sum_{k\le k^*} \P[E_k] \lesssim a^{5/2} e^{-(\ln a)^2/16}\;.$$
On the event $E_k^c$, we get for all $t \in [2^{-k-1} L_a, 2^{-k} L_a]$ such that $t \leq \tau_{\le-3 \sqrt{a}}(Y) \wedge \tau_{\ge 4 \sqrt{a}}(Y)$, and all $a$ large enough, the following upper bound
\begin{align*}
\int_0^t Y(s) ds &\leq \Big(\sqrt{a} +h_a /4\Big) t + 4 \sqrt{a} \int_0^t 1_{\{Y(s) \notin I_a(1/4)\}}ds\\
&\leq \Big(\sqrt{a} + h_a/2\Big) t\;,
\end{align*}
and the following lower bound
\begin{align*}
\int_0^t Y(s) ds 
&\geq \Big(\sqrt{a} - h_a/2\Big) \,t\;.
\end{align*}
To conclude the proof, we show that with large probability $Y$ does not hit $[4\sqrt{a},\infty)$ before $(-\infty,-3\sqrt{a}]$, nor exit $I_a(1/4)$ within a time $1/m(a)$, and that $Y$ hits $(-\infty,-3\sqrt{a}]$ before time $L_a$. Regarding the first claim, we have for all $a>0$ large enough
\begin{align*}
\P[\tau_{\ge 4\sqrt{a}}(Y) < \tau_{\le -3\sqrt{a}}(Y)] &= \int_{x\in\R} \P_{x}[\tau_{4\sqrt{a}} < \tau_{-3 \sqrt{a}}] \mu_a(dx)\\
&\le \mu_a([-2 \sqrt{a}, 2\sqrt{a}]) \;\P_{2 \sqrt{a}}[\tau_{4\sqrt{a}} < \tau_{-3 \sqrt{a}}] + O(e^{-\rho' a^{3/2}})\\
&\le e^{-\rho a^{3/2}},
\end{align*}
for some positive $\rho$, using \eqref{Eq:ScaleHit} and some simple estimates on the scale function. The second claim follows from Lemma \ref{Lemma:RBM} combined with the bounds of Lemma \ref{Lemma:InvMeas} on the invariant measure. The last claim follows from $\P[ \tau_{\le -3\sqrt{a}}(Y) > L_a] \leq \P_{+\infty}[ \tau_{-\infty}(X)/m(a) > L_a/m(a)]$ and Markov's inequality together with \eqref{asymp_ma}.
\end{proof}

The following lemma shows that, after $X$ has come down from infinity, $X$ and $Y$ stay very close to each other until $Y$ starts following its deterministic path to $-\infty$. 
\begin{lemma}\label{Lemma:DiffXY}
There exist $b,C>0$ such that with probability at least $1-e^{-b(\ln \ln a)^2}$, we have:\begin{enumerate}[label=(\roman*)]
\item $\tau_{\le -3\sqrt{a}}(Y) > (3/8) t_a$ and $\tau_{\le -2\sqrt{a}}(X) > (3/8) t_a$
\item $|X(t)-Y(t)| \le h_a/2$ for all $t \in \big[(3/8) t_a, \tau_{\le -3\sqrt{a}}(Y)\big]$,
\item $\big|\tau_{-\infty}(X) -\tau_{-\infty}(Y)\big| < \frac{C}{\sqrt{a}}$
\item $\fint_{(3/8)t_a}^t X(s) ds \in I_a(1)$ for all $t \in [(3/8)t_a, \tau_{-2\sqrt{a}}(X)]$.
\end{enumerate}
\end{lemma}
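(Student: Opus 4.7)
My plan is to work on the intersection of the high-probability events furnished by the preceding lemmas: by Lemma~\ref{Lemma:InvMeas}, $Y(0) \in [\sqrt{a}/2, 3\sqrt{a}/2]$; by Lemma~\ref{Lemma:CDI}, $X$ obeys its entrance asymptotic; and by Lemma~\ref{Lemma:ControlY}, $\fint_0^t Y\,ds \in I_a(1/2)$ for all $t \le \tau_{\le -3\sqrt{a}}(Y)$. Assertion (i) follows quickly: applying Lemma~\ref{Lemma:RBM} to $Y$ with $d = \sqrt{a}/2$, $D = \sqrt{a}$, and $T = (3/8)t_a$, the ratio $(D-d)^2/(2T) = a^{3/2}/(3\ln a)$ yields, with overwhelming probability, that $Y$ stays in $[0, 2\sqrt{a}]$ on $[0,(3/8)t_a]$, hence $\tau_{\le -3\sqrt{a}}(Y) > (3/8)t_a$; and \eqref{Eq:Entrance2} combined with $\sqrt{a}\coth(\sqrt{a}t) \ge \sqrt{a}$ gives $X(t) \ge \sqrt{a} - 1 > -2\sqrt{a}$ on $(0, (3/8)t_a]$.

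The heart of the argument is (ii). Because $X$ and $Y$ are driven by the same Brownian motion, the difference $Z := X - Y$ satisfies the noise-free ODE $dZ = -(X+Y) Z\,dt$, and hence
\begin{equation*}
Z(t) = Z(s)\exp\!\Bigl(-\int_s^t (X+Y)\,du\Bigr), \qquad 0 < s < t.
\end{equation*}
To resolve the singular initial condition $Z(0^+) = +\infty$, I would let $s \downarrow 0$. Using the entrance estimates of Lemma~\ref{Lemma:CDI}, one has $X(s)\sinh(\sqrt{a}s) \to \sqrt{a}$, $Y(s)\sinh(\sqrt{a}s) \to 0$, and $\int_s^t X\,du = \ln\!\bigl(\sinh(\sqrt{a}t)/\sinh(\sqrt{a}s)\bigr) + o(1)$, giving in the limit
\begin{equation*}
Z(t) = \frac{\sqrt{a}}{\sinh(\sqrt{a}t)}\,\exp\!\Bigl(-\int_0^t Y\,du\Bigr)(1+o(1)).
\end{equation*}
Plugging in $t = (3/8)t_a$, using Lemma~\ref{Lemma:ControlY} (so that $\int_0^{(3/8)t_a} Y\,du = (3/8)\ln a + o(1)$) and $\sinh((3/8)\ln a) \sim a^{3/8}/2$, I obtain $|Z((3/8)t_a)| = 2\,a^{-1/4}(1+o(1))$, strictly smaller than $h_a/2 = \ln(a)/(2 a^{1/4})$ for $a$ large enough.

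For $t > (3/8)t_a$ I would run a bootstrap: as long as $|Z(s)| \le h_a/2$ on $[(3/8)t_a,t]$, I write $X + Y = 2Y + (X-Y)$ and split $\int_{(3/8)t_a}^t = \int_0^t - \int_0^{(3/8)t_a}$ to apply Lemma~\ref{Lemma:ControlY} twice. This shows $\int_{(3/8)t_a}^t (X+Y)\,du$ is bounded below by a universal constant, since the positive contribution of order $2\sqrt{a}(t - (3/8)t_a)$ from the typical bulk of the trajectory dominates the $O(\sqrt{a}\,t_a) = O(\ln a)$ negative contribution of any brief excursion of $Y$ toward $-3\sqrt{a}$. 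It follows that $|Z(t)| \lesssim a^{-1/4} \ll h_a/2$ throughout, closing the bootstrap. Assertion (iii) is then immediate from $|X(\tau_{\le -3\sqrt{a}}(Y)) - Y(\tau_{\le -3\sqrt{a}}(Y))| \le h_a/2$ (placing both diffusions within $h_a/2$ of $-3\sqrt{a}$) together with Lemma~\ref{Lemma:Explo}. Assertion (iv) follows from the splitting $\fint_{(3/8)t_a}^t X\,ds = \fint_{(3/8)t_a}^t Y\,ds + O(h_a)$ combined with the averaged bound on $Y$. The main obstacle will be making this bootstrap quantitative: the horizon $\tau_{\le -3\sqrt{a}}(Y)$ can be as large as $m(a) = e^{\Theta(a^{3/2})}$, so one must verify carefully that the small relative errors in Lemma~\ref{Lemma:ControlY} cannot slowly accumulate over such a long window and overwhelm the $\log\log a$-sized slack separating $|Z((3/8)t_a)|$ from the target $h_a/2$.
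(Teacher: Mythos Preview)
Your proposal follows the same broad strategy as the paper --- control $Z := X - Y$ via the noise-free ODE $dZ = -(X+Y)Z\,dt$, anchor $Z$ near time $(3/8)t_a$, and propagate using the averaged control on $Y$ from Lemma~\ref{Lemma:ControlY}. It is essentially correct, but there are two places where the paper's argument is simpler, and one step where yours has a gap.

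First, you overlook that $Z \ge 0$ throughout: since $X(0^+)=+\infty > Y(0)$ and both solve the same SDE, monotonicity gives $X\ge Y$ until $Y$ explodes. Hence $X+Y\ge 2Y$ and
\[
Z(t)\;\le\;Z(t_0)\,\exp\!\Bigl(-2\int_{t_0}^t Y\,ds\Bigr),
\]
so no bootstrap is needed. Second, rather than computing $Z((3/8)t_a)$ from the entrance asymptotics (correct but laborious), the paper picks a time $t_1 = (3/8)t_a + O(1/\sqrt a)$ at which Lemma~\ref{Lemma:CDI} places $X(t_1)\in I_a(1/3)$ and Lemma~\ref{Lemma:InvMeas} places $Y(t_1)\in I_a(1/6)$; this immediately gives $Z(t_1)\le h_a/2$.

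The genuine gap is in your argument for (iv). You propose to obtain $\fint_{(3/8)t_a}^t Y \in I_a(1/2)$ by writing $\int_{(3/8)t_a}^t = \int_0^t - \int_0^{(3/8)t_a}$ and applying Lemma~\ref{Lemma:ControlY} to each piece. But subtracting the two two-sided bounds only yields
\[
\fint_{(3/8)t_a}^t Y \;\ge\; \sqrt{a}\;-\;\frac{h_a}{2}\cdot\frac{t+(3/8)t_a}{t-(3/8)t_a},
\]
and the right-hand side is far below $\sqrt a - h_a$ when $t$ is close to $(3/8)t_a$. The paper's remedy is to exploit the \emph{stationarity} of $Y$: apply Lemma~\ref{Lemma:ControlY} afresh to the process $\bigl(Y((3/8)t_a + s)\bigr)_{s\ge 0}$, which has the same law as $Y$. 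This gives $\fint_{(3/8)t_a}^t Y \in I_a(1/2)$ directly for all $t\le \tau'_{\le -3\sqrt a}(Y)$ (and (i) identifies $\tau'_{\le -3\sqrt a}(Y)$ with $\tau_{\le -3\sqrt a}(Y)$); then (ii) transfers this to $X$. The same restart is what makes the exponential decay in (ii) clean. Your closing worry about error accumulation over a horizon of length $m(a)$ is therefore misplaced: Lemma~\ref{Lemma:ControlY} already controls the average uniformly in $t$, and restarting it at $(3/8)t_a$ via stationarity is the missing ingredient.
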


\begin{proof}
By Lemma \ref{Lemma:RBM} and Lemma \ref{Lemma:InvMeas}, it is simple to establish the first bound of (i) with $-3\sqrt a$ replaced by $-2\sqrt a$. Monotonicity then easily yields the second bound of (i). Recall that $X(t) \ge Y(t)$ for all $t\in [0,\tau_{-\infty}(Y)) \supset [0,\tau_{\le-3\sqrt{a}}(Y))$. The difference $Z=X-Y$ satisfies the ODE:
$$ dZ(t) = -Z(t) (X(t)+Y(t))\;,\quad t\ge 0\;.$$
By Lemma \ref{Lemma:CDI}, there exists $z >0$ such that if we set $t_1 := (3/8) t_a + \ln(z/\ln a)/(2 \sqrt{a})$ then with probability greater than $1- \exp(- (\ln\ln a)^2)$, we have $X(t_1) \in I_a(1/3)$. By the first estimate on the invariant measure in Lemma \ref{Lemma:InvMeas}, we deduce that there exists $c' >0$ such that for all $a$ large enough
$$ \bbP\big[Z(t_1) \leq h_a/2\big] \geq \P[X(t_1) \in I_a(1/3), \;Y(t_1) \in I_a(1/6)] 
\geq 1- e^{-c' (\ln\ln a)^2}\;.$$
By Lemma \ref{Lemma:ControlY} applied from time $t_1$ (recall that $Y$ is stationary) we have with probability at least $1-e^{-c(\ln a)^2}$
$$\fint_{t_1}^t Y(s) ds \in I_a(1/2)\;,\quad \forall t\in [t_1,\tau'_{\le -3\sqrt{a}}(Y)]\;,$$
where $\tau'_{\le -3\sqrt{a}}(Y) := \inf\{t\ge t_1: Y(t) \le -3\sqrt a\}$. By (i), we can assume that $\tau'_{\le -3\sqrt{a}}(Y) = \tau_{\le -3\sqrt{a}}(Y)$. Henceforth, there exists $b'>0$ such that the probability that for all $t\in [t_1,\tau_{\le-3\sqrt{a}}(Y)]$
$$ Z(t) = Z(t_1) \exp(-\int_{t_1}^t (X(s)+Y(s)) ds) \leq Z(t_1) \exp(-2\int_{t_1}^t Y(s) ds) \le \frac{h_a}{2}e^{-2(\sqrt{a}-h_a/2)(t- t_1)}\;,$$
is larger than $1- \exp(- b'(\ln\ln a)^2)$ for all $a$ large enough. This proves (ii). Applying Lemma \ref{Lemma:Explo}, we get (iii). Applying Lemma \ref{Lemma:ControlY} from time $(3/8)t_a$ and using (ii), we easily get (iv).
\end{proof}
%
%

More can be said about $X-Y$. Since $Y(\tau_{-\infty}(Y)+t), t\ge 0$ has the same distribution as $X$, we deduce that it satisfies the estimates \eqref{Eq:tCDI} and (iv) of Lemma \ref{Lemma:DiffXY}. Since with large probability $\tau_{-\infty}^1(X)-\tau_{-\infty}^1(Y)$ is at most $C/\sqrt{a}$, a simple iteration of the proof of Lemma \ref{Lemma:DiffXY} allows to prove that with probability at least $1-2e^{-b(\ln\ln a)^2}$, we have
$\tau^2_{-3\sqrt{a}}(Y) > \tau_{-\infty}(X)+(3/8) t_a$, $\tau^2_{-2\sqrt{a}}(X) > \tau_{-\infty}(X)+(3/8) t_a$ as well as
$$ |X(t)-Y(t)| \le h_a/2\;,\quad \forall t\in [\tau_{-\infty}(X)+(3/8) t_a, \tau^2_{-3\sqrt{a}}(Y)]\;,$$
and $\tau^2_{-\infty}(X)-\tau^2_{-\infty}(Y) \le C/\sqrt{a}$. Iterating this, we get the following result.
\begin{corollary}\label{Cor:XY}
There exist $b,C>0$ such that for any $N_a \ge 1$ with probability at least $1-N_a e^{-b (\ln \ln a)^2}$ the following holds for all $a$ large enough. For all $k\in \{1,\ldots,N_a\}$ we have 
\begin{align*}
&\tau^k_{-3\sqrt{a}}(Y) > \tau_{-\infty}^{k-1}(X)+\frac{3}{8} t_a\;, \quad \tau^k_{-2\sqrt{a}}(X) > \tau_{-\infty}^{k-1}(X)+(3/8) t_a\;,\\
&\big| X(t) - Y(t) \big| \le h_a/2 \;,\quad \mbox{for all } t\in [\tau_{-\infty}^{k-1}(X)+\frac{3}{8} t_a, \tau^k_{-3\sqrt{a}}(Y)]\;,\\
&\big| \tau_{-\infty}^k(X) - \tau_{-\infty}^k(Y) \big| < \frac{C}{\sqrt{a}}\;.
\end{align*}
where $\tau^k_{-3\sqrt{a}}(Y) := \inf\{t\ge \tau^{k-1}_{-\infty}(Y): Y(t) = -3\sqrt{a}\}$.
\end{corollary}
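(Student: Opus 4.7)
The plan is to prove the corollary by induction on $k$, iterating the proof of Lemma \ref{Lemma:DiffXY} via the strong Markov property, as suggested in the paragraph preceding the statement. The base case $k=1$ is Lemma \ref{Lemma:DiffXY}. For the inductive step, assume that the three bounds of the corollary hold up to index $k-1$; in particular $|\sigma - \tilde\sigma| < C/\sqrt a$ where $\sigma := \tau^{k-1}_{-\infty}(X)$ and $\tilde\sigma := \tau^{k-1}_{-\infty}(Y)$. Setting $\rho := \sigma\vee\tilde\sigma$, by time $\rho$ both processes have just restarted from $+\infty$ (the latest of the two within the past $C/\sqrt a$ units of time), and by the strong Markov property at the stopping time $\rho$, the joint process $(X(\rho+\cdot), Y(\rho+\cdot))$ is driven by a Brownian motion independent of $\cF_\rho$.

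Within the offset window $[\sigma\wedge\tilde\sigma, \rho]$ of duration at most $C/\sqrt a$, one of the processes is still in its explosion/restart phase, but since $C/\sqrt a \ll t_a$, this has negligible effect on the entrance analysis. Applying Lemma \ref{Lemma:CDI} separately to $X(\sigma+\cdot)$ and $Y(\tilde\sigma+\cdot)$ (each of which, by strong Markov, is distributed as the canonical process $X$), both processes lie in $I_a(1/3)$ by time $\rho + (3/8)t_a$ with probability at least $1-e^{-c(\ln\ln a)^2}$. Writing $Z := Y-X$ or $X-Y$ (whichever is nonnegative by monotonicity, which is determined by the sign of $\sigma-\tilde\sigma$), $Z$ satisfies $dZ(t) = -Z(t)(X(t)+Y(t))\,dt$ and therefore contracts exponentially at rate $2\sqrt a(1+o(1))$ as long as $X+Y$ stays close to $2\sqrt a$. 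The required oscillation control follows from the analogue of Lemma \ref{Lemma:ControlY} applied to the shifted processes: the proof of that lemma used stationarity of $Y$ only through the fact that $Y(s)\sim \mu_a$ at each fixed time $s$, which remains true after any restart since $\mu_a$ is the invariant measure. One concludes $|Y(t)-X(t)|\le h_a/2$ on $[\rho+(3/8)t_a, \tau^k_{\le -3\sqrt a}(Y)]$, and Lemma \ref{Lemma:Explo} applied to one of the shifted processes yields $|\tau^k_{-\infty}(X) - \tau^k_{-\infty}(Y)| < C/\sqrt a$.

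Each inductive step succeeds with probability at least $1-e^{-b'(\ln\ln a)^2}$ uniformly in $k$; a union bound over $k\in\{1,\ldots,N_a\}$ then yields the stated failure probability $N_a e^{-b(\ln\ln a)^2}$. The main obstacle is handling the small offset $|\sigma-\tilde\sigma|\le C/\sqrt a$ between the restart times of the two processes at each cycle, and the loss of the literal stationarity of $Y$ beyond the first cycle. Both are resolved by the observations that $C/\sqrt a \ll t_a$, so the entrance asymptotics of Lemma \ref{Lemma:CDI} are unaffected, and that $Y(s)$ has marginal law $\mu_a$ for every deterministic $s\ge 0$, which is all that the time-averaging arguments in Lemma \ref{Lemma:ControlY} actually require.
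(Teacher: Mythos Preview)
Your inductive framework via the strong Markov property is correct and matches the paper's approach. The entrance step (Lemma~\ref{Lemma:CDI} applied to both restarted processes), the ODE for $Z$, and the union bound over $k$ are all fine.

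There is, however, a genuine gap in your justification of the oscillation control. You write that Lemma~\ref{Lemma:ControlY} can be reapplied to the shifted process because ``$Y(s)\sim\mu_a$ at each fixed time $s$, which remains true after any restart since $\mu_a$ is the invariant measure.'' It is true that for every \emph{deterministic} $s\ge 0$ one has $Y(s)\sim\mu_a$; this is global stationarity and indeed survives explosions. But you are applying the lemma to the process shifted by the \emph{random} stopping time $\rho$, and $Y(\rho+s)$ does \emph{not} have marginal $\mu_a$: conditionally on $\cF_{\tilde\sigma}$, the process $Y(\tilde\sigma+\cdot)$ is a fresh diffusion from $+\infty$, not a stationary one. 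The Markov-inequality step $\P(E_k)\le a\,\mu_a(I_a(1/4)^c)$ in the proof of Lemma~\ref{Lemma:ControlY} therefore fails for the shifted process.

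The paper closes this gap differently, and the fix is already at hand. Since $Y(\tau_{-\infty}(Y)+\cdot)$ has the law of $X$ (starting from $+\infty$), it satisfies not Lemma~\ref{Lemma:ControlY} but rather item~(iv) of Lemma~\ref{Lemma:DiffXY}, namely $\fint_{(3/8)t_a}^t X(s)\,ds\in I_a(1)$ on $[(3/8)t_a,\tau_{-2\sqrt a}(X)]$. That estimate was itself established via the stationary coupling in the base case, but as a statement about the law of $X$ it now applies to any process with that law --- in particular to both $X(\sigma+\cdot)$ and $Y(\tilde\sigma+\cdot)$ in the inductive step. This is exactly the oscillation input needed to contract $Z$, and it sidesteps the stationarity issue entirely. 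Replacing your appeal to Lemma~\ref{Lemma:ControlY} by an appeal to Lemma~\ref{Lemma:DiffXY}(iv) for the restarted $Y$ makes the argument go through.
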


We conclude this section with the proof of Proposition \ref{Propo:DiffClose}.

\begin{proof}[Proof of Proposition \ref{Propo:DiffClose}] 
Using Corollary \ref{Cor:XY}, with probability greater than $1-e^{-b(\ln\ln a)^2}$, we have:
\begin{equation}\label{Eq:zetatau}\begin{split}
&\tau^k_{-3\sqrt{a}}(Y) > \tau_{-\infty}^{k-1}(X)+\frac{3}{8} t_a\;, \quad \tau^k_{-2\sqrt{a}}(X) > \tau_{-\infty}^{k-1}(X)+(3/8) t_a\;,\\
&\big|X(t) - Y(t)\big| \le h_a/2 \;,\quad \forall t\in \big[\zeta_a(k-1)+ \frac{3}{8} t_a,\tau^k_{-3\sqrt{a}}(Y)\big]\;,\\
&\big|\zeta_a(k) - \tau_{-\infty}^k(Y) \big| < \frac{C}{\sqrt{a}}\;.\end{split}
\end{equation}

Let $Z(t) = X^{t_0}(t) - Y(t)$, $\tau := \inf\{t\ge t_0: Y(t) \le -3\sqrt{a}\}$ and $\tau' := \inf\{t\ge t_0: Y(t) = -\infty\}$ then the same arguments as in the proof of Lemma \ref{Lemma:DiffXY} ensure that with probability at least $1- e^{-b(\ln\ln a)^2}$ we have $\tau > t_0 +(3/8)t_a$ as well as
\begin{align*}
&Z(t) \le h_a/2\;,\quad \forall t\in [t_0+(3/8) t_a,\tau]\;,\\
&\big| \tau_{-\infty}(X^{t_0}) - \tau'\big| < \frac{C}{\sqrt{a}}\;.
\end{align*}

By Lemma \ref{Lemma:RBM}, $Y(t) > 0$ for all $t\in [(t_0-(3/8)t_a)_+,t_0+(3/8)t_a]$ with large probability: we can therefore work on this event from now on. If $t_0 \in [\zeta_a(k-1),\zeta_a(k))$, then by \eqref{Eq:zetatau} and the positivity of $Y$ on $[(t_0-(3/8)t_a)_+,t_0+(3/8)t_a]$ we have $t_0+(3/8)t_a < \tau^k_{-\infty}(Y)$. Therefore $\tau'=\tau^k_{-\infty}(Y)$ and $\tau = \tau^k_{-3\sqrt a}(Y)$. Putting everything together, we get the statement of the proposition.
\end{proof}

\section{Crossing of the well}\label{Sec:Crossing}

This section is devoted to the proof of Propositions \ref{Prop:BoundsCross1}, \ref{Prop:BoundsCross2}, \ref{Prop:BoundsCross3} and of part (b) of Proposition \ref{Propo:cE}. This is achieved through a series of technical lemmas. The proof of Proposition \ref{Prop:BoundsCross1} (i) and (ii) can be found at the end of Subsection \ref{subsec:Boundsuptotheta}. The proof of Proposition \ref{Prop:BoundsCross2} is in Subsection \ref{subsec:coupling}.  The proofs of Proposition \ref{Prop:BoundsCross1} (iii) and Proposition \ref{Prop:BoundsCross3} can be found in Subsection \ref{subsec:boundsaftertheta}. Finally, the proof of part (b) of Proposition \ref{Propo:cE} is presented in subsections \ref{subsec:proofexcursions1} and \ref{subsec:proofexcursions2}.

\subsection{Bounds up to time $\theta_a$}\label{subsec:Boundsuptotheta}

In the sequel, we take the following parameters:
$$ T = \frac34 t_a\;,\quad \delta = \frac{(\ln a)^2}{a^{1/4}},$$
and we study the diffusion $X$ when it crosses the region $[-\sqrt{a},\sqrt{a}]$.

\smallskip

Let us introduce the diffusion:
$$ dH(t) = (-a + H^2(t))dt + dB(t)\;,$$
whose drift is the opposite of the drift of $X$. This diffusion turns out to be a good approximation of the diffusion $X$ starting below $\sqrt{a}$ and conditioned to hit $-\sqrt{a}$ before $\sqrt{a}$ (see also \cite{DumazVirag} where similar techniques were used). This can be stated precisely thanks to Girsanov's theorem.

We will denote by $\bP_x$ the law of this diffusion starting from $x$, while $\P_x$ will denote the law of $X_a$ starting from $x$.

\begin{lemma}[Diffusion from $\sqrt{a}-\delta$ to $-\sqrt{a}+\delta$]\label{Lemma:Girsanov}
Recall that $\tau_0$ denotes the first hitting time of $0$ by $X$. 
For any $c>0$, there exists $C >0$ such that for all $a$ large enough, we have:
$$ \P_{\sqrt{a} - \delta}\big[E(C) \; \big| \; \tau_{-\sqrt{a} + \delta} < \tau_{\sqrt{a} - \delta/2} \wedge T\big] \leq a^{-c}\;,$$
where $E(C)$ is the following event
\begin{align*}
E(C) &:= \Big\{\sup_{t \in [0,\tau_{-\sqrt{a}+\delta}]} \big|X(t)- \sqrt a \tanh(-\sqrt a(t-\tau_0))\big| \geq C \frac{\sqrt a}{\ln a}\Big\}\\
&\quad\bigcup \Big\{\big|\tau_0 - \frac38 t_a\big| \ge C \frac{\ln \ln a}{\sqrt a} \Big\}\bigcup \Big\{\big|\tau_{-\sqrt{a} + \delta} - \tau_0 - \frac38 t_a\big| \ge C \frac{\ln \ln a}{\sqrt a} \Big\}\;.
\end{align*}
\end{lemma}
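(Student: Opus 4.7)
The plan is to apply Girsanov's theorem to replace $X$ by the ``opposite drift'' diffusion $H$, under which the trajectory from $\sqrt{a}-\delta$ to $-\sqrt{a}+\delta$ becomes typical and admits an explicit approximation by the ODE solution $y(t)=\sqrt{a}\tanh(-\sqrt{a}(t-t_0))$. Since $X$ has drift $-V_a'$ and $H$ has drift $+V_a'$, Girsanov combined with It\^o's formula applied to $V_a(X_t)$ (using $V_a''(x)=2x$) yields, after absorbing the quadratic-variation terms, the pathwise identity
\begin{equation*}
\frac{d\P_x}{d\bP_x}\bigg|_{\mathcal{F}_t}=\exp\Big(2V_a(x)-2V_a(H_t)+2\int_0^t H_s\,ds\Big).
\end{equation*}
At the stopping time $\tau:=\tau_{-\sqrt{a}+\delta}$, the endpoint prefactor $\exp(2V_a(\sqrt{a}-\delta)-2V_a(-\sqrt{a}+\delta))=\exp(-\tfrac{8}{3}a^{3/2}+O((\ln a)^4))$ is deterministic and cancels in the conditional ratio, so the problem reduces to an estimate purely under $\bP_{\sqrt{a}-\delta}$.

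Next I would prove the $H$-analog of $E(C)^c$ under $\bP_{\sqrt{a}-\delta}$. Splitting $[0,\tau]$ into an escape phase away from $\sqrt{a}$, a transition phase through $0$, and a convergence phase to $-\sqrt{a}$, each of length $O(t_a)$, the Brownian supremum $\sup_{[0,T]}|B|\lesssim \sqrt{t_a}\,\ln\ln a$ is controlled with probability $1-\exp(-(\ln\ln a)^2)$. The first and third phases are handled exactly as in Lemmas \ref{Lemma:CDI} and \ref{Lemma:Explo} (which apply to $H$ upon observing that its drift near $\pm\sqrt{a}$ mirrors that of $X$ near $\mp\sqrt{a}$), giving $|\tau_0(H)-\tfrac{3}{8}t_a|$ and $|\tau-\tau_0(H)-\tfrac{3}{8}t_a|$ of order $\ln\ln a/\sqrt{a}$. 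In the transition phase, setting $R(t):=H(t)-\sqrt{a}\tanh(-\sqrt{a}(t-\tau_0(H)))$ yields $R(\tau_0(H))=0$ and $\dot R=2yR+R^2+\dot B$; the explicit linearized Green function $\cosh^2(\sqrt{a}(s-\tau_0(H)))/\cosh^2(\sqrt{a}(t-\tau_0(H)))$ gives $\sup_t|R(t)|=O(a^{-1/4})$, well below $C\sqrt{a}/\ln a$ for $C$ large. Moreover under $\bP_{\sqrt{a}-\delta}$ the process $H$ rarely returns to $\sqrt{a}-\delta/2$ and reaches $-\sqrt{a}+\delta$ well before $T$, so the restriction $\{\tau<\tau_{\sqrt{a}-\delta/2}\wedge T\}$ holds with probability $1-\exp(-(\ln\ln a)^2)$.

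After cancellation of the endpoint factor, both the numerator and the denominator of $\P_{\sqrt{a}-\delta}[E(C)\mid\tau<\tau_{\sqrt{a}-\delta/2}\wedge T]$ become $\bP_{\sqrt{a}-\delta}$-expectations of an indicator times $\exp(2\int_0^\tau H_s\,ds)$. On the $H$-good event, $\int_0^\tau H_s\,ds$ concentrates around the vanishing deterministic integral $\int_{-(3/8)t_a}^{(3/8)t_a}\sqrt{a}\tanh(-\sqrt{a}u)\,du=0$, so the Girsanov weight contributes only a bounded constant and the denominator is of order one. On the complementary bad event, after discarding the super-polynomially rare event that $H$ leaves $[-2\sqrt{a},2\sqrt{a}]$ (bounded by a reflected-Brownian argument as in Lemma \ref{Lemma:RBM}), the crude estimate $\exp(2\int_0^\tau H_s\,ds)\le\exp(4\sqrt{a}T)=a^3$ combined with the bad-event probability $\exp(-(\ln\ln a)^2)$ yields
\begin{equation*}
\P_{\sqrt{a}-\delta}[E(C)\mid\tau<\tau_{\sqrt{a}-\delta/2}\wedge T]\lesssim a^3\exp(-(\ln\ln a)^2),
\end{equation*}
which is $\le a^{-c}$ for any fixed $c$ once $a$ is large enough.

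The main obstacle will be the stability estimate for $R$ across the transition region near $y=0$, where the linearization eigenvalue $2y$ vanishes and no Gronwall contraction is directly available. The strategy is to exploit the explicit tanh Green function rather than a differential inequality, so that a single Brownian supremum bound propagates uniformly across all three phases; matching with the endpoint estimates of Lemmas \ref{Lemma:CDI} and \ref{Lemma:Explo} at intermediate levels (say $\pm(3/4)\sqrt{a}$) requires care but introduces no new probabilistic input.
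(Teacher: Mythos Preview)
Your Girsanov setup and the reduction to controlling the $H$-diffusion are correct and match the paper's approach. However, the final quantitative step contains a fatal error: the bound $a^3\exp(-(\ln\ln a)^2)$ does \emph{not} tend to zero, let alone satisfy $\le a^{-c}$. Writing $u=\ln a$, this is $\exp(3u-(\ln u)^2)\to+\infty$. The factor $\exp(4\sqrt{a}T)=a^3$ coming from the crude Girsanov-weight bound is polynomial in $a$, so to absorb it you need the bad-event probability to decay polynomially in $a$, not merely like $\exp(-(\ln\ln a)^2)$.

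The source of the problem is your choice of Brownian threshold $M\asymp\sqrt{t_a}\,\ln\ln a$, which gives $\P[\sup_{[0,T]}|B|>M]\asymp\exp(-(\ln\ln a)^2)$. The paper instead takes $M=c_0\ln a/a^{1/4}$, so that $M^2/(2T)\asymp c_0^2\ln a$ and the bad-event probability is $a^{-(2/3)c_0^2}$, which beats $a^3$ once $c_0$ is chosen large enough. Crucially, this larger $M$ is still small enough to keep the trajectory within $C\sqrt a/\ln a$ of the tanh profile: the paper shows this by setting $Z=H-B$ and sandwiching $Z$ between two explicit tanh-type ODE solutions $Z_\pm$ whose parameters differ by $\kappa=4M/\sqrt a$, then checking directly that $Z_+ + M$ and $Z_- - M$ remain within $O(\sqrt a/\ln a)$ of each other. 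Your linearized Green-function argument around $\tau_0$ would presumably still work with the larger $M$ (and would yield a bound of order $M$ rather than $a^{-1/4}$), but you must redo the accounting with this $M$ to recover a polynomial bad-event probability; as written, the proof does not close.
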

Notice that, eventually, the stopping time $\tau_0$ will correspond (up to a negligible error) to the r.v. $\upsilon_a$ in the context of Proposition \ref{Prop:BoundsCross1}.
\begin{proof}
To simplify the notations, we set
$$ \tau_+ := \tau_{\sqrt{a}-\delta/2}\;,\quad \tau_- := \tau_{-\sqrt{a}+\delta}\;.$$
If we stop the diffusions at time $T\wedge \tau_-\wedge \tau_+$, then Girsanov's Theorem~\cite[Th.VIII.1.7]{RevuzYor} ensures that the Radon-Nikodym derivative of the law of $X_a$ w.r.t.~the law of $H$ up to time $t$ is given by the exponential of $G_t(H) := 2 \int_0^t (a - H(s)^2) dH(s)$. Applying It\^o's formula to $V(H)$, a simple calculation yields:
\begin{align*}
G_t(H) = 2 \big(V_a(H(0)) - V_a(H(t))\big) + 2 \int_0^t H(s) ds\;.
\end{align*}
We thus get:
\begin{align}
\frac{\P_{\sqrt{a} - \delta}\big[ E \; ;\; \tau_{-} < \tau_{+} \wedge T\big]}{\P_{\sqrt{a} - \delta}\big[\tau_{-} < \tau_{+} \wedge T\big]} =\,& \frac{\bE_{\sqrt{a} - \delta}\big[E \; ;\; \tau_{-} < \tau_{+} \wedge T \;;\; \exp(G_{\tau_-}(H))\big]}{\bE_{\sqrt{a} - \delta}\big[\tau_- < \tau_+ \wedge T\;;\;\exp(G_{\tau_-}(H))]} \notag\\
=\,& \frac{\bE_{\sqrt{a} - \delta}\big[E\;;\; \tau_- < \tau_+ \wedge T\;; \;\exp(2 \int_0^{\tau_-} H(t) dt)\big]}{\bE_{\sqrt{a} - \delta}\big[\tau_- < \tau_+ \wedge T\;;\;\exp(2 \int_0^{\tau_-} H(t) dt)\big]} \notag\\
\leq\,& \exp(4\sqrt{a} \,T) \;\frac{\bP_{\sqrt{a} - \delta}\big[E\;;\; \tau_- < \tau_+ \wedge T\big]}{\bP_{\sqrt{a} - \delta}\big[\tau_- < \tau_+ \wedge T\big]}\;. \label{upboundHgirsanov}
\end{align}
We now study $H$ in the region $[-\sqrt{a} + \delta, \sqrt{a}-\delta/2]$. Notice that $Z := H - B$ satisfies
\begin{align*}
dZ(t) = (-a + (Z(t) + B(t))^2) dt\;.
\end{align*}
Set $M := c_0 \ln a / a^{1/4}$ and assume that $\sup_{t \in [0,T]}|B(t)| \leq M$ holds true. Then, if $|H(t)| \in [\sqrt{a}/2,\sqrt{a}]$ we have
\begin{align}
-a + (Z(t) + B(t))^2 &\leq -a + Z(t)^2\Big(1 + \frac{M}{\sqrt{a}/2 - M}\Big)^2 \notag \\
&\leq  -a + Z(t)^2\Big(1 + \frac{4 M}{\sqrt{a}}\Big)^2\;, \label{ineqEDSbord}
\end{align}
for $a$ large enough. On the other hand, if $|H(t)| \in [0,\sqrt{a}/2]$, we have
\begin{align}
-a + (Z(t) + B(t))^2 
&\leq (-a + Z(t)^2)(1 - \frac{4 M}{ \sqrt{a}}).  \label{ineqEDScentre}
\end{align}
Therefore, whenever $\sup_{t \in [0,T]}|B(t)| \leq M$, and $|H(t)| \leq \sqrt{a}$, we get thanks to \eqref{ineqEDSbord} and \eqref{ineqEDScentre}:
\begin{align*}
-a + (Z(t) + B(t))^2 &\leq -a (1 - \frac{4 M}{ \sqrt{a}}) + Z(t)^2 (1 + \frac{4 M}{\sqrt{a}})^2\;.
\end{align*}
Similarly, we have the lower bound:
\begin{align*}
-a + (Z(t) + B(t))^2 &\geq -a (1 + \frac{4 M}{ \sqrt{a}}) + Z(t)^2 (1 - \frac{4 M}{\sqrt{a}})^2\;.
\end{align*}
We deduce that when $\sup_{t \in [0,T]}|B(t)| \leq M$ and as long as $|H(t)| \leq \sqrt{a}$
we have $Z_-(t) \leq Z(t) \leq Z_+(t)$ with
\begin{align*}
dZ_+(t) &= \big(-a (1 - \frac{4 M}{ \sqrt{a}}) + Z_+(t)^2(1+ \frac{4 \, M}{\sqrt{a}})^2\big) dt\;,\quad Z_+(0) = \sqrt a - \delta\;,\\
dZ_-(t) &= \big(-a (1 + \frac{4 M}{ \sqrt{a}}) + Z_-(t)^2(1- \frac{4 \, M}{\sqrt{a}})^2\big) dt\;,\quad Z_-(0) = \sqrt a - \delta\;.
\end{align*}
Denote by $\kappa = 4 M/\sqrt{a}$. Those last equations have explicit solutions given by 
\begin{align*}
Z_+(t) &= - \frac{\sqrt{a} (1 - \kappa)^{1/2}}{1+ \kappa} \tanh(\sqrt{a} (1 - \kappa)^{1/2}(1+ \kappa)(t - A_+))\;,\\
Z_-(t) &= - \frac{\sqrt{a} (1 + \kappa)^{1/2}}{1- \kappa} \tanh(\sqrt{a} (1 + \kappa)^{1/2}(1-\kappa)(t - A_-))\;,
\end{align*}
where
$$ A_+ = \frac1{2\sqrt{a}} \Big( \ln \frac{2\sqrt a}{\delta} + \frac{3\sqrt a}{2\delta} \kappa + o\Big(\frac1{\ln a}\Big)\Big) \;,\quad A_- = \frac1{2\sqrt{a}} \Big( \ln \frac{2\sqrt a}{\delta} - \frac{3\sqrt a}{2\delta} \kappa + o\Big(\frac1{\ln a}\Big)\Big)\;.$$
Note that
\begin{align*}
A_\pm = \frac{3}{8} \frac{\ln a}{\sqrt{a}} - \frac{\ln\ln a}{\sqrt{a}} + \frac{\ln(2)}{2\sqrt{a}} \pm \frac{3 c_0}{\sqrt{a} \ln a}(1 + o(1)).
\end{align*}

From there, simple calculations show that $Z_+(\cdot) + M$ stays below $\sqrt a - \delta/2$ and passes below $-\sqrt{a}+\delta$ before time $T$, while $Z_-(\cdot) - M$ is above $-\sqrt{a}+\delta$ at time $T -  3\ln \ln a / \sqrt a$. Furthermore, $Z_+(\cdot)+ M$ and $Z_-(\cdot)-M$ always stay at a distance of order $\sqrt{a}/\ln a$ from one another. Their respective crossing times of $0$ lie at $(3/8) t_a + O(\ln\ln a / \sqrt a)$ and are at a distance of order $1/(\ln a \,\sqrt a)$ from one another. Recall that $Z_-(t)-M \le H(t) \le Z_+(t)+M$ for $t \in [0,T]$ when $\sup_{t \in [0,T]}|B(t)| \leq M$ and $\sup_{t \in [0,T]} |H(t)| \leq \sqrt{a}$ so that all the hitting times of $0$ by $H$ are located near the crossing times of $0$ of the latter two curves. Hence, there exists $C>0$ such that on the event $\sup_{t \in [0,T]}|B(t)| \leq M$ we have
\begin{align*}
&\sup_{t \in [0,\tau_{-\sqrt{a}+\delta}]} \big|H(t) - \sqrt a \tanh(-\sqrt a(t-\tau_0))\big| < C \frac{\sqrt a}{\ln a} \;,\;\\
&\big|\tau_0 - \frac38 t_a\big| < C\frac{\ln \ln a}{\sqrt a} \;,\; \big|\tau_{-\sqrt{a} + \delta} - \tau_0 - \frac38 t_a\big| < C\frac{\ln \ln a}{\sqrt a}\;.
\end{align*}
Since for $a$ large enough we have $\bP(\sup_{t \in [0,T]} |B(t)| \geq M) \lesssim \exp(- M^2/(2T)) = a^{- \frac{2}{3} c_0^2}$, we get
\begin{align*}
\eqref{upboundHgirsanov} &\le \exp(4 \sqrt{a}\, T)\, \frac{\bP[\sup_{t \in [0,T]} |B(t)| \geq M]}{\bP[\sup_{t \in [0,T]} |B(t)| \leq M]}\lesssim a^{3-\frac23 c_0^2}\;, 
\end{align*}
for all $a$ large enough. This concludes the proof.
\end{proof}

\begin{lemma}[Time needed to go from $\sqrt{a}-\delta$ to $-\sqrt{a}+\delta$]\label{Lemma:Girsanov2}
For all $r>0$ uniformly over all large $a$ we have:
$$ \bbP_{\sqrt{a}-\delta} \Big[ \tau_{-\sqrt{a}+\delta} > T \Big|\, \tau_{-\sqrt{a}+\delta} < \tau_{\sqrt{a}-\delta/2}\Big] \lesssim a^{-r}\;.$$
\end{lemma}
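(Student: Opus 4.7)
My plan is to reuse the deterministic comparison already set up in the proof of Lemma \ref{Lemma:Girsanov}, pushing the Brownian bound $M = c_0 \ln a / a^{1/4}$ up to an arbitrary polynomial exponent by letting $c_0$ grow with $r$. The key observation is that the analysis in Lemma \ref{Lemma:Girsanov} actually shows more than what is stated there: on the event $E := \{\sup_{t\in[0,T]}|B(t)|\le M\}$, the explicit bounds $Z_-(t)-M \le H(t) \le Z_+(t)+M$ force $H$ to hit $-\sqrt a+\delta$ before $\sqrt a-\delta/2$ and to do so well before time $T$.

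More precisely, I would first verify that on $E$, $Z_+(t)+M$ is a non-increasing function of $t$ starting from $\sqrt a-\delta+M < \sqrt a-\delta/2$ (the last inequality holds for $a$ large, since $M \ll \delta$). Combined with the upper bound $H(t)\le Z_+(t)+M$ that is valid as long as $|H(t)|\le \sqrt a$, this immediately shows $H$ cannot return to $\sqrt a-\delta/2$ before exiting through $-\sqrt a+\delta$. Next, solving explicitly $Z_+(\tilde t)=-\sqrt a+\delta-M$ via the $\tanh$ formula (exactly as in Lemma \ref{Lemma:Girsanov}) gives $\tilde t = \tfrac34\tfrac{\ln a}{\sqrt a} - 2\tfrac{\ln\ln a}{\sqrt a}+O(1/\sqrt a) < T$ for all $a$ large enough. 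At time $\tilde t$ one has $H(\tilde t)\le -\sqrt a+\delta$, hence $\tau_{-\sqrt a+\delta}(H)\le \tilde t < T$. In particular, on $E$ the event $\{\tau_{-\sqrt a+\delta}>T\}\cap\{\tau_{-\sqrt a+\delta}<\tau_{\sqrt a-\delta/2}\}$ is empty.

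Therefore,
\begin{align*}
\bbP_{\sqrt a-\delta}\big[\tau_{-\sqrt a+\delta}>T,\,\tau_{-\sqrt a+\delta}<\tau_{\sqrt a-\delta/2}\big] \le \bbP[E^c] \lesssim \exp\!\Big(-\frac{M^2}{2T}\Big) = a^{-\frac{2}{3}c_0^2},
\end{align*}
by the usual Gaussian tail bound for the running supremum of Brownian motion. Simultaneously, $E$ also guarantees $\tau_{-\sqrt a+\delta}<\tau_{\sqrt a-\delta/2}$, so the denominator satisfies $\bbP_{\sqrt a-\delta}[\tau_{-\sqrt a+\delta}<\tau_{\sqrt a-\delta/2}]\ge \bbP[E]\ge 1/2$ for $a$ large. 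Dividing, we obtain the conditional probability bound $\lesssim a^{-\frac{2}{3}c_0^2}$, and choosing $c_0 = c_0(r)$ large enough so that $\tfrac{2}{3}c_0^2 \ge r$ finishes the proof.

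The only mild subtlety is to make sure the upper comparison $H(t)\le Z_+(t)+M$ is applicable throughout $[0,\tilde t]$; this is handled by the standard bootstrap argument used in Lemma \ref{Lemma:Girsanov}: the comparison is valid as long as $|H|\le \sqrt a$, and the bound itself shows $H$ stays in $[-\sqrt a-O(\kappa\sqrt a), \sqrt a-\delta/2]$ while the comparison holds, so it never leaves the good region before reaching $-\sqrt a+\delta$. No new probabilistic estimate beyond the Brownian supremum bound is required.
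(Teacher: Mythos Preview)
There is a genuine gap: you have confused the diffusion $X_a$ (law $\bbP$) with the auxiliary diffusion $H$ (law $\bP$). The deterministic sandwich $Z_-(t)-M \le H(t) \le Z_+(t)+M$ proved in Lemma~\ref{Lemma:Girsanov} is a statement about $H$, whose drift $-a+H^2$ pushes it \emph{downward} through the well; on the event $E=\{\sup_{[0,T]}|B|\le M\}$ the process $H$ indeed descends deterministically to $-\sqrt a+\delta$. But the lemma you are asked to prove is stated for $\bbP$, the law of $X_a$, whose drift $a-X_a^2$ pushes it \emph{upward} toward $\sqrt a$. On the very same event $E$, the diffusion $X_a$ started from $\sqrt a-\delta$ climbs back to $\sqrt a-\delta/2$ rather than going down: the comparison with $Z_\pm$ simply does not hold for $X_a$.

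Consequently your claim that the denominator satisfies $\bbP_{\sqrt a-\delta}[\tau_{-\sqrt a+\delta}<\tau_{\sqrt a-\delta/2}]\ge \bbP[E]\ge 1/2$ is false. Under $\bbP$ this conditioning event is exponentially rare in $a^{3/2}$ (it is precisely the large-deviation event of crossing the barrier), so it is hopeless to bound the numerator crudely by $\bbP[E^c]\lesssim a^{-2c_0^2/3}$ and divide. This is exactly why the paper invokes Girsanov: the change of measure converts $\bbP$-probabilities into $\bP$-expectations of $H$ weighted by $\exp\big(2\int_0^{\tau_-}H(s)\,ds\big)$, and the exponentially small denominator cancels with the Radon--Nikodym factor. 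The paper then controls the residual ratio by a blocking argument in time (the $A_x(n)$ recursion), paying a factor $e^{2\sqrt a T}$ per block against the superpolynomial decay of $\bP_{\sqrt a-\delta/2}[T<\tau_-]$. Your argument would be a correct proof of the analogous statement for $H$ under $\bP$, but that is not what is needed here.
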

\begin{proof}
Fix $r>0$. We keep the notation of the preceding proof. Since
$$ \bbP_{\sqrt{a}-\delta} \Big( \tau_- \ge T \, \big|\, \tau_- < \tau_+\Big) = \lim_{T'\rightarrow\infty} \frac{\bbP_{\sqrt{a}-\delta} \big( T \le \tau_- < T'\wedge \tau_+\big)}{\bbP_{\sqrt{a}-\delta} \big( \tau_- < T'\wedge \tau_+\big)}\;,$$
and since $\bbP_{\sqrt{a}-\delta} \big( \tau_- < T'\wedge \tau_+\big) \ge \bbP_{\sqrt{a}-\delta} \big( \tau_- < T\wedge \tau_+\big)$ for $T'>T$, the lemma will follow if we prove that for $c$ large enough we have the following bound
\begin{equation}\label{Eq:BoundLengthCross}
\sup_{T'>T} \frac{\bbP_{\sqrt{a}-\delta} \big( T \le \tau_- < T'\wedge \tau_+\big)}{\bbP_{\sqrt{a}-\delta} \big( \tau_- < T\wedge \tau_+\big)} \lesssim a^{-r}\;.
\end{equation}
By Girsanov's Theorem applied to the diffusions stopped at time $T'\wedge \tau_- \wedge \tau_+$, it suffices to show that
$$ \sup_{T'>T}\frac{\bE_{\sqrt{a}-\delta} \big[ T \le \tau_- < T'\wedge \tau_+ \;;\; e^{2\int_0^{\tau_-} H(t) dt}\big]}{\bE_{\sqrt{a}-\delta} \big[ \tau_- < T\wedge \tau_+ \;;\; e^{2 \int_0^{\tau_-} H(t) dt}\big]} \lesssim a^{-r}\;.$$
We let $n' := \lfloor T'/T \rfloor$ and we set for every $n\ge 0$ and every $x\in \bbR$
$$ A_x(n) := \bE_{x} \Big[ \tau_- < \tau_+ \;;\; \tau_-\in [nT,(n+1)T)\;;\; e^{2\int_0^{\tau_-} H(t) dt}\Big]\;.$$
We have
\begin{align*}
\bE_{\sqrt{a}-\delta} \big[ T\le \tau_- < T'\wedge \tau_+ \;;\; e^{2\int_0^{\tau_-} H(t) dt}\big] \le \sum_{n=1}^{n'} A_{\sqrt{a}-\delta}(n)\;.
\end{align*}
Applying the Markov property at time $T$, and bounding $H$ by $\sqrt{a}$ on the interval $[0,T)$, we get for all $n\ge 1$ and $x \in (-\sqrt{a} + \delta,\sqrt{a} - \delta/2)$,
\begin{align*}
A_{x}(n)= &\bE_{x} \Big[ T \le \tau_- < \tau_+ \;;\; e^{2\int_0^{T} H(s) ds} \;;\; A_{H(T)}(n-1) \Big]\\
\le &\;e^{2\sqrt{a} T}\; \bP_{x} \big[ T \le \tau_- \big]\sup_{y \in (-\sqrt{a}+\delta,\sqrt{a}-\delta/2)} A_y(n-1)\;.
\end{align*}
On the other hand, we obviously have the bound
$$ A_y(0) \le e^{2\sqrt{a} T} \bP_y\big[\tau_- < T \wedge \tau_+\big] \le e^{2\sqrt{a} T}\;.$$
Thus, a simple recursion for the first bound and the monotonicity of the process for the second bound, yields
\begin{align*}
A_{\sqrt{a}-\delta} (n) &\le e^{(n+1) 2\sqrt{a} T} \sup_{y \in (-\sqrt{a}+\delta,\sqrt{a}-\delta/2)} \bP_{y} \big[T < \tau_- \big]^n\\
&\le e^{(n+1) 2\sqrt{a} T} \bP_{\sqrt{a}-\delta/2} \big[T < \tau_- \big]^n\;.
\end{align*}
By the computations made in the proof of Lemma \ref{Lemma:Girsanov}, we have
$$ \bP_{\sqrt{a}-\delta/2} \big[T < \tau_- \big] \lesssim a^{-c'}\;,$$
for any $c'>0$, as well as
\begin{align*}
\bE_{\sqrt{a}-\delta} \big[ \tau_- < T\wedge \tau_+ \;;\; e^{2\int_0^{\tau_-} H(t) dt}\big] &\ge (1-O(a^{-c'})) e^{- 2\sqrt{a} T}\ge \frac12 e^{-2\sqrt{a} T}\;.
\end{align*}
Putting everything together, we get that for all $T' > T$,
$$\bbP_{\sqrt{a}-\delta} \Big[ \tau_- < T \, \big|\, \tau_- < \tau_+\Big]  \lesssim e^{4\sqrt a T} \sum_{n\ge 1} (e^{2\sqrt a T} a^{-c'})^n\;,$$
which is bounded by a term of order $a^{-r}$, for any given $r>0$, provided $c'$ is large enough.
\end{proof}

We now control the portion of trajectory from $-\sqrt{a}+\delta$ to $-\sqrt{a}$.
\begin{lemma}\label{Lemma:Girsanov3}
For all $C>1$ we have
$$ \P_{-\sqrt{a}+\delta}\Big[ \tau_{-\sqrt{a}} > C\frac{\ln \ln a}{\sqrt{a}}\wedge \tau_{-\sqrt{a}+2\delta} \,\big| \, \tau_{-\sqrt{a}} < \tau_{\sqrt{a}-\delta/2}\Big] \lesssim (\ln a)^{2-2C}\;.$$
\end{lemma}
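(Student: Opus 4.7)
The argument uses Girsanov's theorem as in the proofs of Lemmas~\ref{Lemma:Girsanov} and~\ref{Lemma:Girsanov2}. Set $T^* := C\ln\ln a/\sqrt{a}$ and decompose the event of interest as
\[
\{\tau_{-\sqrt{a}} > T^*\wedge \tau_{-\sqrt{a}+2\delta}\} \subset A_1\cup A_2,\quad A_1 := \{\tau_{-\sqrt{a}+2\delta} < \tau_{-\sqrt{a}}\},\ A_2 := \{T^* < \tau_{-\sqrt{a}}\wedge \tau_{-\sqrt{a}+2\delta}\}.
\]
Since $-\sqrt{a}$ is the local maximum of $V_a$, a Laplace-method analysis of the scale function $S'(y) = e^{2V_a(y)}$ based on $V_a(-\sqrt{a}+u) = (2/3)a^{3/2}-\sqrt{a}u^2+O(u^3)$ yields
\[
I_1 := S(-\sqrt{a}+\delta)-S(-\sqrt{a})\sim e^{(4/3)a^{3/2}}\sqrt{\tfrac{\pi}{8\sqrt{a}}},\quad I_2 := S(-\sqrt{a}+2\delta)-S(-\sqrt{a}+\delta)\sim \frac{e^{(4/3)a^{3/2}}e^{-2(\ln a)^4}}{4\sqrt{a}\delta},
\]
and $K := S(\sqrt{a}-\delta/2)-S(-\sqrt{a}+2\delta) = O(e^{-6(\ln a)^4}I_2)$. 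Setting $J := S(\sqrt{a}-\delta/2)-S(-\sqrt{a})\approx I_1$, the standard scale function formula gives $\P_{-\sqrt{a}+\delta}[B] = (I_2+K)/J\sim e^{-2(\ln a)^4}/(\sqrt{2\pi}(\ln a)^2)$; the strong Markov property at $\tau_{-\sqrt{a}+2\delta}$ (valid because $\tau_{-\sqrt{a}+2\delta}\le\tau_{\sqrt{a}-\delta/2}$ by continuity) yields $\P[A_1\cap B] = (I_1/(I_1+I_2))(K/J)$, so $\P[A_1\mid B]\lesssim K/I_2 = O(e^{-6(\ln a)^4})$, far better than required.

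For the dominant contribution from $A_2$: on this event both $\tau_{-\sqrt{a}}$ and $\tau_{\sqrt{a}-\delta/2}$ exceed $T^*$, so Girsanov applied at the deterministic time $T^*$, combined with the strong Markov property, gives
\[
\P_{-\sqrt{a}+\delta}[A_2\cap B] = \bE_{-\sqrt{a}+\delta}^{\bP}\Big[\un_{A_2}\,e^{G_{T^*}(H)}\,\P^X_{H(T^*)}[\tau_{-\sqrt{a}}<\tau_{\sqrt{a}-\delta/2}]\Big],
\]
with $G_{T^*}(H) = 2V_a(-\sqrt{a}+\delta)-2V_a(H(T^*))+2\int_0^{T^*}H(s)ds$. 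Writing $H(T^*)= -\sqrt{a}+u$ with $u\in(0,2\delta)$ and using $|H+\sqrt{a}|\le 2\delta$ throughout $[0,T^*]$, the Taylor expansion of $V_a$ gives $G_{T^*}(H) = -2(\ln a)^4+2\sqrt{a}u^2-2C\ln\ln a+o(1)$, while a further scale function computation gives $\P^X_{-\sqrt{a}+u}[\tau_{-\sqrt{a}}<\tau_{\sqrt{a}-\delta/2}]\lesssim e^{-2\sqrt{a}u^2}/(\sqrt{2\pi}a^{1/4}u)$ for $u\gtrsim a^{-1/4}$ (and is trivially at most $1$ otherwise). The $e^{2\sqrt{a}u^2}$ factors cancel, so
\[
e^{G_{T^*}(H)}\,\P^X_{H(T^*)}[\tau_{-\sqrt{a}}<\tau_{\sqrt{a}-\delta/2}]\lesssim e^{-2(\ln a)^4}(\ln a)^{-2C}
\]
uniformly on $A_2$. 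Using $\bP[A_2]\le 1$ and dividing by $\P[B]$ yields the target $(\ln a)^{2-2C}$.

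The main technical delicacy is the $u$-uniform cancellation of $e^{2\sqrt{a}u^2}$ between the Girsanov weight and the future hitting probability: taken separately these factors would give $e^{6(\ln a)^4}$ for $u$ close to $2\delta$, so the smallness of $\P^X_{H(T^*)}[\tau_{-\sqrt{a}}<\tau_{\sqrt{a}-\delta/2}]$ when $H(T^*)$ is away from $-\sqrt{a}$ is essential to compensate.
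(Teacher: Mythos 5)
Your proof is correct, and while it shares the paper's overall architecture (split the conditioned event into ``escapes up to $-\sqrt a+2\delta$'' and ``lingers too long'', and use Girsanov to compare with the reversed-drift diffusion $H$), each of the three estimates is obtained by a genuinely different route, and the bookkeeping of where $(\ln a)^{2-2C}$ comes from differs. For the denominator, you compute $\P_{-\sqrt a+\delta}[\tau_{-\sqrt a}<\tau_{\sqrt a-\delta/2}]$ \emph{exactly} via the scale function, getting $\sim e^{-2(\ln a)^4}/(\sqrt{2\pi}(\ln a)^2)$, whereas the paper only lower-bounds it by $I(a)e^{-2\sqrt a S}\asymp e^{-2(\ln a)^4}(\ln a)^{-2C}$ via Girsanov plus an Ornstein--Uhlenbeck comparison; correspondingly your numerator bound for the lingering event is $(\ln a)^{-2C}$ times $e^{-2(\ln a)^4}$ (all the smallness coming from the $2\int_0^{T^*}H\approx -2\sqrt a T^*$ term in the Girsanov weight, with $\bP[A_2]\le 1$), while the paper's is $(\ln a)^{2-4C}$ times the same factor (extracting extra smallness from the OU non-hitting probability). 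The two accountings give the same ratio. Your treatment of the escape event $A_1$ is purely via the strong Markov property and the scale function ($K/I_2=O(e^{-6(\ln a)^4})$ up to polynomial factors), which is cleaner and sharper than the paper's renewal argument with $B_+B_-/(1-B_+B_-)$; and your treatment of $A_2$ applies Girsanov at the deterministic time $T^*$ followed by the Markov property, which forces the uniform-in-$u$ cancellation of $e^{2\sqrt a u^2}$ between the weight and the future hitting probability that you rightly flag as the delicate point (it does hold: by monotonicity of $e^{2V_a}$ on $[-\sqrt a,\sqrt a]$ the scale-function integral from $-\sqrt a+u$ is dominated by its lower endpoint, so $e^{2\sqrt a u^2}\,\P^X_{-\sqrt a+u}[\tau_{-\sqrt a}<\tau_{\sqrt a-\delta/2}]\lesssim 1$ uniformly on $(0,2\delta]$). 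The paper avoids this delicacy by bounding $e^{2\int_0^{\tau_-}H}\le e^{2(-\sqrt a+2\delta)S}$ directly at the stopping time, at the cost of needing the explicit OU hitting-time formulas; your version trades those for one careful Laplace estimate. Both are valid; yours dispenses with the Borodin--Salminen formulas entirely.
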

\begin{proof}
We set $S = C\frac{\ln \ln a}{\sqrt{a}}$, $\tau_- := \tau_{-\sqrt{a}}$, $\tau_+ := \tau_{-\sqrt{a}+2\delta}$ and $\tau_{++} := \tau_{\sqrt{a}-\delta/2}$. For convenience, we also set $I(a) := e^{2(V(-\sqrt{a}+\delta)-V(-\sqrt{a}))}$ as this term will pop up in many equations below. We are going to show that we have
$$ \P_{-\sqrt{a}+\delta}\Big[\tau_{-} < S\wedge \tau_{+}\Big] \gtrsim I(a)e^{-2 \sqrt{a}\,S}\;,$$
as well as
\begin{align*}
\P_{-\sqrt{a}+\delta}\Big[S \wedge \tau_{+} \le \tau_{-} < S' \wedge \tau_{++}\Big]\lesssim I(a) e^{-2\sqrt{a}\,S} (\ln a)^{2-2C}\;,
\end{align*}
uniformly over all $S' >0$ and all $a$ large enough. These two bounds yield the statement of the lemma.\\
To prove these bounds we apply Girsanov's Theorem to the diffusions stopped at time $S'\wedge \tau_-\wedge \tau_{++}$. This will allow us to approximate our process by an Ornstein-Uhlenbeck process and use standard estimates about OU.  Regarding the first term, we have:
\begin{align*}
\P_{-\sqrt{a}+\delta}\Big[\tau_{-} < S\wedge \tau_{+}\Big]&= I(a) \bE_{-\sqrt{a}+\delta}\Big[e^{2\int_0^{\tau_{-}} H(s) ds}\;;\; \tau_{-} < S\wedge \tau_{+} \Big]\\
&\geq I(a) e^{- 2 \sqrt{a}\,S}\,\bP_{-\sqrt{a}+\delta}\Big[\tau_{-} < S\wedge \tau_{+} \Big]\;.
\end{align*}
Now observe that the process $Z(t) = H(t) + \sqrt{a}$ solves
$$ dZ(t) = Z(t)(Z(t)-2\sqrt{a}) dt + dB(t)\;,\quad Z(0) = \delta\;,$$
so that $Z(t) \le U(t)$ for all $t\in [0,\tau_{-}(H) \wedge \tau_{+}(H)]$ where $U$ is the Ornstein-Uhlenbeck process defined by
$$ dU(t) = -2(\sqrt{a}-\delta) U(t) dt + dB(t)\;,\quad U(0) = \delta\;.$$
Consequently, writing $\bP^U$ for the law of $U$, we get
\begin{align*}
\bP_{-\sqrt{a}+\delta}\Big[\tau_{-} < S\wedge \tau_{+} \Big] &\ge \bP^U_{\delta}\Big[\tau_{0} < S\wedge \tau_{2\delta} \Big]\\
&\ge \bP^U_{\delta}\Big[\tau_{0} < S \Big] - \bP^U_{\delta}\big[\tau_{0} > \tau_{2\delta}\big]\;.
\end{align*}
By the formulae~\cite[II.7.2.0.2 and II.7.2.2.2, p.542]{Handbook} we get the bounds
$$ \bP^U_{\delta}\Big[\tau_{0} < S\Big] = 1 - O(\delta a^{\frac14} e^{-2(\sqrt{a}-\delta)S})\;,$$
and
$$ \bP^U_{\delta}\big[\tau_{0} > \tau_{2\delta} \big] = \frac{\int_0^{\sqrt{2}\delta \sqrt{\sqrt{a}-\delta}} e^{v^2} dv }{\int_0^{2\sqrt{2}\delta \sqrt{\sqrt{a}-\delta}} e^{v^2} dv}\lesssim e^{- \rho a^{1/2} \delta^2}\;,$$
for some $\rho > 0$. Thus we get the asserted lower bound for the first term.

We turn to the second term. By Girsanov's Theorem, we get
\begin{align*}
&\bbP_{-\sqrt{a}+\delta}\Big[S \wedge \tau_{+} \le \tau_{-} < S' \wedge \tau_{++}\Big]\\
=\;& I(a) \bE_{-\sqrt{a}+\delta}\Big[e^{2\int_0^{\tau_{-}} H(s) ds}\;;\;  S \wedge \tau_{+} \le \tau_{-} < S' \wedge \tau_{++} \Big]\\
\le\;& I(a) \bE_{-\sqrt{a}+\delta}\Big[e^{2\int_0^{\tau_{-}} H(s) ds}\;;\;  S \wedge \tau_{+} \le \tau_{-} < \tau_{++} \Big]\;. 
\end{align*}
We bound the expectation at the last line by the sum of
$$ A := \bE_{-\sqrt{a}+\delta}\Big[e^{2\int_0^{\tau_{-}} H(s) ds}\;;\;  S \le \tau_{-} < \tau_{+} \Big]\;,$$
and
$$ B := \bE_{-\sqrt{a}+\delta}\Big[e^{2\int_0^{\tau_{-}} H(s) ds}\;;\;  \tau_{+} \le \tau_{-} < \tau_{++} \Big]\;,$$
and we bound these two terms separately. We start with $A$:
\begin{align*}
A &\le e^{2(-\sqrt{a}+2\delta) S} \bP_{-\sqrt{a}+\delta}\big[ S \le \tau_{-} < \tau_{+} \big]\\
&\le e^{2(-\sqrt{a}+2\delta) S} \Big(1-\bP_{-\sqrt{a}+\delta}\big[ \tau_{-} < S\wedge \tau_{+} \big) \Big]\;.
\end{align*}
A previous calculation showed that
$$ 1-\bP_{-\sqrt{a}+\delta}\big[ \tau_{-} < S\wedge \tau_{+} \big] = O(\delta a^{\frac14} e^{-2(\sqrt{a}-\delta)S}) + O(e^{-\rho a^{1/2} \delta^2}) = \cO\big((\ln a)^{2-2C}\big)\;,$$
so that we get the required bound for $A$. Regarding $B$, the proof is slightly more involved. Let us first introduce:
\begin{align*}
B_+&= \bE_{-\sqrt{a}+\delta}\Big[ e^{2\int_0^{\tau_{+}} H(s) ds} \;;\; \tau_{+} < \tau_{-}\Big]\;,\\
B_- &= \bE_{-\sqrt{a}+2\delta}\Big[ e^{2\int_0^{\tau_{-\sqrt{a}+\delta}} H(s) ds} \;;\; \tau_{-\sqrt{a}+\delta} < \tau_{++}\Big]\;.
\end{align*}
Applying the strong Markov property at time $\tau_+$ we get
\begin{align*}
B &= \bE_{-\sqrt{a}+\delta}\Big[\bE_{-\sqrt{a}+\delta}\Big[e^{2\int_0^{\tau_{-}} H(s) ds}\;;\;  \tau_{+} \le \tau_{-} < \tau_{++} \,\big| \, \cF_{\tau_+}\Big]\Big]\\
&= \bE_{-\sqrt{a}+\delta}\Big[e^{2\int_0^{\tau_{+}} H(s) ds} \;;\; \tau_+ \le \tau_- \;;\;\bE_{-\sqrt{a}+2\delta}\Big[e^{2\int_0^{\tau_{-}} H(s) ds}\;;\; \tau_{-} < \tau_{++} \Big]\Big]\\
&= B_+ \cdot \bE_{-\sqrt{a}+2\delta}\Big[e^{2\int_0^{\tau_{-}} H(s) ds}\;;\; \tau_{-} < \tau_{++} \Big]\;.
\end{align*}
Applying the strong Markov property at the first hitting time of $-\sqrt{a}+\delta$, we then obtain:
\begin{align*}
B &= B_+ B_- \cdot \bE_{-\sqrt{a}+\delta}\Big[e^{2\int_0^{\tau_{-}} H(s) ds}\;;\; \tau_{-} < \tau_{++} \Big]\\
&= B_+ B_- \cdot \big(B + D\big)\;,
\end{align*}
where
$$ D = \bE_{-\sqrt{a}+\delta}\Big[e^{2\int_0^{\tau_{-}} H(s) ds}\;;\; \tau_{-} < \tau_{+} \Big] \le 1\;.$$
As a consequence, we find
$$ B = \frac{B_+ B_-}{1-B_+ B_-} D \le \frac{B_+ B_-}{1-B_+ B_-}\;.$$
We claim that
\begin{equation}\label{Eq:ClaimBd}
B_+ B_- \lesssim e^{2t_a \sqrt{a}-\kappa \sqrt{a} \delta^2} \lesssim a^2 e^{-\rho \ln^4 a}\;.
\end{equation}
With this claim at hand, we deduce that $B$ is bounded by a negligible term compared to $A$, thus concluding the proof. We are left with the proof of the claim. We have
$$ B_+ \le \bP_{-\sqrt{a}+\delta}\big[\tau_{+} < \tau_{-}\big] = \bP^U_\delta[\tau_{2\delta} < \tau_0] \lesssim e^{-\rho \sqrt{a} \delta^2}\;.$$
To bound $B_-$, we argue as follows. We set
\begin{align*}
E &:= \sup_{x\in (-\sqrt{a}+\delta,\sqrt{a}-\delta/2)} \bE_{x}\Big[ e^{2\int_0^{\tau_{-\sqrt{a}+\delta}} H(s) ds} \;;\; \tau_{-\sqrt{a}+\delta} < \tau_{++}\Big]\;.
\end{align*}
By considering the two complementary events $\tau_{-\sqrt{a}+\delta} < t_a$ and $t_a \le \tau_{-\sqrt{a}+\delta}$, and by applying the Markov property at time $t_a$ in the second case, we get
$$ E \le e^{2t_a \sqrt{a}} + E\, e^{2t_a \sqrt{a}} \sup_{x\in (-\sqrt{a}+\delta,\sqrt{a}-\delta/2)} \bP_x[t_a < \tau_{-\sqrt{a}+\delta} < \tau_{++}]\;.$$
The proof of Lemma \ref{Lemma:Girsanov} ensures that, if $H$ starts from $\sqrt{a}-\delta/2$, then we have $\tau_{-\sqrt{a}+ \delta/2}< t_a$ on the event 
$\{\sup_{[0,t_a]} |B(t)| \le c_0 \ln a/a^{1/4}\}$, and therefore $\tau_{-\sqrt{a}+\delta}< t_a$ on the same event. By monotonicity, this remains true if $H$ starts from any point in $(-\sqrt{a}+\delta,\sqrt{a}-\delta/2)$. Recall that $\bP[\sup_{[0,t_a]} |B(t)| > c_0 \ln a/a^{1/4}] \lesssim a^{-c_0^2/2}$. Consequently, choosing $c_0$ large enough we get the crude bound
\begin{align*}
B_- \le E \le \frac{e^{2t_a \sqrt{a}}}{1-e^{2t_a \sqrt{a}} \bP[\sup_{[0,t_a]} |B(t)| > c_0 \ln a/a^{1/4}]} \lesssim a^2\;,
\end{align*}
uniformly over all $a$ large enough which concludes the proof of the claim.
\end{proof}

\begin{proof}[Proof of Proposition \ref{Prop:BoundsCross1}-(i) and (ii)]
Statement (i) is a consequence of Lemma \ref{Lemma:DiffXY}. Regarding (ii), we argue as follows.

After its first hitting time of $\sqrt a -\delta$, we decompose the path $X_a$ into two types of "bridges":\begin{itemize}
\item those that start at $\sqrt a - \delta$, hit $\sqrt a$ before $-\sqrt a$ and are stopped at their next hitting time of $\sqrt a -\delta$,
\item those that start at $\sqrt a - \delta$, hit $-\sqrt a$ before $\sqrt a$, and are stopped at their next hitting time of $\sqrt a -\delta$ (possibly after an explosion).
\end{itemize} 
We are interested in the first bridge hitting $-\sqrt{a}$. It follows the conditional law $\P_{\sqrt{a}- \delta}( \;\cdot\; | \tau_{-\sqrt{a}} < \tau_{\sqrt{a}})$ up to its ending time. We first show that it does not hit $\sqrt{a} - \delta/2$ before its first hitting time of $-\sqrt a$ with large probability. Indeed if we bound $\P_{\sqrt a - \delta}[\tau_{\sqrt a-\delta/2} < \tau_{-\sqrt a}]$ by $1$, then we get 
\begin{align*}
\P_{\sqrt{a} - \delta}\big[\tau_{-\sqrt{a}} < \tau_{\sqrt{a} - \delta/2} \,\big| \tau_{-\sqrt{a}} < \tau_{\sqrt{a}}\big] &\geq 1 -  \frac{\P_{\sqrt{a} - \delta /2}[\tau_{-\sqrt{a}} < \tau_{\sqrt{a}}]}{\P_{\sqrt{a} - \delta}[\tau_{-\sqrt{a}} < \tau_{\sqrt{a}}]}.
\end{align*}
Using the scale function \eqref{Eq:ScaleHit} and the fact that there exists $c>0$ such that $V(y+\delta/2) < V(y) - c\sqrt a \delta^2$ uniformly over all $y\in[\sqrt a - \delta,\sqrt a -\delta/2]$, we deduce that
\begin{align*}
\P_{\sqrt{a} - \delta}\big[\tau_{-\sqrt{a}} < \tau_{\sqrt{a} - \delta/2} \,\big| \tau_{-\sqrt{a}} < \tau_{\sqrt{a}}\big]  \ge 1- \exp(-c \,(\ln a)^4).
\end{align*}
Therefore, combining Lemma \ref{Lemma:Girsanov}, Lemma \ref{Lemma:Girsanov2}, Lemma \ref{Lemma:Girsanov3} and this last inequality, we obtain that there exists $C >0$ such that with probability larger than $1- 1/\ln a$, the trajectory of the first bridge hitting $-\sqrt a$ stays below $\sqrt{a} - \delta/2$ and:
\begin{itemize}
\item satisfies the complement of the event $E(C)$,
\item hits $0$ before time $(3/8) t_a + C \ln\ln a /\sqrt a$ but after time $(3/8) t_a - C \ln\ln a /\sqrt a$, and hits $-\sqrt{a} + \delta$ before time $(3/4) t_a + 2C \ln\ln a / \sqrt a$ but after time $(3/4) t_a - 2C \ln\ln a / \sqrt a$,
\item stays below $-\sqrt{a} + 2 \delta$ on the time interval $[\tau_{-\sqrt{a} + \delta}, \tau_{-\sqrt{a}}]$ and $\tau_{-\sqrt{a}} - \tau_{-\sqrt{a} + \delta} < C \ln\ln a/\sqrt{a}$.
\end{itemize}
Therefore, for $X_a$ in the time interval $[\iota_a,\theta_a]$, we deduce that: 
\begin{itemize}
\item The first portion of the trajectory from $\iota_a$ until its first hitting time of $\sqrt{a} - \delta$ stays in the interval $[\sqrt{a} - \delta, \sqrt{a}]$ while the function $t \mapsto -\sqrt{a}\tanh(-\sqrt{a}(t-\upsilon_a))$ is in $[\sqrt{a} - (\ln a)^{4C}/a^{1/4}, \sqrt{a}]$ for $t \leq \upsilon_a -(3/8)t_a + C\ln\ln a/\sqrt{a}$. Since $\upsilon_a \geq (3/8)t_a - C\ln\ln a/\sqrt{a}$, we deduce that these two curves are at distance smaller than $C \sqrt{a}/\ln a$ of each other (in fact, they are much closer).
\item The next portion after its first hitting time of $\sqrt{a} - \delta$ stays below $\sqrt{a} - \delta/2$ and takes a time $3/8\, t_a + O(\ln \ln a /  \sqrt a)$ to reach $0$ and then a time of the same order to reach $-\sqrt a + \delta$, and stays at a distance of order at most $\sqrt a / \ln a$ from $\sqrt a \tanh(-\sqrt a(t - \tau_0))$ where $\tau_0$ is the first hitting time of $0$ after $\iota_a$. This implies that the first and the last hitting times of $0$ of this portion of the trajectory are very close to each other (at distance of order at most $1/(\sqrt{a} \ln a)$): hence, we can replace $\tau_0$ by $\upsilon_a$ in the hyperbolic tangent without modifying the order of magnitude of the bound. 
\item The last portion of the trajectory stays below $-\sqrt a + 2\delta$ and takes at most $C \ln \ln a / \sqrt a$ to hit $-\sqrt a$.
\end{itemize}
This yields (ii).
\end{proof}

\subsection{Coupling of $X_a$ and $X_{a + \eps}$}\label{subsec:coupling}

The next lemma controls the difference between $X_{a}$ and $X_{a+\eps}$, and yields in particular the proof of Proposition \ref{Prop:BoundsCross2}.
\begin{lemma}[Difference between $X_a$ and $X_{a+\eps}$]\label{Lemma:DiffDiff}
Take $\eps \in (0,1]$. There exists $C >0$, such that with probability at least $1-O(1/\ln a)$, the following holds true:
\begin{align*}
&\sup_{t\in [\upsilon_a-(1/16)t_a,\upsilon_a+(1/16) t_a]} |X_a(t)-X_{a+\eps}(t)| < 1\;, \\
&\sup_{t\in [\upsilon_a+(1/16)t_a, \theta_a - (1/16) t_a]} X_{a+\eps}(t) <  -\sqrt{a} +C \,a^{3/7}\;, \\
&\sup_{t\in [\theta_a - (1/16) t_a,\theta_a]} X_{a+\eps}(t) \le \sqrt{a}-1\;.
\end{align*}
\end{lemma}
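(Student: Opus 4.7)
\medskip

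The central point is that, by monotonicity of the SDE flow in $a$, the difference $Z(t) := X_{a+\eps}(t) - X_a(t)$ is nonnegative on $[0,\zeta_a(1))$ and satisfies the pathwise ODE
\begin{align*}
\frac{dZ}{dt} \;=\; \eps - (X_a(t) + X_{a+\eps}(t))\,Z(t) \;=\; \eps - 2 X_a(t)\,Z(t) - Z(t)^2,
\end{align*}
in which the Brownian motion has dropped out. I work deterministically on the event of Proposition \ref{Prop:BoundsCross1}-(ii), on which $X_a(t) = -\sqrt{a}\tanh(\sqrt{a}(t-\upsilon_a)) + O(\sqrt{a}/\ln a)$ uniformly on $[\iota_a,\theta_a]$, together with $\theta_a - \upsilon_a = (3/8)t_a + O(\ln\ln a/\sqrt{a})$.

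\medskip

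For the first bound, the goal is to control $Z$ on the transition window $[\upsilon_a - (1/16)t_a,\, \upsilon_a + (1/16)t_a]$ by an explicit computation. Applying Lemma \ref{Lemma:DiffXY} to both pairs $(X_a,Y_a)$ and $(X_{a+\eps}, Y_{a+\eps})$ gives $|X_a - \sqrt{a}| \lesssim h_a$ and $|X_{a+\eps} - \sqrt{a+\eps}| \lesssim h_a$ at time $\upsilon_a - (1/16) t_a$, hence the initial condition $Z(\upsilon_a - (1/16)t_a) \lesssim h_a$. In the rescaled variable $\tau = \sqrt{a}(t - \upsilon_a)$, after linearising (justified a posteriori by a bootstrap on $-Z^2$) and substituting the $\tanh$ approximation, the equation
\begin{align*}
\frac{dZ}{d\tau} = \frac{\eps}{\sqrt{a}} + 2\tanh(\tau)\, Z
\end{align*}
has integrating factor $\cosh^{-2}(\tau)$ and the closed-form solution
\begin{align*}
Z(\tau) = \cosh^2(\tau)\Bigl[\frac{Z(\tau_0)}{\cosh^2(\tau_0)} + \frac{\eps}{\sqrt{a}}\bigl(\tanh(\tau) - \tanh(\tau_0)\bigr)\Bigr],\qquad \tau_0 = -\frac{\ln a}{16}.
\end{align*}
Since $\cosh^2(\tau) \le a^{1/8}$ on the window, plugging in $Z(\tau_0) \lesssim h_a$ yields $Z \lesssim h_a \ll 1$ throughout. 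The errors from the $\tanh$ approximation and from dropping $-Z^2$ contribute a factor $1 + O(1/\ln a)$ via Gr\"onwall over a time span of length $(\ln a)/(8\sqrt{a})$.

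\medskip

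For the remaining two bounds, after leaving the transition window we have $X_a(t) \ge -\sqrt{a}$ on $[\upsilon_a + (1/16)t_a, \theta_a]$ by definition of $\theta_a$, so the ODE gives $dZ/dt \le \eps + 2\sqrt{a}\,Z$. Starting from $Z(\upsilon_a + (1/16)t_a) \lesssim h_a$, Gr\"onwall produces
\begin{align*}
Z(t) \le \bigl(Z(\upsilon_a + \tfrac{1}{16}t_a) + \tfrac{\eps}{2\sqrt{a}}\bigr)\exp\bigl(2\sqrt{a}(t - \upsilon_a - \tfrac{1}{16}t_a)\bigr).
\end{align*}
At $t = \theta_a - (1/16)t_a$ the exponent is at most $(\ln a)/2$, giving $Z \lesssim h_a \cdot a^{1/2} = a^{1/4}\ln a$. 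Combined with $X_a(t) \le -\sqrt{a} + O(a^{3/8})$ from the $\tanh$ formula, this yields $X_{a+\eps}(t) \le -\sqrt{a} + C a^{3/7}$ for $a$ large. Continuing Gr\"onwall over the further time $(1/16)t_a$ up to $\theta_a$ adds only a factor $a^{1/8}$, so $Z(\theta_a) \lesssim a^{3/8}\ln a = o(\sqrt{a})$, and hence $X_{a+\eps}(\theta_a) \le \sqrt{a} - 1$.

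\medskip

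The main technical obstacle is the transition window, where the linearised drift passes from stable ($\tanh\tau < 0$) to unstable ($\tanh\tau > 0$): a crude Gr\"onwall estimate would multiply the initial data by $\exp(2\sqrt a\cdot(1/8)t_a)=a^{1/4}$ and be much too weak, so one really needs the explicit integrating-factor solution of the linear ODE. The accompanying bootstrap on the discarded $-Z^2$ term must then be performed carefully to close the argument, and the probability accounting (combining Proposition \ref{Prop:BoundsCross1}-(ii) with the oscillation control on $X_a$ and $X_{a+\eps}$ near their respective bottom wells) must ultimately produce the claimed bound $1 - O(1/\ln a)$.
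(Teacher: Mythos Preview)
Your overall strategy---writing the noiseless ODE for $Z = X_{a+\eps} - X_a$ and computing with the $\cosh^{-2}$ integrating factor---is exactly the paper's. However, there is a genuine gap in how you obtain the initial condition.

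You claim $Z(\upsilon_a - (1/16)t_a) \lesssim h_a$ by invoking Lemma~\ref{Lemma:DiffXY} on both $X_a$ and $X_{a+\eps}$ to get $|X_a - \sqrt a| \lesssim h_a$ and $|X_{a+\eps} - \sqrt{a+\eps}| \lesssim h_a$ at that time. This is incorrect for two reasons. First, Lemma~\ref{Lemma:DiffXY} only asserts $|X_a - Y_a| \le h_a/2$ with $Y_a$ the stationary diffusion; it gives no pointwise closeness of $X_a$ to $\sqrt a$. Second, and more fundamentally, at time $\upsilon_a - (1/16)t_a$ the diffusion $X_a$ is already well into its descent: by Proposition~\ref{Prop:BoundsCross1}-(ii) one has $X_a(\upsilon_a - (1/16)t_a) \approx \sqrt a\,\tanh((1/16)\ln a) = \sqrt a - 2a^{3/8}(1+o(1))$, which is nowhere near $\sqrt a + O(h_a)$. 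There is likewise no a priori control on $X_{a+\eps}$ at this random, $X_a$-dependent time beyond $X_{a+\eps} \ge X_a$.

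The paper avoids this by starting the ODE analysis not at $\upsilon_a - (1/16)t_a$ but at the earlier time $\sigma_a$ where $X_a$ hits $\sqrt a - \delta$ at the beginning of the successful bridge, and using only the crude bound $Z(\sigma_a) < 10\sqrt a$ (which follows trivially from $X_{a+\eps}(\sigma_a) < 10\sqrt a$). The key point is that on $[\sigma_a,\upsilon_a - (1/16)t_a]$ the lower bound $X_a(\sigma_a+t) \ge Z_-(t) - M$ from Lemma~\ref{Lemma:Girsanov} makes the linearized coefficient $-2X_a$ strongly damping ($X_a \approx +\sqrt a$ there), and the explicit computation of $\int_0^t Z_-(s)\,ds$ shows the integrating factor kills the initial data by a factor $a^{-5/8+o(1)}$. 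This more than compensates for the large $Z(\sigma_a)$ and yields $Z \lesssim a^{-1/8+o(1)} < 1$ on the whole window $[\upsilon_a - (1/16)t_a,\upsilon_a + (1/16)t_a]$. If you replace your starting point by $\sigma_a$ with this crude initial bound, the rest of your argument (the $\cosh^2$ calculation and the subsequent Gr\"onwall estimates on $[\upsilon_a + (1/16)t_a,\theta_a]$) goes through essentially as you wrote it.
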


\begin{remark}
Those bounds are not optimal because we overestimate the difference between $X_a$ and $X_{a + \eps}$ at the initial time where $X_a$ starts its descent to $- \sqrt{a}$. We indeed expect that when $\eps$ is of order $1/\sqrt{a}$, the difference between $X_a$ and $X_{a+ \eps}$ is of order $1/a^{1/4}$ around time $\theta_a$.
\end{remark}
\begin{proof}
Recall the decomposition of the process $X_a$ from the previous proof and let $\sigma_a$ be the starting time of the bridge that starts at $\sqrt a - \delta$ and hits $-\sqrt a$ before $\sqrt a$. We introduce the process $Z(t) := X_{a+\eps}(\sigma_a+t) - X_a(\sigma_a+t)$, which solves
$$ dZ(t) = - Z(t) (X_a(\sigma_a+t) + X_{a+\eps}(\sigma_a+t)) dt + \eps dt\;.$$
Since $X_a(\sigma_a+t) \le X_{a+\eps}(\sigma_a+t)$ until $X_a$ explodes, we deduce that
$$ dZ(t) \le -2 X_a(\sigma_a+t) Z(t) dt + \eps dt\;,$$
or, written in its integrated form
$$ Z(t) \le Z(0) e^{-2\int_0^t X_a(\sigma_a+s) ds} + \eps \int_0^t e^{-2\int_s^t X_a(\sigma_a + r) dr} ds\;.$$
By Lemma \ref{Lemma:Girsanov}, we know that with probability at least $1-O(a^{-c})$, the process $X_a(\sigma_a+t)$ is bounded from below by $Z_-(t)-M$ until it hits $-\sqrt a + \delta$, where $Z_-$ was introduced in the proof of Lemma \ref{Lemma:Girsanov}. Since $e^{2 M t} \le 2$ for all $t\in [0, t_a]$ and all $a$ large enough, we have the bound
$$ Z(t) \le  2Z(0) e^{-2\int_0^t Z_-(s) ds} + 2\eps \int_0^t e^{-2\int_s^t Z_-(r) dr} ds\;.$$
A simple integration yields for all $t\ge 0$
$$ \int_0^t Z_-(s) ds = \frac1{(1-\kappa)^2}\ln \frac{\cosh \sqrt{a_-}\,A_-}{\cosh \sqrt{a_-}(t-A_-)}\;.$$
where $a_- := a (1+\kappa) (1- \kappa)^2$.

Similarly, we get
\begin{align*}
\int_0^t e^{-2\int_s^t Z_-(r) dr} ds &= (\cosh \sqrt{a_-}(t-A_-))^{2/(1-\kappa)^2} \int_0^t \frac{1}{\big(\cosh \sqrt{a_-}(s-A_-)\big)^{2/(1-\kappa)^2}} ds\\
&\le (\cosh \sqrt{a_-}(t-A_-))^{2/(1-\kappa)^2} \int_0^t \frac{1}{\big(\cosh \sqrt{a_-}(s-A_-)\big)^{2}} ds\\
&\leq \frac1{\sqrt{a_-}} (\cosh(\sqrt{a_-}(t-A_-)))^{2/(1-\kappa)^2}  \Big(\tanh \sqrt{a_-}(t-A_-) + \tanh \sqrt{a_-} A_- \Big)\;.
\end{align*}
Fix $\rho \in (0,3/4)$. For all $t\in [\rho t_a, (\frac34 -\rho) t_a]$ we have
\begin{align}
e^{-2\int_0^t Z_-(s) ds} = \Big(\frac{\cosh \sqrt{a_-}(t-A_-)}{\cosh \sqrt{a_-} A_-}\Big)^{\frac{2}{(1-\kappa)^2}} \lesssim e^{-2\sqrt{a} \frac{(1+\kappa)^{1/2}}{1-\kappa}\big( t \wedge (2A_- -t) \big)} \le a^{-2\rho+o(1)}\;. \label{eq:diffcoupling1}
\end{align}
Additionally, we have for all $t\in [\rho t_a, (\frac34 -\rho) t_a]$
\begin{align}
\eps \int_0^t e^{-2\int_s^t Z_-(r) dr} ds \le \eps \frac2{\sqrt{a_-}} e^{\frac{2}{(1-\kappa)^2}\sqrt{a_-} |t-A_-|} \le \eps a^{\frac14-2\rho+o(1)}\;.\label{eq:diffcoupling2}
\end{align}

From \eqref{eq:diffcoupling1} and \eqref{eq:diffcoupling2}, we deduce that
\begin{align*}
\forall t\in [\rho t_a, (\frac34 -\rho) t_a], \quad Z(t) \leq 2Z(0) a^{-2\rho+o(1)} + \eps a^{\frac14-2\rho+o(1)}.
\end{align*}

By Lemmas \ref{Lemma:Girsanov}, \ref{Lemma:Girsanov2} and \ref{Lemma:Girsanov3}, we know that $\upsilon_a - \sigma_a$ and $\theta_a - \upsilon_a$ are of order $3/8 t_a + O(\ln \ln a / \sqrt a)$ with probability $1-O(1/\ln a)$. Furthermore, $X_{a+\eps}(\sigma_a) < 10 \sqrt a$ with a huge probability so that $Z(0) < 10 \sqrt a$. We thus easily get the first bound of the statement of the lemma.

Notice that we also deduce that there exists $C$ such that for all $a$ large enough 
\begin{align*}
\sup_{t \in [\upsilon_a + (1/16) t_a, \theta_a - (1/16) t_a]} X_{a+\eps}(t) <  -\sqrt{a} +C \,a^{3/7},
\end{align*}
which gives the second bound. 

We turn to the third bound, for which we control $X_{a+\eps}$ on the time interval $[\theta_a - (1/16) t_a,\theta_a]$. We have:
$$ dZ(t) = -2X_a(t) Z(t)dt - Z(t)^2dt + \eps dt \le 2\sqrt{a} Z(t) dt - Z(t)^2 dt + \eps dt = \Big(a+\eps - (Z(t)-\sqrt{a})^2\Big)dt\;,$$
for all $t\le \theta_a$. Notice that $X_a = -\sqrt a + o(1)$ on this time interval. Consequently, for all $t\in [\theta_a - \frac1{16} t_a,\theta_a]$ we have the trivial bound:
\begin{align*}
X_{a+\eps}(t) \le \sqrt{a+\eps} \tanh\big(\sqrt{a+\eps}(t-A_\eps)\big) + o(1)\;,
\end{align*}
where $A_\eps$ is such that the r.h.s. coincides with $X_{a+\eps}(\theta_a - \frac1{16} t_a)$ at time $\theta_a - \frac1{16} t_a$. As $X_{a + \eps}(\theta_a -  \frac1{16} t_a) = -\sqrt{a} + O(a^{3/7})$, we deduce the third bound of the statement.
\end{proof}

\begin{lemma}\label{Lemma:Ordering}
Take $\eps \in (a^{-2/3},1)$. Assume that $X_a(t)$ and $X_{a+\eps}(t)$ lie in $[\sqrt{a}/2,3\sqrt{a}/2]$ for all $t\in[0,2t_a]$. If $X_{a+\eps}(0) < X_a(0)$, then by time $2t_a$ and for all $a$ large enough, $X_{a+\eps}$ passes above $X_a$.
\end{lemma}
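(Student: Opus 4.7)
The key observation is that, since $X_a$ and $X_{a+\eps}$ are driven by the \emph{same} Brownian motion, the difference $Z(t) := X_{a+\eps}(t) - X_a(t)$ satisfies a purely deterministic linear ODE. Indeed, subtracting the SDEs gives
\begin{equation*}
Z'(t) = \eps - \big(X_{a+\eps}(t)^2 - X_a(t)^2\big) = \eps - S(t)\, Z(t),
\end{equation*}
where $S(t) := X_a(t) + X_{a+\eps}(t)$. By the assumption that both processes lie in $[\sqrt{a}/2,3\sqrt{a}/2]$ on $[0,2t_a]$, we have $S(t) \in [\sqrt{a}, 3\sqrt{a}]$ throughout this interval, and $|Z(0)| \le \sqrt{a}$.

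The plan is then to solve this linear ODE explicitly using Duhamel's formula,
\begin{equation*}
Z(t) = Z(0)\,e^{-\int_0^t S(s)\,ds} + \eps \int_0^t e^{-\int_s^t S(u)\,du}\,ds,
\end{equation*}
and to evaluate both terms at $t = 2t_a = 2\ln a/\sqrt{a}$. The first (homogeneous) term is bounded in absolute value by
\begin{equation*}
\sqrt{a}\,e^{-\sqrt{a}\cdot 2t_a} = \sqrt{a}\cdot a^{-2} = a^{-3/2}.
\end{equation*}
For the second (forced) term, using $S(u) \le 3\sqrt{a}$ to lower-bound the exponential, we get
\begin{equation*}
\eps \int_0^{2t_a} e^{-\int_s^{2t_a} S(u)\,du}\,ds \;\ge\; \eps \int_0^{2t_a} e^{-3\sqrt{a}(2t_a - s)}\,ds \;=\; \frac{\eps}{3\sqrt{a}}\big(1 - a^{-6}\big) \;\ge\; \frac{\eps}{6\sqrt{a}}
\end{equation*}
for all $a$ large enough. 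Since $\eps > a^{-2/3}$, this lower bound is at least $a^{-7/6}/6$, which dominates $a^{-3/2}$ by a factor of $a^{1/3}$ as $a \to \infty$.

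Combining these two estimates, we conclude that $Z(2t_a) \ge -a^{-3/2} + a^{-7/6}/6 > 0$ for $a$ large enough, meaning $X_{a+\eps}(2t_a) > X_a(2t_a)$, so the ordering has reversed by time $2t_a$. There is no real obstacle here: the whole point is that driving the two diffusions with the same Brownian motion cancels the stochastic term, leaving an explicitly solvable linear ODE whose asymptotic behavior is dictated by the contraction rate $\sqrt{a}$ versus the forcing $\eps$.
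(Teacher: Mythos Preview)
Your proof is correct and essentially identical to the paper's: both subtract the SDEs to get the linear ODE $Z' = \eps - S(t)Z$ with $S(t)\in[\sqrt a,3\sqrt a]$, solve by Duhamel, and compare the homogeneous contribution $O(a^{-3/2})$ against the forced contribution $\gtrsim \eps/\sqrt a > a^{-7/6}$. The only cosmetic difference is that the paper argues by contradiction (assuming $X_{a+\eps}$ stays below $X_a$ on $[0,2t_a]$ and deriving that the difference becomes negative), whereas you compute $Z(2t_a)>0$ directly; the estimates are the same.
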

\begin{proof}
Assume that $X_{a+\eps}$ is below $X_a$ up to time $2t_a$, then the difference $Z(t) = X_a(t) - X_{a+\eps}(t)$ is positive and satisfies for all $t \in [0,2t_a]$:
\begin{align*}
Z(t) &=  Z(0) \exp(- \int_{0}^t (X_a+X_{a+\eps})(s) ds) -\eps \int_{0}^{t} \exp(- \int_{v}^t (X_a+X_{a+\eps})(s) ds) dv\\
&\leq Z(0) \exp(- \sqrt{a} t)-\eps \int_{0}^{t} \exp( - 3 \sqrt{a}(t-v))dv \\
&\leq  Z(0) \exp(- \sqrt{a} t) - \frac{\eps}{3 \sqrt{a}}(1-e^{-3 \sqrt{a} t})\;.
\end{align*}
By assumption $Z(0) \leq \sqrt{a}$. At time $t=2t_a$, we thus find
\begin{align*}
Z(t) &\leq a^{-\frac32} - \frac{\eps}{3 \sqrt{a}}(1-o(1))\;.
\end{align*}
We get $Z(t) < 0$ for all $a$ large enough, thus raising a contradiction.
\end{proof}

\subsection{Bounds after time $\theta_a$}\label{subsec:boundsaftertheta}

\begin{lemma}\label{Lemma:ExploandHit}
As $a\rightarrow +\infty$, we have $\P_{-\sqrt{a}}\big( \tau_{-\sqrt{a}-\delta} < \tau_{-\sqrt{a}+\delta} \big) \rightarrow 1/2$. Furthermore, there exists $C>0$ such that for all $a$ large enough and all $x \in [-\sqrt{a} - \delta, -\sqrt{a} + \delta]$,
\begin{equation}\label{Eq:BndToto}
\E_{x}\big[ \tau_{-\sqrt{a}+\delta} \wedge \tau_{-\sqrt{a}-\delta}\big] \le C \frac{\ln \ln a}{\sqrt a}\;.
\end{equation}
Consequently,
\begin{align*}
&\P_{-\sqrt a}\big[ \tau_{-\sqrt{a}-\delta}  > \frac{(\ln \ln a)^2}{4 \sqrt a}  \,\big|\, \tau_{-\sqrt{a}-\delta} < \tau_{-\sqrt{a}+\delta}\big] \le \frac{12C}{\ln \ln a}\;,\\
\mbox{and}\quad &\P_{-\sqrt a}\big[ \tau_{-\sqrt{a}+\delta}  > \frac{(\ln \ln a)^2}{4 \sqrt a}  \,\big|\, \tau_{-\sqrt{a}+\delta} < \tau_{-\sqrt{a}-\delta}\big] \le \frac{12C}{\ln \ln a}\;.
\end{align*}
\end{lemma}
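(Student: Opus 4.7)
For the first statement, I will apply the scale-function identity \eqref{Eq:ScaleHit} to write
\[
\P_{-\sqrt a}[\tau_{-\sqrt a-\delta} < \tau_{-\sqrt a+\delta}] = \frac{S(-\sqrt a+\delta) - S(-\sqrt a)}{S(-\sqrt a+\delta) - S(-\sqrt a-\delta)},
\]
and expand $V_a$ around its local maximum at $-\sqrt a$, using $V_a''(-\sqrt a) = -2\sqrt a$: in local coordinates $u = y+\sqrt a$ one has $V_a(-\sqrt a+u) = V_a(-\sqrt a) - \sqrt a\, u^2 + u^3/3$. On the window $|u|\le\delta = (\ln a)^2/a^{1/4}$, the quadratic term $\sqrt a\,u^2$ ranges up to $(\ln a)^4$ while the cubic correction satisfies $|u^3|\le\delta^3$, so $u^3/(\sqrt a u^2)\le\delta/\sqrt a = o(1)$ uniformly. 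Since $\sqrt a\,\delta^2=(\ln a)^4\to\infty$, each integral $\int_0^\delta e^{-2\sqrt a u^2 \pm 2u^3/3}\,du$ is asymptotically equal to $\int_0^{\infty} e^{-2\sqrt a u^2}\,du = \sqrt\pi/(2^{3/2}\,a^{1/4})$, and the two symmetric contributions $S(-\sqrt a\pm\delta)-S(-\sqrt a)$ are equal up to a $(1+o(1))$ factor, giving the ratio $1/2$.

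For the expected-exit-time bound, I will use the standard Green-function representation
\[
\E_x\big[\tau_{-\sqrt a+\delta}\wedge\tau_{-\sqrt a-\delta}\big] = \int_{-\sqrt a-\delta}^{-\sqrt a+\delta}\!G(x,y)\cdot 2e^{-2V_a(y)}\,dy, \qquad G(x,y) = \frac{(S(x\wedge y)-S(c))(S(d)-S(x\vee y))}{S(d)-S(c)},
\]
with $c=-\sqrt a-\delta$ and $d=-\sqrt a+\delta$. The crucial observation is that for fixed $y$ the function $x\mapsto G(x,y)$ is piecewise linear in $S(x)$ with apex at $x=y$, hence $G(x,y)\le G(y,y)$ uniformly in $x$. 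Substituting the Taylor expansion of $V_a$ and rescaling $\tilde v = 2a^{1/4}(y+\sqrt a)$, which ranges over $[-w_\delta,w_\delta]$ with $w_\delta=2(\ln a)^2$, the bound reduces to
\[
\E_x[\tau] \le \frac{1+o(1)}{2\sqrt a}\int_{-w_\delta}^{w_\delta}\!\frac{\Phi(\tilde v)(\Phi(w_\delta)-\Phi(\tilde v))}{\Phi(w_\delta)}\, e^{\tilde v^2/2}\,d\tilde v, \qquad \Phi(\tilde v) := \int_{-w_\delta}^{\tilde v}e^{-w^2/2}\,dw.
\]
The Mills-ratio asymptotics $\Phi(w_\delta)-\Phi(\tilde v)\sim e^{-\tilde v^2/2}/\tilde v$ (as $\tilde v\to+\infty$, symmetrically on the left) show that the integrand is $O(1/(1+|\tilde v|))$, so the integral is controlled by $\ln w_\delta = 2\ln\ln a+O(1)$, yielding the desired bound $C\ln\ln a/\sqrt a$.

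The last two conditional estimates then follow from Markov's inequality: setting $S=(\ln\ln a)^2/(4\sqrt a)$, one has $\P_{-\sqrt a}[\tau>S]\le\E_{-\sqrt a}[\tau]/S\le 4C/\ln\ln a$, and dividing by the probabilities $\P_{-\sqrt a}[\tau_{-\sqrt a\pm\delta}<\tau_{-\sqrt a\mp\delta}]\ge 1/3$ (valid for $a$ large by the first statement) produces the stated constant $12C/\ln\ln a$.

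The main obstacle I expect is the uniform-in-$x$ aspect of the second bound: the naive attempt at $x=-\sqrt a$ immediately gives the right rate $\ln\ln a/\sqrt a$, but without the tent-function observation $G(x,y)\le G(y,y)$ one would have to redo the Green-function asymptotics for each starting point. The cubic correction to the Taylor expansion of $V_a$ is a minor technical nuisance but is comfortably absorbed by the gap $\delta/\sqrt a = (\ln a)^2/a^{3/4}\to 0$.
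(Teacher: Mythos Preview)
Your proof is correct and follows essentially the same route as the paper: scale-function identity plus Taylor expansion of $V_a$ at $-\sqrt a$ for the probability, the Green-function/speed-measure formula (the paper cites it as \cite[Th.~VII.3.6]{RevuzYor}, which is exactly your representation written out as two terms) for the expected exit time, Mills-ratio asymptotics for the $\ln\ln a$ factor, and Markov's inequality divided by the lower bound $1/3$ for the conditional estimates. The one genuine simplification you introduce is the tent-function bound $G(x,y)\le G(y,y)$, which gives the uniformity in $x$ in a single stroke; the paper instead keeps the two integrals weighted by $\P_x[\text{exit left/right}]$ and splits into the cases $(x+\sqrt a)a^{1/4}$ bounded versus $(x+\sqrt a)a^{1/4}\to\infty$, using the smallness of the exit probability in the latter case to compensate the growing integral.
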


\begin{proof}
Recall the identity \eqref{Eq:ScaleHit}. Since $V(y) = V(-\sqrt a) - (y+\sqrt a)^2 \sqrt a + (y+\sqrt a)^3 / 3$, a simple computation yields the following asymptotics:
\begin{align}
&S(-\sqrt{a}+\delta)-S(-\sqrt a) = \int_{-\sqrt a}^{-\sqrt{a}+\delta} \exp(2 V_a(y)) dy \sim \frac{\sqrt{\pi}}{2 \sqrt{2}} \;a^{-1/4} \exp(\frac{4}{3}a^{3/2})\;, \notag\\
&S(-\sqrt{a} + \delta)-S(-\sqrt a - \delta) = \int_{-\sqrt a -\delta}^{-\sqrt{a}+\delta} \exp(2 V_a(y)) dy \sim \frac{\sqrt{\pi}}{\sqrt{2}} \;a^{-1/4} \exp(\frac{4}{3}a^{3/2})\;. \label{equivdiffS}
\end{align}
Therefore $\P_{-\sqrt{a}}[\tau_{-\sqrt{a}-\delta} < \tau_{-\sqrt{a}+\delta} \big] \to 1/2$. We now estimate the expectation of the time it takes to exit the interval $(-\sqrt{a} - \delta,-\sqrt{a} + \delta)$ when starting at $x \in [-\sqrt{a}- \delta,-\sqrt{a}+ \delta]$. Applying~\cite[Th VII.3.6]{RevuzYor}, we find:
\begin{align*}
\E_{x}[\tau_{-\sqrt{a} - \delta} \wedge \tau_{-\sqrt{a}+\delta}] 
&=2 \,\P_{x}[\tau_{-\sqrt{a}-\delta} < \tau_{-\sqrt{a}+\delta} ] \int_{-\sqrt{a}-\delta}^x \frac{S(y)- S(-\sqrt{a}-\delta)}{S'(y)} dy \\
&\quad + 2\, \P_{x}[ \tau_{-\sqrt{a} + \delta}  < \tau_{-\sqrt{a}-\delta}]  \int_{x}^{-\sqrt{a} + \delta}\frac{S(-\sqrt{a} + \delta) - S(y)}{S'(y)} dy\;.
\end{align*}
Let us bound the first term on the r.h.s., the second term can be bounded in the same way.
Using the change of variable $y = -\sqrt{a} + u/a^{1/4}$, and neglecting the cubic terms in the potential we get:
\begin{align*}
&\int_{-\sqrt{a}-\delta}^{x}\frac{S(y)- S(-\sqrt{a}-\delta)}{S'(y)} dy \le \frac{2}{\sqrt{a}} \int_{-a^{1/4} \delta}^{(x + \sqrt{a}) a^{1/4}} \exp(2 u^2) \int_{-a^{1/4} \delta}^{u} \exp(-2 v^2) dv du\;.
\end{align*}
If $(x+\sqrt{a})a^{1/4}$ is bounded from above, the simple inequality $\int_y^\infty \exp(- 2 u^2) du \leq \exp(- 2 y^2)/(4y)$ that holds for $y>0$ allows to prove that the term is of order $\ln(\delta a^{1/4}) / \sqrt a$. If $(x+\sqrt{a})a^{1/4} \to +\infty$, the inequality 
\begin{align*}
\P_x[\tau_{-\sqrt{a}-\delta} < \tau_{-\sqrt{a}+\delta} ] \leq C_0 \int_{(x+\sqrt{a})a^{1/4}}^{\delta a^{1/4}} \exp(- 2 y^2) dy
\end{align*}
allows to get the same bound. This yields \eqref{Eq:BndToto}. Thanks to Markov's inequality and since $\P_{-\sqrt a}\big(\tau_{-\sqrt{a} \pm \delta} < \tau_{-\sqrt{a}\mp\delta}\big) > 1/3$ for all $a$ large enough, we get:
\begin{align*}
\P_{-\sqrt a}\Big[\tau_{-\sqrt{a}\pm \delta} > \frac{(\ln\ln a)^2}{4\sqrt{a}} \, \big|\, \tau_{-\sqrt{a} \pm \delta} < \tau_{-\sqrt{a}\mp \delta}\Big]&\le \frac{\P_{-\sqrt a}\big[\tau_{-\sqrt{a}- \delta} \wedge \tau_{-\sqrt{a}+\delta} >  (\ln\ln a)^2 / (4\sqrt{a})\big]}{\P_{-\sqrt a}\big[\tau_{-\sqrt{a} \pm \delta} < \tau_{-\sqrt{a}\mp\delta}\big]}\\
&\le \frac{12C}{\ln \ln a}\;,
\end{align*}
as required.
\end{proof}

We need a last lemma that controls the time needed by the diffusion $X_a$ to return to $\sqrt a$ when it starts from $-\sqrt{a}+ \delta$.
\begin{lemma}\label{lem:return_sqrta}
There exists $C >0$ such that for all $a$ large enough, we have:
\begin{align*}
\P_{-\sqrt{a}+ \delta}\Big[\tau_{\sqrt{a}} \leq \frac{3}{4} t_a + C \frac{\ln\ln a}{\sqrt{a}},\; \tau_{\sqrt{a}}< \tau_{-\sqrt{a}} \Big] \geq 1- \cO\big(1/\ln a\big).
\end{align*}
\end{lemma}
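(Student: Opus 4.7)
The plan is to decompose the complement of the event of interest into (i) the rare excursion $\{\tau_{-\sqrt a}<\tau_{\sqrt a}\}$ and (ii) the slow ascent $\{\tau_{\sqrt a}>(3/4)t_a+C\ln\ln a/\sqrt a,\,\tau_{\sqrt a}<\tau_{-\sqrt a}\}$, and to show that each has probability $O(1/\ln a)$.

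For (i), I would apply the scale-function identity \eqref{Eq:ScaleHit} to the three points $-\sqrt a<-\sqrt a+\delta<\sqrt a$. Since $S'(y)=\exp(2V_a(y))$ is sharply peaked at $y=-\sqrt a$, a Laplace-method computation parallel to \eqref{equivdiffS} gives
\begin{align*}
S(-\sqrt a+\delta)-S(-\sqrt a)\sim \tfrac12\sqrt{\pi/2}\,a^{-1/4}\,e^{4a^{3/2}/3},\qquad
S(\sqrt a)-S(-\sqrt a+\delta)\lesssim \frac{e^{4a^{3/2}/3-2\sqrt a\,\delta^2}}{\sqrt a\,\delta},
\end{align*}
which plugged into \eqref{Eq:ScaleHit} yields $\P_{-\sqrt a+\delta}[\tau_{-\sqrt a}<\tau_{\sqrt a}]\lesssim e^{-2\ln^4 a}/\ln^2 a$, far smaller than $1/\ln a$.

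For (ii), I would exploit the symmetry $-X_a\stackrel{(d)}{=}H$ (where $H$ is the auxiliary diffusion of Subsection~\ref{subsec:Boundsuptotheta}): under $x\mapsto -x$ and $B\mapsto -B$, the ascent $X_a:\,-\sqrt a+\delta\leadsto\sqrt a-\delta$ is distributed as the descent $H:\,\sqrt a-\delta\leadsto -\sqrt a+\delta$. The pathwise ODE sandwich carried out inside the proof of Lemma~\ref{Lemma:Girsanov} shows, on the event $\{\sup_{t\in[0,t_a]}|B(t)|\le c_0\ln a/a^{1/4}\}$ of probability $1-O(a^{-c_0^2/2})$, that $H$ is squeezed between the explicit tanh-shaped solutions $Z_\pm$ and thus reaches $-\sqrt a+\delta$ within time $(3/4)t_a+O(\ln\ln a/\sqrt a)$; hence so does $X_a$ on its way up to $\sqrt a-\delta$. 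For the final leg from $\sqrt a-\delta$ to $\sqrt a$, the drift $a-x^2\simeq 2\sqrt a\,\delta$ is strongly positive, the deterministic crossing time is $O(1/\sqrt a)$, and the Brownian fluctuations over such a time-scale are only $O(a^{-1/4})\ll\delta$, so a short Brownian-sandwich argument in the spirit of Lemma~\ref{Lemma:CDI}, or equivalently an Ornstein--Uhlenbeck linearisation of the drift around $\sqrt a$, supplies the additional $O(1/\sqrt a)$ time outside an event of probability $O(a^{-c})$.

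The main obstacle will be making the final short-time hitting of $\sqrt a$ fully rigorous: since $\sqrt a$ is the stable equilibrium, the process oscillates and hitting the exact level $\sqrt a$ is not automatic. However the gap $\delta=\ln^2 a/a^{1/4}$ is much larger than the typical fluctuation $a^{-1/4}$, so invoking Lemmas~\ref{Lemma:InvMeas} and \ref{Lemma:RBM} rules out any backward excursion of size $\delta$ within time $O(1/\sqrt a)$ with overwhelming probability. Combining the three estimates from (i), (ii) and the final leg then yields the stated bound.
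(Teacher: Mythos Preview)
Your approach coincides with the paper's: its one-line proof (``introduce $Z:=X-B$ and adapt the proof of Lemmas~\ref{Lemma:Girsanov}, \ref{Lemma:Girsanov2} and \ref{Lemma:Girsanov3}'') is precisely the ODE sandwich you invoke, and your use of the symmetry $-X_a\stackrel{(d)}{=}H$ simply recycles the existing analysis of $H$ rather than redoing it for $X$. Part (i) via the scale function is correct and far sharper than needed.

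One imprecision in your final-leg description: the drift $a-x^2$ is \emph{not} uniformly $\simeq 2\sqrt a\,\delta$ on $[\sqrt a-\delta,\sqrt a]$; it vanishes linearly at $\sqrt a$, so the deterministic flow never actually crosses $\sqrt a$ and there is no ``deterministic crossing time $O(1/\sqrt a)$''. The Ornstein--Uhlenbeck linearisation you correctly offer as an alternative is what does the work (this is exactly the analogue of Lemma~\ref{Lemma:Girsanov3}), and it produces a hitting time of order $\ln\ln a/\sqrt a$, not $1/\sqrt a$ --- still well within the $C\ln\ln a/\sqrt a$ budget of the statement.
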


\begin{proof}
It suffices to introduce $Z := X - B$ and adapt the proof of Lemmas \ref{Lemma:Girsanov}, \ref{Lemma:Girsanov2} and \ref{Lemma:Girsanov3}.
\end{proof}

This readily implies the last event (iii) of Proposition \ref{Prop:BoundsCross1}:
\begin{proof}[Proof of (iii) of Proposition \ref{Prop:BoundsCross1}]
After time $\theta_a$, the process $X_a$ has the law of the diffusion starting from $-\sqrt a$. If it exits the interval $[-\sqrt a-\delta,-\sqrt a + \delta]$ through $-\sqrt a-\delta$ then by Lemma \ref{Lemma:ExploandHit} it does so in a time smaller than $(\ln\ln a)^2/(4\sqrt a)$ with probability at least $1-O(1/\ln\ln a)$, and by Lemma \ref{Lemma:Explo} it stays below $-\sqrt a + 1$ and goes to $-\infty$ within a time $(3/8) t_a$, with probability at least $1-\exp(- (\ln \ln a)^2)$.\\
On the other hand, if it exits through $-\sqrt a + \delta$, then by Lemma \ref{lem:return_sqrta} it reaches $\sqrt a$ without hitting $-\sqrt a$ within a time $(3/4)t_a+ C \ln\ln a /\sqrt a$ with probability at least $1-O(1/\ln a)$.
\end{proof}
%
%
\begin{proof}[Proof of Proposition \ref{Prop:BoundsCross3}]
The r.v.~$\theta \wedge L$ is a stopping time in the filtration $\cF_t,t\ge 0$ of the underlying Brownian motion $B$. By the strong Markov property, the process $(B(t+\theta\wedge L)-B(\theta\wedge L),t\ge 0)$ is a standard Brownian motion, independent from $\cF_{\theta\wedge L}$. Hence, conditionally given $\theta\wedge L$, the process $(\hat{X}_a(t),t\in [0,L-\theta\wedge L])$ has the law of the time-reversed diffusion stopped at the deterministic time $L-\theta\wedge L$. The two bounds of the statement follow from the same type of arguments as those presented in the proofs of Lemmas \ref{Lemma:ControlY} and \ref{Lemma:DiffXY}. In particular, we introduce a stationary diffusion $\hat{Y}$ (with the same law as the stationary diffusion $-Y$) driven by $\hat B$. We define the event
$$\mathcal{A} := \big\{\hat{Y}((L-t)\vee 0) \leq -\sqrt a + h_a/2\mbox{ for all }t\in[\theta,\theta + 11 t_a]\big\}\;.$$
From the estimates on the invariant measure collected in Lemma \ref{Lemma:InvMeas} and using a comparison with a reflected Brownian motion (as in the proof of Proposition \ref{Propo:DiffClose}), we deduce that there exists $\rho>0$ such that $\P[\cA] > 1-1/\ln a$ for all $a$ large enough. Denote by $\mathcal{B}$ the event on which $\hat{X}_a$ and $\hat{Y}$ remain at a distance $h_a/2$ from each other on the interval $[(3/8)t_a,\tau]$ where $\tau$ is the first hitting time of $3\sqrt a$ by $\hat Y$ and both of them explode after time $\tau$ within a time $O(1/\sqrt{a})$. The proof of Lemma \ref{Lemma:DiffXY} ensures that $\P[\cB] \ge 1 - 1/\ln a$.\\
Define now the event $\cC$ on which
$$ \fint_{\theta\wedge L}^{t} \hat{Y}(L-s) ds \le -\sqrt a + h_a/2\;,\quad \forall t\in [\theta\wedge L,L]\;.$$
We claim that there exists $c>0$ such that $\P[\cC \, |\, \mathcal{F}_{\theta\wedge L}] > 1- \exp(-c(\ln a)^2)$, and consequently $\P[\cC] > 1-\exp(-c(\ln a)^2)$. Indeed, if we introduce the events
$$ E_k := \Big\{ \fint_{\theta\wedge L}^{\theta\wedge L + 2^{-k} (L-\theta\wedge L)} \un_{\{-\hat{Y}(L-s) \notin I_a(1/4)\}} ds \ge \frac1{a} \Big\}\;,\quad k\ge 1\;,$$
then, by the stationarity of $\hat{Y}$, we deduce that $\P[E_k \,|\, \mathcal{F}_{\theta\wedge L}] \le a\, e^{-(\ln a)^2/16}$ so that the proof of Lemma \ref{Lemma:ControlY} carries through \textit{mutatis mutandis} and yields the asserted bound on the conditional probability of $\cC$.\\
Let $\cD$ be the event on which $\hat{Y}$ explodes to $+\infty$ within a time of order $1/\sqrt a$ once it has hit $3\sqrt a$. By Lemma \ref{Lemma:Explo}, $\P(\cD) > 1-\exp(-(\ln\ln a)^2)$.\\
Therefore, on the event $\cA\cap \cB \cap \cC \cap \cD$, if $\hat \zeta_a(1) > L- \theta - 10 t_a$ and $\theta < L$ then $\hat Y$ hasn't reached $3\sqrt a$ by time $L-\theta-10t_a$: indeed, if it had then by $\cD$ it would explode before time $L-\theta$ and this would contradict $\cA$. Using $\cA$, this in turn ensures that $\tau > L-\theta$. Since $\hat{X}_a$ remains below $\hat Y$ up to the first explosion time of the latter, the bound of event $\cA$ yields:
$$\hat{X}_a(L-t) \le -\sqrt{a}+h_a\;,\quad \forall t\in [\theta,\theta+10\, t_a]\;,$$
so that $\hat{X}_a$ does not explode before time $L-\theta$. Moreover the bound of event $\cC$ combined with the condition of event $\cB$ yields the second bound of the proposition.
\end{proof}

\subsection{Proof of parts \ref{(b)-(i)}, \ref{(b)-(ii)} and \ref{(b)-(iii)}  of Proposition \ref{Propo:cE}}\label{subsec:proofexcursions1}


Let us fix $\eps >0$. Let $a \in M_L$. Recall that by Proposition \ref{Prop:PoissonExplo}, the explosion times of $X_{a}$ (resp. $X_a^j$), rescaled by $m(a_L)=L$, converge to a Poisson point process on $\R_+$ (resp. on $[t_j^n,+\infty)$) of intensity bounded from above by $e^{4/\eps}$ as $a \geq a_L - 1/(\eps \sqrt{a_L})$. 
Consequently, with probability $1-O(2^{-n})$ for all $L$ large enough, $X_a$ does not explode on $[(t_j^n - 2\cdot 2^{-2n} L)\vee 0, (t_j^n + 2\cdot 2^{-2n} L)\wedge L]$, $X^j_{a}$ explodes at most once per interval $[t_j^n, t_{j+3}^n]$. This yields \ref{(b)-(i)} and \ref{(b)-(iii)} for all $a \in M_L$ with probability $1-O(2^{-n})$ for all $L$ large enough (since there are of order $1/\eps^2$ points in $M_L$). 

It is easy to generalize \cite[Thm 3.3]{AllezDumazTW} to show that the first hitting time of $-\sqrt{a}$, rescaled by $m(a)/2$, of the diffusion $X_a$ starting from $\sqrt{a}$ converges to an exponential r.v.~of parameter $1$ when $a \to \infty$ (its Laplace transform satisfies a similar fixed point equation as in~\cite[Proposition 3.3]{AllezDumazTW} replacing $-\infty$ in the first integral by $-\sqrt{a}$ and taking $y \geq -\sqrt{a}$. The same proof then carries through, noting that we have to divide $m(a)$ by $2$ so that the recursive integrals $R_n(y,a)$ converges to $1$). As a consequence, we have the following counterpart of Proposition \ref{Prop:PoissonExplo}: the first hitting times of $-\sqrt a$ of the successive excursions of $X_a$, rescaled by $m(a)$, converge as $a\to\infty$ to a Poisson point process on $\R_+$ of intensity $2$. Consequently the first hitting times of $-\sqrt a$ of the successive excursions of $X_a$ (resp. $X_{a}^j$), rescaled by $m(a_L)=L$, converge as $L\to\infty$ to a Poisson point process on $\R_+$ (resp. on $[t_j^n,\infty)$) of intensity bounded from above by $2e^{4/\eps}$. Therefore for $j=0$ and $j=2^n-1$,$X_a$ and $X_{a}^j$ do not hit $-\sqrt a$ on $[t_j^n,t_{j+1}^n]$ with probability at least $1-O(2^{-n})$. For any other given $j$, $X_a$ and $X_{a}^j$ hit at most once $-\sqrt a$ with probability at least $1-O(2^{-2n})$ so that, taking a union bound over all such $j$, we obtain a probability at least $1-O(2^{-n})$. Hence \ref{(b)-(ii)} is satisfied with probability at least $1-O(2^{-n})$ for all $L$ large enough.

\subsection{Proof of part \ref{(b)-(iv)} of Proposition \ref{Propo:cE}}\label{subsec:proofexcursions2}
Let $a_\ell$ be the smallest point in $M_L$. In the next lemma, we show that for $a>a_\ell$ in $M_L$, if $X_a^j$ explodes then $X_{a_\ell}^j$ explodes roughly at the same time, and this proves \ref{(b)-(iv)}.
\begin{lemma}\label{Lemma:Xaa0}
Fix $n\ge 1$ and take $a \in M_L$. The following holds for all $j\in\{0,\ldots,2^n-1\}$ with probability at least $1- O(1/\ln\ln a_L)$. If $X_{a_\ell}^j$ makes at most one excursion to $-\sqrt a_\ell$ on $[t_j^n,t_{j+1}^n]$, then:\begin{itemize}
\item[(A)] on $[t_j^n,t_{j+1}^n]$ the number of excursions of $X_a^j$ to $-\sqrt a$ is smaller than or equal to the number of excursions of $X_{a_\ell}^j$ to $-\sqrt a_\ell$,
\item[(B)] if $X_a^j$ hits $-\sqrt a$ on $[t_j^n,t_{j+1}^n]$ and then explodes without coming back to $\sqrt a$, then so does $X_{a_\ell}^j$ and their explosion times lie at a distance at most $(\ln\ln a_L)^2/(3\sqrt{a_L})$ from each other. 
\end{itemize}
\end{lemma}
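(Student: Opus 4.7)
The plan is to combine three ingredients: the monotonicity property $X_{a_\ell}^j(t) \leq X_a^j(t)$ (which holds until the first explosion of $X_{a_\ell}^j$), the coupling estimates of Proposition \ref{Prop:BoundsCross2} applied to the pair with $\eps = a - a_\ell \in (a_\ell^{-2/3}, 1)$ (valid since $\eps = O(1/\sqrt{a_L})$ by construction of $M_L$), and the typical-trajectory description of Proposition \ref{Prop:BoundsCross1}. We work on the high-probability event on which both diffusions simultaneously exhibit these typical behaviors; each of the cited estimates holds up to probability $O(1/\ln\ln a_L)$, and the size of $M_L$ is bounded by a constant (depending on $\eps$), so a union bound preserves the desired failure probability.

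For (A), consider first the case where $X_{a_\ell}^j$ makes no excursion: then $X_{a_\ell}^j \geq -\sqrt{a_\ell}$ throughout $[t_j^n, t_{j+1}^n]$, monotonicity gives $X_a^j \geq -\sqrt{a_\ell} > -\sqrt{a}$, and $X_a^j$ has no excursion either. Suppose now $X_{a_\ell}^j$ has exactly one excursion on $[\iota_{a_\ell}, \theta_{a_\ell}]$. Proposition \ref{Prop:BoundsCross2} shows that throughout this window $X_a^j$ never rises above $\max(\sqrt{a_\ell} - 1, X_{a_\ell}^j + 1) < \sqrt{a}$, so $X_a^j$ cannot launch a new excursion during $X_{a_\ell}^j$'s excursion. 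After $\theta_{a_\ell}$, either $X_{a_\ell}^j$ returns to $\sqrt{a_\ell}$ without exploding and (by hypothesis) makes no further excursion, in which case monotonicity gives $X_a^j \geq -\sqrt{a_\ell} > -\sqrt{a}$ for the remainder of the interval and rules out further excursions of $X_a^j$; or $X_{a_\ell}^j$ explodes. In the explosion case, one shows (as in (B) below) that $X_a^j$ also explodes within time $O((\ln\ln a_L)^2/\sqrt{a_L})$ of $X_{a_\ell}^j$, and then Lemma \ref{Lemma:Ordering} applied to the two diffusions re-descending from $+\infty$ re-establishes the pointwise ordering within time $2t_L$, after which the same monotonicity argument excludes further excursions of $X_a^j$ during the remainder of $[t_j^n, t_{j+1}^n]$.

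For (B), suppose $X_a^j$ hits $-\sqrt{a}$ at time $\theta_a$ and subsequently explodes without returning to $\sqrt{a}$. The coupling of Proposition \ref{Prop:BoundsCross2} forces $\upsilon_a$ and $\upsilon_{a_\ell}$ to lie within $O(1/(\sqrt{a_L}\ln a_L))$ of each other, and combined with the deterministic descent asymptotics of Proposition \ref{Prop:BoundsCross1}(ii) this gives $|\theta_a - \theta_{a_\ell}| \leq C \ln\ln a_L/\sqrt{a_L}$. After their respective $\theta$'s, both diffusions remain trapped below $-\sqrt{a_\ell} + 1$ (by Proposition \ref{Prop:BoundsCross1}(iii) applied to $X_a^j$ together with the monotonicity $X_{a_\ell}^j \leq X_a^j$); it follows that $X_{a_\ell}^j$ cannot escape back to $\sqrt{a_\ell}$ and must explode, and by Proposition \ref{Prop:BoundsCross1}(iii) both explosions occur within an additional time $(3/8)t_L + O((\ln\ln a_L)^2/\sqrt{a_L})$ of their respective $\theta$'s. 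Combining these estimates yields $|\zeta_a^j - \zeta_{a_\ell}^j| \leq (\ln\ln a_L)^2/(3\sqrt{a_L})$ as claimed.

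The main obstacle is handling the short transient right after $X_{a_\ell}^j$ restarts at $+\infty$: there, naive pointwise monotonicity is momentarily reversed, and one must carefully invoke Lemma \ref{Lemma:Ordering} together with the rapid descent from $+\infty$ of Proposition \ref{Prop:BoundsCross1}(i) to show that the correct ordering is restored within time $2t_L$---quickly enough that no spurious extra excursion of $X_a^j$ can occur before $X_{a_\ell}^j$ has settled back into the well. This, together with the tight bookkeeping of the different time scales ($t_L$ for transients, $(\ln\ln a_L)^2/\sqrt{a_L}$ for the explosion phase, and the macroscopic interval length $2^{-n}L$), is the only delicate point; once it is addressed, (A) and (B) both follow from the machinery already developed in Sections \ref{subsec:Boundsuptotheta}--\ref{subsec:boundsaftertheta}.
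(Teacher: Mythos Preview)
There is a genuine gap in your treatment of (A) in the case where $X_{a_\ell}^j$ explodes. You write that ``one shows (as in (B) below) that $X_a^j$ also explodes within time $O((\ln\ln a_L)^2/\sqrt{a_L})$ of $X_{a_\ell}^j$''. But (B) proves the \emph{opposite} implication: if $X_a^j$ explodes, then $X_{a_\ell}^j$ does. The converse is simply false in general --- indeed, the whole point is that the larger parameter can have strictly fewer explosions. When $X_{a_\ell}^j$ explodes but $X_a^j$ does not, the pointwise ordering $X_{a_\ell}^j \le X_a^j$ breaks (since $X_{a_\ell}^j$ restarts from $+\infty$), and you have given no argument that rules out a second excursion of $X_a^j$ during the resulting transient.

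The paper organizes the argument differently, and this difference is what closes the gap. Instead of splitting on whether $X_{a_\ell}^j$ explodes, it splits on where $X_a^j$ sits at time $\theta_{a_\ell}^j$: either in $[-\sqrt a,-\sqrt a+\delta)$ or in $[-\sqrt a+\delta,\sqrt{a_\ell}-1]$ (Proposition~\ref{Prop:BoundsCross2} guarantees it is in this range). In the first case, Lemma~\ref{Lemma:ExploandHit} shows $X_a^j$ quickly exits $[-\sqrt a -\delta,-\sqrt a+\delta]$; if through the bottom, then by monotonicity $X_{a_\ell}^j$ explodes too and (B) follows, while if through the top one reduces to the second case. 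In the second case, Lemma~\ref{lem:return_sqrta} shows $X_a^j$ returns to $\sqrt a$ \emph{without} hitting $-\sqrt a$, so $X_a^j$ has made at most one excursion so far. Meanwhile $X_{a_\ell}^j$ reaches $\sqrt{a_\ell}$ within time $(3/4)t_L+o(t_L)$ as well, \emph{with or without} an intervening explosion; both diffusions then linger in $[\sqrt{a_\ell}/2,3\sqrt{a_\ell}/2]$ long enough for Lemma~\ref{Lemma:Ordering} to restore $X_a^j\ge X_{a_\ell}^j$. This is exactly the re-ordering step you flagged as delicate, but the paper applies it in the non-explosion branch for $X_a^j$, where it is actually needed, rather than trying to force $X_a^j$ to explode.
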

\begin{proof}
Notice that the difference $a-a_\ell$ belongs to $(a^{-2/3}_L,1)$. We are going to describe the behavior of the two trajectories $X^j_a, X^j_{a_\ell}$ by combining several estimates obtained in previous lemmas. The bounds on the probabilities obtained in these lemmas ensure that all what follows happens with probability greater than $1- O(1/\ln\ln a_L)$. The diffusion $X^j_a$ remains above $X^j_{a_\ell}$ until the first explosion time of $X^j_{a_\ell}$, and cannot reach $-\sqrt{a}$ before the first hitting time of $-\sqrt{a_\ell}$ by $X^j_{a_\ell}$. Let us denote this first hitting time $\theta^j_{a_\ell}$. At time $\theta^j_{a_\ell}$, by the estimates of Proposition \ref{Prop:BoundsCross2} the diffusion $X^j_{a}$ lies in $[-\sqrt a_\ell, \sqrt a_\ell-1] \subset [-\sqrt a, \sqrt a_\ell -1]$. For convenience, let us distinguish two cases: the diffusion is either in the interval $[-\sqrt{a}, -\sqrt{a}+\delta)$ or in the interval $[-\sqrt{a} + \delta, \sqrt{a_\ell} - 1]$, where $\delta = \ln^2(a) / a^{1/4}$.

In the first case, thanks to Lemma \ref{Lemma:ExploandHit}, it takes a time smaller than $(\ln\ln a_L)^2/(4\sqrt{a_L})$ to exit the interval $[-\sqrt{a} - \delta, \sqrt{a} + \delta]$ with probability greater than $1- O(1/\ln\ln a)$. If it exits through $-\sqrt{a}- \delta$, then it takes a time $(3/8)t_L +O(\ln \ln a_L/\sqrt{a_L})$ to explode to $-\infty$ (without hitting $-\sqrt a$) by Lemma \ref{Lemma:Explo}.
From the ordering of the diffusions, this implies that $X_{a_\ell}^j$ explodes as well and that their explosion times are at distance at most $(\ln\ln a_L)^2/(3\sqrt{a_L})$ from one another. If it exits through $-\sqrt{a} + \delta$, then we get to the second case.

In the second case, thanks to Lemma \ref{lem:return_sqrta}, it takes a time smaller than $(3/4) t_L +O(\ln\ln a_L/\sqrt{a_L})$ for the diffusion to reach $\sqrt{a}$ without hitting $-\sqrt a$. Moreover, after this return time, it stays in the region $[\sqrt{a_\ell}/2, (3/2)\sqrt{a_\ell}]$ during a time greater than $5 t_L$ (notice that this happens with a huge probability). Similarly, the diffusion $X^j_{a_\ell}$ hits $\sqrt{a_\ell}$ (with or without explosion) before time $\theta_{a}^j + (3/4) t_L + o(t_L)$ and stays in the interval $[\sqrt{a}/2,(3/2)\sqrt{a_\ell}]$ during a time greater than $5 t_L$. We can therefore apply Lemma \ref{Lemma:Ordering} which states that $X^j_{a}$ passes above $X^j_{a_\ell}$ before time $\theta_{a_\ell}^j + 5 t_L$, thus preventing $X^j _a$ to make a second excursion to $-\sqrt a$ before $X^j_{a_\ell}$ makes itself a second excursion to $-\sqrt a_\ell$.

Therefore, with probability $1-O(1/\ln\ln a_L)$ if $X^j_{a_\ell}$ makes only one excursion to $-\sqrt a_\ell$ on $[t_j^n,t_{j+1}^n]$ then (A) and (B) are satisfied. Since there are $2^n$ different values $j$, the statement follows.
\end{proof}

\section{Neumann boundary conditions}\label{Sec:Neumann}

When the operator is endowed with Neumann boundary conditions, we have $\varphi_k'(0)=\varphi_k'(L)=0$ so that necessarily $\varphi_k(0),\varphi_k(L) \ne 0$ (otherwise, $\varphi_k$ would be identically zero). Consequently, the corresponding Riccati transforms $\Chi_{k}^{(N)}$ start and end at $0$. Therefore, we need to start our diffusions from $0$ as well. To avoid confusions, we let $X_a^{(N)}$ and $\hat{X}_a^{(N)}$ be the analogues of $X_a$ and $\hat{X}_a$ but starting from $0$. Notice that we have the following almost sure equivalences:\begin{itemize}
\item $a > -\lambda_1^{(N)}$ if and only if $X_a^{(N)}$ does not explode on $[0,L]$ and $X_a^{(N)}(L) > 0$,
\item $-\lambda_k^{(N)} \ge a > -\lambda_{k+1}^{(N)}$ if and only if either $\Big[X_a^{(N)}$ explodes $k$ times on $[0,L]$ and $X_a^{(N)}(L) > 0\Big]$ or $\Big[X_a^{(N)}$ explodes $k-1$ times on $[0,L]$ and $X_a^{(N)}(L) \le 0\Big]$.
\end{itemize}

\medskip

The strategy of proof is exactly the same as in the case of Dirichlet boundary conditions: we decompose the interval $[0,L]$ into subintervals of size $2^{-n}L$, and, within these subintervals, we still consider the diffusions $X_a^{j}$ that start from $+\infty$ (and not from $0$!) at time $j2^{-n}L$. The only changes in the proof consist in dealing with the boundary conditions of $X_a^{(N)}$ and $\hat{X}_a^{(N)}$. One needs to show that with large probability we have:
\begin{enumerate}
\item The diffusion $X_a^{(N)}$ reaches a neighborhood of $\sqrt a$ very quickly:
$$ \tau_{x}(X_a^{(N)}) = (3/8)  t_a(1+o(1))\;,$$
where $x = \sqrt a - \frac{\ln a}{a^{1/4}}$ and
$$ \sup_{t\in (0,\frac38 t_a]} \Big|X_a^{(N)}(t)-\sqrt{a} \tanh (\sqrt a t)\Big| \le 1\;.$$
\item The diffusion $X_a^{(N)}$ synchronizes with $X_a^{j}$ for every $j=0,\ldots,2^n-1$: the content of Proposition \ref{Propo:DiffClose} remains true upon replacing $X_a$ by $X_a^{(N)}$ and $t_0$ by $j2^{-n}L$.
\item If $X_a^{(N)}$ does not explode more than $k$ times, then $X_a^{(N)}(L) > 0$.
\end{enumerate}
The proofs of the two first estimates are essentially the same as those for Dirichlet boundary conditions. The proof of the third estimate is a simple consequence  of the synchronization with the stationary diffusion proved in Lemma \ref{Lemma:DiffXY} for Dirichlet boundary conditions, which can be adapted to the Neumann case, and of the fact that, for a stationary diffusion $Y$ the probability that $Y(L) > \sqrt a / 2$ is overwhelming.\\
This being given, one considers a more restrictive event $\cE(n,\eps)$ than previously: one also imposes that for all $a\in M_L$, $X_a^{(N)}$ satisfies the above bounds and \ref{(b)-(ii)}, and similarly for $\hat{X}_a^{(N)}$. This event $\cE(n,\eps)$ has a probability of order at least $1-O(\epsilon)$.\\
We now work on $\cE(n,\eps)$. We know that for every $a\in M_L$, the sum over $j$ of the number of explosions of $X_a^{j}$ coincides with the number of explosions of $X_a$. Thanks to the additional properties collected above, this is also true for $X_a^{(N)}$. Since in addition $X_a^{(N)}(L) > 0$, the r.v. $a_{i}$ and $a_{i}'$, which were defined in Section \ref{Sec:ProofsTh}, satisfy $a_i \le -\lambda_i^{(N)} < a'_i$. Since $a_{i}'-a_{i} = \epsilon/\sqrt a_L$, this immediately implies that $(\lambda_{i}^{(N)}-\lambda_{i}) \sqrt a_L$ goes to $0$ in probability. Then, the same monotonicity arguments as for the Dirichlet case allow to bound $\Chi_{i}^{(N)}$ using the diffusions $X_{a_i}^{(N)}$, $X_{a_i'}^{(N)}$ and their time reversals. In particular, the location of the maximum of $\varphi_i^{(N)}$ is still very close to the point $\upsilon_*$, defined as the last zero of $X_{a_i}^{j_*}$ where the time-interval $[j_*2^{-n}L, (j_*+1)2^{-n}L]$ corresponds to the additional explosion of $X_{a_i}^{(N)}$. This concludes the proof of Theorem \ref{Th:Neumann}.

\bibliographystyle{Martin}
\bibliography{library}

\end{document}